\newtheorem{Theorem}{Theorem}[section]
\newtheorem{Definition}[Theorem]{Definition}
\newtheorem{Proposition}[Theorem]{Proposition}
\newtheorem{Lemma}[Theorem]{Lemma}
\newtheorem{Corollary}[Theorem]{Corollary}
\newtheorem{Remark}[Theorem]{Remark}
\newtheorem{Claim}[Theorem]{Claim}
\newtheorem{Assumption}[Theorem]{Assumption}
\def\Axis{\operatorname{A}}
\def\dist{\operatorname{dist}}
\def\Fix{\operatorname{A_{\infty}}}
\begin{document}

\title{Regenerating hyperbolic cone 3-manifolds from dimension 2}

\author{Joan Porti\footnote{Partially supported by the 
Spanish Micinn through grant MTM2009-07594 and prize ICREA ACADEMIA 2008}} 
\date{\today}

\maketitle

\begin{abstract}
We prove that a closed 3-orbifold that fibers over a hyperbolic polygonal
2-orbifold admits a family
of hyperbolic cone structures that are viewed as regeneration of the polygon,
provided that the perimeter is minimal.
\end{abstract}

\section{Introduction}
\label{section:introduction}

The   space of hyperbolic cone 3-manifolds with 
fixed topological type and with cone angles less than
$\pi$ is well understood by \cite{Weiss}, but  the boundary of this space is not. 
The aim of this paper is to establish a
regeneration result in the spirit of Hodgson \cite{Hodgson}, that goes
from a hyperbolic 2-orbifold, viewed as a collapsed 3-orbifold,  to a family of hyperbolic
cone 3-manifolds with decreasing cone angles, starting at $\pi$.
The 3-orbifold is Seifert fibered, and those cone manifolds appear in the
proof of the orbifold theorem;
however, none of the approaches shows the explicit collapse, as 
the Seifert fibration is constructed by other methods 
\cite{CHK,BP,BLP,BLP2}.

Let $\mathcal O^3$ be a closed and orientable 3-orbifold, which is  Seifert
fibered over a two orbifold $P^2$:
$$
	S^1\to \mathcal O^3\to P^2.
$$
The branching locus of $\mathcal O^3$
is a link or a trivalent graph $ \Sigma_{\mathcal O^3}$.
Its edges and circles are grouped in two,
horizontal (if they are transverse to the fibers)
or vertical (if they are fibers):
$$
\Sigma_{\mathcal O^3}=\Sigma_{\mathcal O^3}^{Hor}\cup \Sigma_{\mathcal O^3}^{Vert}.
$$
Points in $\Sigma_{\mathcal O^3}^{Hor}$
project to the mirror and dihedral points of $P^2$. 
Assume
that the orbifold $P^2$ is a \emph{hyperbolic} Coxeter group, generated by reflections on a
hyperbolic polygon whose angles are $\pi$ over an integer. Thus $P^2$ is a
polygon with mirror points at the edges, 
and dihedral points at the vertices.
We may assume also that $P^2$ 
has possibly a single cone point in its interior.
For instance, $S^3$ with branching locus a Montesinos link is an example of such
fibration.

We view the Seifert fibration as a transversely hyperbolic foliation, hence with a developing map 
$$
D_0\! : \! \tilde{ \mathcal O}^3\to \mathbf H^2
$$ 
that factors through the universal covering of $P ^2$.

According to Kerckhoff's proof of Nielsen conjecture \cite{KerckhoffAnnals},
there is a unique point in the Teichm\"uller space that minimizes the perimeter
of $P^2$. Let 
$$
P^2_{min}
$$
denote the orbifold equiped with this hyperbolic structure.

The main result of this paper is the following:

\begin{Theorem}
\label{theorem:1}
Assume that $P^2$ has at most one cone point in its interior. 
 There exists a family of hyperbolic cone manifold structures $C(\alpha)$ on $\vert
\mathcal O^3 \vert$, with singular locus $\Sigma_{\mathcal O^3}$
and cone angle $\alpha\in (\pi-\varepsilon,\pi)$ on $\Sigma_{\mathcal O^3}^{Hor}$ and constant
angles (the orbifold ones) on $\Sigma_{\mathcal O^3}^{Vert}$,
so that
$$
\lim_{\alpha\to \pi^-} C(\alpha)= P^2_{min}
$$
for the Gromov-Hausdorff convergence.
 Moreover the developing maps converge to the developing
map of the transversely hyperbolic foliation.
\end{Theorem}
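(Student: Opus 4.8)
The plan is to regenerate the hyperbolic cone structures by deforming in the variety of representations of $\pi_1(|\mathcal O^3|\setminus\Sigma_{\mathcal O^3})$ into $\mathrm{PSL}_2(\mathbf C)$, starting from the holonomy of the transversely hyperbolic foliation. First I would fix the collapsed picture: the developing map $D_0$ gives a holonomy representation $\rho_0\colon\pi_1\to\mathrm{Isom}^+(\mathbf H^2)\hookrightarrow\mathrm{PSL}_2(\mathbf C)$, which is the holonomy of the orbifold $P^2_{min}$ pulled back through the Seifert fibration; it maps a regular fiber to $\pm 1$ and the vertical branching curves to rotations of the prescribed orbifold angles, while the meridians of the horizontal branching locus go to rotations by $\pi$ (reflections in $P^2$ squared, i.e. the order-two dihedral/mirror data becomes cone angle $\pi$). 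The first main step is a local rigidity / dimension count: I would show that $\rho_0$ is a smooth point of the representation variety of the right expected dimension, and more precisely that one can deform $\rho_0$ so that the cone angle along $\Sigma^{Hor}$ becomes a free parameter. This is where Hodgson's thesis \cite{Hodgson} and the Hodgson--Kerckhoff infinitesimal rigidity machinery enter: I want $H^1$ of $\pi_1$ with coefficients in the adjoint $\mathfrak{sl}_2(\mathbf C)$-bundle to be exactly parametrized by the cone-angle/length variables on the singular locus, so that the angle $\alpha$ can be prescribed near $\pi$.

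The crucial obstacle, and the reason the \emph{minimal perimeter} hypothesis appears, is that the trivial-looking deformation of $\rho_0$ within $\mathrm{PSL}_2(\mathbf R)$ (deforming the hyperbolic structure on $P^2$ itself) is always available, so the space of deformations is larger than naively expected and one must show there is a deformation that genuinely leaves $\mathrm{PSL}_2(\mathbf R)$, i.e. that ``regenerates'' into three dimensions. The obstruction to integrating the infinitesimal out-of-plane deformation is, by a Poincaré-type argument, governed by a symmetric bilinear form — essentially the Hessian of the perimeter function on Teichmüller space — and Kerckhoff's theorem \cite{KerckhoffAnnals} guarantees this is non-degenerate (positive definite) precisely at the minimizing point $P^2_{min}$; this forces the first obstruction to vanish only in the three-dimensional direction and lets the implicit function theorem produce a genuine curve $\rho_\alpha$ of representations with $\rho_\pi=\rho_0$. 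I expect this obstruction computation — matching the second-order term of the regeneration equation with the second derivative of the perimeter, and invoking Kerckhoff — to be the technical heart of the argument.

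Once the representations $\rho_\alpha$ exist for $\alpha\in(\pi-\varepsilon,\pi)$, the next step is to promote holonomy to geometry: I would use the deformation theory of cone manifolds (the cone-manifold analogue of Ehresmann--Thurston, following Hodgson--Kerckhoff and the work of Weiss \cite{Weiss}) to show that for $\alpha$ close enough to $\pi$ the representation $\rho_\alpha$ is the holonomy of an actual hyperbolic cone structure $C(\alpha)$ on $|\mathcal O^3|$ with singular locus $\Sigma_{\mathcal O^3}$, cone angle $\alpha$ on $\Sigma^{Hor}$ and the fixed orbifold angles on $\Sigma^{Vert}$. Here one must check that the developing maps can be constructed consistently near the singular locus — the local model near an edge is a hyperbolic cone wedge, near a vertex of the trivalent graph it is the cone on a spherical triangle, and the vertical axes must be controlled — and that these local pieces glue because $\rho_\alpha$ is a small perturbation of $\rho_0$; the ``opening up'' of the degenerate ($\alpha=\pi$, collapsed) cone structure is modeled on the standard degeneration of a hyperbolic wedge to a segment.

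Finally, for the convergence statement I would rescale appropriately and take a geometric limit as $\alpha\to\pi^-$: since the collapsing direction is exactly the fiber $S^1$ direction (whose holonomy stays at $\pm 1$) and the developing maps $D_\alpha$ converge to $D_0$ by construction of the curve $\rho_\alpha$, the cone manifolds $C(\alpha)$ Gromov--Hausdorff converge to the base, which carries the metric of $P^2_{min}$ because $\rho_0$ is the holonomy of $P^2_{min}$; uniqueness of the Kerckhoff minimizer pins down which hyperbolic structure on $P^2$ appears in the limit. The only subtlety in this last part is controlling the geometry uniformly near the singular locus during the collapse (the tubes around $\Sigma^{Hor}$ shrink, but in a controlled self-similar way), which again follows from the explicit local models. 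In summary: (1) identify $\rho_0$ and the local deformation theory; (2) use the Hessian of the perimeter and Kerckhoff's rigidity to regenerate a curve $\rho_\alpha$ out of $\mathrm{PSL}_2(\mathbf R)$ — the main obstacle; (3) integrate $\rho_\alpha$ to cone structures $C(\alpha)$ via cone-manifold deformation theory; (4) take the Gromov--Hausdorff limit and identify it with $P^2_{min}$ via convergence of developing maps.
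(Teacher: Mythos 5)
Your steps (1), (2) and (4) are broadly in the spirit of the paper: the curve of representations is indeed obtained by exploiting that $P^2_{min}$ is a critical point of the perimeter with nondegenerate Hessian (Kerckhoff plus Wolpert's formulas), although the paper's actual mechanism is not a direct obstruction/implicit-function argument but Goldman's symplectic structure on $X(\partial M)$, Fenchel--Nielsen coordinates, a duality theorem (criticality of $\sum\lambda_i$ is equivalent to the existence of a tangent vector with $d\mu_i(v)=1$ for all horizontal meridians), and a blow-up biholomorphism; moreover, when some singular vertical fibers are not in the branching locus the paper needs a further limiting argument over relabeled orbifolds $\mathcal O^3_N$, using Sullivan's theorem and the earthquake theory for cone surfaces, which your sketch does not address. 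These are differences of route rather than gaps.

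The genuine gap is in your step (3). You propose to pass from the representations $\rho_\alpha$ to actual cone structures $C(\alpha)$ by the cone-manifold analogue of Ehresmann--Thurston together with Hodgson--Kerckhoff/Weiss deformation theory. That machinery gives openness of the holonomy map \emph{near an existing} hyperbolic cone structure; here there is no such structure at $\alpha=\pi$, since the limit object is a collapsed, two-dimensional transversely hyperbolic foliation, and the thickness of the putative structures tends to zero as $\alpha\to\pi^-$. So there is nothing to perturb, and no uniform lower bound on the size of the charts that an openness argument would require: this is exactly the regeneration difficulty the theorem is about. The paper instead constructs the developing maps by hand along the fibration (tangles at the vertices, $I$-fibered strips over the edges, then the complementary solid torus), after first proving that the deformation of the fiber holonomy $\rho_t(f)$ is nontrivial and loxodromic (via Schl\"afli-type volume estimates) and that the associated Killing field is an infinitesimal translation perpendicular to the polygon through its incenter; the local models (double roofs, Euclidean models $E(p_i/q_i)$) are then matched with rescaled bi-Lipschitz control as $t\to0^+$. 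Without an argument of this kind, your step (3) does not go through, and the convergence of developing maps in step (4) is likewise not automatic from convergence of the representations but is built into that explicit construction.
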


The orbifold $\mathcal O^3$ can be viewed as a cone manifold $C(\pi)$ with geometry $\widetilde{SL_2(\mathbf R)}$ or $\mathbf H^2\times\mathbf R$, depending on the Euler number. 
With one of these geometries, the singular edges and circles are either horizontal (with angle $\pi$) or vertical, as those geometries are fibered.
The theorem generalizes to those cone manifolds, without assuming that the vertical angles are $2\pi/n$, just that the basis is a polygon 
with angles $\leq \pi/2$. This means that if $q\geq 1$ is the order of a singular fiber (with $q=1$ if the fiber is regular),
and if $\vartheta_i\leq 2\pi$ is the cone angle, then we require $\vartheta_i/q\leq  \pi$.

Let $k$ be the number of circles and edges  on $\Sigma_{C(\pi)}^{Hor}$.
Choose weights
$w_1,\ldots,w_k\in\mathbf R_+=\{x\in\mathbf R\mid x>0\}$ on these  horizontal components. 
Those induce weights 
$$
\mathcal W=\{\overline w_1,\ldots,\overline w_n\}
$$ 
on the edges of $P^2$, just by adding the two weights of 
the mirror points of the fiber of any interior point of the edge,
ie.\ $\overline w_i=w_{j_i}+w_{k_i}$.
Of course if $k=1$, then $\mathcal W$ is the constant weight. 
If $e_1,\ldots,e_n$ are the edges of $P^2$, with lengths $\vert e_1\vert ,\ldots,\vert e_n\vert $,  
the $\mathcal W$-perimeter is defined as
$$
\overline w_1\vert e_1\vert+\cdots+\overline w_n\vert e_n\vert.
$$
We will show (Proposition~\ref{prop:geometry}) that in the set of hyperbolic structures on a polygon
with ordered angles $\vartheta_1/2q_1,\ldots, \vartheta_n/2q_n$ there is a unique minimizer of the $\mathcal W$-perimeter,
that we denote
$
P^2_{\mathcal W-min}.
$

\begin{Theorem}
\label{theorem:2}
Let $C(\pi)$ be a closed, orientable  cone manifold with geometry $\widetilde{SL_2(\mathbf R)}$ or $\mathbf H^2\times\mathbf R$
with cone angles $\leq 2\pi$, and
with space of fibers a polygon $P^2$ with angles $\leq \pi/2$.
There exists a family of hyperbolic cone manifold structures $C(\alpha_1,\ldots,\alpha_k)$
with cone angles $\alpha_i=\pi-w_i t$, for $i=1,\ldots,k$ and   $t\in (0,\varepsilon)$ on $\Sigma_{\mathcal O^3}^{Hor}$, and constant
angles  on $\Sigma_{\mathcal O^3}^{Vert}$, such that
$$
\lim_{t\to 0^+} C(\alpha_1,\ldots,\alpha_k)=P^2_{\mathcal W-min}
$$
for the Gromov-Hausdorff converge.
\end{Theorem}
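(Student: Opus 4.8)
\medskip
\noindent\textbf{Proof proposal.}
The proof should parallel that of Theorem~\ref{theorem:1}, replacing the perimeter by the $\mathcal W$-perimeter and Kerckhoff's minimization \cite{KerckhoffAnnals} by Proposition~\ref{prop:geometry}. The plan is to work first at the level of holonomy representations into $\mathrm{PSL}_2(\mathbf C)$, to single out a first-order deformation of the collapsed structure that decreases the horizontal cone angles at the prescribed rates $-w_i$, to show that such a deformation integrates exactly when the base polygon is $\mathcal W$-perimeter critical, and finally to promote the resulting path of representations to a path of hyperbolic cone structures whose Gromov--Hausdorff limit is read off from the developing maps.

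\emph{The collapsed holonomy and its deformations.}
The transversely hyperbolic foliation underlying $C(\pi)$ gives a holonomy $\rho_0$ with values in $\mathrm{PSL}_2(\mathbf R)\subset\mathrm{PSL}_2(\mathbf C)$ that factors through the holonomy of the base polygon (it is the map through which $D_0$ factors), sends each meridian of $\Sigma_{\mathcal O^3}^{Hor}$ to the identity of $\mathrm{PSL}_2(\mathbf R)$ — the degenerate shadow of ``cone angle $\pi$'' once one accounts for the two-fold quotient downstairs — and satisfies the fixed relations along $\Sigma_{\mathcal O^3}^{Vert}$. The key computation is the first cohomology with coefficients in $\mathfrak{sl}_2(\mathbf C)$ twisted by $\mathrm{Ad}\rho_0$. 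Using the $\mathrm{PSL}_2(\mathbf R)$-module splitting $\mathfrak{sl}_2(\mathbf C)=\mathfrak{sl}_2(\mathbf R)\oplus i\,\mathfrak{sl}_2(\mathbf R)$ together with a Gysin-type argument along the Seifert circle fibration, one reduces to the cohomology of the polygon with $\mathfrak{sl}_2(\mathbf R)$-coefficients (of dimension equal to that of the deformation space of hyperbolic polygons with the prescribed angles) plus the contribution of the circle fibre. I would carry along precise information on the behaviour of cocycles near the vertical singular fibres (order $q_i$, angle $\vartheta_i$) and near the interior cone point, and identify, inside the $i\,\mathfrak{sl}_2(\mathbf R)$ summand, the line of infinitesimal deformations responsible for changing the horizontal cone angles.

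\emph{Integration via the $\mathcal W$-perimeter.}
Among the first-order deformations of $\rho_0$ fixing the vertical angles, I would select those whose effect on the $i$-th horizontal meridian is, to first order, a rotation by $-w_i t$. By a bilinear-pairing argument on $H^1$ along the tori neighbouring $\Sigma_{\mathcal O^3}^{Hor}$ (half-dimensionality of the restriction map, Poincar\'e duality), such a first-order deformation exists and integrates to a real-analytic path $\rho_t$, $t\in(0,\varepsilon)$, with $\rho_0$ as limit, precisely when the underlying hyperbolic polygon is a critical point of the $\mathcal W$-perimeter $\sum_j\overline w_j\vert e_j\vert$. This is the fibered counterpart of the Schl\"afli formula, the $\mathcal W$-perimeter playing the role of the potential whose differential is the obstruction co-vector, just as length functions do in Hodgson--Kerckhoff-type deformation arguments. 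By Proposition~\ref{prop:geometry} there is a unique such point, $P^2_{\mathcal W-min}$; taking $\rho_0$ to be its transversely hyperbolic holonomy makes the argument go through.

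\emph{From representations to cone structures, convergence, and the main obstacle.}
It remains to realise $\rho_t$ by honest hyperbolic cone manifold structures $C(\alpha_1(t),\dots,\alpha_k(t))$ on $\vert\mathcal O^3\vert$ with singular locus $\Sigma_{\mathcal O^3}$ and $\alpha_i(t)=\pi-w_i t$: for small $t>0$ this follows from the local deformation theory and rigidity of cone 3-manifolds with cone angles $\leq\pi$ \cite{Weiss} together with an Ehresmann-type deformation of the developing map, while the genuinely new point — closest to \cite{Hodgson} — is that the degenerate structure at $t=0$ sits on the boundary of this deformation space and regenerates, which is why the previous steps were arranged so that $\rho_0$ is a smooth point with the cone angles as admissible local parameters and the regeneration direction pointing inward. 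Since the $D_t$ converge to $D_0$, and the $S^1$-fibres have length of order $t$ while the transverse geometry converges to that of $P^2_{\mathcal W-min}$, the cone manifolds collapse onto $P^2_{\mathcal W-min}$; the cone angles remaining $\leq\pi$ controls the metric completion near $\Sigma_{\mathcal O^3}$, so that pointed bi-Lipschitz convergence of the thick parts upgrades to Gromov--Hausdorff convergence. I expect the crux to be exactly this interface: matching the integration obstruction with $d\bigl(\sum_j\overline w_j\vert e_j\vert\bigr)$ so that Proposition~\ref{prop:geometry} applies, and then producing actual cone metrics — not merely representations — in a full neighbourhood of the collapsed locus. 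This is where the hypotheses ``angles $\leq\pi/2$ on $P^2$'', equivalently $\vartheta_i/q_i\leq\pi$ on $\Sigma_{\mathcal O^3}^{Hor}$, are essential, since they are what make the cone-manifold deformation theory and rigidity available and keep the singular locus from misbehaving as $t\to0^+$.
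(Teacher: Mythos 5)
Your overall strategy does match the paper's: Section~\ref{section:weights} proves Theorem~\ref{theorem:2} by rerunning the proof of Theorem~\ref{theorem:1} with the vector $(1,\ldots,1)$ replaced by $(w_1,\ldots,w_n,w_1',\ldots,w_n')$ and with Corollary~\ref{cor:geometry} (the $\mathcal W$-perimeter version of Kerckhoff's minimizer) as the Teichm\"uller input. But two of your steps are genuinely missing or would fail as stated. The most serious one is the passage from representations to cone structures. You appeal to the local deformation theory and rigidity of cone manifolds \cite{Weiss} plus an ``Ehresmann-type deformation'', but at $t=0$ there is no hyperbolic cone structure at all --- the structure is collapsed --- so there is nothing that theory can be applied to; this is precisely the regeneration problem the paper has to solve by hand. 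The paper does it in Sections~\ref{section:Killing} and~\ref{section:developingmaps}: it first shows $\rho_t(f)$ is nontrivial and loxodromic via volume/Schl\"afli estimates, proves that the Killing field of the fiber deformation is an infinitesimal translation perpendicular to the polygon through a (weighted) equidistant point (Proposition~\ref{lemma:killingfield}, which is also the source of the inscribed-circle statement in Proposition~\ref{prop:geometry}), and then builds the metric explicitly from ``double roofs'' around the common perpendiculars of the axes of the meridian holonomies near the vertices and edges, finally extending over the complementary solid torus using that $\rho_t(f)$ is loxodromic (Proposition~\ref{prop:deformreps}); \cite{Weiss} only enters for rigidity/equivariance. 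Note also that $\rho_0$ sends a horizontal meridian to the $\pi$-rotation about the corresponding edge (it is the fiber $f$ that goes to the identity), not to the identity as you write; the angle bookkeeping $\alpha_i=\pi-w_it$ and the Killing-field computation both start from that fact.

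The second gap is the integration step. Criticality of the $\mathcal W$-perimeter, via the duality theorem (Theorem~\ref{thm:duality}), yields only the infinitesimal deformation $v$ with $d\mu_i(v)=w_i$; it does not by itself give a path. The paper obtains an actual analytic curve by combining the nondegeneracy of the Hessian of the $\mathcal W$-perimeter at the minimizer --- proved in Appendix~\ref{sec:appendix} through earthquakes on cone surfaces \cite{BonsanteSchlenker} and Wolpert-type second-derivative estimates (Lemma~\ref{lemma:compact}, Corollary~\ref{cor:geometry}) --- with the blow-up argument of Section~\ref{section:adding} (Lemmas~\ref{lemma:normal}--\ref{Prop:blowupbihol}); and when some vertical cone angle equals $2\pi$ (fibers not in the branching locus), with the further covering/limit argument of Section~\ref{section:curve}, including Sullivan's parity of local Euler characteristics in Corollary~\ref{coro:chi0inS}, carried out for real-analytic rather than algebraic sets. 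Your sketch asserts that the first-order deformation ``exists and integrates \ldots precisely when the polygon is $\mathcal W$-perimeter critical'' without these ingredients, so both the existence of the curve of representations and the existence of the cone metrics remain unproven in your proposal.
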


A version of Theorem~\ref{theorem:1}  was stated in Hodgson's thesis \cite{Hodgson}. 
In particular, Hodgson showed that the minimizer of the perimeter corresponds
to a singularity in the variety of representations of
$\mathcal O^3\smallsetminus\Sigma_{\mathcal O^3}$.
However, to construct the path of representations requires further detail, done here.
For the construction of the developing maps we use an approach different from \cite{Hodgson}.

The idea of regenerating from Kerckhoff minimizers is also related to a
theorem of  Series on pleated surfaces \cite{Series},
where  as an application she also obtains a family of cone manifolds.

Theorem~\ref{theorem:1}
 needs to assume that there is at most one cone point in the interior of the
polygon, otherwise  $\mathcal O^3\setminus \Sigma_{\mathcal O^3}$
would contain an essential torus, contradicting  the existence of hyperbolic
cone structures.

\begin{figure}
\begin{center}
{
\includegraphics[height=4cm]{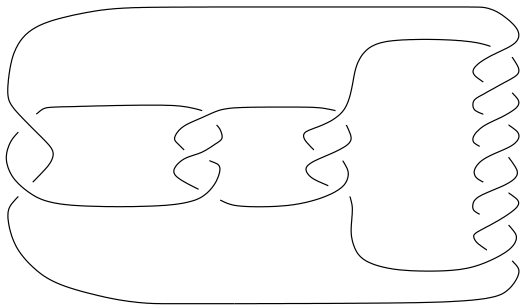}
}
\end{center}
   \caption{Example of fibered orbifold: $S^3$ with branching locus this link.
The base is a quadrilateral.}\label{fig:4pretzel}
\end{figure}

\smallskip

We make   the following assumption along the paper:

\begin{Remark}
\label{remark:nointeriorpoint}
We may assume that  $P^2$ has no interior cone point:
If the interior of $P^2$ has one cone point, then we consider the orbifold covering
that unfolds this point and work equivariantly.
\end{Remark}

Concerning the equivariance, the structure  of $P^2$ that minimizes the perimeter is unique, by Kerckhoff \cite{KerckhoffAnnals},
hence equivariant. The structure of the hyperbolic cone manifolds is also equivariant because it is 
unique, by Weiss' global rigidity \cite{WeissGlobal}.
Notice that two or more cone points would not produce a polygon when unfolding, but an orbifold with more complicated underlying space.

\smallskip

Let us show a consequence of Theorem~\ref{theorem:2}  for hyperbolic polyhedra.
Fix $n$ positive real numbers 
$$
0<\beta_1,\ldots,\beta_n\leq \pi/2,
$$
satisfying $\sum(\pi-\beta_i)> \pi $. By Andreev theorem, for any choice of $\alpha_1,\ldots,
\alpha_n$, $\alpha_1',\ldots,\alpha_n'$
 satisfying
$$
0< \alpha_i,\alpha_i'<\pi/2,\qquad \alpha_i+\alpha_{i+1}>\pi-\beta_i,\quad \textrm{ and }\quad \alpha_i'+\alpha_{i+1}'>\pi-\beta_i,
$$
there exists a unique hyperbolic polyhedron with the combinatorial type of a
prism with an $n$-edged polygonal base, with dihedral angles at the ``vertical'' edges
 $\beta_1,\cdots,\beta_n$, angles $\alpha_1,\ldots,\alpha_n$ at the respective $n$ ``horizontal'' edges of the top face,
and $\alpha_1',\ldots,\alpha_n'$ at the respective $n$ horizontal edges of the bottom face.
They are arranged so that the edges with angles $\alpha_i$, $\beta_i$, $\alpha_i'$ and $\beta_{i+1}$ bound a quadrilateral
face.
See Figure~\ref{fig:prism}.

\begin{figure}
\begin{center}
{
\psfrag{a1}{$\alpha_1$}
\psfrag{a2}{$\alpha_2$}
\psfrag{a3}{$\alpha_3$}
\psfrag{a4}{$\alpha_4$}
\psfrag{a5}{$\alpha_5$}
\psfrag{a1'}{$\alpha_1'$}
\psfrag{a2'}{$\alpha_2'$}
\psfrag{a3'}{$\alpha_3'$}
\psfrag{a4'}{$\alpha_4'$}
\psfrag{a5'}{$\alpha_5'$}
\psfrag{b1}{$\beta_1$}
\psfrag{b2}{$\beta_2$}
\psfrag{b3}{$\beta_3$}
\psfrag{b4}{$\beta_4$}
\psfrag{b5}{$\beta_5$}
\includegraphics[height=4cm]{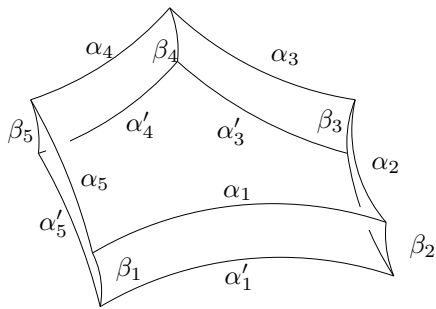}
}
\end{center}
   \caption{A hyperbolic prism as in Corollary~\ref{prop:prism}.}\label{fig:prism}
\end{figure}

Now if we choose weights $w_1,\ldots,w_n>0$ for the bottom horizontal edges and
 $w_1',\ldots,w_n'>0$ for the top ones, assume that 
\begin{eqnarray*}
 \alpha_i &= &\pi/2-w_i t, \\
 \alpha_i'& = &\pi/2-w_i' t.
\end{eqnarray*}
Keep $\beta_1,\ldots,\beta_n$ fixed and let $t\searrow 0$ (hence $\alpha_i,\alpha_i'\nearrow\pi/2$).
Set 
$$
\mathcal W=\{w_1+w_1',\ldots,w_n+w_n'\}
$$ and recall that the $\mathcal W$-perimeter 
of a polygon is the addition of its edge lengths multiplied by the weights.

\begin{Corollary}
 \label{prop:prism}
When $t\searrow 0$, the prism converges to the $n$-edged polygon
with angles $\beta_1,\ldots,\beta_n$ of minimal $\mathcal W$-perimeter, where
$\mathcal W=\{w_1+w_1',\ldots,w_n+w_n'\}$.
\end{Corollary}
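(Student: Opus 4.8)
The plan is to deduce Corollary~\ref{prop:prism} from Theorem~\ref{theorem:2} by exhibiting the prism of Andreev's theorem as the underlying polyhedron of a natural cone manifold $C(\pi)$ fibering over the $n$-gon with angles $\beta_1,\dots,\beta_n$. Concretely, I would take $P^2$ to be the polygon with vertex angles $\beta_i\le\pi/2$ and form its orientation double (the sphere with $n$ cone points of order $2$ coming from the vertices, plus the doubled mirror structure), and then take $\mathcal O^3=P^2\times S^1$ with the product geometry $\mathbf H^2\times\mathbf R$ (Euler number zero, since the prism has two genuine boundary faces rather than a closed-up fibration). The key observation is that the quotient of the prism under the involution swapping top and bottom horizontal faces — equivalently, the orbifold whose underlying space is the prism with the two polygonal faces as mirrors — is precisely this $C(\pi)$: the $n$ vertical edges with angle $\beta_i$ become the vertical singular locus $\Sigma^{Vert}$, while the $n+n$ horizontal edges with angles $\alpha_i,\alpha_i'<\pi/2$ become $\Sigma^{Hor}$ after we note that the mirror reflections double these to cone angles $2\alpha_i,2\alpha_i'$ approaching $\pi$.

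With this dictionary in place, the substitution $\alpha_i=\pi/2-w_i t$, $\alpha_i'=\pi/2-w_i' t$ makes the doubled horizontal cone angles equal to $\pi-2w_it$ and $\pi-2w_i't$, which is exactly the shape $\pi-w_i t$ demanded by Theorem~\ref{theorem:2} (up to rescaling $t$ by a factor of $2$, which is harmless). The horizontal components of $\Sigma_{C(\pi)}^{Hor}$ split into the $n$ bottom edges carrying weight $w_i$ and the $n$ top edges carrying weight $w_i'$; since each pair (bottom edge $e_i$, top edge $e_i'$) sits over the same edge of $P^2$ — they are the two mirror points in the fiber of an interior point of that edge — the induced weight on the $i$-th edge of $P^2$ is $\overline w_i=w_i+w_i'$, giving precisely $\mathcal W=\{w_1+w_1',\dots,w_n+w_n'\}$. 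Theorem~\ref{theorem:2} then yields a family $C(\alpha_1,\dots,\alpha_n,\alpha_1',\dots,\alpha_n')$ of hyperbolic cone structures, and Gromov--Hausdorff convergence $C\to P^2_{\mathcal W\text{-}min}$ as $t\searrow 0$. Reading this convergence back through the orbifold quotient — the hyperbolic cone structure on the doubled prism restricts to a genuine hyperbolic polyhedron (the fundamental domain of the mirror reflections), and Gromov--Hausdorff convergence of the quotients lifts to convergence of the fundamental domains — shows the prisms converge to $P^2_{\mathcal W\text{-}min}$, the $n$-gon with angles $\beta_i$ of minimal $\mathcal W$-perimeter. This is the assertion of the corollary.

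The step I expect to be the main obstacle is verifying carefully that Andreev's prism really is realized as a fundamental domain inside the cone manifold produced by Theorem~\ref{theorem:2}, i.e.\ that the mirror faces of the orbifold $C(\pi)$ regenerate to \emph{totally geodesic} faces of the cone structures $C(\alpha_1,\dots,\alpha_k)$ rather than merely to abstract cone structures. One must check that the reflection symmetry is preserved along the deformation — this follows from the uniqueness part (Weiss' global rigidity, already invoked in Remark~\ref{remark:nointeriorpoint} for the equivariance discussion), since the symmetric structure exists and the cone structure with given cone angles is unique, hence symmetric — and that the fixed locus of the reflection is a totally geodesic disk meeting the singular locus orthogonally, so that cutting along it produces an honest polyhedron with the claimed dihedral angles. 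One should also confirm the side condition $\sum(\pi-\beta_i)>\pi$ matches the hyperbolicity hypothesis of Theorem~\ref{theorem:2} (it is the Gauss--Bonnet condition that the $n$-gon with angles $\beta_i$ carries a hyperbolic structure at all), and that $\beta_i\le\pi/2$ is exactly the ``angles $\le\pi/2$'' hypothesis on the base polygon. Once the symmetry and the geodesic-face claim are nailed down, the remainder is the translation of notation carried out above together with a direct appeal to Theorem~\ref{theorem:2}.
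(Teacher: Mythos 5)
Your overall strategy---realize the prism inside a fibered cone manifold $C(\pi)$, feed it to Theorem~\ref{theorem:2} with the weights arranged so that each base edge carries $w_i+w_i'$, and recover the prism afterwards by a uniqueness/symmetry argument---is the right way to deduce the corollary. But the object you actually hand to Theorem~\ref{theorem:2} does not satisfy its hypotheses, and this is a genuine gap. The theorem requires a \emph{closed, orientable} cone manifold with codimension-two singular locus. The ``prism with the two polygonal faces as mirrors'' is not such an object: reflecting only in the top and bottom faces leaves the lateral quadrilateral faces as genuine boundary (its orientation double is topologically $Q\times S^1$ with a boundary torus), the horizontal edges remain \emph{boundary} edges of dihedral angle $2\alpha_i$ rather than cone singularities, and the vertical edges remain corners of angle $\beta_i$ rather than a vertical singular locus. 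The alternative description, ``the quotient of the prism by the involution swapping top and bottom,'' is not available either: for $w_i\neq w_i'$ one has $\alpha_i\neq\alpha_i'$, so the prism has no such symmetry (and that quotient would anyway be a half-prism with a mirror cross-section, a different object). Finally ``$\mathcal O^3=P^2\times S^1$'' does not work: the $\beta_i$ are arbitrary in $(0,\pi/2]$, so the mirrored polygon is not an orbifold, and if you instead use its double (a sphere with cone points of angle $2\beta_i$, not of order $2$) the product has no horizontal singular locus at all, hence nothing whose angle tends to $\pi$ and a base which is a sphere rather than a polygon.

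The fix is to reflect in \emph{all} faces: take $C(\pi)$ to be the double of $Q\times[0,1]$ (product metric, $Q$ the polygon with angles $\beta_i$) along its entire boundary. This is a closed orientable $\mathbf H^2\times\mathbf R$ cone manifold with underlying space $S^3$, singular locus the $1$-skeleton of the prism, horizontal cone angles $\pi$, vertical cone angles $2\beta_i\le\pi$, fibered over $Q$---an instance of Proposition~\ref{prop:conemanifolds}. Giving the doubled bottom and top edges weights proportional to $w_i$ and $w_i'$, the horizontal angles become $2\alpha_i=\pi-2w_it$ and $2\alpha_i'=\pi-2w_i't$, and the induced weight on the $i$-th base edge is proportional to $w_i+w_i'$, so Theorem~\ref{theorem:2} yields cone structures $C_t$ converging to the $\mathcal W$-minimizer. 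Since the double of the Andreev prism with angles $\alpha_i,\alpha_i',\beta_i$ is a hyperbolic cone structure of the same topological type with the same cone angles $\le\pi$, Weiss' global rigidity identifies $C_t$ with that double; this replaces (or justifies) your symmetry and totally-geodesic-fixed-locus step, and the prisms inherit the Gromov--Hausdorff convergence because the collapsing direction is the fiber. One small side correction: the condition for the limit polygon to be hyperbolic is $\sum(\pi-\beta_i)>2\pi$ by Gauss--Bonnet, not $>\pi$.
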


The following result follows from the proof of Theorem~\ref{theorem:2} 
and the results of Appendix~\ref{sec:appendix}. 
Given $0<\vartheta_1,\ldots,\vartheta_n\leq  \pi/2$ with $\sum (\pi-\vartheta_i)>2\pi$, 
let $$
\mathfrak P(\vartheta_1,\ldots,\vartheta_n)
$$ 
denote the space of hyperbolic n-gons with those ordered angles.

\begin{Proposition}
\label{prop:geometry}
The perimeter has a unique minimum in $\mathfrak P(\vartheta_1,\ldots,\vartheta_n)$. In addition, this is the only 
 polygon in $\mathfrak P(\vartheta_1,\ldots,\vartheta_n)$
 that has an inscribed circle tangent to all 
of its edges.

For $\mathcal W=\{w_1,\ldots,w_n\}$, with $w_i>0$, the $ \mathcal W$-perimeter has a unique minimum in $\mathfrak P(\vartheta_1,\ldots,\vartheta_n)$.
In addition, this is the only polygon in $\mathfrak P(\vartheta_1,\ldots,\vartheta_n)$ 
such that has a point $p$ in its interior, so that 
$\frac1{w_i}\sinh(d(e_i,p))$ is independent of  the edges $e_i$ of the polygon.
\end{Proposition}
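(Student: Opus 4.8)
The plan is to treat the $\mathcal W$-perimeter as a smooth function on the space $\mathfrak P(\vartheta_1,\ldots,\vartheta_n)$ and analyze its critical points via a first-variation computation, then upgrade a critical point to a unique global minimum using convexity, with the inscribed-circle statement falling out as the geometric interpretation of the critical point equation. First I would recall (or set up) a coordinate description of $\mathfrak P(\vartheta_1,\ldots,\vartheta_n)$: a hyperbolic $n$-gon with prescribed ordered angles is determined by the $n$ edge lengths $|e_1|,\ldots,|e_n|$ subject to $n-2$ closure (or cosine-rule/Gram-matrix) constraints, so the space is a smooth manifold of dimension $2$ (this matches the Teichmüller-style count and the hypothesis $\sum(\pi-\vartheta_i)>2\pi$ guarantees nonemptiness and that the interior-point construction below makes sense). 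I would also note the space is contractible and that as a polygon degenerates to the boundary of $\mathfrak P$, some edge length tends to $\infty$, so the $\mathcal W$-perimeter (a positive combination of edge lengths) is proper and bounded below; hence a minimum exists, and it suffices to show any critical point is unique and is the claimed inscribed-circle polygon.

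Next I would compute the first variation of the $\mathcal W$-perimeter. Deforming a polygon within $\mathfrak P$ (angles fixed) through a one-parameter family, the derivative of $\sum \overline w_i |e_i|$ should, after using the angle-preservation constraints and the Schläfli-type / cosine-rule identities, take the form of a sum over edges of $\overline w_i$ times a quantity expressible through the motion of the supporting geodesic of $e_i$. The clean way: fix an interior point $p$ and use that $\cosh d(e_i,p)$ and the signed length along $e_i$ relate to the horocyclic/Fermi coordinates around $e_i$; a critical point of $\sum \overline w_i|e_i|$ is characterized by the existence of $p$ with $\frac1{\overline w_i}\sinh d(e_i,p)$ independent of $i$. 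For the unweighted case this reads $d(e_i,p)$ constant, i.e.\ $p$ is the incenter and the common value is the inradius, giving the inscribed circle tangent to all edges. I would phrase this using the standard fact that the gradient of edge length, as a functional on $\mathfrak P$, pairs with the tangent space in a way dual to the ``support function'' of the polygon at $p$; the Appendix~\ref{sec:appendix} results are invoked here to make the variational formula rigorous.

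Finally, uniqueness: I would argue that the $\mathcal W$-perimeter is a strictly convex function on $\mathfrak P(\vartheta_1,\ldots,\vartheta_n)$ in suitable coordinates — e.g.\ parametrizing by the position in $\mathbf H^2$ of the lifts of the supporting geodesics, or by the Fenchel–Nielsen-type length/twist parameters for the reflection group, using that edge length as a function of the geodesic configuration is strictly convex (a standard fact, cf.\ Kerckhoff's argument in \cite{KerckhoffAnnals} for the ordinary perimeter, which is the $w_i\equiv 1$ case). Strict convexity plus properness gives a unique minimum, which must be the unique critical point, hence the polygon with the inscribed-circle (resp.\ weighted inscribed-circle) property described above; conversely any polygon with that property is critical, so there is exactly one. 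The main obstacle I anticipate is establishing strict convexity of the $\mathcal W$-perimeter in the weighted case: Kerckhoff's convexity is for geodesic length functions with unit coefficients, and one must check the argument survives positive reweighting and the constraint of fixed angles — I expect this works because each individual edge-length function is already convex along earthquake/geodesic deformations and positive linear combinations preserve convexity, with strictness coming from at least one edge being strictly convex, but making the ``fixed ordered angles'' constraint compatible with the right deformation space is the delicate point and is presumably where the Appendix is needed.
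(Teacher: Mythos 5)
Your overall strategy (first variation to identify critical points, then convexity for uniqueness) is in the right spirit, but at the two decisive points the argument is missing rather than merely sketched, and at those points it also diverges from what the paper actually does. The heart of the second half of the statement is the claim that a polygon is critical for the $\mathcal W$-perimeter among polygons with the given angles if and only if it admits an interior point $p$ with $\frac{1}{w_i}\sinh d(e_i,p)$ independent of $i$. You assert this ``after using the angle-preservation constraints and Schl\"afli-type identities'' and by appeal to a support-function duality, but you never produce the identity, and it is not routine: the fixed-angle constraint couples all the supporting geodesics, and the appearance of $\sinh d(e_i,p)$ (rather than $d(e_i,p)$ or $\cosh d(e_i,p)$) is precisely the nontrivial content. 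The paper does not obtain this from a two-dimensional first variation at all: the tangency (resp.\ weighted) property of the minimizer comes from the three-dimensional regeneration, namely from the Killing field $\mathfrak f$ of Proposition~\ref{lemma:killingfield}, whose pairing $B(\mathfrak d_i,\mathfrak f)$ with the infinitesimal rotations about the edges is the same for every $i$ (Lemma~\ref{lemma:productofF} combined with Proposition~\ref{prop:killing}), and uniqueness of the polygon with an inscribed circle is handled by a separate continuity argument. If you want a purely two-dimensional proof, deriving that variational formula is the missing idea, not a detail to be filled in.

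The convexity step has a similar gap. ``Edge length is strictly convex, cf.\ Kerckhoff'' is not available off the shelf: Kerckhoff--Wolpert convexity concerns closed geodesics on smooth surfaces, whereas here (after doubling the polygon) one needs convexity of lengths of geodesic \emph{arcs between cone points} on a hyperbolic cone surface, along deformations preserving the cone angles. This is exactly what Appendix~\ref{sec:appendix} supplies: the Bonsante--Schlenker earthquake theorem for cone surfaces (Theorem~\ref{thm:earthquake}), Wolpert-type first and second variation formulas for such arcs (Lemmas~\ref{lemma:wolpertlamination} and \ref{lemma:wolpertscc}), with strict positivity using that cone angles are $<\pi$ so that cone points stay at definite distance from the lamination, Kerckhoff's ``every tangent vector is an earthquake vector'' (Lemma~\ref{lemma:kerckhofftg}), and a separate branched-covering/limit argument for the boundary case $\vartheta_i=\pi/2$ (cone angle $\pi$ in the double), which occupies most of the proof of Corollary~\ref{cor:geometry}. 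You flag this as ``the delicate point'' but do not resolve it; the harmless part (positive weights preserve convexity) was never the issue. Two smaller but genuine errors: $\mathfrak P(\vartheta_1,\ldots,\vartheta_n)$ has dimension $n-3$, since the closure condition is three equations (the dimension of the isometry group), not $n-2$; and properness is not simply ``some edge length tends to $\infty$ at the boundary'' --- a priori an edge can collapse at bounded perimeter, and excluding such degenerations uses the hypothesis $\vartheta_i\le\pi/2$ and requires an argument rather than a one-line claim.
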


To prove Proposition~\ref{prop:geometry}, in Appendix~\ref{sec:appendix} we adapt to this setting the remarkable work of Kerckhoff \cite{KerckhoffAnnals}, Thurston \cite{ThurstonEarthquake}
and Wolpert \cite{Wolpert}
on earthquakes and convexity of length functions, and a generalization of Thurston's earthquake theorem
to cone manifolds due to Bonsante and Schlenker \cite{BonsanteSchlenker}.
In addition, in the proof of Theorem~\ref{theorem:1} there is Killing vector 
field (corresponding to the deformation of the fiber) 
that has an axis perpendicular to the polygon and is equidistant to  all of its edges. In particular
the polygon that minimizes the perimeter has an inscribed circle tangent to all 
 of its edges. Uniqueness of the polygon in $\mathfrak P(\vartheta_1,\ldots,\vartheta_n)$
with such an inscribed circle
is easy to prove by a continuity method, however I was unable to 
find Proposition~\ref{prop:geometry} in the literature.


\medskip

The proof of Theorem~\ref{theorem:1} has two parts: first to construct a curve of representations of the smooth
part of $\mathcal O^3$ and second to prove that these representations 
are holonomy structures of the cone manifolds by constructing developing maps.

For the construction of the curve, we have to choose the structure that  minimizes 
the
perimeter. Using the symplectic structure of the variety of representations
of $\partial \mathcal N(\Sigma_{\mathcal O}) $, due to Goldman \cite{GoldmanAdvances},
the Hamiltonian vector field of the perimeter is essentially the direction to 
regenerate in the variety of representations of $\partial \mathcal N(\Sigma_{\mathcal O}) $. 
Beeing a 
critical point for the perimeter implies that this direction is induced from 
deformations of $\mathcal O\setminus \mathcal N(\Sigma_{\mathcal O})$.

The construction of the curve of representation is much easier if the base $P^2$
is a triangle, ie.\ the orbifold is small. In this case the Teichm\"uller
space is just a point, and a direct computation in cohomology  allows to construct 
the curve of representations. Notice also that, since the orbifold is small, the
analysis of Paiva-Barreto \cite{Alexandre} applies here, to prove that the limit of cone manifolds is 
the 2-dimensional orbifold.

Once we have the existence of the curve of representations, we 
 construct developing maps. For this we use 
the fibration: vertices of the base correspond to
rational  tangles, edges to $I$-fibered strips, and the interior points to
regular fibers. We
construct the developing map  first for the tangles, then for the strips that
connect them, and finally for the regular points. 
In particular the union of tangles and neighborhoods of the strips 
is a solid torus, and the underlying space of the orbifold is a generalized lens
space. Previous to this construction, we must analyze the infinitesimal deformation of
the fiber, and the corresponding Killing vector field, which happens to be perpendicular
to the developing map of the two dimensional polygon.

\smallskip

The proof of Theorem~\ref{theorem:2} follows exactly the same scheme as Theorem~\ref{theorem:1},
just by adding the weights, and by adapting some arguments from orbifolds to cone manifolds.
To simplify, we discuss first  Theorem~\ref{theorem:1} and  prove
Theorem~\ref{theorem:2} in 
Section~\ref{section:weights}.

\medskip

The paper is organized as follows. 
In Section~\ref{sec:varieties of representations} we state the existence of a one parameter
deformation of representations with some properties. 
We prove it in this section  when $P^2$ is a 
triangle, and  its proof for general $P^2$ is done in the following sections.
Section~\ref{section:relative} deals with relative character varieties.
Section~\ref{section:fenchelnielsen} is devoted 
the symplectic structure of the variety of characters of a surface, and to
 Fenchel-Nielsen local coordinates.
The curve of representations is build in Section~\ref{section:adding}
when all singular fibers are in $\Sigma_{\mathcal O}^{Vert}$,
and 
Section~\ref{section:curve}
in the general case.
In Section~\ref{section:fibration} we recall
the structure of the fibration of $\mathcal O^3$ and we also set notation.
Section~\ref{section:Killing} is devoted to the the study of the Killing
vector field associated to the infinitesimal deformation of the fiber.
Developing maps are  constructed in Section~\ref{section:developingmaps}.
In Section~\ref{section:weights}, we discuss  
Theorem~\ref{theorem:2}. 
Section~\ref{section:example} is devoted to an example.
Finally, Appendix~\ref{sec:appendix} explains earthquakes and Kerckhoff-Thurston-Wolpert
theory for cone manifolds, in particular Proposition~\ref{prop:geometry} is proved in this appendix.
 Appendix~\ref{sec:inf} is devoted to some results about
infinitesimal isometries, used mainly in Section~\ref{section:Killing}.


\section{Varieties of representations}
\label{sec:varieties of representations}

We start with the holonomy representation of the hyperbolic orbifold 
$$
\operatorname{hol}\!:\! \pi_1(P^2)\to PGL_2(\mathbf R)=\operatorname{Isom}(\mathbf H^2),
$$
where the elements preserve or reverse the orientation of $\mathbf H^2$ according to the sign of
the determinant. 
Notice that 
$$
	PGL_2(\mathbf R)=PSL_2(\mathbf R)\sqcup PSL_2(\mathbf i\mathbf R) < PSL_2(\mathbf C)
	=\operatorname{Isom}^+(\mathbf H^3).
$$
Let $$
M=\vert \mathcal O^3\vert\setminus \mathcal N(\Sigma_{\mathcal O^3})
$$ 
denote the smooth
part of the orbifold.
By \cite{culler,FicoMontesinos}, the induced representation on $M$ can be lift
to 
$$
\rho_0:\pi_1(M)\to SL_2(\mathbf C).
$$

The goal of this section and the next ones is to prove the following result.


\begin{Proposition}
\label{assumption:rho} There exists 
$\{\rho_t\}_{t \in (-\varepsilon,\varepsilon)}$  an analytic path  of
representations of $M$ in $SL_2(\mathbf C)$
such that $\rho_0$ is as above and for each $t\in (0,\varepsilon)$, $\rho_t$ of a vertical meridian is constant, and 
$\rho_t$ of a horizontal
meridian
is a rotation of
angle  
$$
\pi-t^r+ O(t^{r+1})
$$ 
for some $r\in\mathbf Z$, $r>0$,
independent of $i$.
\end{Proposition}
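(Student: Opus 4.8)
The plan is to reduce the existence of $\{\rho_t\}$ to a statement about the tangent space of the $SL_2(\mathbf C)$-character variety at $\rho_0$, and then to show that the required infinitesimal deformation is integrable. First I would set up the deformation theory: by Weil rigidity / the standard identification, the tangent space to the representation variety at $\rho_0$ is $Z^1(\pi_1(M),\mathfrak{sl}_2(\mathbf C)_{\mathrm{Ad}\rho_0})$ and the tangent space to the character variety is essentially $H^1(\pi_1(M),\mathfrak{sl}_2(\mathbf C)_{\mathrm{Ad}\rho_0})$ (away from the reducible locus, or after a slice argument). Because $\rho_0$ factors through $\pi_1(P^2)\to PGL_2(\mathbf R)\subset PSL_2(\mathbf C)$, the coefficient module decomposes, under the Cartan-type involution fixing $\mathbf H^2\subset\mathbf H^3$, as $\mathfrak{sl}_2(\mathbf R)\oplus \mathbf i\,\mathfrak{sl}_2(\mathbf R)$, i.e. the tangent directions split into those that deform inside the Teichm\"uller space of $P^2$ and those that ``bend'' out of the hyperbolic plane. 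The direction we want is of the second, purely imaginary, type: it must open up the horizontal meridians (which are rotations of angle $\pi$, i.e.\ order-two elliptics, hence on the boundary between $PSL_2(\mathbf R)$ and the complex regeneration) while keeping vertical meridians fixed.

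Next I would bring in the symplectic geometry of the boundary. Following Goldman, the character variety of $\partial \mathcal N(\Sigma_{\mathcal O^3})$ — a union of tori and turnover-type pieces — carries a symplectic form, and the restriction map from the character variety of $M$ lands on a Lagrangian. The perimeter function of $P^2$ (as a function on Teichm\"uller space, pulled back appropriately) has a Hamiltonian flow on this boundary character variety, and the key computation, essentially due to Hodgson, is that the Hamiltonian vector field of the perimeter at the minimizing structure $P^2_{\min}$ is precisely the ``regeneration direction'' — the vector that assigns to each horizontal meridian the infinitesimal rotation decreasing its angle and to each vertical meridian zero. The hypothesis that $P^2_{\min}$ minimizes the perimeter (Kerckhoff) is exactly what forces the differential of the perimeter to vanish along the Teichm\"uller directions, so that this Hamiltonian vector field is tangent to the image of the Lagrangian $H^1(M)\to H^1(\partial\mathcal N)$, i.e.\ comes from an honest infinitesimal deformation of $M$. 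This produces the needed class $u\in H^1(\pi_1(M),\mathfrak{sl}_2(\mathbf C))$, and I would then read off that it is a nonzero first-order deformation turning the horizontal rotation angle from $\pi$ to $\pi - (\text{something})t + \cdots$.

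From the infinitesimal deformation I would pass to an actual analytic path. When $P^2$ is a triangle (the ``small'' case) the Teichm\"uller space is a point and, as the excerpt says, a direct cohomology computation gives the curve — here one just has to check that the obstruction in $H^2$ vanishes or that $\rho_0$ is a smooth point of the representation variety of the right dimension, then apply the implicit function / Artin-type analyticity to get a genuine analytic arc $\rho_t$ with the prescribed meridian behaviour; the exponent $r$ and the $O(t^{r+1})$ expansion come from the order of vanishing of the angle function along the arc (generically $r=1$, but one allows higher $r$ if the first-order term degenerates, re-parametrising $t$). For general $P^2$ the same conclusion is obtained by the later sections (relative character varieties, Fenchel–Nielsen coordinates, and the symplectic argument sketched above), so here I would only indicate that structure. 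The constancy of the vertical meridians along the whole path — not just to first order — should be arranged by working throughout in the subvariety where those meridian traces are pinned to their orbifold values, which is cut out by finitely many analytic equations and still contains $\rho_0$ smoothly.

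The main obstacle is the Lagrangian/transversality step: showing that the perimeter's Hamiltonian vector field at the Kerckhoff minimizer actually lies in the image of the restriction map from $H^1(M)$, equivalently that being a critical point of the perimeter kills precisely the components of the deformation that would fail to extend over $M$. This is where the minimality hypothesis is genuinely used and where the symplectic pairing between ``bending'' the horizontal meridians and ``shearing'' along the Teichm\"uller directions has to be computed and matched with Wolpert's derivative-of-length formulas; everything else (splitting of coefficients, analyticity of the arc, the asymptotic expansion of the angle) is comparatively formal once this is in hand.
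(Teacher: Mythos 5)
Your infinitesimal picture is the right one and matches the paper: the criticality of the perimeter at the Kerckhoff minimizer, fed through Goldman's symplectic pairing and the duality between $d\mu_i$ and $d\lambda_i$ (the paper's Theorem~\ref{thm:duality}), does produce a class $v_0\in H^1(M;Ad\rho_0)$ with $d\mu_i(v_0)=1$ on horizontal meridians and $0$ on vertical ones, and your treatment of the triangle case (vanishing of obstructions via $H^*(\mathcal O^3;Ad\rho_0)=0$, then Artin) is essentially the paper's proof of that case. The genuine gap is that you declare the passage from this cohomology class to an actual analytic arc to be ``comparatively formal'' and propose to get it by pinning the vertical traces and applying an implicit-function/smoothness argument. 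For general $P^2$ this is precisely where the triangle argument breaks down: the vanishing $H^*(\mathcal O^3;Ad\rho_0)=0$ fails for large $P^2$, the character $\chi_0$ is (as Hodgson already observed) a singular point of the relevant representation variety, and the map recording the meridian angles collapses the whole $(n-3)$-dimensional stratum $X(H,\Gamma)\cong X_{PSL_2(\mathbf C)}(P^2)$ to a single point, so no implicit function theorem applied at $\chi_0$ produces a curve along which all $\mu_i$ move equally. Having a Zariski tangent vector at a point of a (possibly non-reduced, singular) character variety does not integrate it.

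What the paper actually needs, and what is missing from your proposal, is the quantitative use of minimality: not just $d(\sum\lambda_i)=0$ but the nondegeneracy of the Hessian of the perimeter (Lemma~\ref{lemma:isolated}, resting on Wolpert's second-derivative formulas and, in the cone-manifold setting, on the Bonsante--Schlenker earthquake theorem of Appendix~\ref{sec:appendix}). This nondegeneracy is what makes the lifted angle map $\check\mu$ a biholomorphism after blowing up along $X(H,\Gamma)$ (Proposition~\ref{Prop:blowupbihol}), yielding the algebraic curve of Corollary~\ref{cor:curve} — and even that only under Assumption~\ref{Assumption:Ifibres}. The general case then requires the whole apparatus of Section~\ref{section:curve}: replacing $\mathcal O^3$ by the auxiliary orbifolds $\mathcal O^3_N$, taking Zariski closures $S_{\mathbf R},S_{\mathbf C}$ of the resulting curves, the power-series cocycle extraction of Lemma~\ref{lemma:morecritical}, and the connectedness argument via Sullivan's even-Euler-characteristic theorem to show $\chi_0\in S_{\mathbf R}$, before one can cut out the curve $\mathcal E$ and check (Lemma~\ref{claim:factorstom}) that its representations kill the vertical meridians — which is proved, not arranged by fiat as in your last paragraph, where the claim that the trace-pinned locus contains $\rho_0$ ``smoothly'' is exactly what cannot be assumed. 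So the step you label the main obstacle (the Lagrangian/Hamiltonian computation) is in fact the easier half, and the integration step you dismiss is where the real work, and the real use of minimality, lies.
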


It is convenient to fix the orientation of the singular edges and their meridians to distinguish a 
rotation of angle $\pi-t^{r}$ from $\pi+t^{r}$, $t>0$.

We prove that this  proposition  holds when $P^2$ is a triangle at the end of this section,
but the general case is proved in Sections~\ref{section:adding}  and \ref{section:curve}. 
Later, we will also show that $r=1$.
Before  that,
we state some basic properties of the variety of representations
and characters.

\medskip

For an orbifold or a manifold $Z$, the \emph{variety of representations}
of $\pi_1(Z)$ in $SL_2(\mathbf C)$ is 
$$
R( Z)=\hom(\pi_1(Z),SL_2(\mathbf C)). 
$$
This is a complex affine set of $\mathbf C^N$ defined over $\mathbf Q$.
The embedding in $\mathbf C^N$ is given by trace functions of $N$ elements of $\pi_1( Z)$
\cite{CullerShalen,FicoMontesinos}.

For a representation $\rho\in R(Z)$, its character is the map
$$
\begin{array}{rcl}
 \chi_{\rho}:\pi_1(Z)&\to&\mathbf C\\
\gamma & \mapsto & \operatorname{Trace}(\rho(\gamma)).
\end{array}
$$
The \emph{variety of characters} $X(Z)$ is the set of all characters of $R(Z)$,
and it is also a complex affine set over $\mathbf Q$.

A representation $\rho\in R(Z)$ is called \emph{irreducible} if no proper subspace of
$\mathbf C^2$ is $\rho(\pi_1(Z))$-invariant. The representations we are considering are always irreducible.
The set of irreducible representations is Zariski open, and so is the set of irreducible characters
\cite{CullerShalen}. We denote them by
$R^{irr}(Z)$ and $X^{irr}(Z)$ respectively.

\begin{Lemma}[\cite{CullerShalen}]
\label{lemma:varietycharacters} 
The projection
$$
\begin{array}{rcl}
 R(Z) & \to & X(Z) \\
 \rho & \mapsto & \chi_{\rho}
\end{array}
$$
is surjective.
Moreover  $R^{irr}(Z)\to X^{irr}(Z)$ is a local fibration with fiber the orbit by conjugation.
\end{Lemma}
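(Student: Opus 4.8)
The surjectivity of $R(Z)\to X(Z)$ is immediate from the definition of $X(Z)$ as the set of characters of representations of $\pi_1(Z)$; the content is the assertion about $R^{irr}(Z)\to X^{irr}(Z)$, and I would organize its proof into three steps: the character of an irreducible representation determines it up to conjugacy, the conjugation action on $R^{irr}(Z)$ is free modulo the center, and local slices exist.

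First I would show that if $\rho_1,\rho_2\in R^{irr}(Z)$ have the same character then they are conjugate in $SL_2(\mathbf C)$. Using irreducibility of $\rho_1$, the matrices $\rho_1(\gamma)$, $\gamma\in\pi_1(Z)$, span $M_2(\mathbf C)$; choosing $\gamma_1,\dots,\gamma_4$ with $\rho_1(\gamma_1),\dots,\rho_1(\gamma_4)$ a basis and using that the trace form $(A,B)\mapsto\operatorname{Trace}(AB)$ on $M_2(\mathbf C)$ is nondegenerate together with $\operatorname{Trace}(\rho_j(\gamma)\rho_j(\delta))=\chi_{\rho_j}(\gamma\delta)$, one constructs an explicit matrix conjugating $\rho_1$ to $\rho_2$, which after rescaling lies in $SL_2(\mathbf C)$. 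This is the classical argument of \cite{CullerShalen}, which I would simply carry over.

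Next I would record, via Schur's lemma, that the centralizer in $SL_2(\mathbf C)$ of an irreducible representation is the center $\{\pm I\}$, so that $PSL_2(\mathbf C)$ acts freely on $R^{irr}(Z)$. Then each orbit is a smooth, locally closed, $3$-dimensional subset, and by the previous step the orbits are exactly the nonempty fibers of $R^{irr}(Z)\to X^{irr}(Z)$.

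Finally, for local triviality I would produce local slices. Fixing $\rho_0\in R^{irr}(Z)$, freeness makes the orbit map $g\mapsto g\rho_0 g^{-1}$ an injective immersion of $PSL_2(\mathbf C)$ into $R(Z)$; I would pick a linear complement $V$ in $\mathbf C^N$ to the tangent space of the orbit at $\rho_0$, set $S=(\rho_0+V)\cap R^{irr}(Z)$, and check by a transversality and dimension argument that the multiplication map $PSL_2(\mathbf C)\times S\to R^{irr}(Z)$ is a $PSL_2(\mathbf C)$-equivariant local analytic isomorphism onto a neighborhood $U$ of $\rho_0$. Since $S\to X^{irr}(Z)$ is injective by the first step, it identifies $S$ with a neighborhood of $\chi_{\rho_0}$ over which $R^{irr}(Z)\cong PSL_2(\mathbf C)\times S$, which is the asserted local fibration with fiber the conjugation orbit; equivalently one may invoke Luna's slice theorem, the irreducible representations being exactly the stable points of the $SL_2(\mathbf C)$-action, so that $X^{irr}(Z)$ is a geometric quotient. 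The main obstacle is this last step: ensuring the transverse slice genuinely descends to a local chart of $X^{irr}(Z)$—that is, that the quotient is geometric near $\chi_{\rho_0}$—the crucial inputs being Step 1 (injectivity of the slice in $X^{irr}$) and Schur's lemma (freeness, hence the immersion property).
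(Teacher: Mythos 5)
The paper offers no proof of this lemma at all: it is quoted from Culler--Shalen, so there is nothing internal to compare against, and your sketch is essentially the argument of the cited source together with the standard slice argument. Your three steps are correct: surjectivity is definitional; an irreducible $SL_2(\mathbf C)$-representation is determined up to conjugacy by its character (the trace-form/Burnside argument, or Skolem--Noether); and Schur's lemma gives that $PSL_2(\mathbf C)$ acts freely on $R^{irr}(Z)$. The only point needing care is the one you flag: knowing that two points of the slice $S$ with equal characters are \emph{conjugate} (Step 1) does not by itself force them to be \emph{equal}, since a single orbit of the noncompact group $PSL_2(\mathbf C)$ could a priori return and meet $S$ twice; to shrink $S$ so that it meets each orbit in at most one point one needs properness of the conjugation action on irreducible representations, or equivalently closedness of the orbits plus Luna's slice theorem, which you do invoke. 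With that input your argument is complete and is the standard route to the statement.
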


In fact, the action of $SL_2(\mathbf C)$ on $R(Z)$ by conjugation is algebraic, and $X(Z)$ is the Mumford quotient
in algebraic invariant theory.

Since orbifolds have torsion, sometimes we  need to work with representations in $PSL_2(\mathbf C)$,
because they may not lift to $SL_2(\mathbf C)$. This does not make any difference
for the local structure of the variety of representations and characters at the representations 
we are interested in, cf.\ \cite{HP2}. The varieties of $PSL_2(\mathbf C)$ representations and characters are denoted by
$$
R_{PSL_2(\mathbf C)}(Z) \quad \textrm{ and }\quad X_{PSL_2(\mathbf C)}(Z).
$$

By Weil's construction, the Zariski tangent space to $X(Z)$ at $\chi_{\rho}$ is naturally identified with the 
first cohomology group of $\pi_1( P^2)$ with coefficients in the Lie algebra $\mathfrak{sl}_2(\mathbf C)$
twisted by the adjoint representation $Ad\rho$ \cite{LubotzkiMagid}:

\begin{Lemma}[\cite{Weil, LubotzkiMagid,HP2}]
\label{lemma:weil}
If $\rho \in R(Z)$ is irreducible, then  
$$
T^{Zar}_{\chi_{\rho}}X(Z)\cong H^1(\pi_1(Z),Ad\rho),
$$
where $T^{Zar}$ means the Zariski tangent space as a scheme
(not necessarily reduced).

If $\rho \in R_{PSL_2(\mathbf C)}(Z)$ does not preserve a subset of  $\partial_{\infty} \mathbf H^3$ of cardinality $\leq 2$, then
$$
T^{Zar}_{\chi_{\rho}}X_{PSL_2(\mathbf C)}(Z)\cong H^1(\pi_1(Z),Ad\rho).
$$
\end{Lemma}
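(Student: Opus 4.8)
The plan is to establish the standard identification between the Zariski tangent space of the character variety and the group cohomology $H^1(\pi_1(Z), Ad\rho)$ via cocycles. First I would recall that a tangent vector to $R(Z)$ at $\rho$, viewed scheme-theoretically, is a homomorphism $\rho_\varepsilon \colon \pi_1(Z) \to SL_2(\mathbf C[\varepsilon]/(\varepsilon^2))$ reducing to $\rho$ mod $\varepsilon$; writing $\rho_\varepsilon(\gamma) = (1 + \varepsilon\, u(\gamma))\rho(\gamma)$ with $u(\gamma) \in \mathfrak{sl}_2(\mathbf C)$, the homomorphism condition $\rho_\varepsilon(\gamma\delta) = \rho_\varepsilon(\gamma)\rho_\varepsilon(\delta)$ translates, to first order in $\varepsilon$, into the cocycle identity
$$
u(\gamma\delta) = u(\gamma) + Ad\rho(\gamma)\, u(\delta).
$$
Hence $T^{Zar}_\rho R(Z) \cong Z^1(\pi_1(Z), Ad\rho)$, the space of $1$-cocycles, as schemes (no reducedness is claimed here — the point is precisely that this may be a non-reduced identification). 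This is purely formal and I would cite \cite{Weil, LubotzkiMagid} for it.

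Next I would identify the coboundaries $B^1(\pi_1(Z), Ad\rho)$ with the tangent space to the conjugation orbit of $\rho$: conjugating $\rho$ by $1 + \varepsilon\, a$ with $a \in \mathfrak{sl}_2(\mathbf C)$ changes $\rho(\gamma)$ to first order by the cocycle $\gamma \mapsto a - Ad\rho(\gamma)\, a$, which is the generic coboundary. Since $\rho$ is irreducible (resp., in the $PSL_2$ case, does not preserve a subset of $\partial_\infty\mathbf H^3$ of cardinality $\le 2$), its stabilizer under conjugation is $0$-dimensional — the centralizer of an irreducible representation is just the center — so the orbit map $SL_2(\mathbf C) \to R(Z)$ is an immersion at the identity and its image is a smooth subscheme of dimension $3$. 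By Lemma~\ref{lemma:varietycharacters}, near $\chi_\rho$ the map $R^{irr}(Z) \to X^{irr}(Z)$ is a fibration with fibers exactly these orbits, so taking the quotient by the tangent space to the fiber gives
$$
T^{Zar}_{\chi_\rho} X(Z) \cong Z^1(\pi_1(Z), Ad\rho) / B^1(\pi_1(Z), Ad\rho) = H^1(\pi_1(Z), Ad\rho).
$$
The $PSL_2(\mathbf C)$ case is identical once one knows that the representation is a smooth point of the fibration $R_{PSL_2(\mathbf C)} \to X_{PSL_2(\mathbf C)}$ with $0$-dimensional stabilizer, which is exactly the hypothesis on not preserving a small subset of $\partial_\infty\mathbf H^3$; here I would refer to \cite{HP2} for the comparison of the $SL_2$ and $PSL_2$ local pictures.

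The main subtlety — and the reason the statement is phrased with $T^{Zar}$ ``as a scheme (not necessarily reduced)'' — is that one must be careful to work scheme-theoretically throughout and not pass to reduced structures prematurely: the identification $T^{Zar}_\rho R(Z) \cong Z^1$ is an isomorphism of the Zariski tangent spaces defined by the actual defining ideal of $R(Z)$ in $\mathbf C^N$, and the quotient by the orbit direction must be taken compatibly. The cleanest way to handle this is to invoke the slice theorem for the (algebraic) $SL_2(\mathbf C)$-action on $R(Z)$ at an irreducible representation, where the orbit is closed and the action is proper in a neighborhood, so that locally $X(Z)$ is literally a slice transverse to the orbit; then the tangent-space computation above is exactly the tangent space of that slice. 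For this last point I would cite \cite{LubotzkiMagid} (and \cite{HP2} in the $PSL_2$ setting), since a self-contained treatment of the scheme structure is somewhat technical and orthogonal to the rest of the paper.
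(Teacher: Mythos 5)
The paper gives no proof of this lemma: it is quoted from \cite{Weil,LubotzkiMagid,HP2}, and your sketch is precisely the standard argument of those references --- $\mathbf C[\varepsilon]/(\varepsilon^2)$-points of $R(Z)$ are cocycles, coboundaries are the orbit directions, and a slice (or the local fibration of Lemma~\ref{lemma:varietycharacters}) identifies $T^{Zar}_{\chi_\rho}X(Z)$ with $Z^1/B^1=H^1(\pi_1(Z),Ad\rho)$. So the route is the intended one, and delegating the scheme-theoretic slice statement to \cite{LubotzkiMagid} and \cite{HP2} is consistent with how the paper itself treats the lemma.

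One justification, however, is stated incorrectly and is exactly the delicate point in the $PSL_2(\mathbf C)$ case. You say the hypothesis that $\rho$ preserves no subset of $\partial_\infty\mathbf H^3$ of cardinality $\leq 2$ is ``exactly'' the condition that the stabilizer is $0$-dimensional. In $PSL_2(\mathbf C)$ every irreducible representation already has finite, hence $0$-dimensional, centralizer; what the stronger hypothesis excludes are the irreducible representations preserving an unoriented geodesic, whose centralizer is finite but nontrivial (it contains the $\pi$-rotation about that geodesic). The identification $T^{Zar}_{\chi_\rho}X\cong Z^1/B^1$ needs the conjugation action to be locally \emph{free}, i.e.\ trivial stabilizer, so that $R^{irr}\to X^{irr}$ is locally a principal bundle; with a nontrivial finite stabilizer the slice must still be divided by that finite group and the conclusion of the lemma can fail at such dihedral-type characters --- this is precisely the case analyzed and excluded in \cite{HP2}. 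In the $SL_2(\mathbf C)$ statement there is no issue, since by Schur's lemma the centralizer of an irreducible representation is $\{\pm\operatorname{Id}\}$, i.e.\ the kernel of the conjugation action, so the action of $PSL_2(\mathbf C)$ is free there. Your appeal to \cite{HP2} covers the gap, but the sentence as written would admit the geodesic-preserving characters, for which the stated isomorphism is not true in general.
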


Notice that the hypothesis that $\rho$ does not preserve a  subset of  $\partial_{\infty} \mathbf H^3$ of cardinality $\leq 2$,
is equivalent to say 
that $\rho$ is irreducible and that it does not preserve any unoriented geodesic.

We may need to work with  cohomology of orbifolds instead of manifolds: a possible way to define it
is as the equivariant 
cohomology of a manifold cover (all orbifolds here are very good: they have a finite covering which is a manifold).
Otherwise, it can be equivalently defined  as simplicial cohomology
of a triangulation adapted to the stratification of the singularity.

\begin{Lemma}
\label{lemma:hohomology} 
For a very good orbifold $Z$ there is a natural map
$$
H^i(\pi_1(Z); Ad\rho)\to H^i(Z; Ad\rho)
$$
which is  an isomorphism for $i\leq 1$ and injective for $i=2$.
\end{Lemma}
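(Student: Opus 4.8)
The plan is to compare two cochain complexes computing the two cohomologies and show the comparison map is a quasi-isomorphism through degree $1$. First I would set up the orbifold cohomology $H^i(Z;Ad\rho)$ as the equivariant cohomology $H^i_{\pi_1^{orb}(Z)}(\tilde Z; Ad\rho)$, where $\tilde Z$ is the universal orbifold cover (a simply connected manifold, since $Z$ is very good) and $\pi_1^{orb}(Z)=\pi_1(Z)$ acts on it; concretely one uses a $\pi_1(Z)$-CW structure on $\tilde Z$ adapted to the stratification of $\Sigma_{\mathcal O^3}$. The group cohomology $H^i(\pi_1(Z);Ad\rho)$ is computed by the bar resolution, equivalently by the cellular cochains of $B\pi_1(Z)$, and the classifying map $\tilde Z/\pi_1(Z)\to B\pi_1(Z)$ induces the natural map in the statement. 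So the task reduces to controlling the kernel and cokernel of $H^i(B\pi_1(Z))\to H^i(Z)$ in low degrees.

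The clean way to do this is via the Cartan--Leray (equivariant) spectral sequence
$$
E_2^{p,q}=H^p\bigl(\pi_1(Z); H^q(\tilde Z; Ad\rho)\bigr)\Longrightarrow H^{p+q}(Z;Ad\rho).
$$
Since $\tilde Z$ is simply connected we have $H^0(\tilde Z;Ad\rho)=\mathfrak{sl}_2(\mathbf C)$ (with the given $Ad\rho$ action on the constants) and $H^1(\tilde Z;Ad\rho)=0$, the latter because $\tilde Z$ is a simply connected space with, up to homotopy, no $1$-cells carrying nontrivial first cohomology once the action is untwisted on the simply connected cover — more precisely $H^1(\tilde Z;\mathbf C)=0$ and $Ad\rho$ is a trivial local system upstairs. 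Feeding this into the five-term exact sequence of the spectral sequence gives
$$
0\to H^1(\pi_1(Z);\mathfrak{sl}_2(\mathbf C))\to H^1(Z;Ad\rho)\to H^0(\pi_1(Z);H^1(\tilde Z;Ad\rho))\to H^2(\pi_1(Z);\mathfrak{sl}_2(\mathbf C))\to H^2(Z;Ad\rho),
$$
and the vanishing of $H^1(\tilde Z;Ad\rho)$ collapses the middle term, yielding the isomorphism in degrees $\leq 1$ and injectivity in degree $2$. The only subtlety is identifying $H^*(\pi_1(Z);\mathfrak{sl}_2(\mathbf C))$ appearing here with $H^*(\pi_1(Z);Ad\rho)$ as in the statement: this is immediate because the coefficient module for the $E_2$-term in row $q=0$ is exactly $H^0(\tilde Z;Ad\rho)=\mathfrak{sl}_2(\mathbf C)$ with the $\pi_1(Z)$-action induced by $Ad\rho$.

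The main obstacle I anticipate is the careful verification that $H^1(\tilde Z;Ad\rho)=0$, i.e.\ that $\tilde Z$ really behaves like a simply connected space for the purposes of twisted cohomology with these coefficients. For a genuine orbifold universal cover this requires knowing that $\tilde Z$ is, say, aspherical-free in degree $1$ — that the orbifold fundamental group captures all of $\pi_1$ in the naive sense and there are no extra $1$-cocycles coming from the singular strata. One handles this by choosing the CW structure so that the preimages of the singular strata are subcomplexes on which $\pi_1(Z)$ acts with the expected finite stabilizers, and using that over $\mathbf C$ the cohomology of a finite group vanishes in positive degrees; this localizes the computation to the underlying manifold $|\tilde Z|$, which is simply connected. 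Everything else — exactness of the five-term sequence, naturality of the comparison map with the one induced by the classifying map — is formal. Alternatively, and perhaps more in the spirit of the paper, one can argue directly with the adapted triangulation: $H^*(\pi_1(Z);Ad\rho)$ and $H^*(Z;Ad\rho)$ have the same $0$- and $1$-cochains once one quotients by stabilizers, and a Mayer--Vietoris / spectral-sequence bookkeeping over the strata gives injectivity in degree $2$; I would present the spectral sequence argument as the cleaner of the two.
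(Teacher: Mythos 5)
Your proof is correct, but it takes a genuinely different route from the paper's. The paper argues by cell attachment: for a manifold $Z$ one obtains a $K(\pi,1)$ by attaching cells of dimension $\geq 3$, so the relative cohomology of the pair vanishes in degrees $\leq 2$ and the long exact sequence gives the isomorphism for $i\leq 1$ and injectivity for $i=2$; for a very good orbifold the same construction is carried out equivariantly. You instead pass to the universal cover $\tilde Z$ and run the Cartan--Leray spectral sequence with its five-term exact sequence, the only nontrivial input being $E_2^{0,1}=H^0\bigl(\pi_1(Z);H^1(\tilde Z;Ad\rho)\bigr)=0$. Note that this vanishing is immediate and the paragraph you devote to it is largely unnecessary: since $Z$ is very good, $\tilde Z$ is an honest simply connected manifold, the pulled-back local system on it is trivial, and $H^1$ of a simply connected space with constant field coefficients vanishes --- there are no singular strata upstairs to worry about. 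The place where a little care (and the very goodness hypothesis) genuinely enters your argument is elsewhere: in identifying the paper's definition of $H^*(Z;Ad\rho)$, namely equivariant cohomology of a finite manifold cover $Y$ with deck group $G$, with the $\pi_1(Z)$-equivariant cohomology of $\tilde Z$ to which Cartan--Leray converges; this follows by first quotienting $\tilde Z\times E\pi_1(Z)$ by the freely acting subgroup $\pi_1(Y)$ and using homotopy invariance, and it plays the same role for you that equivariance of the cell attachment plays in the paper. As for what each approach buys: the paper's is shorter and reduces directly to the cited manifold case, while yours makes the comparison map explicit as an edge homomorphism and isolates exactly which group must vanish, which would also adapt without change to any coefficient module, not just $\mathfrak{sl}_2(\mathbf C)$.
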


\begin{proof}
When $Z$ is a manifold, there is always a natural map from $Z$ to a $K(\pi,1)$ space, that consist in attaching cells to $Z$ 
to kill the higher homotopy groups. Since the attached cells are of dimension $\geq 3$, the lemma follows for manifolds
(cf. \cite[Lemma~3.1]{HPAGT}).
 Since the coefficients are 
$\mathfrak{sl}_2(\mathbf C)$, by working equivariantly it also holds true for very good orbifolds.
\end{proof}

See \cite{CullerShalen,FicoMontesinos,GoldmanAdvances,HP2, LubotzkiMagid}
for more results about the varieties of representations and characters.


\begin{proof}[Proof of Proposition~\ref{assumption:rho} when $P^2$ is a triangle]
Let $\rho_0\!:\!\pi_1(P^2)\to PSL_2(\mathbf C)$ denote the holonomy representation. (It also denotes the lift to
$SL_2(\mathbf C)$ of the induced representation on $M=\mathcal O^3\setminus\Sigma_{\mathcal O^3}$.)
As $P^2$ is an orbifold, we have to work with coefficients in $PSL_2(\mathbf C)$ instead of
$SL_2(\mathbf C)$.
 Triangular orbifolds 
are rigid, and therefore the tangent space to the variety of $PSL_2(\mathbf C)$-characters of $\pi_1 (P^2)$ is trivial at 
$\rho_0$:
$$
H^1(\pi_1(P^2); Ad\rho_0)=
H^1(P^2; Ad\rho_0)=0.
$$

 The variety of $PSL_2(\mathbf C)$-characters for 
$\mathcal O^3$ is locally isomorphic to the one of $P^2$, because any irreducible 
representation of $\mathcal O^3$ must map the fiber to the center, hence to the 
identity matrix, cf.\ Lemma~\ref{Lemma:basefibr}. It follows that
$$
H^1(\mathcal O^3; Ad\rho_0)=0.
$$
Also $H^0(\mathcal O^3; Ad\rho_0)=0$ because the representation is irreducible, thus
by duality:
$$
H^*(\mathcal O^3; Ad\rho_0)=0.
$$
This vanishing does not hold true if $P^2$ is a large orbifold.

 By  the Mayer-Vietoris exact sequence of the pair $(M,\mathcal N(\Sigma_{\mathcal O^3}))$, 
we get the isomorphism: 
$$
0\to H^1(M; Ad\rho_0)\oplus H^1( \mathcal N(\Sigma_{\mathcal O^3}); Ad\rho_0)\to H^1(\partial M; Ad\rho_0)\to 0.
$$
Its dual in homology is
\begin{equation}
\label{eqn:MVhom}
0\to H_1(\partial M; Ad\rho_0)\to  H_1(M; Ad\rho_0)\oplus H_1( \mathcal N(\Sigma_{\mathcal O^3}); Ad\rho_0)\to 0.
 \end{equation}
In particular, $\dim H_1(M; Ad\rho_0)=\frac12\dim H_1(\partial M; Ad\rho_0)  $.

Let $\gamma_1,\ldots , \gamma_k\in\pi_1(M)$ denote the meridians of $\Sigma_{\mathcal O^3}$, if 
 $\Sigma_{\mathcal O^3}$ has $ k$ components and edges.
Let  $ \mu_1,\ldots,\mu_k$ denote the complex lengths the meridians   $\gamma_1,\ldots , \gamma_k$
(ie.\ the eigenvalues of a representation evaluated at $\gamma_j$ are  $\pm e^{\pm \mu_j/2}$).

Let $ \lambda_1,\ldots,\lambda_k$ be the complex length of the 
twist parameters of this pants decomposition,
cf.\ Section~\ref{section:fenchelnielsen}. By 
\cite{HPAGT} and Proposition~\ref{prop:FenchelNielsen}, 
$$
\{d{\mu_1},d{\lambda_1},\ldots d{\mu_k},d{\lambda_k}\}
$$
is a basis for the cotangent space of the product of character varieties of the boundary components:
$$
X(\partial M)= X(\partial_1 M)\times\cdots\times  X(\partial_r M),
$$
which is isomorphic to
$$
H_1(\partial M; Ad\rho_0)\cong H_1(\partial_1 M; Ad\rho_0)\oplus\cdots\oplus H_1(\partial_r M; Ad\rho_0).
$$
On the other hand, $
d{\mu_1},\ldots, d{\mu_k}
$
are mapped to zero in 
$H_1( \mathcal N(\Sigma_{\mathcal O^3}); Ad\rho_0)$.
Thus it follows from Isomorphism (\ref{eqn:MVhom}) above that 
$\{d{\mu_1},\ldots d{\mu_k}\}$ is a basis for
the Zariski cotangent space $H_1(M; Ad\rho_0)$.

Now we want to prove that all infinitesimal deformations of $\rho_0\vert_{\pi_1M}$ are integrable: namely that all elements
of the Zariski tangent space $H^1(M;Ad\rho_0)$ are actually tangent vectors to paths. There is
an infinite sequence of obstructions to integrability living 
in the second cohomology group \cite{GoldmanAdvances}, starting with the cup product
and following with Massey products. These obstructions are natural and 
they vanish 
for $\partial M$.
In addition, we also have the following isomorphism from Mayer-Vietoris:
$$
0\to H^2(M; Ad\rho_0) \to H^2(\partial M; Ad\rho_0)\to 0.
$$
Hence the infinite sequence of obstructions to integrability vanishes. 
This only  implies that the infinitesimal deformations are   formally integrable,
but a 
 theorem of Artin implies that 
they are actually integrable \cite{Artin}. See \cite{HPS} for details. Thus
$(\mu_1,\ldots,\mu_k)$ define local coordinates for the variety of
characters of $M$. To prove Proposition~\ref{assumption:rho}, it suffices to 
take $\mu_j= \mathbf i (\pi-t)$ when $\gamma_j$ is horizontal,
and $\mu_j=ctnt$ when $\gamma_j$ is vertical.
\end{proof}

\section{Relative character variety}
\label{section:relative}

Let $Z$ be a compact aspherical 3-manifold with boundary, for instance 
 the exterior of the singular locus $M=\mathcal O\setminus \mathcal N(\Sigma_{\mathcal O})$.

One way to work with manifolds instead of orbifolds is to use relative character varieties of manifolds. This
is convenient for working also with cone manifolds.

\begin{Definition} Let $ \Gamma=\{\gamma_1,\ldots,\gamma_k\}\subset \pi_1(Z)$ be a finite subset. The \emph{relative character variety} 
with respect  to the values $a_1,\ldots,a_k\in\mathbf C	\setminus\{\pm 2\}$ is
$$
X(Z,\Gamma)= \{ \chi\in X(Z)\mid \chi(\gamma_i)=a_i\textrm{  for } \gamma_i\in \Gamma\}.
$$
\end{Definition}

The r\^ole of the $a_1,\ldots,a_k\in\mathbf C	\setminus\{\pm 2\}$ is not important, and they are not included in the notation. Usually, these
values are clear from the context.

\begin{Lemma}
\label{lemma:tgrelative}
Let $\chi=\chi_\rho\in X(Z)$ be an irreducible character such that $\chi(\gamma)\neq \pm 2$ for $\gamma\in \Gamma$.
\begin{enumerate}
 \item The Zariski tangent space to $X(Z, \Gamma)$ is:
\begin{multline*}
  T^{Zar}_{\chi}X(Z, \Gamma) \cong   \ker (H^1(Z;Ad\rho)\to \oplus_{\gamma\in\Gamma} H^1(\gamma,Ad\rho)) \\
				\cong  \operatorname{Im} (H^1(Z,\Gamma;Ad\rho)\to H^1(Z;Ad\rho)).
 \end{multline*}
 \item The Zariski cotangent space to $X(Z, \Gamma)$ is:
\begin{multline*}
  (T^{Zar}_{\chi})^*X(Z, \Gamma) \cong   H_1(Z;Ad\rho)/ \operatorname{Im}(   \oplus_{\gamma\in\Gamma} H_1(\gamma,Ad\rho)) \to H_1(Z;Ad\rho) ) \\
					\cong  \operatorname{Im}( H_1(Z;Ad\rho)\to H_1(Z,\Gamma;Ad\rho)).
 \end{multline*}
\end{enumerate}
 \end{Lemma}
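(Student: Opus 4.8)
The plan is to identify the relative character variety $X(Z,\Gamma)$ near $\chi$ as the fiber of the restriction map $X(Z)\to \prod_{\gamma\in\Gamma} X(\langle\gamma\rangle)$ over the point $(a_1,\dots,a_k)$, and then compute its Zariski tangent space by differentiating this map, using Weil's identification $T^{Zar}_\chi X(Z)\cong H^1(Z;Ad\rho)$ (Lemma~\ref{lemma:weil}) together with naturality of this identification under restriction to subgroups. The first observation is that since each $a_i\neq\pm 2$, the element $\rho(\gamma_i)$ is a non-parabolic, non-central element, so the single-element character variety $X(\langle\gamma_i\rangle)$ is smooth of (complex) dimension $1$ at $\chi(\gamma_i)$, and its tangent space is canonically $H^1(\gamma_i;Ad\rho)\cong\mathbf C$. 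Thus the differential of the restriction map is precisely the map $H^1(Z;Ad\rho)\to\bigoplus_{\gamma\in\Gamma}H^1(\gamma;Ad\rho)$ appearing in the statement, and the Zariski tangent space to the fiber is its kernel. This gives the first displayed isomorphism in part (1).

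For the second isomorphism in part (1), I would invoke the long exact sequence of the pair $(Z,\Gamma)$ (where $\Gamma$ is shorthand for the disjoint union of circles representing the $\gamma_i$, mapped into $Z$, or equivalently the subcomplex they generate): the portion
$$
H^1(Z,\Gamma;Ad\rho)\to H^1(Z;Ad\rho)\to \bigoplus_{\gamma\in\Gamma} H^1(\gamma;Ad\rho)
$$
is exact, so $\ker\bigl(H^1(Z;Ad\rho)\to\bigoplus H^1(\gamma;Ad\rho)\bigr)=\operatorname{Im}\bigl(H^1(Z,\Gamma;Ad\rho)\to H^1(Z;Ad\rho)\bigr)$, which is exactly what is claimed. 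Part (2) then follows by dualizing: applying Poincar\'e--Lefschetz duality and the universal coefficient theorem (the coefficients $\mathfrak{sl}_2(\mathbf C)$ form a field, so Ext terms vanish), the cotangent space is the dual of the tangent space, and the cokernel/image descriptions in homology are the transposes of the kernel/image descriptions in cohomology. Concretely, dualizing the exact sequence above gives $H_1(\gamma;Ad\rho)\to H_1(Z;Ad\rho)\to H_1(Z,\Gamma;Ad\rho)$ exact, so the quotient of $H_1(Z;Ad\rho)$ by the image of $\bigoplus H_1(\gamma;Ad\rho)$ equals the image of $H_1(Z;Ad\rho)$ in $H_1(Z,\Gamma;Ad\rho)$.

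The main subtlety I expect to confront is scheme-theoretic: Lemma~\ref{lemma:weil} gives the Zariski tangent space \emph{as a scheme}, possibly non-reduced, and I must make sure the identification of $T^{Zar}_\chi X(Z,\Gamma)$ with the kernel is valid at the level of schemes, not merely as a set-theoretic tangent cone. This is handled by noting that $X(Z,\Gamma)$ is cut out inside $X(Z)$ by the equations $\operatorname{tr}\rho(\gamma_i)=a_i$, i.e. it is a scheme-theoretic fiber, so its Zariski tangent space is literally the kernel of the differential of $(\operatorname{tr}\rho(\gamma_1),\dots,\operatorname{tr}\rho(\gamma_k))$; the content is then that this differential, read through Weil's isomorphism, is the natural restriction-in-cohomology map, which is a standard computation with the cocycle description of $H^1$ and the formula for the derivative of the trace. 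A secondary point to state carefully is that for $\chi$ irreducible with $\chi(\gamma)\neq\pm2$, the target $X(\langle\gamma\rangle)$ is smooth at $\chi(\gamma)$ with tangent space $H^1(\gamma;Ad\rho)$ (using that $\rho(\gamma)$ is diagonalizable with distinct eigenvalues, so $H^0(\gamma;Ad\rho)\cong\mathbf C$ and $H^1(\gamma;Ad\rho)\cong\mathbf C$), which is what makes "$\chi(\gamma_i)=a_i$" an honest $k$-fold cut rather than an overdetermined system.
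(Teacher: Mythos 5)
Your proposal is correct and follows essentially the same route as the paper: the paper's own (very terse) proof also derives the lemma from Lemma~\ref{lemma:weil} by analyzing the tangent and cotangent maps of the restriction morphism $X(Z)\to X(\gamma_1)\times\cdots\times X(\gamma_k)$ together with the long exact sequence of the pair $(Z,\Gamma)$, and your treatment of the two standard subtleties (the scheme-theoretic fiber and the identification of $d\operatorname{tr}_{\gamma_i}$ with restriction to $H^1(\gamma_i;Ad\rho)$) is the right way to fill in the details. The only small quibble is in part (2): no Poincar\'e--Lefschetz duality is needed, just the Kronecker pairing over the field $\mathbf C$ (the coefficient module $\mathfrak{sl}_2(\mathbf C)$ is a finite-dimensional vector space, self-dual via the Killing form), applied to the homology long exact sequence of the pair.
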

 
The lemma follows from Lemma~\ref{lemma:weil} by analyzing tangent and cotangent induced maps of the morphism induced by inclusion,
$$
X(Z)\to X(\gamma_1)\times \cdots \times X(\gamma_k),
$$
and from the long exact sequence in cohomology of the pair $(Z,\Gamma)$.

Now assume that  $ \Gamma=\{\gamma_1,\ldots,\gamma_k\}$ is a pants decomposition of $\partial Z$. This is, each component of 
$\partial Z\setminus \Gamma$ is either a pair of pants or a cylinder, and the cardinality of $\partial Z$ is minimal. In particular
$$
k=-\frac32 \chi(\partial Z)+\textrm{ number of components of }\partial Z\textrm{ that are tori}.
$$
Also assume that for some $1\leq l\leq k$, 
$$
\chi(\gamma_1)= \pm 2\cos(\frac{\pi}{n_1}), \ldots, \chi(\gamma_l)=\pm 2\cos(\frac{\pi}{n_l}).
$$
Set $\mathbf n= (n_1,\ldots,n_l)\in\mathbf N^l$.
Consider the orbifold ${Z_{\mathbf n}}$ obtained by adding  $l$  1-handles  branched along the cores of respective orders 
$n_1,\ldots,n_l$,
along the  meridians
$\gamma_1,\ldots ,\gamma_l$:
$$
{Z_{\mathbf n}}= Z\cup D^2(n_1)\times [0,1] \cup\cdots\cup D^2(n_l)\times [0,1].
$$
Here $D^2(n_i)$ denotes the disk quotiented out by a group of rotations of order $n_i$, thus 
$D^2(n_i)\times [0,1]$ is a $1$-handle with branching locus its core.

Fill the spherical components of $\partial{Z_{\mathbf n}}$: ie.\ for every $2$-orbifold
in $\partial{Z_{\mathbf n}}$ that is spherical (isomorphic to $S^2/\Gamma$) attach a ball $B^3/\Gamma$:
$$
{\overline Z_{\mathbf n}}=  {Z_{\mathbf n}} \cup B^3/\Gamma_1\cup \cdots \cup B^3/\Gamma_s.
$$
Those spherical 2-orbifolds either come from pairs of pants that are bounded by three curves with an attached
meridian disk each, or they come from cylinders bounded by a curve with an attached meridian. Notice that  $\pi_1({\overline Z_{\mathbf n}})\cong \pi_1({Z_{\mathbf n}})$.


\begin{Lemma}
\label{lemma:isoorbifoldrel}
The inclusion $Z\to {\overline Z_{\mathbf n}}$ induces an isomorphism of the relative varieties of irreducible characters
$$
  X_{PSL_2(\mathbf C)}^{irr}({\overline Z_{\mathbf n}},  \{\gamma_{l+1},\ldots,\gamma_k\})
\overset{\cong}\longrightarrow
 X_{PSL_2(\mathbf C)}^{irr}(Z,\{\gamma_1,\ldots,\gamma_k\}). 
$$ 
\end{Lemma}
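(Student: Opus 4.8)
The plan is to build the isomorphism in two steps corresponding to the two operations used to pass from $Z$ to $\overline Z_{\mathbf n}$: first attaching the branched $1$-handles $D^2(n_i)\times[0,1]$ along $\gamma_1,\dots,\gamma_l$, and then capping off the spherical boundary components with $B^3/\Gamma_j$. Throughout I work with $PSL_2(\mathbf C)$-characters and restrict to the irreducible locus, so that by Lemma~\ref{lemma:weil} and Lemma~\ref{lemma:hohomology} Zariski tangent spaces are computed by group (equivalently orbifold) cohomology with $Ad\rho$ coefficients; the strategy is to show the map is bijective on points and an isomorphism on Zariski tangent spaces, hence an isomorphism of affine schemes.

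For the bijection on points: attaching a branched $1$-handle of order $n_i$ along $\gamma_i$ amounts, at the level of fundamental groups, to the amalgamated/HNN construction that imposes the relation $\gamma_i^{n_i}=1$ — equivalently, for a $PSL_2(\mathbf C)$-representation, forcing $\rho(\gamma_i)$ to be an elliptic of order exactly $n_i$ (rotation by $\pi/n_i$ up to the sign ambiguity), which is precisely the constraint $\chi(\gamma_i)=\pm 2\cos(\pi/n_i)$ already built into the relative character variety on the $Z$ side. So a character of $Z$ in $X^{irr}(Z,\{\gamma_1,\dots,\gamma_k\})$ has $\rho(\gamma_i)$ elliptic of order $n_i$ for $i\le l$ and hence extends (uniquely, since $\pi_1$ of a branched handle over the core is generated by the image of $\gamma_i$) over each handle; conversely restriction is injective because $\pi_1(Z)\to\pi_1(Z_{\mathbf n})$ is surjective. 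Capping a spherical boundary $2$-orbifold $S^2/\Gamma_j$ with $B^3/\Gamma_j$ only adds the relations already satisfied: $\pi_1(S^2/\Gamma_j)$ is finite (spherical), $\pi_1(B^3/\Gamma_j)=\Gamma_j$, and the inclusion $\pi_1(S^2/\Gamma_j)\to\pi_1(Z_{\mathbf n})$ factors through $\Gamma_j$ already — here one uses that the pairs of pants / cylinders giving these spherical orbifolds are bounded by curves all of which carry meridian (branched) disks, so their $\pi_1$-images are generated by the elliptic elements $\rho(\gamma_i)$ and land in a finite spherical subgroup; irreducibility is preserved since $\pi_1$ is unchanged. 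Since $\pi_1(\overline Z_{\mathbf n})\cong\pi_1(Z_{\mathbf n})$, the point-level map is a bijection.

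For the tangent spaces I would run the Mayer--Vietoris / long-exact-sequence argument in $Ad\rho$ cohomology for each elementary move. For the branched handle along $\gamma_i$: the pair $(Z_{\mathbf n}, Z)$ has relative cohomology concentrated as that of $\bigsqcup_i (D^2(n_i)\times[0,1], \gamma_i)$; because $\rho(\gamma_i)$ is a nontrivial elliptic, $H^*(D^2(n_i)\times[0,1];Ad\rho)\to H^*(\gamma_i;Ad\rho)$ is an isomorphism in degrees $0,1$ and the relative groups vanish through degree $2$, which forces $H^i(Z_{\mathbf n};Ad\rho)\cong H^i(Z;Ad\rho)$ for $i\le 1$ (one checks $H^0$ vanishes on both sides by irreducibility) — and the relative tangent space on the $Z$ side, being the kernel of restriction to the remaining $\gamma_j$, matches the absolute tangent space on the $Z_{\mathbf n}$ side relative to $\{\gamma_{l+1},\dots,\gamma_k\}$ via Lemma~\ref{lemma:tgrelative}. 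For the $B^3/\Gamma_j$ caps: $\Gamma_j$ is finite so $H^i(B^3/\Gamma_j;Ad\rho)=H^i(\Gamma_j;\mathfrak{sl}_2(\mathbf C))=0$ for $i\ge 1$ (Maschke: the group is finite, coefficients a $\mathbf C$-vector space) and likewise $H^i(S^2/\Gamma_j;Ad\rho)=0$ for $i\ge 1$; the Mayer--Vietoris sequence then gives $H^i(\overline Z_{\mathbf n};Ad\rho)\cong H^i(Z_{\mathbf n};Ad\rho)$ for all $i$, compatibly with the restriction maps to the surviving meridians $\gamma_{l+1},\dots,\gamma_k$. Composing, $T^{Zar}X^{irr}_{PSL_2(\mathbf C)}(\overline Z_{\mathbf n},\{\gamma_{l+1},\dots,\gamma_k\})\cong T^{Zar}X^{irr}_{PSL_2(\mathbf C)}(Z,\{\gamma_1,\dots,\gamma_k\})$, which together with the bijection on points yields the claimed scheme isomorphism.

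I expect the main obstacle to be the bookkeeping in the second step: correctly identifying which boundary components of $Z_{\mathbf n}$ become spherical, verifying in each case that the image of $\pi_1(S^2/\Gamma_j)$ in $\pi_1(Z_{\mathbf n})$ is exactly the finite subgroup $\Gamma_j$ (so that capping changes neither $\pi_1$ nor the irreducible-character scheme), and checking the elliptic-eigenvalue hypothesis $a_i = \pm2\cos(\pi/n_i)\ne\pm2$ is genuinely what makes $H^*(D^2(n_i)\times[0,1];Ad\rho)\to H^*(\gamma_i;Ad\rho)$ an isomorphism — i.e.\ that $Ad\rho(\gamma_i)$ has no nonzero fixed vector beyond the expected one, so that the local cohomology behaves as for a circle. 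The handle-attachment tangent-space computation itself is routine once the representation-theoretic input is in place.
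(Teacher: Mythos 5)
Your overall strategy (bijection on points plus an isomorphism of Zariski tangent spaces, computed via the long exact sequence of the pair $(Z_{\mathbf n},Z)$ and excision, with the capping step handled separately) is the same as the paper's, and the point-level argument and the treatment of the spherical caps are fine. But the key cohomological step for the branched handles is wrong as you state it. You claim that $H^*(D^2(n_i)\times[0,1];Ad\rho)\to H^*(\gamma_i;Ad\rho)$ is an isomorphism in degrees $0,1$, so that the relative groups vanish through degree $2$ and hence $H^1(Z_{\mathbf n};Ad\rho)\cong H^1(Z;Ad\rho)$. In fact $H^1(D^2(n_i);Ad\rho)=0$ because $\pi_1(D^2(n_i))\cong\mathbf Z/n_i$ is finite and the coefficients are a complex vector space, whereas $H^1(\gamma_i;Ad\rho)\cong\mathbf C$: since $\rho(\gamma_i)$ is a nontrivial elliptic, $Ad\rho(\gamma_i)$ fixes the line spanned by the infinitesimal rotation about its axis, so the circle has nonvanishing $H^1$. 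Thus the map is \emph{not} an isomorphism in degree $1$; the local cohomology of the branched disk does not behave like that of a circle, contrary to the expectation in your final paragraph. The correct computation gives $H^1(D^2(n_i),\gamma_i;Ad\rho)=0$ but $H^2(D^2(n_i),\gamma_i;Ad\rho)\cong H^1(\gamma_i;Ad\rho)\neq 0$, and this nonvanishing $H^2$ of the pair is precisely what carries the content of the lemma: the long exact sequence then reads
$$
0\to H^1(Z_{\mathbf n};Ad\rho)\to H^1(Z;Ad\rho)\to \bigoplus_{i=1}^{l} H^1(\gamma_i;Ad\rho),
$$
so $H^1(Z_{\mathbf n};Ad\rho)$ is identified with $\ker\bigl(H^1(Z;Ad\rho)\to\oplus_{i\le l}H^1(\gamma_i;Ad\rho)\bigr)$, which by Lemma~\ref{lemma:tgrelative} is what matches the relative tangent spaces on the two sides.

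Your stronger claim $H^1(Z_{\mathbf n};Ad\rho)\cong H^1(Z;Ad\rho)$ cannot be repaired: it would force every class in $H^1(Z;Ad\rho)$ to restrict to zero on each $\gamma_i$, $i\le l$, i.e.\ the traces of those meridians would be locally constant on all of $X(Z)$. This contradicts the deformations the paper constructs (for instance the vectors $\bar\nu_i$ in Lemma~\ref{lemma:normal} with $d\operatorname{Trace}_{\gamma_{2n+i}}(\bar\nu_j)=\delta_{ij}$, and the deformations produced by Theorem~\ref{thm:duality}), and with it the tangent spaces of $X(\overline Z_{\mathbf n},\{\gamma_{l+1},\dots,\gamma_k\})$ and $X(Z,\{\gamma_1,\dots,\gamma_k\})$ would in general not agree, since the $Z$ side imposes the extra conditions at $\gamma_1,\dots,\gamma_l$. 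Once you replace the erroneous isomorphism claim by the exact-sequence identification above (and check that the connecting map is the restriction to the $\gamma_i$, $i\le l$, under the excision isomorphism), the rest of your argument, including the Mayer--Vietoris treatment of the caps $B^3/\Gamma_j$ (the paper instead just uses $\pi_1(\overline Z_{\mathbf n})\cong\pi_1(Z_{\mathbf n})$ and Lemma~\ref{lemma:hohomology}), goes through.
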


\begin{proof}
We prove that this map is a bijection and also that it induces an isomorphism of Zariski tangent spaces.
The natural surjection $\pi_1(Z)\to \pi_1({\overline Z_{\mathbf n}})$ induces the map of relative $PSL_2(\mathbf C)$-character
varieties that is a bijection, because 
$$
  \pi_1({\overline Z_{\mathbf n}})\cong\pi_1({Z_{\mathbf n}})\cong \pi_1(Z)/\langle {\gamma_1}^{n_1},\ldots, {\gamma_l}^{n_l}\rangle,
$$
and the fact a matrix of $PSL_2(\mathbf C)$ has order $n_i$ is determined by its trace.

 To prove that it is a  local isomorphism at the infinitesimal level, 
we claim that $H^1( {\overline Z_{\mathbf n}}; Ad\rho) \to H^1( Z; Ad\rho)$ is an inclusion, and that the image equals the 
kernel of $ H^1( Z; Ad\rho)\to \bigoplus_{i=1}^l H^1( \gamma_i; Ad\rho) $. 
Since the inclusion ${Z_{\mathbf n}}\subset{\overline Z_{\mathbf n}}$ induces an isomorphism of fundamental groups,
 by Lemma~\ref{lemma:hohomology}
$H^1( {\overline Z_{\mathbf n}}; Ad\rho)\cong 
H^1( {Z_{\mathbf n}}; Ad\rho)$ and we may replace ${\overline Z_{\mathbf n}}$ by ${Z_{\mathbf n}}$ in the claim.

Apply the long exact sequence to the pair $( {Z_{\mathbf n}}, Z)$:
$$
 H^1({Z_{\mathbf n}},Z; Ad\rho)\to H^1( {Z_{\mathbf n}}; Ad\rho) \to H^1( Z; Ad\rho)\to  H^2( {Z_{\mathbf n}},Z; Ad\rho).
$$
Let $V$ be the union of singular 1-handles attached along $\gamma_1,\ldots,\gamma_l$. In particular $V\cup Z={Z_{\mathbf n}}$ and
$V\cap Z\simeq \gamma_1\cup\cdots \cup \gamma_l$. By excision
$$
 H^*({Z_{\mathbf n}},Z; Ad\rho)\cong  H^*(V, V\cap Z; Ad\rho)\cong  \bigoplus_{i=1}^l H^*(D^2(n_i),\gamma_i; Ad\rho).
$$
From the exact sequence of the pair, and by using that $H^1(D^2(n_i); Ad\rho)=0$ because $\pi_1(D^2(n_i))$ is finite
and $H^0( D^2(n_i); Ad\rho) \cong H^0( \gamma_i; Ad\rho)$, we deduce that
$$
H^1(D^2(n_i),\gamma_i; Ad\rho)=0 \textrm{ and }  H^2(D^2(n_i),\gamma_i; Ad\rho) \cong H^1(\gamma_i; Ad\rho).
$$ 
This proves the claim and the lemma.
\end{proof}

Assume that ${\overline Z}$ is an orientable Seifert fibered orbifold with base $B$ and that $\chi$ is an irreducible character. 
Assume also that $\gamma_{l+1},\ldots,\gamma_{k}$ project to peripheral elements of $B$.

\begin{Lemma}
\label{Lemma:basefibr}
The projection
$\overline Z\to B$
induces a map
$$
X_{PSL_2(\mathbf C)}^{irr}(B,\{ \gamma_{l+1},\ldots,\gamma_{k}\}) \to 
X_{PSL_2(\mathbf C)}^{irr}(\overline Z,\{ \gamma_{l+1},\ldots,\gamma_{k}\}) 
$$
that is an isomorphism.
\end{Lemma}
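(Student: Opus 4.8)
The plan is to reduce the statement to the fact that an irreducible $PSL_2(\mathbf C)$-representation of $\pi_1(\overline Z)$ kills the regular fiber, and then to identify the two relative character varieties as schemes. Recall that the Seifert fibration $\overline Z\to B$ gives an exact sequence
$$
1\longrightarrow \langle f\rangle \longrightarrow \pi_1(\overline Z)\longrightarrow \pi_1(B)\longrightarrow 1,
$$
where $f$ is the generic fiber; it generates an infinite cyclic normal subgroup, central when $B$ is orientable. If $\rho$ is irreducible, then $\rho(f)$ lies in the centralizer of $\rho(\pi_1(\overline Z))$; since a non-elementary irreducible subgroup of $PSL_2(\mathbf C)$ has trivial centralizer (the fixed-point set of $\rho(f)$ on $\partial_\infty\mathbf H^3$ would otherwise be a $\rho(\pi_1(\overline Z))$-invariant set of at most two points), we get $\rho(f)=1$. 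Hence $\rho$ factors through $\pi_1(B)$ as some $\bar\rho$, which is irreducible since it has the same image as $\rho$; conversely the pullback of any irreducible representation of $\pi_1(B)$ is irreducible. As $\pi_1(\overline Z)\to\pi_1(B)$ is onto, this correspondence is equivariant under conjugation, so it is a bijection between the irreducible $PSL_2(\mathbf C)$-character varieties. Because $\rho(f)=1$, the value $\chi(\gamma_i)$ for $i>l$ equals that of the peripheral image of $\gamma_i$ in $\pi_1(B)$, so the constraints defining the relative varieties transport, and the bijection restricts to $X^{irr}_{PSL_2(\mathbf C)}(B,\{\gamma_{l+1},\dots,\gamma_k\})\cong X^{irr}_{PSL_2(\mathbf C)}(\overline Z,\{\gamma_{l+1},\dots,\gamma_k\})$ as sets.

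To upgrade this bijection to an isomorphism of schemes it suffices, the correspondence being algebraic in both directions on the irreducible loci, to check that the induced map on Zariski tangent spaces is an isomorphism at every irreducible character. By Lemma~\ref{lemma:weil} these are $H^1(\pi_1(\overline Z);Ad\rho)$ and $H^1(\pi_1(B);Ad\bar\rho)$, with $Ad\rho$ the inflation of $Ad\bar\rho$. I would run the Lyndon--Hochschild--Serre spectral sequence of the extension above, whose five-term exact sequence reads
$$
0\to H^1(\pi_1(B);Ad\bar\rho)\to H^1(\pi_1(\overline Z);Ad\rho)\to H^1(\langle f\rangle;\mathfrak{sl}_2(\mathbf C))^{\pi_1(B)}\to H^2(\pi_1(B);Ad\bar\rho).
$$
Since $\rho(f)=1$ we have $H^1(\langle f\rangle;\mathfrak{sl}_2(\mathbf C))\cong\mathfrak{sl}_2(\mathbf C)$ with $\pi_1(B)$ acting through $Ad\bar\rho$, and its invariants vanish because $\bar\rho$ is irreducible; hence $H^1(\pi_1(B);Ad\bar\rho)\xrightarrow{\cong}H^1(\pi_1(\overline Z);Ad\rho)$, using also Lemma~\ref{lemma:hohomology} to identify group and orbifold cohomology in degree $\le 1$. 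The relative tangent spaces are, by Lemma~\ref{lemma:tgrelative}, the kernels of the restrictions to $\bigoplus_{i>l}H^1(\gamma_i;Ad\rho)$, and since those restrictions factor through $\pi_1(B)$ the five lemma gives the corresponding isomorphism of relative Zariski tangent spaces. A bijective morphism of $\mathbf C$-schemes of finite type that is an isomorphism on all Zariski tangent spaces is an isomorphism, and the lemma follows.

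The step needing the most care is the case when $B$ is non-orientable, so that $f$ fails to be central; this does not occur in the application, where $B$ is a disk with at most one cone point, but one should address it for the statement as given. There I would first restrict $\rho$ to the index-two subgroup $\Gamma_0<\pi_1(\overline Z)$ projecting onto the orientation-preserving loops of $B$, in which $f$ is central: if $\rho|_{\Gamma_0}$ is irreducible we get $\rho(f)=1$ as before, while if $\rho|_{\Gamma_0}$ is reducible a short argument — using that $\Gamma_0$ is normal of index two and that $\rho$ itself is irreducible — forces $\rho$ to preserve an unoriented geodesic of $\mathbf H^3$, i.e.\ to be dihedral, and such representations are excluded among those considered here. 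A last, routine point is to check that each $\gamma_i$ with $i>l$ maps to the prescribed peripheral element of $\pi_1(B)$ and not to a proper power of it, which follows from the local form of the Seifert fibration along $\partial\overline Z$; granting this, the relative conditions match verbatim and the argument above applies.
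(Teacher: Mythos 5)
Your proposal follows essentially the same strategy as the paper: show that an irreducible $PSL_2(\mathbf C)$-representation kills the fiber, so that representations of $\pi_1(\overline Z)$ correspond bijectively to representations of $\pi_1(B)$, and then identify the Zariski tangent spaces via $H^1$. The one genuine difference is the cohomological step: the paper argues directly with group cocycles (every cocycle is shown to vanish on the kernel, hence factors through $\pi_1(B)$), whereas you run the five-term exact sequence of $1\to\langle f\rangle\to\pi_1(\overline Z)\to\pi_1(B)\to 1$ and use the vanishing of $\mathfrak{sl}_2(\mathbf C)^{Ad\bar\rho}$; the two are equivalent in substance, and your spectral-sequence version is fine, though in the case relevant here the $\pi_1(B)$-action on $H^1(\langle f\rangle;\mathfrak{sl}_2(\mathbf C))$ is twisted by the orientation character of the fibration and the vanishing of the twisted invariants again requires excluding characters preserving an unoriented geodesic. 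Two caveats. First, your remark that the non-central case ``does not occur in the application'' is backwards: the base $P^2$ is a polygon with mirror boundary, so meridians project to reflections and conjugation inverts the fiber; this is precisely why the paper states that the kernel is central only in a subgroup $G_0$ of index at most two. Your index-two argument does cover it, and your dismissal of dihedral characters is at the same level of rigor as the paper's ``again by irreducibility'' (and is in any case consistent with the stronger hypothesis needed in Lemma~\ref{lemma:weil} for $PSL_2(\mathbf C)$). Second, your closing principle --- that a bijective morphism of finite-type $\mathbf C$-schemes inducing isomorphisms on all Zariski tangent spaces is an isomorphism --- is false for non-reduced schemes (e.g.\ $\operatorname{Spec}\mathbf C[x]/(x^2)\hookrightarrow\operatorname{Spec}\mathbf C[x]/(x^3)$), and the character schemes here are explicitly allowed to be non-reduced; note, however, that the paper itself only establishes and only uses the bijection together with the isomorphism of Zariski tangent spaces, so you should stop there rather than invoke this principle.
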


\begin{proof}
As in the previous lemma, we prove that this map is a bijection that induces an isomorphism on Zariski tangent spaces.
The kernel $\ker(\pi_1(\overline Z)\to \pi_1(B))$ is  the center of
 a finite index subgroup $G_0 \vartriangleleft\pi_1(\overline Z)$ (of index at most $2$). 
Since $PSL_2(\mathbf C)$ has no center, every irreducible representation
of $\pi_1(\overline Z)$ maps the center to the identity. Hence $\ker(\pi_1(\overline Z)\to \pi_1(B))$
is mapped to the identity or to a normal subgroup of order two. Again by irreducibility,
$\ker(\pi_1(\overline Z)\to \pi_1(B))$ is mapped to the identity, hence irreducible representations
of $\pi_1(\overline Z)$ are canonically in bijection with representations of $\pi_1(B)$.

To prove the isomorphism in cohomology, we use group cohomology, in particular group cocycles.
We claim that every group cocycle $\theta:\pi_1(\overline Z)\to \mathfrak{sl}_2(\mathbf C)$
maps the fiber to zero and therefore factors through $\pi_1(B)$.
Here a cocycle means that 
$\theta(\gamma_1\gamma_2)=\theta(\gamma_1)+Ad_{\rho(\gamma_1)}\theta(\gamma_2)$, $\forall \gamma_1,\gamma_2\in \pi_1(\overline Z)$.

To prove the claim, if $\gamma_0\in \ker(\pi_1(\overline Z)\to \pi_1(B))$, and $\theta$ is a cocycle,
then for any $\gamma\in G_0<\pi_1(\overline Z)$, the cocycle rule applied to the relation $\gamma_0\gamma=\gamma\gamma_0$ reads
$$
\theta(\gamma_0)+Ad_{\rho(\gamma_0)}\theta(\gamma)= \theta(\gamma)+Ad_{\rho(\gamma)}\theta(\gamma_0).
$$
As $\rho(\gamma_0)=\pm\operatorname{Id}$, it follows that $(Ad_{\rho(\gamma)}-1)\theta(\gamma_0)=0$.
 Since this holds
for every $\gamma\in G_0\vartriangleleft\pi_1(\overline Z)$ and $\rho$ is irreducible, it follows that $\theta(\gamma_0)=0$.
\end{proof}

\section{The symplectic structure of the variety of characters}
\label{section:fenchelnielsen}

As in the previous section,  $Z$ is a compact aspherical 3-manifold with boundary. 
%
Let $X(\partial Z)$ denote the product of character varieties of components of $\partial Z$:
$$
X(\partial Z)=X(\partial_1 Z)\times\cdots\times X(\partial_r Z),
$$
where $\partial Z= \partial_1 Z\cup\cdots\cup \partial_r Z$ is the splitting in connected components.

Choose a \emph{pants} decomposition for the components of  $\partial  Z$.
This is, 
a collection of disjoint simple closed  curves in $\partial Z$,
$$
\gamma_1, \ldots, \gamma_{k},
$$
that cut $\partial Z$ into pairs of pants or cylinders, and the family has minimal cardinality.
Here $k=-\frac32\chi(\partial N)+k_0$, where $k_0$ is the number of components of 
 $\partial Z$ that are tori.

For $j=1,\ldots, k$, 
 let $\mu_j$ denote twice the logarithm of the eigenvalue of the trace of the
$j$-th meridian $\gamma_j$, so that $\mu_j$ has real part the 
translation length and imaginary part the rotation angle of $\rho(\gamma_j)$.
Let $\lambda_j$ denote the twist parameter. 
 Algebraically, when we cut along the (non-separating) 
meridian and write the fundamental group of the surface as an HNN-extension,
$\lambda_j$ is twice the logarithm of the eigenvalue
of the element of the extension. (In the separating case, it is twice
the logarithm of
the eigenvalue of the conjugating factor).
Notice that $\lambda_j$ is only defined after normalization. 

When $\gamma_j$ is the meridian of a cone manifold, by \cite{Weiss},
$\lambda_j$ can be chosen so that its real part is 
the length of the corresponding  singular edge.
For representations of $M=\mathcal O\setminus \mathcal N(\Sigma_{\mathcal O})$,
that factor through $P^2$,  then 
$\sum\lambda_j$ is twice the perimeter of $P^2$.

\begin{Proposition}[Fenchel-Nielsen local coordinates]
\label{prop:FenchelNielsen}
Let $\chi\in X(\partial Z)$ be such that $\chi(\gamma_j)\neq \pm 2$,
and $\chi$ restricted to each pant of $\partial Z\setminus \cup_i\gamma_i$ 
is irreducible.
Then the parameters 
$$
(\mu_1,\ldots,\mu_{k},\lambda_1,\ldots,\lambda_{k})
$$ 
define
local coordinates for $X(\partial Z)$ around $\chi$.
\end{Proposition}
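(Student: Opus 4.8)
The plan is to reduce the statement to a dimension count plus the linear independence of the differentials $d\mu_j, d\lambda_j$ at $\chi$, using the identification of the cotangent space of $X(\partial Z)$ with $H_1(\partial Z; Ad\rho)$ (Lemma~\ref{lemma:weil}) and the fact that $X(\partial Z)$ is a product over components. First I would observe that, since $\chi(\gamma_j)\neq\pm2$ and $\chi$ is irreducible on each pair of pants, the character variety of each component $\partial_i Z$ is smooth of complex dimension $-3\chi(\partial_i Z)+2k_0^{(i)}$ at $\chi|_{\partial_i Z}$ (the pair-of-pants pieces contribute $0$, each cutting curve contributes $2$); summing gives $\dim X(\partial Z)=2k$, matching the number of functions $(\mu_1,\ldots,\mu_k,\lambda_1,\ldots,\lambda_k)$. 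So it suffices to show these $2k$ functions have independent differentials at $\chi$, equivalently that the induced map to $\mathbf{C}^{2k}$ is a local diffeomorphism.

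Since everything splits as a product over the connected components of $\partial Z$, and within a component the symplectic form (Goldman \cite{GoldmanAdvances}) pairs $H_1$ of that component with itself, I would argue component by component. For a single component $S$ with its induced pants curves, the classical picture of Fenchel--Nielsen coordinates applies: cutting $S$ along one non-separating curve $\gamma_j$ realizes $\pi_1(S)$ as an HNN extension, and the trace of $\gamma_j$ (hence $\mu_j$) together with the eigenvalue of the stable letter (hence $\lambda_j$) are algebraically independent from the data of the two pieces and of the other curves. Concretely, I would build an explicit chart: on each pair of pants the character is determined up to conjugacy by the three boundary traces, so fixing those and varying only $\mu_j,\lambda_j$ for the curves $\gamma_j$ of that component, one reconstructs the representation by the standard amalgamation/HNN gluing formulas, and the Jacobian of $(\mu_j,\lambda_j)$ against this explicit parametrization is triangular and invertible. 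This is exactly the computation underlying the symplectic statement that $\{\mu_j,\lambda_{j'}\}=\delta_{jj'}$ and $\{\mu_j,\mu_{j'}\}=\{\lambda_j,\lambda_{j'}\}=0$, which both forces independence of the differentials and identifies the coordinates as a symplectic (Darboux) chart.

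The main obstacle is the bookkeeping in the separating case and at cylinder components: when $\gamma_j$ separates $\partial_i Z$ (or bounds a cylinder), $\lambda_j$ is only defined after a normalization, and one must check that a consistent normalization can be chosen simultaneously for all curves so that the resulting functions are genuinely well-defined and smooth near $\chi$, not merely multivalued. I would handle this by fixing, once and for all, a base representation $\rho$ and trivializing the conjugation ambiguity using the irreducibility on the pieces (each pair of pants has a unique-up-to-scalar invariant configuration, pinning down the conjugating factor), exactly as in \cite{HPAGT}; the cylinder components contribute a torus with the single coordinate pair $(\mu_j,\lambda_j)$ and are immediate. Once the normalization is fixed, the Poisson-bracket computation of Goldman gives the non-degeneracy of the differential of $(\mu_1,\ldots,\mu_k,\lambda_1,\ldots,\lambda_k)$ at $\chi$, and combined with the dimension count this yields that these parameters are local coordinates, completing the proof.
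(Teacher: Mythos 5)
Your proposal is correct, and its working core --- each pair of pants, being irreducible, is locally determined by its three boundary traces (hence by the $\mu_j$, since $\chi(\gamma_j)\neq\pm2$), cylinders by the single trace, and the character of the glued surface is recovered by adjoining the amalgamation/HNN parameters $\lambda_j$ --- is exactly the paper's proof of Proposition~\ref{prop:FenchelNielsen}. What you add is scaffolding the paper does not use: the dimension count $\dim X(\partial Z)=2k$ together with nondegeneracy of the differentials, the latter justified both by your explicit gluing chart and by the Poisson relations $\{\mu_j,\lambda_{j'}\}=\delta_{jj'}$. One caution about the second justification: in this paper Goldman's result (Theorem~\ref{Theorem:symplectic}) is stated in terms of the coordinate vector fields $\partial_{\lambda_j}$, which are only defined after Proposition~\ref{prop:FenchelNielsen} is established, so quoting it here would be circular; to use that route you must invoke Goldman's original, coordinate-free computation of twist flows as Hamiltonian flows of the trace functions. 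Since your explicit amalgamation chart (with the normalization of the $\lambda_j$ pinned down by irreducibility of the pieces, as you note) already gives well-definedness, local injectivity and local surjectivity of the parametrization, the symplectic detour and the dimension count are not logically needed; what they buy is the additional Darboux-chart interpretation, which the paper records separately afterwards.
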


Though it is well
known, we give a proof in this algebraic setting for completeness. 

\begin{proof}[Proof of Proposition~\ref{prop:FenchelNielsen}] 

%
%

Since the representation restricted to each pant   is
irreducible,
it is locally parametrized by
the trace of its boundary curves. Namely, an irreducible character
in $SL_2(\mathbf C)$ of a free group with two generators $a$ and $ b$ is
parametrized by the traces of 
$a$, $b $ and $ab$  (see for instance \cite{FicoMontesinos}), that are precisely
the boundary curves of a pair of pants. We also use that since 
$\chi(\gamma_j)\neq\pm 2$, the value of a character at $\gamma_j$ is locally parametrized by 
$\mu_j$, because $\chi(\gamma_j)=2\cosh (\mu_j/2) \neq\pm 2$.
When the curve $\gamma_i$ is in a torus, the  cutoff of this component is a cylinder,
and its conjugacy class is parametrized by $\mu_j$.
Hence the $\mu_1,\ldots,\mu_{k}$ are
local coordinates for the restrictions to pants and cylinders. 
The coordinates are completed by adding the amalgamations along the curves $\gamma_i$,
namely the ${\lambda_1},\ldots,{\lambda_{k}}$.
\end{proof}

\begin{Definition}
The tangent vectors  $\{
\partial_{\mu_1},\ldots,\partial_{\mu_{k}},\partial_{\lambda_1},\ldots,
\partial_{\lambda_{k}}\}$
are the coordinate vectors of a parametrization as in
Proposition~\ref{prop:FenchelNielsen}.
\end{Definition}

Notice that $\{
\partial_{\mu_1},\ldots,\partial_{\mu_{k}},\partial_{\lambda_1},\ldots,
\partial_{\lambda_{k}}\}$ 
is a $\mathbf C$-basis for $H^1( \partial Z,Ad\rho)$.

Consider the pairing  that consists in combining the usual cup product
with the Killing form:
$$
B\!:\!\mathfrak{sl}(2,\mathbf C)\times \mathfrak{sl}(2,\mathbf C)\to\mathbf C,
$$
to get a 2-cocycle with values in $\mathbf C$, see 
 \cite{GoldmanAdvances,GoldmanInvariant,GoldmanCplx}.
We still denote by $\cup$ this paring:
\begin{equation}
\label{eqn:cupproduct}
\cup: H^1( \partial Z;Ad\rho)\times  H^1( \partial Z;Ad\rho) \to 
H^2( \partial Z;\mathbf C)\to \mathbf C.
\end{equation}
Here the last arrow is just the composition of the isomorphism 
$H^2( \partial_i Z,\mathbf C)\cong\mathbf C $
for each boundary component $\partial_i Z$ with the addition of the coordinates  
$\mathbf C\times\cdots\times\mathbf C\to\mathbf C$.

\begin{Theorem}[Goldman  \cite{GoldmanAdvances,GoldmanInvariant}]
\label{Theorem:symplectic}
The product (\ref{eqn:cupproduct}) defines a symplectic structure on $X(\partial Z)$.
Moreover $\partial_{\lambda_j}$ is the Hamiltonian vector field of 
 $\mu_j$:
$$
d\mu_j= \partial_{\lambda_j}\cup-.
$$
 \end{Theorem}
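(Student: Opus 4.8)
The plan is to establish the two assertions separately: first that the pairing~(\ref{eqn:cupproduct}) is a symplectic form on $X(\partial Z)$, and second the identification of $\partial_{\lambda_j}$ as the Hamiltonian vector field of $\mu_j$. For the first part, I would begin by recalling that $H^2(\partial_i Z;\mathbf C)\cong\mathbf C$ for each connected surface $\partial_i Z$ via the fundamental class, so the last arrow in~(\ref{eqn:cupproduct}) is well defined; the cup product paired through the Killing form $B$ is skew-symmetric because $B$ is symmetric while the cup product $H^1\times H^1\to H^2$ on a surface is skew-symmetric. Nondegeneracy is exactly Poincar\'e duality: since $\rho$ restricted to each pant is irreducible, $H^0(\partial_i Z;Ad\rho)=0$ and $H^2(\partial_i Z;Ad\rho)=0$, so the cup-product pairing $H^1(\partial_i Z;Ad\rho)\times H^1(\partial_i Z;Ad\rho)\to H^2(\partial_i Z;\mathbf C)\cong\mathbf C$ is a perfect pairing (the Killing form is nondegenerate on $\mathfrak{sl}_2(\mathbf C)$). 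Closedness of the resulting $2$-form on $X(\partial Z)$ is the content of Goldman's theorem \cite{GoldmanAdvances,GoldmanInvariant}; I would cite it rather than reprove it, since it requires the naturality of the construction under the mapping-class group action together with the fact that the form is locally constant in the Fenchel-Nielsen-type coordinates.

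For the second part, I would argue boundary component by boundary component, since the form and the functions $\mu_j,\lambda_j$ are all ``diagonal'' with respect to the splitting $\partial Z=\partial_1 Z\cup\cdots\cup\partial_r Z$. Fix the component containing $\gamma_j$. The key computation is that, in the twist-flow picture, the vector field $\partial_{\lambda_j}$ generated by the Fenchel-Nielsen twist along $\gamma_j$ is represented in $H^1(\partial Z;Ad\rho)$ by a cocycle supported near $\gamma_j$ and valued in the infinitesimal generator of the one-parameter subgroup fixing the axis of $\rho(\gamma_j)$; pairing this cocycle via $B$ with an arbitrary tangent vector $v\in H^1(\partial Z;Ad\rho)$, and using the localization of the cup product to a neighborhood of $\gamma_j$, reduces the whole pairing to a computation on an annular neighborhood of $\gamma_j$, where it evaluates to the variation of the eigenvalue of $\rho(\gamma_j)$ in the direction $v$, i.e. to $d\mu_j(v)$. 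This is precisely Goldman's twist-flow formula, and I would present it as the core lemma, citing \cite{GoldmanInvariant} for the annular computation but spelling out why the normalization of $\mu_j$ (twice the log of the eigenvalue) produces the factor that makes $d\mu_j=\partial_{\lambda_j}\cup-$ exactly, with no stray constant.

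The main obstacle is the twist-flow computation itself: one must be careful that the chosen representative cocycle for $\partial_{\lambda_j}$ is genuinely the derivative of the Fenchel-Nielsen twist deformation of characters (not merely some cocycle in the right cohomology class), and that the cup product with the Killing form is normalized consistently with the convention ``$\mu_j=$ twice the logarithm of the eigenvalue'' so that the Hamiltonian identity holds on the nose. A secondary subtlety is the bookkeeping when $\gamma_j$ is separating inside its boundary component versus non-separating, since the twist parameter $\lambda_j$ is defined differently (eigenvalue of the conjugating factor versus eigenvalue of the HNN-stable letter); in both cases the localized computation is the same, but this should be remarked. Since all of this is classical, I would keep the proof short, isolate the annular computation as the one genuinely load-bearing step, and otherwise quote \cite{GoldmanAdvances,GoldmanInvariant,GoldmanCplx}.
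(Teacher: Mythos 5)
The paper gives no proof of this statement at all: it is quoted as Goldman's theorem, with the references \cite{GoldmanAdvances,GoldmanInvariant} doing all the work, so there is nothing internal to compare your argument against. Your sketch is a correct outline of the standard argument (skew-symmetry from cup product plus the symmetric Killing form, nondegeneracy from Poincar\'e duality with the nondegenerate coefficient pairing, closedness and the twist-flow/Hamiltonian identity quoted from Goldman), and your emphasis on the normalization of $\mu_j$ as twice the logarithm of the eigenvalue is exactly the point that must be checked against the paper's conventions in Proposition~\ref{prop:FenchelNielsen}. One small remark: nondegeneracy of the pairing on $H^1(\partial_i Z;Ad\rho)$ follows from Poincar\'e duality and the nondegeneracy of $B$ alone; the vanishing of $H^0$ and $H^2$ that you invoke is what makes $X(\partial Z)$ smooth with tangent space $H^1$ at such characters, not what makes the pairing perfect.
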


\begin{Corollary}
\label{lemma:cup}
Let $F$ be a function of $X(\partial M)$, and $H_F\in T_{\rho} X(\partial Z)$ its Hamiltonian,
the vector that satisfies $d F= H_F\cup-$. Then 
$$
d \mu_j (H_F)= -\frac{\partial F}{\partial \lambda_j}.
$$
%
%
%
\end{Corollary}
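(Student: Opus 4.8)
The plan is to unwind the definition of the Hamiltonian vector field and use the symplectic duality between $\partial_{\mu_j}$ and $\partial_{\lambda_j}$ supplied by Goldman's Theorem~\ref{Theorem:symplectic}. First I would expand $H_F$ in the Fenchel--Nielsen basis of $T_\rho X(\partial Z)\cong H^1(\partial Z;Ad\rho)$ from Proposition~\ref{prop:FenchelNielsen}, writing
$$
H_F=\sum_{i=1}^k a_i\,\partial_{\mu_i}+\sum_{i=1}^k b_i\,\partial_{\lambda_i},
$$
so that by definition of the Hamiltonian, $dF=H_F\cup-$ means $dF(V)=H_F\cup V$ for all $V\in T_\rho X(\partial Z)$. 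Evaluating at the coordinate vectors gives $a_i=dF(\partial_{\mu_i})=\partial F/\partial\mu_i$ and $b_i=dF(\partial_{\lambda_i})=\partial F/\partial\lambda_i$ once I know the values of the cup-product pairing on the basis vectors.

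Next I would pin down those pairings. Goldman's theorem says $d\mu_j=\partial_{\lambda_j}\cup-$, which on the basis reads $\partial_{\lambda_j}\cup\partial_{\mu_j}=1$, $\partial_{\lambda_j}\cup\partial_{\lambda_i}=0$, and $\partial_{\lambda_j}\cup\partial_{\mu_i}=0$ for $i\neq j$. By the antisymmetry of $\cup$ on $H^1$ (the cup product paired with the Killing form is skew on odd-degree classes), this forces $\partial_{\mu_j}\cup\partial_{\mu_i}=0$, $\partial_{\lambda_j}\cup\partial_{\lambda_i}=0$, and $\partial_{\mu_j}\cup\partial_{\lambda_i}=-\delta_{ij}$; in other words $(\mu_j,\lambda_j)$ are Darboux coordinates. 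Then
$$
d\mu_j(H_F)=H_F\cup\text{(dual)}\ \text{—}
$$
more directly, since $d\mu_j=\partial_{\lambda_j}\cup-$, I compute
$$
d\mu_j(H_F)=\partial_{\lambda_j}\cup H_F=\sum_i a_i(\partial_{\lambda_j}\cup\partial_{\mu_i})+\sum_i b_i(\partial_{\lambda_j}\cup\partial_{\lambda_i})=a_j=\frac{\partial F}{\partial\mu_j}.
$$
That gives the wrong sign, so the bookkeeping must be done carefully: the correct route is to expand $dF$ itself rather than $H_F$. Writing $dF=\sum_i(\partial F/\partial\mu_i)\,d\mu_i+\sum_i(\partial F/\partial\lambda_i)\,d\lambda_i$ and using $d\mu_i=\partial_{\lambda_i}\cup-$ together with $d\lambda_i=-\partial_{\mu_i}\cup-$ (which follows from $d\mu_i=\partial_{\lambda_i}\cup-$ and skew-symmetry), one reads off $H_F=\sum_i(\partial F/\partial\mu_i)\,\partial_{\lambda_i}-\sum_i(\partial F/\partial\lambda_i)\,\partial_{\mu_i}$, and hence
$$
d\mu_j(H_F)=\partial_{\lambda_j}\cup H_F=-\frac{\partial F}{\partial\lambda_j},
$$
as claimed.

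The only genuinely delicate point is the sign convention: one must fix orientations so that the isomorphism $H^2(\partial_iZ;\mathbf C)\cong\mathbf C$ and the cup-product pairing are compatible with Goldman's normalization $d\mu_j=\partial_{\lambda_j}\cup-$, and then verify that $d\lambda_j=-\partial_{\mu_j}\cup-$ (not $+$). I expect this sign check to be the main obstacle, since everything else is formal linear algebra on the symplectic vector space $H^1(\partial Z;Ad\rho)$. Once the Darboux relations $\partial_{\mu_i}\cup\partial_{\lambda_j}=-\delta_{ij}$ and the vanishing of the other pairings are in hand, the corollary is immediate from expanding $dF$ in the dual basis $\{d\mu_i,d\lambda_i\}$ and applying $d\mu_j=\partial_{\lambda_j}\cup-$.
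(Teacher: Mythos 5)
Your final formula is correct, but the route you take passes through an unjustified step. Goldman's relation $d\mu_j=\partial_{\lambda_j}\cup-$, evaluated on the coordinate basis, only determines the pairings of the vectors $\partial_{\lambda_j}$ against everything: $\partial_{\lambda_j}\cup\partial_{\mu_i}=\delta_{ij}$ and $\partial_{\lambda_j}\cup\partial_{\lambda_i}=0$. Skew-symmetry then gives $\partial_{\mu_i}\cup\partial_{\lambda_j}=-\delta_{ij}$, but it does \emph{not} force $\partial_{\mu_i}\cup\partial_{\mu_j}=0$: those pairings are simply not constrained by Theorem~\ref{Theorem:symplectic} as stated. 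Hence your assertion that $d\lambda_i=-\partial_{\mu_i}\cup-$ ``follows from $d\mu_i=\partial_{\lambda_i}\cup-$ and skew-symmetry'' is a genuine gap --- it is a Wolpert-type duality statement that would need its own argument --- and the explicit expansion $H_F=\sum_i(\partial F/\partial\mu_i)\,\partial_{\lambda_i}-\sum_i(\partial F/\partial\lambda_i)\,\partial_{\mu_i}$ that you read off from it is not justified: if some $\partial_{\mu_i}\cup\partial_{\mu_j}$ were nonzero, the Hamiltonian of $\lambda_i$ would acquire extra $\partial_{\lambda}$-components. (Your first attempt also conflates the coefficients of $H_F$ with the values $dF(\partial_{\mu_i})$, which is why it produced $\partial F/\partial\mu_j$; you rightly discarded it.)

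Fortunately the corollary never needs the coordinate expression of $H_F$, nor the ``sign check'' you single out as the main obstacle. Only the $\partial_{\mu_j}$-component of $H_F$ matters, and it is pinned down by pairing against $\partial_{\lambda_j}$ alone:
$d\mu_j(H_F)=\partial_{\lambda_j}\cup H_F=-H_F\cup\partial_{\lambda_j}=-dF(\partial_{\lambda_j})=-\partial F/\partial\lambda_j$,
using only Goldman's relation, skew-symmetry of the cup-product pairing, and the defining property $dF=H_F\cup-$. This one-line computation is exactly the paper's proof; all of its ingredients already appear in your write-up, so the repair is immediate, but as written the detour through the full Darboux relations is where your argument breaks.
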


\begin{proof}
By  Theorem~\ref{Theorem:symplectic},
$$
d \mu_j (H_F)= \partial_{\lambda_j}\cup H_F= - H_F\cup \partial_{\lambda_j}
=-\frac{\partial F}{\partial \lambda_j}.
$$
\end{proof}

\begin{Theorem}[Duality Theorem]
\label{thm:duality}
Let $\chi\in X(Z)$ and $\Gamma=\{\gamma_1,\ldots, \gamma_{k}\}\subset \pi_1(\partial Z) $ 
satisfy the hypothesis of Proposition~\ref{prop:FenchelNielsen} and let 
$a_1,\ldots,a_{k}\in\mathbf C$.
There exists a tangent vector $v\in H^1(Z,Ad\rho)$ such that
$$
d\mu_i(v)= a_{i}, \qquad\textrm{ for }i=1,\ldots,k,
$$
if and only if $a_1d\lambda_1+\cdots+a_{k}d\lambda_{k}$ vanishes in the cotangent space to $X( Z,\Gamma)$.
\end{Theorem}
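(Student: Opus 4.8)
The plan is to deduce this from the exact sequences in (co)homology relating $Z$, $\partial Z$, and the pair $(Z,\partial Z)$, together with the nondegeneracy of Goldman's cup-product pairing on $H^1(\partial Z;Ad\rho)$ (Theorem~\ref{Theorem:symplectic}) and the half-dimensionality phenomenon for $H^1(Z;Ad\rho)$ inside $H^1(\partial Z;Ad\rho)$. First I would set $V=H^1(\partial Z;Ad\rho)$, equipped with the symplectic form $\omega(\cdot,\cdot)=\cdot\cup\cdot$, and consider the restriction map $j^*\colon H^1(Z;Ad\rho)\to V$ with image $L=\operatorname{Im}(j^*)$. By Poincar\'e--Lefschetz duality for the pair $(Z,\partial Z)$ and the standard half-lives--half-dies argument (as in the Mayer--Vietoris computation already used in the proof of Proposition~\ref{assumption:rho}), $L$ is a Lagrangian subspace of $(V,\omega)$; in particular $L=L^{\perp_\omega}$. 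Dually, $\operatorname{Im}(H_1(Z;Ad\rho)\to H_1(\partial Z;Ad\rho))$ is the annihilator of $L$, and the Zariski cotangent space to $X(Z,\Gamma)$ is, by Lemma~\ref{lemma:tgrelative}(2), the image of $H_1(Z;Ad\rho)$ in $H_1(Z,\Gamma;Ad\rho)$.

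Next I would translate the two conditions in the statement into statements about $L$. On one side, Theorem~\ref{Theorem:symplectic} gives $d\mu_i=\omega(\partial_{\lambda_i},\cdot)$, so "there exists $v\in H^1(Z;Ad\rho)$ with $d\mu_i(v)=a_i$ for all $i$" is equivalent to "there exists $\bar v\in L$ with $\omega(\partial_{\lambda_i},\bar v)=a_i$", i.e.\ the linear functional $\sum a_i d\mu_i$ on $V$ — which equals $\omega(\sum a_i\partial_{\lambda_i},\cdot)$ — is realized on $L$, i.e.\ $\sum a_i\partial_{\lambda_i}\in L+ (\text{stuff that pairs trivially})$... more precisely, since $\omega$ is nondegenerate, $\sum a_id\mu_i|_L\ne$ obstruction amounts to $\sum a_i\partial_{\lambda_i}$ lying in the $\omega$-orthogonal complement issue. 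The cleanest route: $\sum a_id\mu_i$ restricts to zero on $\ker(j^*)^{\perp}$-type considerations — I would instead argue directly that the functional $\sum a_i d\mu_i$ vanishes on $L$ iff $\sum a_i\partial_{\lambda_i}\in L^{\perp_\omega}=L$, and since the $\{\partial_{\mu_i},\partial_{\lambda_i}\}$ are dual-type coordinates (Proposition~\ref{prop:FenchelNielsen}), membership $\sum a_i\partial_{\lambda_i}\in L$ is, after dualizing via the exact sequence of the pair $(Z,\Gamma)$ and Lemma~\ref{lemma:tgrelative}, exactly the condition that $\sum a_i d\lambda_i$ maps to $0$ in $(T^{Zar}_\chi)^*X(Z,\Gamma)$. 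The "if and only if" then falls out of $L=L^{\perp_\omega}$: existence of $v$ fails precisely when $\sum a_i d\mu_i|_L\neq 0$, which by Lagrangian self-duality is precisely when $\sum a_i\partial_{\lambda_i}\notin L$, which is precisely when $\sum a_i d\lambda_i\neq 0$ in the cotangent space to $X(Z,\Gamma)$.

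Concretely the steps are: (1) identify $L=\operatorname{Im}(H^1(Z)\to H^1(\partial Z))$ and show it is Lagrangian for $\omega$ via Poincar\'e--Lefschetz duality of $(Z,\partial Z)$; (2) record the dual statement that $\operatorname{Im}(H_1(Z)\to H_1(\partial Z))=\operatorname{Ann}(L)$ and combine with Lemma~\ref{lemma:tgrelative}(2) to express the cotangent space to $X(Z,\Gamma)$; (3) rewrite "$\exists v$ with $d\mu_i(v)=a_i$" as a surjectivity/nonvanishing statement for the functional $\sum a_i d\mu_i$ on $L$ using Goldman's $d\mu_i=\omega(\partial_{\lambda_i},\cdot)$; (4) use $L=L^{\perp_\omega}$ to convert nonvanishing of $\sum a_i d\mu_i|_L$ into $\sum a_i\partial_{\lambda_i}\notin L$; (5) dualize to get $\sum a_i d\lambda_i\neq 0$ in $(T^{Zar}_\chi)^*X(Z,\Gamma)$. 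I expect the main obstacle to be step (1)–(2): one must be careful about which version of duality applies (the coefficients $Ad\rho$ are self-dual via the Killing form, $Z$ is a compact $3$-manifold with boundary, and the pairing used is the composite through $H^2(\partial Z;\mathbf C)\to\mathbf C$), and about non-reducedness of the Zariski tangent/cotangent spaces — so one should phrase everything at the level of the cohomology groups $H^1$, $H_1$ and only at the end invoke Lemma~\ref{lemma:weil} and Lemma~\ref{lemma:tgrelative} to reinterpret them as (co)tangent spaces to the character varieties. A secondary point to check is that the curves $\gamma_i$ being a pants decomposition (not separating individually) does not affect the argument, since only the linear-algebraic structure of $(V,\omega)$ and the coordinate vectors $\partial_{\mu_i},\partial_{\lambda_i}$ enter.
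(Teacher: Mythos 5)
Your frame in steps (1)--(2) — $L=\operatorname{Im}\bigl(H^1(Z;Ad\rho)\to H^1(\partial Z;Ad\rho)\bigr)$ Lagrangian for Goldman's form, and Lemma~\ref{lemma:tgrelative}(2) to describe $(T^{Zar}_\chi)^*X(Z,\Gamma)$ — is exactly the paper's, but the central translation in steps (3)--(5) is wrong. Solvability of the \emph{system} $d\mu_i(v)=a_i$, $i=1,\dots,k$, is not equivalent to vanishing (or nonvanishing) of the \emph{single} functional $\sum a_i d\mu_i$ on $L$. Already for $k=1$: if $L=\langle\partial_{\mu_1}\rangle$, a vector $v$ with $d\mu_1(v)=a_1$ exists for every $a_1$, and $a_1\,d\lambda_1$ does vanish in the relative cotangent space (consistent with the theorem), yet $a_1\,d\mu_1|_L\neq 0$ and $a_1\partial_{\lambda_1}\notin L$ for $a_1\neq0$, so your chain predicts nonexistence; if instead $L=\langle\partial_{\lambda_1}\rangle$, the chain fails in the opposite direction. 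The correct dual formulation of solvability must quantify over all covectors annihilating the image of $L$ in $\mathbf C^k$, equivalently must carry auxiliary coefficients $b_i$ in the $\mu$-directions; your criterion ``$\sum a_id\mu_i|_L=0$'' is a different (and generally inequivalent) condition. Note also that the theorem is applied in the paper with $a_i=1$ at points where the $d\mu_i$ restricted to the image of $H^1(M)$ are far from zero (in the triangle case they form a basis of the cotangent space of $X(M)$), so your criterion would essentially never be satisfied there, contradicting the intended use.

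There is a second conflation in step (5): by Theorem~\ref{Theorem:symplectic}, $\partial_{\lambda_i}$ is the Hamiltonian vector of $\mu_i$, so via $\ker(i_*)=\operatorname{Ann}(L)$ and $L=L^{\perp_\omega}$ the condition $\sum a_i\partial_{\lambda_i}\in L$ dualizes to $i_*(\sum a_i d\mu_i)=0$ in $H_1(Z;Ad\rho)$ — a statement about the $\mu$'s — not to the vanishing of $\sum a_i d\lambda_i$ in $(T^{Zar}_\chi)^*X(Z,\Gamma)$; that step is a non sequitur. The repair is the paper's actual argument: given $v$, choose $F$ linear in the coordinates with $H_F=i^*(v)$; Corollary~\ref{lemma:cup} forces $F=-\sum a_i\lambda_i+\sum b_i\mu_i$ with \emph{undetermined} $b_i$, isotropy of $L$ gives $i_*(dF)=0$, and the $\mu$-terms are discarded precisely by passing to $X(Z,\Gamma)$, because the image of $\bigoplus_i H_1(\gamma_i;Ad\rho)$ in $H_1(Z;Ad\rho)$ is spanned by the $i_*(d\mu_i)$ (Lemma~\ref{lemma:tgrelative}(2)). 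Conversely, the hypothesis only yields $i_*(\sum a_id\lambda_i)=i_*(-\sum b_jd\mu_j)$ for some $b_j$; the Hamiltonian vector of $F=-(\sum a_i\lambda_i+\sum b_j\mu_j)$ then lies in $L=L^{\perp_\omega}$ and satisfies $d\mu_i(H_F)=a_i$, which is what produces $v$. The auxiliary coefficients $b_i$, absent from your outline, are exactly what make both implications work.
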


\begin{proof}
 Assume first that there exists a tangent vector $v\in H^1(Z,Ad\rho)=T_\chi X(Z)$ such that
$d\mu_i(v)=a_i$, for $i=1,\ldots, k$.
Let $i\!:\! \partial Z\to Z$, denote the inclusion, and 
$i^*\!:\! T_\chi X(Z)\to T_\chi X(\partial Z)$ 
the induced map in cohomology. 	Let $F$ be a function linear in $\mu_i$ and $\lambda_i$
 such that
$ i^* (v)=H_F$. Then by Corollary~\ref{lemma:cup}
$$
a_ j=d\mu_j (H_F)=-\frac{\partial F}{\partial \lambda_j}
$$
Hence 
$$
F = -a_1 \lambda_1-\cdots - a_{k} \lambda_{k}+ \sum b_i\mu_i,
$$
for some $b_i\in\mathbf C$.

For every $y\in T_\chi X(Z)$, 
$$
dF \circ i^*(y)=-i^*(y)\cup H_F= -i^*(y)\cup i^*(v)=0
$$
because the image of $i^*$ is an isotropic subspace.
Thus $i_*(dF)=0 $ and  by Lemma~\ref{lemma:tgrelative} (2),
$a_1 d\lambda_1+\cdots+ a_{k} d\lambda_{k}$ vanishes in the cotangent space 
$$
	 (T^{Zar}_{\chi_\rho})^* X( Z, \Gamma).
$$

To prove the converse, start assuming that $a_1 d\lambda_1+\cdots+a_{k} d\lambda_{k}$ vanishes in the cotangent space to
$X( Z, \Gamma)$. Thus $i_*(a_1 d\lambda_1+\cdots+a_{k} d\lambda_{k})\in \bigoplus_{i=1}^k H^1(\gamma_i;Ad\rho)$, by Lemma~\ref{lemma:tgrelative} (2). Hence
there exist $b_1,\ldots,b_{k}\in\mathbf C$ such that 
$$
 i_*(a_1 d\lambda_1+\cdots+a_{k} d\lambda_{k}) = i_*( -b_1\, d\mu_1-\cdots-b_{k}\, d\mu_{k}) \in H_1(\partial Z;Ad\rho).
$$
Setting $F=
-( a_1\lambda_1+\cdots+a_{k}  \lambda_{k}+ b_1 \mu_1+\cdots+b_{k}\mu_{k})
$
we have $i_*(dF)=0$. 
Working in cohomology, consider the image of
$$
i^*\! :\! H^1(Z;Ad\rho)\to  H^1(\partial Z;Ad\rho)
$$
which is a Lagrangian subspace of $H^1(\partial Z;Ad\rho)$, by a
standard argument using Poincar\'e duality. Moreover, since $i_*(dF)=0$, 
$dF$ is orthogonal to all deformations 
of $\partial Z$ induced from deformations of $Z$: 
$dF (\operatorname{Im}(i^*))=0$.
Let $H_F\in H^1(\partial Z;Ad\rho)$ be the ``Hamiltonian vector of $F$'':
$$
H_F\cup - = dF .
$$
In particular $H_F^\bot=\ker dF$ contains $\operatorname{Im}(i^*)$.
Since $\operatorname{Im}(i^*)$ is Lagrangian for the symplectic pairing,
$$
H_F\in \operatorname{Im}(i^*),
$$
otherwise $\operatorname{Im}(i^*)\oplus \langle H_F \rangle$ would contradict the maximality of
$\operatorname{Im}(i^*)$ among isotropic subspaces.

By Corollary~\ref{lemma:cup}:
$$
H_F=a_1 \partial_{\mu_1}+\cdots+ a_{k} \partial_{\mu_{k}} + \tilde b_1\,
\partial_{\lambda_1}+\cdots +\tilde b_{k}\, \partial_{\lambda_{k}},
$$
because $d\mu_j(H_F)=\frac{\partial F}{\partial \lambda_j}=a_j
$ (Corollary~\ref{lemma:cup}).
Notice that the
$\tilde b_j$ may be different from the $b_j$. 
As  $H_F\in \operatorname{Im}(i^*)$, there exists $v\in H^1(Z,Ad\rho)$ whose restriction to
$\partial Z$ is $H_F$, and therefore $d\mu_i(v)=d\mu_i(H_F)=a_i$, for
$i=1,\ldots,3n$.
\end{proof}

\section{All singular fibers are in the branching locus}
\label{section:adding}

In this section we make the following assumption, that we will remove in Section~\ref{section:curve}:

\begin{Assumption}
\label{Assumption:Ifibres}
 All singular $I$-fibers are in the branching locus of $\mathcal O^3$.
\end{Assumption}

Recall that $$
\chi_0\in X_{PSL_2(\mathbf C)}(\mathcal O^3)
$$ is the $PSL_2(\mathbf C)$-character induced by
the perimeter minimizing hyperbolic metric  of $P^2$.
Since all singular $I$-fibers are in the branching locus, we have:

\begin{Remark}
\label{rm:handlebody} 
The smooth part 
$$
H=\mathcal O^3\setminus \mathcal N(\Sigma_{\mathcal O^3}),
$$
 is a handlebody 
of genus $n+1$.
\end{Remark}

Consider 
$$
\Gamma=\{\gamma_1,\ldots, \gamma_{3n}\}\subset \pi_1( \partial H)
$$
the (oriented) meridian curves for $\mathcal O^3$, one for each singular arc of  $\Sigma_{\mathcal O^3}$.
In particular they give a pants decomposition of $\partial H$.
Order them so that:
\begin{itemize}
 \item 
$
\Upsilon=\{\gamma_{2n+1},\ldots \gamma_{3n}\}
$
is the set of \emph{vertical} meridians (around $\Sigma_{\mathcal O^3}^{Vert}$), and
\item $\Gamma\setminus \Upsilon$ is the set of \emph{horizontal} meridians (around  $\Sigma_{\mathcal O^3}^{Hor}$).
\end{itemize}
 We have the following isomorphisms
\begin{equation}
 \label{eqn:XF}
X_{PSL_2(\mathbf C)}^{irr}(P^2)\cong 
X_{PSL_2(\mathbf C)}^{irr}(\mathcal O^3)
\cong
X_{PSL_2(\mathbf C)}^{irr}(H, \Gamma)\cong_{LOC} X^{irr}(H, \Gamma).
\end{equation}
The first isomorphism is Lemma~\ref{Lemma:basefibr},
the second one is Lemma~\ref{lemma:isoorbifoldrel}, and the third one follows from the fact that all representations of a free group to
$PSL_2(\mathbf C)$ lift to $SL_2(\mathbf C)$.

We have an inclusion 
$$
X(H,\Gamma)\subset X(H,\Upsilon).
$$

In a neighborhood of $  U\subset X(H, \Upsilon)$ of $\chi_N$, we define:
$$
 \mu=(\mu_1,\ldots,\mu_{2n}) : U\subset X(H, \Upsilon) \to  \mathbf C^{2n}
$$
so that 
$$
X(H,\Gamma)\cap U= \mu^{-1}(\pi\mathbf i,\ldots, \pi\mathbf i).
$$

\begin{Lemma}
\label{lemma:normal}
\begin{enumerate}
 \item The pair $(U,X(H,\Gamma)\cap U)$ is biholomorphic to a neighborhood of the origin in in $(\mathbf C^{2n},\mathbf C^{n-3})$
\item The tangent map 
$$
\mu_*:T_{\chi_N} U\to T_{(\pi\mathbf i,\ldots, \pi\mathbf i)}\mathbf C^{2n}
$$
is injective on the normal bundle to $X(H,\Gamma)\cap U$.
\end{enumerate}
 \end{Lemma}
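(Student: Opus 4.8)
The plan is to use the isomorphisms in \eqref{eqn:XF} together with the Duality Theorem (Theorem~\ref{thm:duality}) to understand the structure of $X(H,\Gamma)$ inside $X(H,\Upsilon)$ near $\chi_N$, and to read off the normal-bundle statement from the symplectic geometry.

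First I would establish the dimensions. By \eqref{eqn:XF}, a neighborhood of $\chi_N$ in $X(H,\Gamma)$ is biholomorphic to a neighborhood of the corresponding character in the Teichm\"uller space of $P^2$ (more precisely the $PSL_2(\mathbf R)$-character variety of the polygonal Coxeter orbifold), which has complex dimension $n-3$; this is where the hypothesis that $P^2$ has $n$ edges enters. On the other hand $\chi_N$ lies in $X(H,\Upsilon)$, which by Proposition~\ref{prop:FenchelNielsen} (applied to the handlebody $H$, which is aspherical with boundary a genus $n+1$ surface, so that the pants decomposition $\Gamma$ has $3n$ curves and $\Upsilon$ fixes $n$ of them) has the $\mu_1,\dots,\mu_{2n},\lambda_1,\dots,\lambda_{2n}$ as local coordinates after restricting to the boundary; combined with the fact that $i^*\colon H^1(H;Ad\rho_0)\to H^1(\partial H;Ad\rho_0)$ is Lagrangian and that $d\mu_1,\dots,d\mu_{2n}$ span a totally isotropic subspace of its dual, one gets $\dim_{\mathbf C} X(H,\Upsilon)=2n$ near $\chi_N$, with the $\lambda_j$ of the vertical curves playing no role since those meridians are killed. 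This gives part (1): the pair is a smooth $(2n, n-3)$ pair, hence biholomorphic to $(\mathbf C^{2n},\mathbf C^{n-3})$ near the origin, provided one checks smoothness of $X(H,\Upsilon)$ at $\chi_N$ — which follows from the same integrability/obstruction-vanishing argument used in the triangle case (the obstructions are natural, live in $H^2$, and vanish on $\partial H$, then pull back), together with Artin's theorem as cited via \cite{HPS}.

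For part (2), I would argue by a dimension count using Theorem~\ref{thm:duality}. The tangent space $T_{\chi_N}U = H^1(H;Ad\rho_0)$, restricted to $\partial H$, is the Lagrangian $\operatorname{Im}(i^*)$; the map $\mu_*$ is the restriction to this Lagrangian of the projection onto the $d\mu_1,\dots,d\mu_{2n}$ coordinates. Its kernel on $T_{\chi_N}U$ consists of vectors $v$ with $d\mu_i(v)=0$ for all $i=1,\dots,2n$; the tangent space to $X(H,\Gamma)\cap U$ is exactly this kernel intersected with the locus where also the vertical $d\mu_j$ are constrained, but since those are already fixed in $X(H,\Upsilon)$ the tangent space to $X(H,\Gamma)$ inside $U$ is precisely $\ker(\mu_*)$. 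Hence $\mu_*$ is injective on any complement to $\ker(\mu_*)$, i.e.\ on the normal bundle to $X(H,\Gamma)\cap U$ in $U$. The only thing to check is that $T_{\chi_N}(X(H,\Gamma)\cap U)$ really equals $\ker(\mu_*)$ and not something smaller, which is the content of the first isomorphism in \eqref{eqn:XF} combined with Lemma~\ref{lemma:tgrelative}(1): being in $X(H,\Gamma)$ means all $3n$ meridian traces are pinned, but the relative tangent space is the kernel of $H^1(H)\to\bigoplus H^1(\gamma_i)$, and modulo the already-imposed vertical conditions this is cut out exactly by $d\mu_1=\dots=d\mu_{2n}=0$, because on $\operatorname{Im}(i^*)$ the restriction $H^1(\gamma_i)$-component is detected by $d\mu_i$ (here one uses $\chi_N(\gamma_i)\neq\pm2$).

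The main obstacle I expect is part (1): showing that $X(H,\Upsilon)$ is smooth of dimension $2n$ at $\chi_N$ and that the pair $(X(H,\Upsilon),X(H,\Gamma))$ is a \emph{smooth} submanifold pair. Smoothness of $X(H,\Gamma)\cong X^{irr}(P^2)$ near $\chi_N$ is clear (it is an open set in a Teichm\"uller space), but to promote this to smoothness of the ambient $X(H,\Upsilon)$ and to get the pair biholomorphic to a linear pair one must run the formal-integrability argument of the triangle case relative to $\Upsilon$: the obstruction classes in $H^2(H,\Upsilon;Ad\rho_0)$ are natural, vanish after restriction to the boundary where everything is a product of pairs of pants and cylinders, and a Mayer--Vietoris / long-exact-sequence argument as in Lemma~\ref{lemma:isoorbifoldrel} forces them to vanish in $H$ as well; then Artin's approximation theorem \cite{Artin} upgrades formal to analytic smoothness. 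Once both the ambient variety and the subvariety are smooth of the expected dimensions, the biholomorphism with $(\mathbf C^{2n},\mathbf C^{n-3})$ is automatic by the holomorphic implicit function theorem.
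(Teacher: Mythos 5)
There is a genuine gap in your part (1). The heart of that statement is that $X(H,\Upsilon)$ is \emph{smooth of dimension $2n$} at $\chi_N$, i.e.\ that the $n$ trace functions of the vertical meridians $\gamma_{2n+1},\ldots,\gamma_{3n}$ have linearly independent differentials on $H^1(H;Ad\rho_0)\cong T_{\chi_N}X(H)$. Neither of your two justifications delivers this. The Fenchel--Nielsen/Lagrangian remark only shows that a combination $\sum_j c_j\,d\mu_{2n+j}$ kills $\operatorname{Im}(i^*)$ exactly when $\sum_j c_j\,\partial_{\lambda_{2n+j}}\in\operatorname{Im}(i^*)$; you would still have to rule out that a nontrivial pure vertical twist is induced from the handlebody, which you do not do. The obstruction-theory fallback is misdirected: since $\pi_1(H)$ is free, $X(H)$ is already smooth at irreducible characters and there is nothing to integrate; the issue is transversality of the level set of the vertical traces, not integrability, and the cohomological vanishing you want to import from the triangle case ($H^*(\mathcal O^3;Ad\rho_0)=0$) fails for a general polygon, as the paper itself points out. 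The paper closes exactly this gap with a geometric construction you are missing: tangent vectors $\bar\nu_1,\ldots,\bar\nu_n$ obtained by deforming $P^2\setminus vertices(P^2)$ so as to open or close one vertex angle at a time, which satisfy $d\operatorname{Trace}_{\gamma_{2n+i}}(\bar\nu_j)=\delta_{ij}$ and hence force $X(H,\Upsilon)$ to be smooth of dimension $2n$.

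Your part (2) takes a genuinely different route from the paper. You identify $\ker(\mu_*|_{T_{\chi_N}U})$ with the Zariski tangent space of $X(H,\Gamma)$ via Lemma~\ref{lemma:tgrelative}(1) and the fact that for $\chi_N(\gamma_i)\neq\pm 2$ the restriction to $H^1(\gamma_i)$ is detected by $d\mu_i$, and then conclude because this Zariski tangent space has dimension $n-3$. The paper instead bounds the \emph{rank} of $\mu_*$ from below ($\geq n+3$) using Theorem~\ref{thm:duality} together with the embedding of the Teichm\"uller space of polygons into $\mathbf R^n$ by edge lengths (the conormal space $V$ and $V'\cong\mathbf C^{n+3}$), and then uses only that the variety $X(H,\Gamma)$ has dimension $n-3$. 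Your version is shorter but needs the stronger input that $\dim_{\mathbf C}T^{Zar}_{\chi_N}X_{PSL_2(\mathbf C)}(P^2)=\dim_{\mathbf C}H^1(\pi_1 P^2;Ad\rho_0)=n-3$, i.e.\ smoothness of the orbifold character variety at the Fuchsian character, which you assert rather than prove (``open set in a Teichm\"uller space'' is not quite the right statement; what is needed is that the quasifuchsian space is the full local model). That fact is standard and your argument could be completed with it, but note that it, and the whole normal-bundle formulation, still presupposes the smoothness of $U$ from part (1), so the missing independence argument above is the real obstruction to your proposal.
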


\begin{proof}
We first prove that $U$ is biholomorphic to a neighborhood of $\mathbf C^{2n}$.
The 2-orbifold $P'=P^2\setminus vertices(P^2)$ obtained by removing the vertices of $P^2$
can be deformed by changing the angles of the vertices. This gives  
$\nu_1,\ldots, \nu_{ n}$ tangent vectors to the variety of characters of
$P^2\setminus vertices(P^2)$ in $PGL_2(\mathbf R)$, 
one for each cone angle (keeping the other angles fixed). Let $\bar \nu_i$
denote the corresponding vectors in the variety of characters of $H$.
The trace functions of $\gamma_{2n+i}$ satisfy
$
d\, \operatorname{Trace}_{\gamma_{2n+i}}(\bar \nu_j)=\delta_{ij}
$,
which  implies that 
$
X( H,\Upsilon)
$
is smooth at $\chi_N$ and has dimension $2n$. 

Using elementary hyperbolic trigonometry, one can prove that the Teichm\"uller space of $P^2$ (the space of $n$-polygons in $\mathbf H^2$ with fixed angles) embeds in $\mathbf R^n$,
with coordinates edge lengths, and it is a smooth submanifold of codimension $3$. 
Let $V\subset\mathbf C^n$ be the complexification of this normal space,
and, assuming that $\gamma_i$ and $\gamma_{n+i}$ project to the same edge of $P^2$, $i=1,\ldots,n$, let 
$$
V'=\{(a_1,\ldots,a_{2n})\in \mathbf C^{2n}\mid (a_1+a_{n+1},\ldots , a_n+a_{2n})\in V\}\cong \mathbf C^{n+3}. 
$$
Thus, if $(a_1,\ldots,a_{2n})\in V'$, then $a_1d \lambda_1+\cdots + a_{2n} d\lambda_{2n}$ vanishes in the cotangent space
$$(T^{Zar}_{\chi_0})^* X_{PSL_2(\mathbf C)}(P^2)\cong (T^{Zar}_{\chi_0})^* X(H,\Gamma).$$
By the duality theorem (Thm.~\ref{thm:duality}), $V'$ is contained in the image of $\mu_*:T_{\chi_0} U\to T_{(\pi\mathbf i,\ldots, \pi\mathbf i)}\mathbf C^{2n}$,
ie.\ this map has rank at least $n+3$. Since the dimension of $X_{PSL_2(\mathbf C)}^{irr}(P^2)\cong_{LOC} X^{irr}(H, \Gamma)$  is $n-3$, the lemma follows.
\end{proof}

Write $V=\mu (U)\subset \mathbf C^{2n}$. Let $\check U$ be the blow-up of $U$ at the submanifold  $X(H,\Gamma)\cap U$ and $\check V$ the blow-up of $V$ at the point
$(\pi\mathbf i,\ldots, \pi\mathbf i)\in V$. The respective exceptional divisors are denoted by $E_U\subset \check U$ and $E_V\subset \check V$.

\begin{Lemma}
 \label{lemma:blowuplift}
The map $\mu$ lifts to the blow-up, so that the following diagram commutes:
$$\xymatrix{
  (\check U, E_U)\ar[r]^{\check\mu} \ar[d]_{pr_U} &  (\check V, E_V) \ar[d]^{pr_V} \\
(U,X(H,\Gamma)\cap U)  \ar[r]^\mu & (V, (\pi\mathbf i,\ldots,\pi\mathbf i)
 }
$$

\end{Lemma}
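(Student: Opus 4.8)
The plan is to use the universal property of the blow-up. Recall that $\mathrm{pr}_V : \check V \to V$ is the blow-up of $V$ at the single smooth point $(\pi\mathbf i,\ldots,\pi\mathbf i)$, so by the universal property of blowing up a point, the lift $\check\mu$ exists provided the composite $\mu\circ \mathrm{pr}_U : \check U \to V$ sends the exceptional divisor $E_U$ into the exceptional divisor $E_V$, equivalently provided the pullback under $\mu\circ\mathrm{pr}_U$ of the maximal ideal sheaf of the point $(\pi\mathbf i,\ldots,\pi\mathbf i)$ is locally principal on $\check U$. So the content is a statement about the order of vanishing of the functions $\mu_j - \pi\mathbf i$ along $X(H,\Gamma)\cap U$.

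First I would record that, by Lemma~\ref{lemma:normal}(1), the pair $(U, X(H,\Gamma)\cap U)$ is biholomorphic to $(\mathbf C^{2n}, \mathbf C^{n-3})$, so locally we may choose holomorphic coordinates $(x,y)$ on $U$ with $x\in\mathbf C^{n+3}$ normal and $y\in\mathbf C^{n-3}$ tangential, such that $X(H,\Gamma)\cap U = \{x=0\}$. Then $\check U$ is the standard blow-up of $\mathbf C^{2n}$ along $\{x=0\}$ and $E_U$ is cut out (in a chart) by a single one of the $x$-coordinates. Since $\mu$ vanishes on $X(H,\Gamma)\cap U$ — indeed $X(H,\Gamma)\cap U = \mu^{-1}(\pi\mathbf i,\ldots,\pi\mathbf i)$, so each $\mu_j - \pi\mathbf i$ vanishes there — each function $\mu_j-\pi\mathbf i$ lies in the ideal $(x_1,\ldots,x_{n+3})$. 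The key extra input is Lemma~\ref{lemma:normal}(2): the differential $\mu_*$ is injective on the normal bundle to $X(H,\Gamma)\cap U$. This means the linear parts of the $\mu_j-\pi\mathbf i$ in the $x$-directions span the full $(n+3)$-dimensional dual, so after a linear change one can arrange that the $\mu_j-\pi\mathbf i$ already have nonzero linear term in $x$; in particular each $\mu_j-\pi\mathbf i$ vanishes to order exactly $1$ along $\{x=0\}$. Therefore in each blow-up chart $\mathrm{pr}_U^*(\mu_j-\pi\mathbf i)$ is divisible by the local equation of $E_U$, i.e. $\mathrm{pr}_U^*(\mu_j - \pi\mathbf i) = \ell \cdot g_j$ with $\ell$ the exceptional coordinate and $g_j$ holomorphic, and the $g_j$ do not simultaneously vanish (again by injectivity on the normal bundle). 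This is exactly the condition that $\mu\circ\mathrm{pr}_U$ factors through $\mathrm{pr}_V$, producing the holomorphic lift $\check\mu : \check U \to \check V$ with $\check\mu(E_U)\subset E_V$, and the diagram commutes by construction since $\mathrm{pr}_V\circ\check\mu = \mu\circ\mathrm{pr}_U$ away from $E_U$ and both sides are continuous.

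To make the factorization concrete I would work chart by chart on $\check V$: the blow-up $\check V\subset V\times \mathbf P^{2n-1}$ is covered by charts where the homogeneous coordinates $[z_1:\cdots:z_{2n}]$ on $\mathbf P^{2n-1}$ are de-homogenized, and on chart $\{z_m\neq 0\}$ the map $\check\mu$ is given by $\check\mu = (\mu\circ\mathrm{pr}_U,\ [g_1:\cdots:g_{2n}])$ where $g_j = \mathrm{pr}_U^*(\mu_j-\pi\mathbf i)/\ell$ as above — the point is that this ratio extends holomorphically across $E_U$ precisely because numerator and denominator both vanish to order one. One checks that these chart-wise definitions glue and land in $\check V$ because the $g_j$ satisfy the same incidence relations $(\pi\mathbf i + \ell g) \in V$ that define $\check V$ as the strict-plus-total transform. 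The hard part, and the only place where real work beyond bookkeeping is needed, is verifying the \emph{exact} order-one vanishing — i.e. genuinely using Lemma~\ref{lemma:normal}(2) rather than merely Lemma~\ref{lemma:normal}(1) — to guarantee that $\mathrm{pr}_U^*(\mu_j-\pi\mathbf i)/\ell$ has no common zero on $E_U$, so that the lifted map is well-defined (lands in $\mathbf P^{2n-1}$) everywhere on $\check U$; once this is in hand, commutativity of the square and holomorphy are automatic from the universal property of the blow-up.
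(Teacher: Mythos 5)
Your proposal is correct and takes essentially the same route as the paper: the paper's one-line proof asserts that the condition for lifting is precisely injectivity of $\mu_*$ on the normal bundle (Lemma~\ref{lemma:normal}(2)), and your argument via the universal property of the blow-up, order-one vanishing of the $\mu_j-\pi\mathbf i$ along $X(H,\Gamma)\cap U$, and the non-vanishing of the quotients $g_j=pr_U^*(\mu_j-\pi\mathbf i)/\ell$ on $E_U$ is exactly the standard way to substantiate that claim. The only point to make explicit, as the paper does, is that the injectivity in Lemma~\ref{lemma:normal}(2), stated at $\chi_N$, holds at every point of $X(H,\Gamma)\cap U$ near $\chi_N$ (it is an open condition), after shrinking $U$ if necessary, since your non-vanishing of the $g_j$ is needed over the whole center and not just over the fibre at $\chi_N$.
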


\begin{proof}
This is a consequence of  Lemma~\ref{lemma:normal} (2), that applies no only to $\chi_0$ but to a neighborhood of it
in $X(H,\Gamma)$, because that condition to lift is that $\mu_*$ is injective on the normal bundle.
\end{proof}

\begin{Lemma}
\label{lemma:isolated} The character 
$\chi_0$ is an isolated critical point  in 
$X_{PSL_2(\mathbf C)}(P^2)$
 of the complex function
$\lambda_1+\cdots+\lambda_{2n}$. In addition, for any choice of local coordinates, the determinant of the Hessian at  $\chi_0$ does not vanish.
\end{Lemma}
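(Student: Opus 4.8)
The plan is to reduce this statement about a holomorphic function on the complex character variety to a statement about the genuine (real) perimeter function on the Teichm\"uller space of polygons, where the Kerckhoff--Thurston--Wolpert theory of Appendix~\ref{sec:appendix} applies, and then to transfer the conclusion back. Write $X=X_{PSL_2(\mathbf C)}(P^2)$; by~(\ref{eqn:XF}) and Lemma~\ref{lemma:normal}, $X$ is smooth of complex dimension $n-3$ near $\chi_0$. Inside $X$ sits $\mathfrak P$, the Teichm\"uller space of hyperbolic structures on the polygon $P^2$ with its fixed vertex angles, a real-analytic submanifold of real dimension $n-3$ (edge lengths embed it in $\mathbf R^n$ with codimension $3$, as recalled in the proof of Lemma~\ref{lemma:normal}). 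Since $\rho_0$ takes values in $\operatorname{Isom}(\mathbf H^2)=PGL_2(\mathbf R)$, the adjoint action $Ad\,\rho_0(\gamma)$ preserves the real form $\mathfrak{sl}_2(\mathbf R)\subset\mathfrak{sl}_2(\mathbf C)$ for every $\gamma$, so $T_{\chi_0}\mathfrak P=H^1(\pi_1 P^2;Ad\,\rho_0)$ with $\mathfrak{sl}_2(\mathbf R)$-coefficients is a real form of $T_{\chi_0}X=H^1(\pi_1 P^2;Ad\,\rho_0)$ with $\mathfrak{sl}_2(\mathbf C)$-coefficients; thus $\mathfrak P$ is a totally real submanifold of $X$ of half the real dimension. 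Finally, by Section~\ref{section:fenchelnielsen} the holomorphic function $f:=\lambda_1+\cdots+\lambda_{2n}$, defined near $\chi_0$, restricts on $\mathfrak P$ to twice the perimeter of $P^2$ (the $\gamma_1,\ldots,\gamma_{2n}$ being the horizontal meridians, two over each edge of $P^2$).

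Next I would run the real-variable argument. By Proposition~\ref{prop:geometry} the perimeter has a unique minimum on $\mathfrak P$, attained at $\chi_0$; moreover the convexity results recalled in Appendix~\ref{sec:appendix} --- strict convexity of length functions along earthquake paths together with Wolpert's second variation formula --- show that the perimeter is strictly convex with positive-definite Hessian at every point. In particular $d(f|_{\mathfrak P})(\chi_0)=0$ and the real symmetric matrix $\operatorname{Hess}_{\mathbf R}(f|_{\mathfrak P})(\chi_0)$ is invertible. Since $f$ is holomorphic, $df(\chi_0)$ is $\mathbf C$-linear, and as it vanishes on $T_{\mathbf R}\mathfrak P$, which spans $T_{\chi_0}X=T_{\mathbf R}\mathfrak P\otimes_{\mathbf R}\mathbf C$ over $\mathbf C$, it vanishes identically: $\chi_0$ is a critical point of $f$ on $X$.

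Then I would complexify. Choose holomorphic coordinates $z=(z_1,\dots,z_{n-3})$ on $X$ centered at $\chi_0$ and restricting to real-analytic coordinates on $\mathfrak P$; this is possible because $\mathfrak P$ is a real-analytic totally real submanifold of maximal dimension (complexify any real-analytic chart of $\mathfrak P$). Expanding $f$ to second order at $\chi_0$ and using that $\chi_0$ is critical and that $f|_{\mathfrak P}$ is real-valued in the real coordinates, one sees that the complex Hessian matrix $\big(\partial^2 f/\partial z_i\,\partial z_j(\chi_0)\big)$ is real, symmetric, and equal to $\operatorname{Hess}_{\mathbf R}(f|_{\mathfrak P})(\chi_0)$; by the previous paragraph it is invertible. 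Hence $\chi_0$ is a nondegenerate critical point of $f$; applying the holomorphic inverse function theorem to the map $\chi\mapsto df_\chi$ (whose differential at $\chi_0$ is this Hessian) shows $df$ vanishes nowhere else near $\chi_0$, so $\chi_0$ is isolated. For the coordinate-independence of the non-vanishing of the Hessian determinant: under a biholomorphic change $w=\phi(z)$ one has, at the critical point, $\operatorname{Hess}_w f=(D\phi(\chi_0))^{-T}\operatorname{Hess}_z f\,(D\phi(\chi_0))^{-1}$, so $\det\operatorname{Hess}_w f=\det(D\phi(\chi_0))^{-2}\det\operatorname{Hess}_z f$, which is nonzero because $\det D\phi(\chi_0)\neq 0$.

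The main obstacle is the input from the Appendix: one genuinely needs positive-definiteness of the Hessian of the perimeter at the minimizer, not merely that the minimizer is unique (as in Proposition~\ref{prop:geometry}) or that the perimeter is convex along earthquake paths --- strict convexity alone would not exclude a degenerate critical point. Establishing this is precisely where Wolpert's second variation formula for geodesic length functions, adapted to the polygon and cone setting in Appendix~\ref{sec:appendix}, is needed; the identification of $\mathfrak P$ as a totally real slice and the resulting transfer of the Hessian from $\mathfrak P$ to $X$ are then routine.
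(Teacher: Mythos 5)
Your proposal is correct and follows essentially the same route as the paper: nondegeneracy of the perimeter's Hessian on the Teichm\"uller space of the polygon via Wolpert's second variation along earthquakes and Kerckhoff's result that every tangent direction is an earthquake direction (the content of Appendix~\ref{sec:appendix}), followed by complexification to the character variety. The paper states the complexification step in one line, whereas you spell it out via the totally real slice and the holomorphic inverse function theorem; this is a welcome elaboration, not a different method.
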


\begin{proof}
By Kerckhoff's proof of Nielsen conjecture \cite{KerckhoffAnnals}, 
it is an isolated critical point when restricted to the Teichm\"uller space
of $P^2$ (after taking a finite covering that is a manifold). As explained in \cite{KerckhoffAnnals},
the second derivative is nonzero in all directions tangent to earthquake
deformations, by using the estimates of Wolpert \cite{Wolpert} on the second derivative
of twist deformations (See Appendix~\ref{sec:appendix}). By \cite[Thm~3.5]{KerckhoffDuke}, every tangent direction
in Teichm\"uller space is tangent to an earthquake. Hence the restriction of
$\lambda_1+\cdots+\lambda_{2n}$ to the Teichm\"uller space has a nondegenrate critical
point at $\chi_0$ (ie.\ its Hessian is positive definite). 
By complexifying, it follows that it is an isolated critical point on
quasifuchsian space.
\end{proof}

By using  the duality theorem (Thm.~\ref{thm:duality}), and since $\chi_0$ minimizes the perimeter of $P^2$, we can make the following definition:

\begin{Definition}
We denote by $v_0\in T_ {\chi_N} X(H,\Upsilon)$ a vector that satisfies
$$
 d\mu_i(v_0)= \left\{
\begin{array}{l}
1,\textrm{ for }i=1,\ldots, 2n, \\
0,\textrm{ for }i=2n+1,\ldots, 3n.
\end{array}\right.
$$
\end{Definition}

Elements of $E_U$ are directions of vectors $v$ normal to $X(H,\Gamma)$, denoted by $\langle v\rangle$.

\begin{Proposition}
\label{Prop:blowupbihol}
The map $\check\mu$ restricts to a biholomorphism between a neighborhood of  $\langle v_0\rangle $
in $\check U$ and a neighborhood of $\langle (1,\ldots,1)\rangle $ in $\check V$.
\end{Proposition}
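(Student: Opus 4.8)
The plan is to apply the holomorphic inverse function theorem. Both $\check U$ and $\check V$ are smooth complex manifolds of dimension $2n$ near the relevant points ($\check U$ is the blow--up of the $2n$--manifold $U$ along the smooth $(n-3)$--dimensional locus $X(H,\Gamma)\cap U$, and $V=\mu(U)$ has dimension $2n$ because the generic fibre of $\mu$ is $0$--dimensional: for trace values close to but different from $p:=(\pi\mathbf i,\dots,\pi\mathbf i)$ the relative character variety is $0$--dimensional, the hyperbolic cone structures being rigid). Hence it suffices to show that $\check\mu(\langle v_0\rangle)=\langle(1,\dots,1)\rangle$ and that $d\check\mu$ is an isomorphism at $\langle v_0\rangle$. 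The first assertion is immediate: on the exceptional divisors the lift of Lemma~\ref{lemma:blowuplift} sends a normal direction $\langle v\rangle\in E_U$ to $\langle\mu_*(v)\rangle\in E_V$ (this is exactly the injectivity of $\mu_*$ on the normal bundle, Lemma~\ref{lemma:normal}(2), that permits the lift), and by definition of $v_0$ one has $\mu_*(v_0)=(1,\dots,1)$.

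For the differential I would work in coordinates. By Lemma~\ref{lemma:normal}(1) identify $U$ with a neighbourhood of the origin in $\mathbf C^{n-3}_y\times\mathbf C^{n+3}_x$ so that $X(H,\Gamma)\cap U=\{x=0\}$; since $\mu$ is constant equal to $p$ on $\{x=0\}$ we may write
$$
\mu(y,x)-p=A(y)\,x+O(\|x\|^2),
$$
where $A(y)\colon\mathbf C^{n+3}\to\mathbf C^{2n}$ is linear and, by Lemma~\ref{lemma:normal}(2), injective for $y$ near $0$. In the usual blow--up charts $x=t\,\xi$ the lift becomes $\check\mu(y,t,[\xi])=\bigl(t(A(y)\xi+O(t)),\,[A(y)\xi+O(t)]\bigr)$; on the exceptional divisor $E_U=\{t=0\}$ it is $(y,[\xi])\mapsto(0,[A(y)\xi])$, and $\langle v_0\rangle$ is the point $(0,0,[\hat v_0])$, where $\hat v_0$ is the normal component of $v_0$, so that $A(0)\hat v_0=(1,\dots,1)$. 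Reading off $d\check\mu$ at this point, it is an isomorphism if and only if: (a) $A(0)\hat v_0\neq0$, which holds; and (b) the map $(y,[\xi])\mapsto[A(y)\xi]\in\mathbf P(\mathbf C^{2n})$ has injective differential at $(0,[\hat v_0])$. The $\xi$--variation in (b) is the projectivization of the injective map $A(0)$, contributing the $(n+2)$--plane $\operatorname{Im}A(0)/\langle(1,\dots,1)\rangle$ inside $T_{[(1,\dots,1)]}\mathbf P(\mathbf C^{2n})\cong\mathbf C^{2n}/\langle(1,\dots,1)\rangle$; the $y$--variation contributes the classes of $\dot A_1\hat v_0,\dots,\dot A_{n-3}\hat v_0$, where $\dot A_j=\partial_{y_j}A(0)$. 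So (b), and with it the Proposition, is equivalent to: \emph{$\dot A_1\hat v_0,\dots,\dot A_{n-3}\hat v_0$ are linearly independent modulo $\operatorname{Im}A(0)$.}

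The last step identifies this with Lemma~\ref{lemma:isolated}. By the Duality Theorem~\ref{thm:duality}, a tuple $(a_1,\dots,a_{2n})$ lies in $\operatorname{Im}A(0)=\operatorname{Im}(\mu_*)_{\chi_0}$ precisely when $a_1\,d\lambda_1+\dots+a_{2n}\,d\lambda_{2n}=0$ in $T^*_{\chi_0}X(H,\Gamma)$, so $(a_i)\mapsto\sum_i a_i\,d\lambda_i$ descends to an isomorphism $\mathbf C^{2n}/\operatorname{Im}A(0)\xrightarrow{\ \sim\ }T^*_{\chi_0}X(H,\Gamma)$, the cotangent space to the Teichm\"uller space of $P^2$. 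Under it, $\dot A_j\hat v_0$ — whose $i$--th entry is the mixed second derivative $\partial_{y_j}\partial_{\hat v_0}\mu_i$ at $\chi_0$ — is carried, after unwinding the definition of $v_0$ through Goldman's pairing (Theorem~\ref{Theorem:symplectic}, Corollary~\ref{lemma:cup}) and the fact that $\chi_0$ is a critical point of $\lambda_1+\dots+\lambda_{2n}$ (it minimizes the perimeter), to the $j$--th column of the Hessian of $\lambda_1+\dots+\lambda_{2n}$ on $X(H,\Gamma)$ at $\chi_0$. Since that Hessian is nondegenerate by Lemma~\ref{lemma:isolated}, these columns are linearly independent, which is exactly condition (b); the inverse function theorem then gives the asserted biholomorphism.

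The main obstacle is this final identification: matching the second--order variation of the horizontal trace functions along the Teichm\"uller directions (the vectors $\dot A_j\hat v_0$) with the Hessian of the perimeter, through the symplectic duality between the coordinates $\mu_i$ and $\lambda_i$. Everything else is the standard local analysis of a blow--up together with the inverse function theorem.
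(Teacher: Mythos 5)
Your formal reduction is correct: the chart computation on the blow--up, the observation that $d\check\mu$ at $\langle v_0\rangle$ is invertible iff (a) $A(0)\hat v_0\neq 0$ and (b) $\dot A_1\hat v_0,\dots,\dot A_{n-3}\hat v_0$ are linearly independent modulo $\operatorname{Im}A(0)$, and the identification of $\mathbf C^{2n}/\operatorname{Im}A(0)$ with $(T^{Zar}_{\chi_0})^*X(H,\Gamma)$ via $(a_i)\mapsto\sum_i a_i\,d\lambda_i$ (injectivity is Theorem~\ref{thm:duality}, surjectivity follows from the dimension count $2n-(n+3)=n-3$) are all fine. The gap is exactly at the step you yourself call ``the main obstacle'': the claim that the class of $\dot A_j\hat v_0$ is carried to the $j$--th column of the Hessian of $\lambda_1+\cdots+\lambda_{2n}$ is asserted, not proved, and the ingredients you invoke do not by themselves yield it — criticality of $\chi_0$ plays no role in this identity (it only makes the Hessian coordinate--free), and Goldman's pairing enters only through Theorem~\ref{thm:duality}. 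Since this identity carries the entire content of the Proposition beyond routine blow--up bookkeeping, the proposal as written is incomplete.

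The identity is true, and the missing argument is short: apply Theorem~\ref{thm:duality} not only at $\chi_0$ but along $X(H,\Gamma)$. For $y$ near $0$, the vector $\tilde v(y)\in T_{(y,0)}U$ with coordinates $(0,\hat v_0)$ satisfies $d\mu_i(\tilde v(y))=(A(y)\hat v_0)_i$ for $i\le 2n$ and $d\mu_i(\tilde v(y))=0$ for the vertical meridians (the vertical traces are constant on $X(H,\Upsilon)$), so by the duality theorem $\sum_{i\le 2n}(A(y)\hat v_0)_i\,d\lambda_i$ annihilates $T_{(y,0)}X(H,\Gamma)$, i.e.
\begin{equation*}
\sum_{i=1}^{2n}\bigl(A(y)\hat v_0\bigr)_i\,\frac{\partial\lambda_i}{\partial y_l}(y)=0\qquad\textrm{for all }l\textrm{ and all }y\textrm{ near }0.
\end{equation*}
Differentiating in $y_j$ at $y=0$ and using $A(0)\hat v_0=(1,\dots,1)$ gives $\sum_i(\dot A_j\hat v_0)_i\,\partial_{y_l}\lambda_i(0)=-\,\partial^2_{y_jy_l}(\lambda_1+\cdots+\lambda_{2n})(0)$, so under your isomorphism the classes $[\dot A_j\hat v_0]$ are (up to sign) the columns of the Hessian, and Lemma~\ref{lemma:isolated} gives their independence. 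With this supplied, your route works.

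Note that it is genuinely different in execution from the paper's: the paper never computes this second--order identity, but instead uses the nondegenerate Hessian plus the implicit function theorem to get, for each $(a_1,\dots,a_{2n})$ near $(1,\dots,1)$, a unique critical point of $\sum a_i\,d\lambda_i$ on $X(H,\Gamma)\cap U$, hence (via Theorem~\ref{thm:duality} and Lemma~\ref{lemma:normal}(2)) injectivity of $\check\mu$ restricted to $E_U$ near $\langle v_0\rangle$; an injective holomorphic map between the $(2n-1)$--dimensional divisors is then a biholomorphism, and injectivity of $\check\mu_*$ in the normal direction (your condition (a)) allows the inverse function theorem. Your pointwise differential computation buys a more self--contained local argument at one point, at the price of the second--order duality identity above. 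Two minor remarks: do not justify $\dim V=2n$ by rigidity of the hyperbolic cone structures — these are only constructed in Section~\ref{section:developingmaps}, so at this stage the appeal is circular; it is also unnecessary, since once $d\check\mu$ is invertible at $\langle v_0\rangle$ its image is open in the blow--up of $\mathbf C^{2n}$ at $(\pi\mathbf i,\dots,\pi\mathbf i)$ and is automatically a neighborhood of $\langle(1,\dots,1)\rangle$ in $\check V$.
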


\begin{proof}
We prove first that $\check\mu$  is injective in  a neighborhood of $\langle v_0\rangle $   in the exceptional divisors $E_U$.
By Lemma~\ref{lemma:isolated}, the determinant of the Hessian of $\lambda_1+\cdots+\lambda_{2n}:X(H,\Gamma)\cap U\to \mathbf C$ at $\chi_0$ does not vanish. Hence
by the implicit function theorem, for any $(a_1,\ldots,a_{2n})\in \mathbf C^{2n}$ in a
 neighborhood of $(1,\ldots, 1)$, there exists a unique $\chi\in X(H,\Gamma)\cap U$ such that $\chi$ is a critical point of $a_1d\lambda_1+\cdots + a_{2n}d\lambda_{2n}$.
In particular, by Theorem~\ref{thm:duality} there exists a unique $\chi\in X(H,\Gamma)\cap U$ such that its tangent space contains a vector $v\in T_{\chi} U$ with $\mu_*(v)=(a_1,\ldots,a_{2n})$. 
Moreover, this $v$ is unique in the normal bundle, by Lemma~\ref{lemma:normal} (2). This proves that 
 $\check\mu$ 
is injective in a neighborhood  in the exceptional divisor $E_U$. By holomorphicity, this implies that $\check\mu$ is a biholomorphism between the 
neighborhoods in $E_U$ and $E_V$. By construction $\check\mu_*$ is injective in the normal direction to $E_U$ ($(pr_U)_*$ is injective in the normal direction), hence the inverse function theorem applies.
\end{proof}

\begin{Corollary}
 \label{cor:curve}
There exists an algebraic $\mathbf C$-curve $\mathcal C\subset X(H;\Upsilon)$ containing $\chi_N$, such that 
$\mu\vert_{ \mathcal C}$ is  a biholomorphism between a neighborhood of $\chi_N$ and a neighborhood of the diagonal
 $\mu_1=\cdots=\mu_{2n}$.
\end{Corollary}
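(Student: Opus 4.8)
The plan is to obtain the curve $\mathcal C$ simply as the $pr_U$-image of (a neighborhood of $\langle v_0\rangle$ in) the strict transform of the diagonal, using Proposition~\ref{Prop:blowupbihol}. More precisely, let $\Delta=\{\mu_1=\cdots=\mu_{2n}\}\subset V$ be the diagonal line through $(\pi\mathbf i,\ldots,\pi\mathbf i)$, and let $\check\Delta\subset\check V$ denote its strict transform under the blow-up $pr_V$. Since $\Delta$ is a smooth curve through the blown-up point, $pr_V$ restricts to a biholomorphism $\check\Delta\to\Delta$ near $E_V$, and $\check\Delta$ meets $E_V$ transversely at the single point $\langle(1,\ldots,1)\rangle$, the direction of $\Delta$. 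By Proposition~\ref{Prop:blowupbihol}, $\check\mu$ carries a neighborhood of $\langle v_0\rangle$ in $\check U$ biholomorphically onto a neighborhood of $\langle(1,\ldots,1)\rangle$ in $\check V$; set $\check{\mathcal C}=\check\mu^{-1}(\check\Delta)$ near $\langle v_0\rangle$, a smooth $\mathbf C$-curve in $\check U$ meeting $E_U$ transversely at $\langle v_0\rangle$.

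Next I would push $\check{\mathcal C}$ down to $U$. Because $\check{\mathcal C}$ is smooth and transverse to $E_U$, the blow-down $pr_U$ maps it biholomorphically onto a smooth curve $\mathcal C\subset U\subset X(H;\Upsilon)$ through $\chi_N$, whose tangent at $\chi_N$ is the direction $\langle v_0\rangle$ normal to $X(H,\Gamma)\cap U$ (in particular $\mathcal C$ is not contained in $X(H,\Gamma)\cap U$, which is exactly what is needed for the eventual deformation). Then the commuting square of Lemma~\ref{lemma:blowuplift} gives, near $\chi_N$,
$$
\mu\vert_{\mathcal C}=pr_V\circ\check\mu\circ(pr_U\vert_{\check{\mathcal C}})^{-1}:\mathcal C\to\Delta,
$$
a composition of three biholomorphisms onto their images; hence $\mu\vert_{\mathcal C}$ is a biholomorphism between a neighborhood of $\chi_N$ and a neighborhood of the diagonal, as claimed.

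Finally, for algebraicity: $X(H;\Upsilon)$ is an affine algebraic set over $\mathbf Q$, the map $\mu$ is given by regular (trace) functions, and $\Delta$ is cut out by the linear equations $\mu_i-\mu_j=0$; so $\mu^{-1}(\Delta)$ is an algebraic subset of $X(H;\Upsilon)$, and $\mathcal C$ is the unique irreducible component of $\mu^{-1}(\Delta)$ through $\chi_N$ having the right tangent direction $\langle v_0\rangle$ — the local analysis above shows this component is one-dimensional and smooth at $\chi_N$, with $\mu$ restricting to a local biholomorphism onto $\Delta$. The main obstacle, already dispatched by Proposition~\ref{Prop:blowupbihol} and Lemma~\ref{lemma:blowuplift}, is precisely the transversality/biholomorphy of $\check\mu$ along the exceptional divisor; once that is in hand the present corollary is just bookkeeping with strict transforms and blow-downs, together with the observation that preimages of linear subspaces under a regular map are algebraic.
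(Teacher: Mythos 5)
Your argument is correct and is essentially the paper's intended derivation: the corollary is stated as an immediate consequence of Proposition~\ref{Prop:blowupbihol}, obtained exactly as you do by transporting the (strict transform of the) diagonal through the biholomorphism $\check\mu$ and blowing down, with Lemma~\ref{lemma:blowuplift} providing the commuting square. The only small imprecision is the claim that $\mu$ is given by regular trace functions: the $\mu_i$ involve a choice of logarithm of eigenvalues, so the algebraic equations cutting out the locus are really the trace relations $\chi(\gamma_i)=\pm\chi(\gamma_j)$ (equivalently $2\cosh(\mu_i/2)=2\cosh(\mu_j/2)$), which is precisely the bookkeeping the paper performs later when defining $S_{\mathbf C}$ in Section~\ref{section:curve}.
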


\begin{Remark}
In this way we already obtain a path of representations analogue to Proposition~\ref{assumption:rho}, just by considering the path 
$\mu_1=\cdots=\mu_{2n}=\mathbf i (\pi-t)$. Lifting it to a deformation of representations $\rho_t$, 
and since $\rho_t(\gamma_i)\in\mathbf R$,
the deformation satisfies, up to conjugation
\begin{equation}
\label{eqn:rhomi} 
\rho_t(\gamma_i)=\pm 
\begin{pmatrix} e^{\mathbf \mu_i(t)/2} & 0 \\
	0 & e^{-\mathbf \mu_i(t)/2} 
\end{pmatrix}.
\end{equation}
\end{Remark}

\begin{Remark}
\label{remark:changesign}
 Replacing $t$ by $-t$ in the previous choice changes the sign of the trace. This corresponds to changing the orientation 
because when we take the complex conjugate, the sign of the trace of $\rho_t(\gamma_i)$ in Equation~(\ref{eqn:rhomi})
is changed, but also the sign of $$
\rho_0(\gamma_i)=\pm \begin{pmatrix} e^{\mathbf i\pi/2} & 0 \\
	0 & e^{- \mathbf i\pi/2} 
\end{pmatrix},
$$
hence the sign in the relation $tr(\rho_t(\gamma_i))=\pm 2\cos(\alpha_i(t)/2)$.
\end{Remark}

\section{Constructing a curve of representations of $M$}
\label{section:curve}

The goal of this section is to prove the following proposition, which implies Proposition~\ref{assumption:rho}. We will use the previous section and a deformation argument.

\begin{Proposition}
\label{prop:defmM}
There exists an algebraic curve of representations  of $M=\mathcal O\setminus \mathcal N(\Sigma_{\mathcal O})$ containing $\chi_0$, so that  all
meridians can be deformed by decreasing their rotation angle, 
and the cone angle is the same for each meridian.
\end{Proposition}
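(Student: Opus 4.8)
The plan is to reduce the general case to the situation of Section~\ref{section:adding} by a deformation argument, degenerating the hyperbolic structures on the vertical singular fibers that lie \emph{outside} the branching locus. Recall that under Assumption~\ref{Assumption:Ifibres} we have produced (Corollary~\ref{cor:curve}) an algebraic curve $\mathcal C\subset X(H,\Upsilon)$ along which all horizontal meridians rotate with the same angle $\pi - t^r + O(t^{r+1})$. In the general case, some singular fibers of $\mathcal O^3$ are not in $\Sigma_{\mathcal O^3}$; these correspond to orbifold cone points of the base polygon $P^2$ that are not covered by a mirror. The first step is to set up the relative character variety $X(M,\Gamma\cup\Gamma')$, where $\Gamma$ is the set of meridians of $\Sigma_{\mathcal O^3}$ as before and $\Gamma'$ is the set of (vertical) meridians around those singular fibers not in the branching locus; on $\Gamma'$ we impose the orbifold trace values $\pm 2\cos(\pi/m_j)$, which pins down the local structure exactly as in Lemma~\ref{lemma:isoorbifoldrel} and Lemma~\ref{Lemma:basefibr}.

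The key step is to realize $\chi_0$ as a limit of characters on the handlebody exterior. Concretely, I would temporarily \emph{forget} the orbifold condition along $\Gamma'$ and instead view those singular fibers as honest boundary tori of a manifold $M'$ (the exterior of $\Sigma_{\mathcal O^3}\cup(\text{the non-branching singular fibers})$), so that $M'$ is now a handlebody of the type in Remark~\ref{rm:handlebody} for the enlarged graph. On this handlebody the machinery of Section~\ref{section:adding} applies verbatim: the duality theorem (Theorem~\ref{thm:duality}) together with the nondegeneracy of the Hessian of $\lambda_1+\cdots$ at $\chi_0$ (Lemma~\ref{lemma:isolated}, which is a statement purely about the perimeter-minimizing point in Teichm\"uller space and does not care whether fibers are branched) yields, after blowing up as in Proposition~\ref{Prop:blowupbihol}, an algebraic curve $\widehat{\mathcal C}$ through $\chi_0$ along which all horizontal meridians acquire equal rotation angle $\pi - t^r + O(t^{r+1})$, while the meridians in $\Gamma'$ have complex length with derivative $0$ along the curve at $t=0$. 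Because $d\mu_j(v_0)=0$ for $j$ indexing $\Gamma'$, the character stays, to first order, on the slice where the $\Gamma'$-traces equal the orbifold values; one then intersects $\widehat{\mathcal C}$ with the subvariety $\{\chi(\gamma_j')=\pm2\cos(\pi/m_j)\}$ and uses that this intersection is transverse (again via the injectivity in Lemma~\ref{lemma:normal}(2), now applied with the enlarged meridian system) to extract a genuine algebraic curve inside $X(M,\Gamma\cup\Gamma')$ — equivalently, inside the character variety of the actual smooth part $M$ of $\mathcal O^3$ — with the desired properties.

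The main obstacle I anticipate is precisely this last transversality/deformation argument: one must check that imposing the $\Gamma'$-trace conditions cuts the blown-up curve $\widehat{\mathcal C}$ transversally at the exceptional divisor, so that the intersection is again one-dimensional and smooth. This is where the hypothesis on the number of interior cone points enters — with two or more such points the relevant tangent spaces degenerate (and, as remarked after Theorem~\ref{theorem:1}, $M$ would contain an essential torus), so some dimension count must be invoked to rule this out. Concretely I would argue that the differential of the map $\widehat{\mathcal C}\to\mathbf C^{|\Gamma'|}$, $\chi\mapsto(\operatorname{Trace}_{\gamma_j'})_{j}$, vanishes identically near $\chi_0$ — this follows because every cohomology class on $M$ restricts to zero on the finite-order meridians in $\Gamma'$ by the argument of Lemma~\ref{Lemma:basefibr} (a cocycle kills torsion in an irreducible representation) — so that $\widehat{\mathcal C}$ automatically lies in the locus where the $\Gamma'$-traces are locally constant, hence constant equal to the orbifold value. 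Granting this, the curve $\widehat{\mathcal C}$ itself already lies in $X(M,\Gamma\cup\Gamma')$, no intersection is needed, and lifting it to $SL_2(\mathbf C)$ (possible up to the usual finite cover, cf.\ Remark~\ref{remark:nointeriorpoint} for the equivariance) gives the algebraic curve of representations of $M$ asserted in the proposition; the common decreasing cone angle is inherited from the construction on the handlebody.
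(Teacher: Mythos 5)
Your plan breaks down at the very first step: you cannot run the machinery of Sections~\ref{section:relative}--\ref{section:adding} at $\chi_0$ with the enlarged meridian system $\Gamma\cup\Gamma'$. The meridians in $\Gamma'$ (around the exceptional fibers that are \emph{not} in the branching locus) bound disks in $M$, so at $\chi_0$ their holonomy is the identity and their trace is $\pm 2$. This value is explicitly excluded from the definition of the relative character variety, from the hypotheses of Proposition~\ref{prop:FenchelNielsen} (Fenchel--Nielsen coordinates), from Lemma~\ref{lemma:tgrelative}, and hence from the duality theorem and from Lemma~\ref{lemma:normal}; moreover $H^1(\gamma;Ad\rho_0)$ is $3$-dimensional rather than $1$-dimensional when $\rho_0(\gamma)=\pm\operatorname{Id}$, so the dimension counts change. (Also, the condition for a character of the enlarged exterior $M'$ to come from $\pi_1(M)$ is trivial holonomy on $\Gamma'$, not the values $\pm 2\cos(\pi/m_j)$ you impose.) Your fallback claim --- that the differential of the $\Gamma'$-trace map vanishes identically near $\chi_0$ "by the argument of Lemma~\ref{Lemma:basefibr}" --- is not correct: that lemma kills cocycles on the \emph{central} fiber class, whereas the meridians in $\Gamma'$ are not central (indeed not even nontrivial in $\pi_1(M)$, only in $\pi_1(M')$), and their traces genuinely vary near $\chi_0$; in the paper's construction $\operatorname{Trace}_{\gamma_{2n+1}}$ is one of the two coordinates of the surface $S_{\mathbf C}$, and the sought curve is obtained precisely by cutting with $\{\operatorname{Trace}_{\gamma_{2n+1}}=2\}$. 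Even after cutting, trace $2$ does not imply identity: excluding parabolic holonomy is a separate, nontrivial step (Lemma~\ref{claim:factorstom}, which uses the geometry of the tangle group), as is showing that the horizontal meridian trace is nonconstant along the resulting curve (Claim~\ref{claim:munonconstant}).

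The missing idea is how the paper actually reaches $\chi_0$: it perturbs the \emph{orbifold}, not the meridian system. One adds a branching label $N$ to the unbranched exceptional fibers, so Assumption~\ref{Assumption:Ifibres} holds for $\mathcal O^3_N$ and Section~\ref{section:adding} produces a curve through the Kerckhoff minimizer $\chi_N$ (where the relevant traces are $\pm2\cos$ of a nonzero angle, hence $\neq\pm2$). One then takes the Zariski closure $S_{\mathbf R}\subset S_{\mathbf C}$ of these curves over infinitely many $N$ and proves that $\chi_0$ lies in it (Corollary~\ref{coro:chi0inS}); this limit step is the heart of the proof and needs the earthquake/Wolpert estimates for cone surfaces (Lemma~\ref{lemma:compact}, via Appendix~\ref{sec:appendix}), the criticality statement Lemma~\ref{lemma:morecritical}, and Sullivan's parity theorem for the local structure of the real curve $\mathcal D$. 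None of this apparatus appears in your proposal, and without it there is no way to place $\chi_0$ on a curve to which the transversality or rigidity arguments you invoke could be applied.
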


\begin{proof}
The case where all singular fibers are contained in $\Sigma_{\mathcal O}$ is discussed in the previous section. 
To simplify, we assume that 
$\Sigma_{\mathcal O}$ is a link (ie.\ $\Sigma_{\mathcal O}^{Vert}=\emptyset$), so that we deform the angle of all singular $I$-fibers.
 In the general case we should only deform some of the $I$-fibers,

For $N>1$, let $\mathcal O^3_N$ denote the orbifold obtained by adding a label $N$ to all singular fibers. Thus $\mathcal O^3_N$ is an orbifold 
that satisfies Assumption~\ref{Assumption:Ifibres}.
The hyperbolic structure on the basis is modified, and the angles of $P^2$ are divided by $N$, obtaining a new hyperbolic structure.
The new orbifold is denoted by $P^2_N$ and the character of the 
hyperbolic structure that minimizes the perimeter 
is denoted by $\chi_N$.

Set $H=\mathcal O^3_N\setminus \Sigma_{\mathcal O^3_N}$.
Recall that  $\Upsilon=\{\gamma_{2n+1},\ldots,\gamma_{3n}\}\subset \pi_1(H)$ denotes the set of 
vertical meridians, ie.\ meridians of the singular
components of $\mathcal O_N$ corresponding to singular $I$-fibers,
and  $\Gamma\setminus\Upsilon=\{\gamma_1,\ldots \gamma_{2n}\}\subset \pi_1(H)$ denote the set of horizontal ones.

The character $\chi_N$ satisfies $\chi_N(\gamma_i)=0$ for $i=1,\ldots,2n$. 
The characters of the curve of Corollary~\ref{cor:curve} with all cone angles equal
 satisfy
$\chi(\gamma_i)=\pm \chi(\gamma_j)$, and the sign depends on the lift of 
the holonomy of $ P_N^2$ to $SL_2(\mathbf C)$.
 Namely, 
a rotation of angle $\pi$ that fixes the oriented axis in 
the upper half space model for $\mathbf H^3$ that goes from $0$ 
to $\infty$ is
$$
\pm \begin{pmatrix} 
	\mathbf{i} & 0 \\
	0 & -\mathbf{i}
\end{pmatrix}= 
\pm
 \begin{pmatrix} 
	e^{\mathbf{i}\pi/2} & 0 \\
	0 & e^{-\mathbf{i}\pi/2}
\end{pmatrix}.
$$
Thus decreasing the angle $\pi$ affects differently 
the sign of the trace: since we work with 
half angle, it depends on whether we start with 
$\pi/2$ or $3\pi/2$. In what follows, we will assume that for infinitely many 
$N$, $\chi(\gamma_i)= \chi(\gamma_j)$, ie.\ we are able to make the same choice of lift for
for infinitely many $N$. Otherwise, some equalities $\chi(\gamma_i)=\chi(\gamma_j)$
have to be replaced by $\chi(\gamma_i)=-\chi(\gamma_j)$.

Let $S_{\mathbf R}$ and $S_{\mathbf C}$ denote
the respective $\mathbf R$ and $\mathbf C$-Zariski closures in $X(H)$
of the union of curves provided by Corollary~\ref{cor:curve}.
Using the results of Section~\ref{section:developingmaps}, they correspond to hyperbolic cone manifolds,
and rigidity results apply.
By local rigidity of hyperbolic cone manifolds,  $S_{\mathbf C}$ is a
 $\mathbf C$-irreducible component of the set defined by the
equations:
$$
\left\{
\begin{array}{ll}
 \chi(\gamma_{2n+i})=\chi(\gamma_{2n+j}) & \textrm{ for } i,j=1,\ldots n, \textrm{ (ie.\ }\gamma_{2n+i},\gamma_{2n+j}\in\Upsilon);\\
 \chi(\gamma_i)=\chi(\gamma_j) & \textrm{ for } i,j=1,\ldots 2n.
\end{array}
\right.
$$
Using this local rigidity, 
 $S_{\mathbf C}$ is a surface, because $\chi(\gamma_{2n+1})$ is one of the parameters, and
$\chi(\gamma_1)$ is the second parameter.
Taking  real values for these parameters, we obtain the  real surface $S_{\mathbf R}$.

Set the orbifold $P'=P^2\setminus vertices(P^2)$. The real
surface $S_{\mathbf R}$ intersects $X(P')$ in infinitely many points (infinitely many odd $N$),
hence 
there is a component  of
$S_{\mathbf R}\cap X(P')$ that  is a real curve  and contains all the characters
corresponding to $\chi_N$, for infinitely many $N$.
We shall show in Corollary~\ref{coro:chi0inS} that  $S_{\mathbf R}\cap X(P')$ contains $\chi_0$.

If $q_1,\ldots,q_n$ denote the indices of the singular $I$-fibers, then
 the angles of the vertices of $P^2$ are $\pi/q_1,\ldots,\pi/q_n$,
so that the angles of the vertices of $P^2_N$ are $\pi/(N q_1),\ldots,\pi/(N q_n)$.

For $0<t\leq 1$, 
let $\mathcal T_t$ denote the \emph{Teichm\"uller space} of polygons with
given angles $t\frac{\pi}{q_1},\ldots,t\frac{\pi}{q_n}$, and 
$\mathcal {QF}_t$,  the \emph{quasifuchsian space} of polygons with those angles,
which is locally the 
 complexification of $\mathcal T_t$.

\begin{Lemma}
\label{lemma:compact}
On every $\mathcal T_t$ there exists a unique minimizer of the perimeter. In
addition, the perimeter has a non degenerate critical point of $\mathcal {QF}_t$.
\end{Lemma}

Lemma~\ref{lemma:compact} is the analogue of Lemma~\ref{lemma:isolated} and a consequence of the earthquake theory and the results of
Kerckhoff and Wolpert in this setting. This will be explained and proved in
Appendix~\ref{sec:appendix}, Corollary~\ref{cor:minimizer}.

\begin{Lemma}
\label{lemma:morecritical} 
For $0<t\leq 1$, $S_{\mathbf R}\cap \mathcal {QF}_t$ is contained in the critical set of 
 $\lambda_1+\cdots+\lambda_{2n}$ restricted to $\mathcal {QF}_t$.
\end{Lemma}

\begin{proof}
Let $\chi'\in S_{\mathbf R}\cap \mathcal {QF}_t$.
Set $P'=P^2\setminus vertices(P^2)$.
 Since $S_{\mathbf R}$ is an irreducible  surface,  $S_{\mathbf R}\cap X(P')$
is a curve and therefore there exist a curve that deforms $\chi'$ in $S_{\mathbf R}$ away from  $X(P')$.
We lift this curve from  the variety of characters
to the variety of representations. 
We obtain in this way an analytic path of representations $\rho_s'$ in $\mathcal
C'$, with $\rho_0'=\rho'$ a representation whose character is $\chi_{\rho'}=\chi'$.
Let $l\geq 0$ be maximal such that the power expansion
$$
\rho_s'(\gamma)=(1+ s a_1(\gamma)+ \cdots   + s^l a_l(\gamma) +
s^{l+1} a_{l+1}(\gamma) +\cdots )\rho'(\gamma),
\qquad \forall \gamma\in\pi_1(H),
$$
is a representation  in $SL_2(\mathbf C[s]/(s^{l+1}))$
that factors through $\pi_1(P')$, but as a representation in 
$SL_2(\mathbf C[s]/(s^{l+2}))$ does not factor
through $\pi_1(P')$.

Namely, up to conjugation we may assume that 
 $a_i\!:\!\pi_1(H)\to M_2(\mathbf C)$ factors through $\pi_1(P')$, for
$i=1,\ldots, l$, but  $a_{l+1}$ does not factor. 
Since the variety of representations 
of $P'$ is smooth, there exists $b:\pi_1( H)\to M_2(\mathbf C)$ such that 
$$
\gamma\mapsto (1+ s a_1(\gamma)+ \cdots   + s^la_l(\gamma)  + s^{l+1}
b(\gamma))\rho'(\gamma)
$$
is a representation of $P^2_N$ in $SL_2(\mathbf C[s]/(s^{l+2}))$.
The compatibility relations to be
a representation imply that 
$$
a_{l+1}-b
$$
is a group cocycle of $\pi_1(H)$ taking values in the Lie algebra
$\mathfrak{sl}_2(\mathbf C)$. 
In addition by maximality of $l$, $a_{l+1}-b$ is nontrivial on horizontal meridians, and by
construction it is tangent to
all meridians being equal. 
Hence we obtain a cohomology element $v_0=[a_{l+1}-b]\in H^1(H;Ad \rho')$ that satisfies
$$
d\mu_i(v_0)=
1,\qquad \textrm{ for } i=1,\ldots,2n.
$$
Considering deformations of $P'=P^2\setminus vertices(P)$, there exist a tangent vector
$v_1\in H^1(H;Ad\rho')$ such that $d\mu_j(v_1)=0$, $j=1,\ldots,2n$, (because it is obtained from deformations of $P'$)
and $d\mu_{2n+j}(v_1)=d\mu_{2n+j}(v_0)$, $j=1,\ldots,n$ (by perturbing the angles). Thus  $v=v_0-v_1$ satisfies 
$$
d\mu_j(v)=\left\{
\begin{array}{l}
 1, \textrm{ for  } j=1,\ldots,2n; \\
 0, \textrm{ for  } j=2n+1,\ldots,3n.
\end{array}\right.
$$
Then we apply Theorem~\ref{thm:duality}, 
and the lemma follows from the local isomorphism
between 
$X(H,\Gamma)$ and $X(P',\Upsilon)$,
by combining Lemmas~\ref{lemma:isoorbifoldrel} and \ref{Lemma:basefibr}.
 \end{proof}

\begin{Corollary}
\label{coro:chi0inS} The curve
$S_{\mathbf R}\cap X(P')$ contains $\chi_0$.
\end{Corollary}

\begin{proof}
By Corollary~\ref{cor:curve}, for infinitely many  natural $N$, the Kerckhoff minimizer
of Lemma~\ref{lemma:compact}, 
$\tau_{1/N}\in  \mathcal T_{1/N}$, is contained in $S_{\mathbf R}\cap X(P')$. Thus 
there is an irreducible $\mathbf R$-curve $\mathcal D\subset  S_{\mathbf R}\cap X(P')$
that contains infinitely many $\tau_{1/N}$. 
Define 
$$
I=\{
t\in (0,1]\mid \tau_t\in \mathcal D
\},
$$
where $\tau_t$ denotes the Kerckhoff minimizer  of Lemma~\ref{lemma:compact}.
We claim that $1\in I$. We use
a connectedness argument.
Since $1/N\in I$ for infinitely many $N$, $I\neq\emptyset$. 
The set of Kerckhoff minimizers $\tau_t$ 
is closed in $\cup_t\mathcal T_t$, and so it is in $\mathcal D$.
Hence $I$ is closed. For openness, we have:
\begin{enumerate}
 \item The set $\mathcal D\cap \bigcup_{t\in (0,1)}\mathcal {QF}_t$ is open in $\mathcal D$, 
in particular it is locally an algebraic curve. This follows from the fact that the quasifuchsian space 
is an open subset of the variety of characters (and using the corresponding restrictions on the cone angles).
\item By  Sullivan's theorem \cite{Sullivan}, the Euler characteristic of the link  of
any point  in $\mathcal D$ (hence in $\mathcal D\cap \bigcup_{t\in (0,1)}\mathcal {QF}_t$) is even.
 \item For every $t\in (0,1]$ one of the components of the intersection 
$S_{\mathbf R}\cap \mathcal {QF}_t $ is an isolated point, 
precisely equal to 
$\tau_t$,
by Lemmas~\ref{lemma:compact} and \ref{lemma:morecritical}.
\end{enumerate}

These three facts  imply that $\mathcal D\cap \mathcal {QF}_{t'}\neq\emptyset $
for $t'$ in a neighborhood of $t\in (0,1)$.
By Lemma~\ref{lemma:morecritical}, this intersection must be 
precisely equal to the minimizer $\tau_{t'}$.
\end{proof}

Consider $\mathcal E$  an irreducible component of 
$$
	\{ \operatorname{Trace}_{\gamma_{2n+1}}=2\}\cap S_{\mathbf C}
$$
that contains $\chi_0=\chi_{\rho_0}$. Since the intersection is nonempty (it contains $\chi_0$)
and it is not the whole $S_{\mathbf C}$ (the angles are not constant in $S_{\mathbf C}$), it is 
a complex curve.

\begin{Lemma}
\label{claim:factorstom}
 If $\rho\in R(H)$ is a representation close to $\rho_0$ and $\chi_{\rho}\in \mathcal E$, 
then $\rho(\gamma_{2n+i})$ is the identity matrix for all $i=1,\ldots, n$. In particular it factors to
a representation of $M$.
\end{Lemma}

\begin{proof}
 The element $\gamma_{2n+1}$ is the meridian of a singular $I$-fiber of $\mathcal O_ N$.
Each endpoint of this edge 
meets the endpoints  of two more branching edges of $\mathcal O^3_ N$, with
respective meridians
 $\varsigma$ and $\tilde\varsigma$ in $\pi_1(H)$.
They satisfy $\varsigma \gamma_{2n+1}=\tilde\varsigma$ and $\varsigma$ and
$\tilde\varsigma$
project both to the same element $\sigma_1$ in $\pi_1(M)$ (using the notation of
Section~\ref{section:fibration}, cf.~Figure~\ref{fig:loopstangle}).
Since $\rho$ is close to $\rho_0$ and $\rho_0(\sigma_1)$ is a rotation of angle
$\pi$, we may assume that
$\rho(\varsigma)$ and $\rho(\tilde\varsigma)$ are both diagonal matrices with
(equal) eigenvalues 
$\lambda^{\pm 1}\neq \pm 1$. 
We write
$$
\rho(\varsigma)=\begin{pmatrix}
                 \lambda & 0 \\ 0 & \lambda^{-1} 
                \end{pmatrix}
\textrm{ and }
\rho(\gamma_{2n+1})=\begin{pmatrix}
                 a & b \\ c & d 
                \end{pmatrix},
$$
with $ad-bc=1$, $a+d=2$. Since $\chi_{\rho}\in S_{\mathbf C}$,
 $a \lambda+d\lambda^{-1}=\lambda+\lambda^{-1}\neq\pm 2$. Thus $a=d=1$
and either $b$ or $c$ vanishes. 
This means that if $ \rho(\gamma_{2n+1})$ is not the identity but  parabolic, 
then the fixed point of $
\rho(\gamma_{2n+1})$ 
has to be one of the endpoints of the axis of $\rho(\varsigma)$. 
Let $\sigma_1'$ be the meridian of the opposite edge in the tangle, so that the tangle group is the free group on $\sigma_1$ and $\sigma_1'$.
The axis of
$\rho_0(\sigma_1)$ and $\rho_0(\sigma_1')$ form an angle,
hence the endpoints of their axis are far, and
the previous argument  for $\sigma'_1$ instead of $\sigma_1$ gives a contradiction with the hypothesis 
that $\rho(\gamma_{2n+1})$ is not the identity.
\end{proof}

\begin{Claim}
 \label{claim:munonconstant}
The trace of the meridian $\gamma_1$  is not constant along  $\mathcal E $.
\end{Claim}

\begin {proof}
By contradiction, assume that  it is constant, then 
$\mathcal E$ 
is contained in $X(P')$. 
Take a character $\bar \chi \in \mathcal E$ close to $\chi_0$. 
Lemma~\ref{claim:factorstom}   implies that 
$\bar\chi$ induces a character of $M$ and of $ \mathcal O^3$, in particular it lies
in $X(P^2)$. 
Since $\bar \chi\in\mathcal E $ but $S_{\mathbf C}$ is not
contained in $X(P')$, the argument in the proof 
of Lemma~\ref{lemma:morecritical} implies that there is a tangent vector
$v\in T_{\bar \chi} X(H)$ that satisfies $d\mu_i(v)=1$, for $i\leq 2n$. 
In addition, as in Lemma~\ref{lemma:morecritical}, 
the restriction of $v$ to each $\gamma_{2n+i}$ can be made
zero by adding
infinitesimal deformations of $P'$, hence $v\in T_{\bar \chi}  X(M)$. 
By Theorem~\ref{thm:duality}, $\bar \chi$ is a critical point of the perimeter
in
$$
X^{irr}(M,\Gamma')\cong_{LOC} X^{irr}_{PSL_2(\mathbf C)}( \mathcal O^3)\cong X^{irr}_{PSL_2(\mathbf C)}( P^2),
$$
where $\Gamma'\subset\pi_1(M)$ is a collection of meridians for the singular components of $\mathcal O^3$.
This contradicts the analogue of 
 Lemma~\ref{lemma:isolated}, that the Kerckhoff minimizer is an isolated critical point
of the perimeter in $X_{PSL_2(\mathbf C)}(P^2)$.
\end{proof}

\noindent\emph{End of the proof of Proposition~\ref{prop:defmM}.} 
By Lemma~\ref{claim:factorstom}, $\mathcal E $ gives a
curve of representations of $M$.
In addition, by Claim~\ref{claim:munonconstant}, the trace of the meridian on
this curve is nonconstant.
A nonconstant complex map is open, thus by looking at the inverse image of
points with real trace, we find the
path of representations we are looking for.
\end{proof}

\begin{Remark}
Once we have Proposition~\ref{prop:deformreps} below, the trace of the meridian on ${\mathcal E}$ is
a local diffeomorphism around $\chi_0$.
\end{Remark}

In fact, the trace of the meridian on ${\mathcal E}$ 
 cannot be a ramified covering, because this would
contradict global rigidity of hyperbolic cone manifolds. Namely, we only can have one inverse image of the
real line, that gives two branches,
 corresponding to the two complex  conjugate representations, one with trace
$2\cos(\alpha)·$ and the other one with 
 trace $-2\cos(\alpha)$.

\section{The fibration of the orbifold}
\label{section:fibration}

The orbifold $\mathcal O^3$ is Seifert fibered over $P^2$:
$$
 S^1\to \mathcal O^3\overset{p}\to P^2.
$$

We distinguish three kinds of points of $P^2$: interior points of the underlying space
$\vert P^2 \vert$, interior
points of the mirror edges, and vertices.
Each interior point of $P^2$ has a neighborhood $U$  such that $p^{-1}(U)$
is a fibered solid torus.
By hypothesis, there is at most one cone point in the interior.
Such a point has a neighborhood $U\subset P^2$,
such that $p^{-1}(U)$ can have a singular core, a singular Seifert
fibration, or both. By Remark~\ref{remark:nointeriorpoint}, we may assume that there is no such interior cone point.
Points in the  boundary of $\vert P^2 \vert$ have a neighborhood with inverse image an orbifold with
topological underlying space
 a ball, and with branching locus two unknotted arcs of order $2$, possibly linked by a segment, giving a graph with $H$-shape.
For points in the interior of the edges, the fibration is nonsingular, but for
vertices,
the fibre is either singular, or in the branching locus, or both. The singularity and the  branching determine  the angle, see
\cite{BonahonSiebenmann}.
More precisely, there is a  rational number $p/q\in\mathbf Q$, $p,q\in\mathbf Z$
coprime, 
describing the singular fibration, and the angle at the vertex of $P^2$
is $\pi/(mq)$, cf.\ Figure~\ref{fig:fibration}, where $m\geq 1$ is the branching index (not branched for $m=1$).

\begin{figure}
\begin{center}
{
\psfrag{p1}{$p_1/q_1$}
\psfrag{p2}{$p_2/q_2$}
\psfrag{p3}{$p_3/q_3$}
\psfrag{p4}{$p_4/q_4$}
\includegraphics[height=4cm]{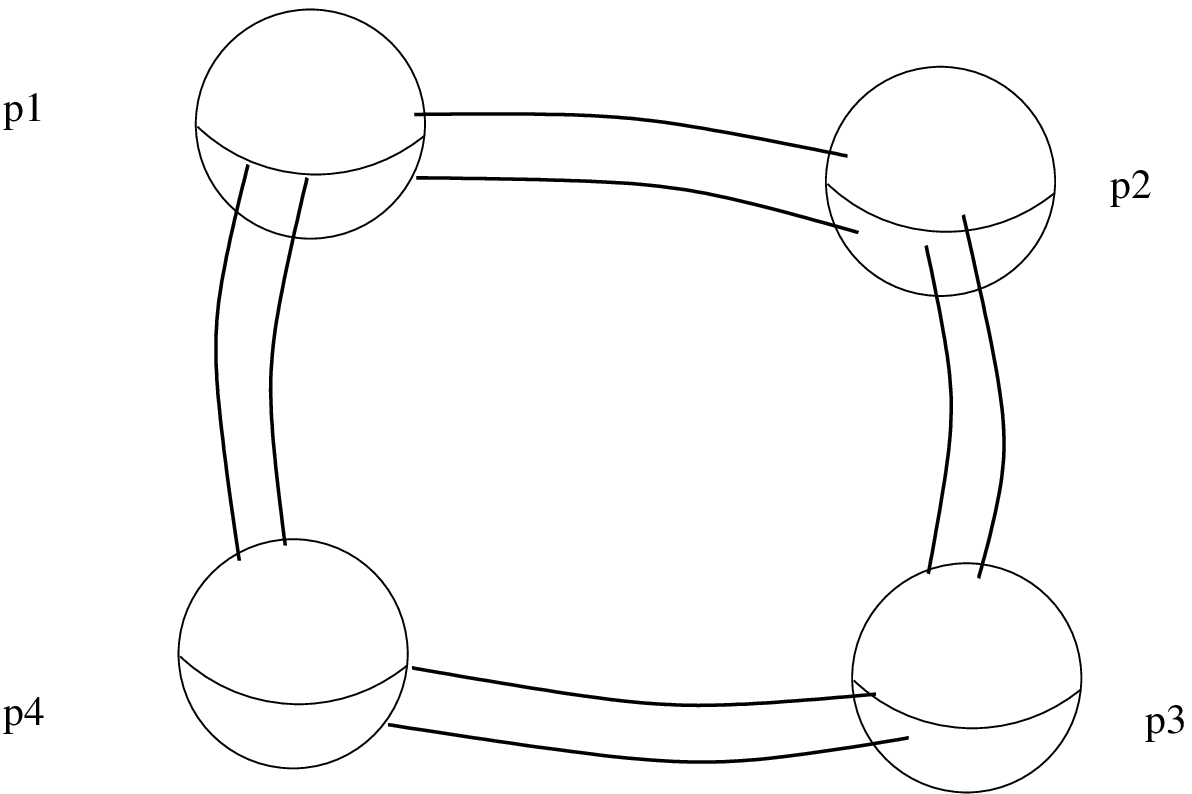}
}
\end{center}
   \caption{Fibration with 4 vertices The $p_i/q_i$-tangles around the 
$I$-singular fibers are inside the balls of the picture}\label{fig:fibration}
\end{figure}

We  \emph{orient} the components of  $\Sigma_{\mathcal O^3}^{Hor}$. The 
fiber of the interior of each edge of $ P^2$ contains two subsegments 
of $\Sigma_{\mathcal O^3}^{Hor}$, that project homeomorphically to the edge. 
The segments of $\Sigma_{\mathcal O^3}^{Hor}$ may induce the same or opposite 
orientations.

\begin{Remark} 
\label{remark:orient}
The  orientations induced by  $\Sigma_{\mathcal O^3}^{Hor}$ 
can be chosen to be either compatible for every edge of $P^2$,
or opposite for every edge.
 \end{Remark}

It suffices to prove this remark when  $\Sigma_{\mathcal O^3}$ is a link.
Notice that when at least one of the indices $q_i$ of the $I$-singular fibers of the vertices is even,
then the orientations of all pairs of edges are opposite. When all singular indices  are odd and
$\Sigma_{\mathcal O^3}$ is connected, then the orientations are compatible. 
Finally, when all singular indices are odd and $\Sigma_{\mathcal O^3}$ is not connected, then
$\Sigma_{\mathcal O^3}$ has two components and the orientations can be chosen compatible or 
opposite.

\medskip

Set
$$
M=\mathcal O^3\setminus\mathcal N(\Sigma_{ \mathcal O^3 }).
$$
We choose elements and subgroups of the fundamental group of $M$ according to
the fibration.
 We fix a base point $x_0\in M$  that projects to an interior point of $P^2$.

Let $n$ denote the number of vertices of $P^2$. 
The vertices of $P^2$ are denoted by $v_1,\ldots,v_n$,  and the edges,
$e_1,\ldots,e_n$, so that the endpoints of $e_i$
are $v_i$ and $v_{i+1}$, with coefficients modulo $n$. We distinguish the
following elements of $\pi_1(M,x_0)$:

\begin{itemize}
\item Let $f\in\pi_1(M,x_0)$ be an element represented by the fiber through $x_0$.
In particular $f$ projects to  the center of an  index two subgroup of $\pi_1(\mathcal O^3)$.

\item
For each edge $e_i$ of $P^2$, let  $e_i$ and $e_i'$ denote still the components of the singular locus of $\mathcal O^3$ that project to it.
We choose meridians $m_i$ and $m_i'$ by joining
$x_0$ to $e_i$  and $e_i'$ along a path that projects to an interior path of $P^2$, and then turn around the respective axis, so that 
$m_i$ and $m_i'$ differ only in a neighborhood of the  $I$-fiber.
We orient $m_i$ and $m_i'$ accordingly to the orientation of the edges.
Thus, when the orientations of the edges are compatible, 
we
require that 
$$m_i m_i'=f,$$
 (Figure~\ref{fig:loops}). When they are opposite, 
$$
m_i (m_i')^{-1}=f.
$$

\item For each vertex $v_i$ of $P^2$ we choose $V_i$ a neighborhood of the corresponding singular $I$-fiber and we call
$\pi_1(V_i\setminus (\Sigma_{\mathcal O}\cap V_i))$ the \emph{$i$-th tangle group}. We distinguish two cases.

If the singular $I$-fiber is not in the branching locus of the orbifold, then the tangle group is the free group on two meridians 
$\sigma_i,\sigma_i'\in \pi_1(M)$. We
choose a point 
in the middle of the singular fiber, and from there we consider both loops
(Figure~\ref{fig:loops}).

When the  singular $I$-fiber is in the branching locus of the orbifold, the tangle group is isomorphic to the fundamental group of a sphere with 
4 punctures. We choose generators $\varsigma_i,\bar \varsigma_i,\varsigma_i',\bar \varsigma_i' \in\pi_1(V_i\setminus (\Sigma_{\mathcal O}\cap V_i)) $
such that $\varsigma_i\bar\varsigma_i^{-1}=(\varsigma_i')^{-1}\bar\varsigma_i'$ is a meridian for the singular $I$-fiber. We choose the loops similarly
(Figure~\ref{fig:loopstangle}).

\end{itemize}

\begin{figure}
\begin{center}
{\psfrag{mi}{$m_i$}
\psfrag{mi'}{$m_i'$}
\psfrag{ei}{$e_i$}
\psfrag{ei'}{$e_i'$}
\psfrag{si}{$\sigma_i$}
\psfrag{si'}{$\sigma_i'$}
\psfrag{x0}{$x_0$}
\includegraphics[height=2.5cm]{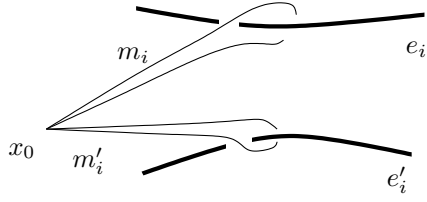}
}
\end{center}
   \caption{The loops for the meridians around regular $I$-fibers ($i$-th edge) )}\label{fig:loops}
\end{figure}

\begin{figure}
\begin{center}
{\psfrag{v1}{$\varsigma_i$}
\psfrag{v2}{$\bar \varsigma_i$}
\psfrag{v3}{$\varsigma_i'$}
\psfrag{v4}{$\bar\varsigma_i'$}
\psfrag{si}{$\sigma_i$}
\psfrag{si'}{$\sigma_i'$}
\psfrag{x0}{$x_0$}
\includegraphics[height=3cm]{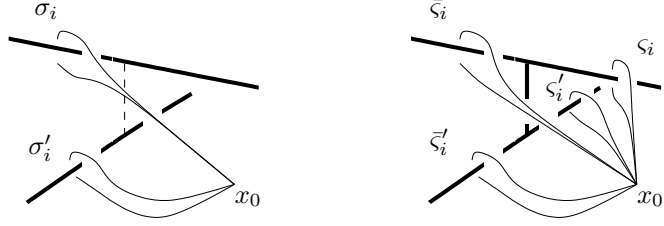}
}
\end{center}
   \caption{The loops for the meridians around the $i$-th tangle. 
When the singular $I$-fiber is smooth in $\mathcal O$
(on the left) and when it is in the branching locus
(on the right)}\label{fig:loopstangle}
\end{figure}

For a singular $I$-fiber,  the product $\sigma_i\sigma_i'$ 
($\varsigma_i\varsigma_i'$ when the fiber is in the branching locus)
projects in  $\pi_1(\mathcal O^3)$
to a root of $f^{\pm 1}$, but not in $\pi_1(M)$. On the other hand, if $e_i$ and
$e_{i+1}$ are the edges adjacent to
the $i$-th vertex, then
$$
m_i, m_i',m_{i+1}, m_{i+1}'\in  \pi_1(V_i\setminus (\Sigma_{\mathcal O}\cap V_i)).
$$

For an elliptic element $a\in\operatorname{Isom}^+(\mathbf H^3)$, let
$\Axis(a)\subset\mathbf H^3$ denote its fixed point set (or its axis). 

When the $I$-fiber is not in the branching locus,  
the angle between the axis of $\rho_0(\sigma_i)$ and $\rho_0(\sigma_i')$ is
\begin{equation}
 \label{equation:angle}
\angle(\Axis(\rho_0(\sigma_i)),\Axis(\rho_0(\sigma_i')))=\frac{p_i}{q_i}\pi
\end{equation}
with $p_i,q_i\in\mathbf Z$ coprime, $0<p_i<q_i$. 
This rational number
${p_i}/{q_i}$
describes the singularity of the fiber, that has order $q_i$. 
The  angle
of $P^2$ at the corresponding vertex is $\pi/q_i$.

When the $I$-fiber is in the branching locus,  
the angle between the axis of $\rho_0(\varsigma_i)$ and $\rho_0(\varsigma_i')$ is
\begin{equation}
 \label{equation:angle2}
\angle(\Axis(\rho_0(\varsigma_i)),\Axis(\rho_0(\varsigma_i')))=\frac{p_i}{2 q_i}\vartheta_i
\end{equation}
with $p_i,q_i\in\mathbf Z$ coprime, $0<p_i<q_i$ as above and $\vartheta_i=2\pi/m_i$ is the \emph{orbifold} angle,
where $m_i\geq 2$ is the order of the branching.
However, in what follows we may also consider any $0<\vartheta_i<2\pi$.
The  angle
of $P^2$ at the corresponding vertex is $\frac{\vartheta_i}{2q_i}$.

\begin{Definition}
\label{dfn:euclideanmodel}
The \emph{euclidean model} is the metric orbifold 
$$E(p_i/q_i)=\mathbf
R^3/D_{\infty},$$
 where $D_{\infty}$ is the infinite dihedral 
group generated by two rotations
of order $2$, whose axis are at distance one
and have an angle (after parallel transport) equal to $\frac{p_i}{q_i}\pi$.
\end{Definition}

\begin{Definition}
\label{dfn:singeuclideanmodel}
The \emph{singular euclidean model} is the cone manifold  
$$E(p_i/q_i,\vartheta_i)=\mathbf
R^3(\vartheta_i)/D_{\infty},$$
 where $\mathbf R^3(\vartheta_i)=\mathbf R^2(\vartheta_i)\times \mathbf R$ and
$\mathbf R^2(\vartheta_i)$ is the Euclidean plane with a singular point of angle $0<\vartheta_i<2\pi$. Here
$D_{\infty}$ is generated by two rotations
of order $2$, with axis at distance one perpendicular to the singular axis of $\mathbf R^3(\vartheta_i)$, and 
forming an angle (after parallel transport) equal to $\frac{p_i}{2q_i}\vartheta_i$.
\end{Definition}

\begin{Remark}
\label{remark:fibration}
The orbifold $E(p_i/q_i)$ and the cone manifold $E(p_i/q_i,\vartheta_i)$
have a natural fibration, that
gives precisely the fibration of a neighborhood of the i-th singular vertex.
This is the fibration by
parallel lines of $\mathbf R^3$, in the direction of the translation vector of the
index two subgroup
$\mathbf Z< D_{\infty}$. 
\end{Remark}

An alternative way of describing $E(p_i/q_i)$ is by considering fundamental
domains, cf.~Figure~\ref{fig:euclidean}. Consider a region of 
$\mathbf R^3$ bounded by two parallel planes at distance one. On each plane,
there is a rotation axis, one for each generator,
and $E(p_i/q_i)$ is obtained from identifying half of each face with the other half
after folding. For $E(p_i/q_i,\vartheta_i)$, a similar fundamental domain is constructed in 
$\mathbf R^3(\vartheta_i)=\mathbf R^2(\vartheta_i)\times\mathbf R$.

The fibers come from the vertical segments (say the planes are horizontal),  the
singular $I$-fiber is the minimizing segment between the rotation axis. It is 
its soul, in the Cheeger-Gromoll sense.

\begin{Definition}
A sequence of pointed metric spaces $(X_n,x_n)$ \emph{converges} to $(X_{\infty},x_{\infty})$ for the 
\emph{pointed bi-Lipschitz}
topology if, $\forall R>0$ and $\varepsilon>0$, there exists $n_0$ such that, for $n\geq n_0$,
$B(x_{\infty},R)$ is $(1+\varepsilon)$-bi-Lipschitz to a neighborhood $U\subset X_n$ that satisfies
$B(x_n,R-\varepsilon)\subseteq U\subseteq B(x_n,R+\varepsilon)$.
\end{Definition}

\begin{figure}
\begin{center}
{\psfrag{p}{${\pi}$}
\includegraphics[height=3cm]{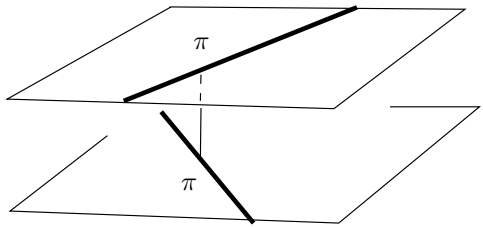}
}
\end{center}
   \caption{The model $E(p_i/q_i)$ via its fundamental domain.}
\label{fig:euclidean}
\end{figure}

When constructing the developing maps in Section~\ref{section:developingmaps},
we will use the following lemma for the transition between
singular and regular $I$-fibers:

\begin{Lemma}
 \label{lemma:euctrans}
Let $x_n$ be a sequence of points in the singular locus of $E(p_i/q_i)$. If $x_n\to
\infty$, then 
$(E(p_i/q_i),x_n)$ converges to another euclidean model with parallel singular
axis, for the pointed bi-Lipschitz topology. In addition, the distance between the axis is
 $q_i$, the order of the singular fiber.

The same statement holds true for the cone manifold $E(p_i/q_i,\vartheta_i)$ and points in the horizontal singular locus.
\end{Lemma}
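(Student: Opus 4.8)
The plan is to pass to the natural ``universal'' covers and reduce everything to an elementary computation with infinite dihedral groups of isometries. Both $E(p_i/q_i)$ and $E(p_i/q_i,\vartheta_i)$ are of the form $Y/D_{\infty}$, where $Y=\mathbf R^3$ (resp.\ $Y=\mathbf R^3(\vartheta_i)=\mathbf R^2(\vartheta_i)\times\mathbf R$) and $D_{\infty}=\langle s_1,s_2\rangle$ is generated by the two order-two rotations about lines $\ell_1,\ell_2$ which meet the distinguished axis $\mathcal A$ (the common perpendicular line, resp.\ the singular axis) orthogonally, at two points at distance $1$, with directions making an angle $\theta$ equal to $\frac{p_i}{q_i}\pi$ (resp.\ $\frac{p_i}{2q_i}\vartheta_i$). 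I would first record that $\tau=s_1s_2$ is a screw motion along $\mathcal A$ with translation length $2$ and rotation angle $2\theta$, so that $\tau^{q_i}$ is a \emph{pure translation} of length $2q_i$ along $\mathcal A$: its rotation angle is $2q_i\theta$, which is $2p_i\pi$ in the orbifold case and $p_i\vartheta_i$ in the cone case, i.e.\ a multiple of $2\pi$, resp.\ of the cone angle $\vartheta_i$, hence trivial. Consequently $D'_{\infty}:=\langle s_1,\tau^{q_i}\rangle<D_{\infty}$ is the infinite dihedral group generated by $s_1$ and the half-turn $\tau^{q_i}s_1$, whose axes are $\ell_1$ and the translate of $\ell_1$ by $q_i$ along $\mathcal A$: two parallel lines at distance $q_i$. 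Let $E_{\infty}$ be the flat euclidean model $\mathbf R^3/D'_{\infty}$; it has two parallel singular axes at distance $q_i$ (its soul being a segment of length $q_i$).

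Next I would set up the pointed convergence. Lifting $x_n$ and translating by $D_{\infty}$, I may assume $\tilde x_n\in\ell_1$ (the case $\tilde x_n\in\ell_2$ being symmetric and giving the same limit up to isometry), and, as $x_n\to\infty$, necessarily $\tilde x_n\to\infty$ along $\ell_1$, so that $\dist(\tilde x_n,\mathcal A)\to\infty$. The key local fact is that the $D_{\infty}$-stabilizer of $\tilde x_n$ is exactly $\langle s_1\rangle$ and $D_{\infty}=\langle\tau\rangle\langle s_1\rangle$, whence the orbit $D_{\infty}\tilde x_n$ equals $\{\tau^k\tilde x_n:k\in\mathbf Z\}$. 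For $q_i\mid k$, writing $k=q_ij$, these are the points $\tilde x_n$ translated by $2q_ij$ along $\mathcal A$, at distance $2q_i|j|$; for $q_i\nmid k$, $\tau^k$ rotates $\tilde x_n$ about $\mathcal A$ by an angle lying in the finite set $\{\tfrac{2m}{q_i}\pi:1\le m\le q_i-1\}$ (resp.\ $\{\tfrac{m}{q_i}\vartheta_i:1\le m\le q_i-1\}$), which is bounded away from $0$ (and from $2\pi$, resp.\ $\vartheta_i$) uniformly in $k$ because $\gcd(p_i,q_i)=1$; hence $\dist(\tilde x_n,\tau^k\tilde x_n)\ge c\,\dist(\tilde x_n,\mathcal A)$ for a constant $c=c(p_i,q_i,\vartheta_i)>0$. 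Therefore, given $R>0$, for all large $n$ the set $\{g\in D_{\infty}:\dist(\tilde x_n,g\tilde x_n)\le 2R\}$ is contained in $D'_{\infty}$, and $B_Y(\tilde x_n,R)$ is disjoint from $\mathcal A$, hence flat. Since any identification of two points of $B_Y(\tilde x_n,R)$ by $D_{\infty}$ is realized by such a $g$, the ball $B_{E(p_i/q_i)}(x_n,R)=\pi(B_Y(\tilde x_n,R))$ is isometric to $B_Y(\tilde x_n,R)/D'_{\infty}$, and this, being flat, is isometric to a ball of $E_{\infty}$ centered on a singular axis, using the translation symmetry of $E_{\infty}$ along its singular lines. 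That is precisely pointed bi-Lipschitz (in fact isometric, for $n$ large) convergence $(E(p_i/q_i),x_n)\to(E_{\infty},\cdot)$, with distance $q_i$ between the axes. The cone-manifold statement follows by the same argument verbatim, the only change being that in $\mathbf R^3(\vartheta_i)$ the rotation $\tau^{q_i}$ is trivial because $p_i\vartheta_i$ is an integer multiple of the cone angle $\vartheta_i$, and $B_Y(\tilde x_n,R)$ leaves every neighborhood of the cone axis, so the limit is still the flat model $E_{\infty}$.

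The only genuine content, and the step I expect to require care, is the uniform escape estimate $\dist(\tilde x_n,\tau^k\tilde x_n)\ge c\,\dist(\tilde x_n,\mathcal A)$ for $q_i\nmid k$: one must rule out that the rotation angles $2k\theta$ accumulate at $0$ modulo $2\pi$ (resp.\ modulo $\vartheta_i$), which is exactly where $\gcd(p_i,q_i)=1$ enters, and, in the cone case, check that a rotation by such an angle about the tip of $\mathbf R^2(\vartheta_i)$ still displaces a point at distance $\rho$ from the tip by at least a fixed multiple of $\rho$ — this uses $\vartheta_i\le 2\pi$. Everything else is bookkeeping: the structure of $D_{\infty}$ as $\langle\tau\rangle\rtimes\langle s_1\rangle$, the description of products of half-turns about parallel lines, and the elementary fact that for a properly discontinuous isometric action, metric balls in the quotient are quotients of metric balls by their coarse stabilizers.
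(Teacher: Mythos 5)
Your proposal is correct and follows essentially the same route as the paper: lift to the universal cover $\mathbf R^3$ (resp.\ $\mathbf R^3(\vartheta_i)$), analyse the configuration of lifted singular lines under the screw motion generated by the two half-turns, observe that the nearest singular line to a far-away basepoint is parallel at distance $q_i$ (here $\gcd(p_i,q_i)=1$ enters), and deduce pointed convergence to the flat model with parallel axes. The only difference is presentational: where the paper says ``the convergence follows easily,'' you make it explicit by identifying the limit as $\mathbf R^3/\langle s_1,\tau^{q_i}\rangle$ and proving the uniform displacement estimate for elements outside this subgroup, which is a correct and complete way to fill in that step.
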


\begin{proof} 
We prove it for $E(p_i/q_i)$, the proof for $E(p_i/q_i,\vartheta_i)$ being similar.
The lift  of the branching locus of $E(p_i/q_i)$ to the universal covering
(isometric to $\mathbf R^3$)
is a countable family of lines, all of them 
perpendicular to a given axis that minimize the distance between any pair of the lines. From
each line, we obtain the next one by a 
screw motion. This screw motion has axis the line perpendicular to all the lifts,
translation length one and rotation angle
$\frac{p_i}{q_i}\pi$. In this way, if $x_n$ goes to infinity along one of the
lines, the closest singular component will be 
parallel and at distance $q_i$. 
Then the convergence follows easily.
\end{proof}

\section{The Killing vector field}
\label{section:Killing}

In this section we prove a result  about Killing vector fields that will be used in the 
construction of developing maps.

Consider $P^2\setminus \Sigma_{P^2}$ the smooth part of $P^2$. Via the developing map of
the transversely hyperbolic foliation, 
the closure 
$$
\mathcal P= \overline{D_0(P^2\setminus \Sigma_{P^2})}
$$ 
is a polygon in $\mathbf
H^2\subset\mathbf H^3$. 
Let $m_i,m_i'\in\pi_1(M,x_0)$ be as in Section~\ref{section:fibration}, for $i=1,\ldots,n$.
Let $\tilde m_i$ and $\tilde m_i'$ be the corresponding paths lifted to the universal covering.
We may assume that the path
$D_0(\tilde m_i)$ starts at the base point 
$D_0(\tilde x_0)$ in the interior, crosses the boundary of $\mathcal P$,
and follows along $\rho_0(m_i)(\mathcal P)=D_0(m_iP)$ until $\rho_0(m_i)(D_0(\tilde x_0))$,
and similarly for $m_i'$, $\sigma_j$ and $\sigma_j'$.
Recall that $f=m_i(m_i')^{\pm 1}$, where the sign of the power 
$(m_i')^{\pm 1}$ depends on  the compatibility of orientations at a given axis.

By analyticity, if $\rho_t(f)$ is nontrivial, then  there is a natural way to associate a Killing vector field $F$ to 
the deformation of $\rho_t(f)$. Namely, as $\rho_0(f)=\pm \operatorname{Id}$,
$$
\rho_t(f)= \pm \exp(t^s \mathfrak f+ O(t^{s+1}))
$$
for some $\mathfrak f\in\mathfrak{sl}_2(\mathbf C)$. The Killing vector field $F$ associated to the infinitesimal isometry
 $ \mathfrak f$ is then
$$
F_x=\lim_{t\to 0} \frac {\rho_t(f)(x)-x}{t^s}\qquad \forall x\in \mathbf H^3.
$$

The \emph{incenter} of a polygon is the point whose distance to \emph{every} edge of the polygon is the same, if it exists.

The goal of this section is to prove the following:

\begin{Proposition}
 \label{lemma:killingfield}
The polygon $\mathcal P$ has an incenter and the Killing vector field $F$ 
 is a field of infinitesimal purely loxodromic translations along an axis that   meets perpendicularly $\mathcal P$ in its incenter.
In particular $F$ is perpendicular to $\mathcal P$.
In addition,  it has the same orientation as the fiber of the Seifert fibration of $\mathcal O^3$
restricted to the interior of $\mathcal P$.
\end{Proposition}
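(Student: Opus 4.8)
The strategy is to exploit the rigidity of the deformation given by Proposition~\ref{assumption:rho}: each $\rho_t(m_i)$ is (conjugate to) a rotation of angle $\pi - t^r + O(t^{r+1})$ about an axis, and these rotations deform the holonomy $\rho_0$ which preserves the hyperbolic plane $\mathbf H^2\subset \mathbf H^3$ containing $\mathcal P$. First I would set up coordinates so that $\mathbf H^2$ is the fixed plane of the orientation-reversing involution (complex conjugation in the upper half-space model), and record that for each edge $e_i$ the holonomy $\rho_0(m_i)$ is the rotation of angle $\pi$ about the geodesic in $\mathbf H^2$ carrying the edge $D_0(e_i)$ (a half-turn, represented up to sign by a matrix with purely imaginary trace $0$). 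The relation $f = m_i(m_i')^{\pm 1}$ then says that the product of two half-turns about the two geodesics lying over the same edge $e_i$ equals $f$; since those two geodesics are the \emph{same} geodesic of $\mathbf H^2$ (they both project to $e_i$), the product of the two half-turns about it is the identity at $t=0$, and the first-order term $\mathfrak f$ of $\rho_t(f)$ is the derivative of this product of rotations.

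The key computation is infinitesimal: I would differentiate $\rho_t(m_i)\rho_t(m_i')^{\pm 1}$ at $t=0$. Writing $\rho_t(m_i) = \exp(\tfrac{\mathbf i}{2}(\pi - t^r + \cdots)\,u_i)$ where $u_i\in\mathfrak{sl}_2(\mathbf C)$ is the (unit) infinitesimal generator of the rotation about the axis of $D_0(e_i)$, the derivative of the product picks up a term proportional to $\mathfrak f = \sum(\text{something})$; the point is that, because the two axes coincide to zeroth order and the cone angles deform at the \emph{same} rate $t^r$ on all edges (this is exactly the content of ``independent of $i$'' in Proposition~\ref{assumption:rho}), the infinitesimal isometry $\mathfrak f$ must be simultaneously ``dual'' to every edge axis of $\mathcal P$. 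Concretely: $\mathfrak f$ is a Killing field; being the leading term of a deformation whose $m_i$-part is a rotation about the $i$-th edge geodesic, the restriction of $F$ to that geodesic must be tangent to the infinitesimal rotation there, which forces $F$ to be perpendicular to each edge geodesic of $\mathcal P$ and of constant ``size'' along it. A Killing field of $\mathbf H^3$ that is perpendicular to a geodesic of $\mathbf H^2$ and has constant translational part along it is either a hyperbolic translation along a geodesic perpendicular to that one, or an infinitesimal rotation; the hypothesis that the cone angles \emph{decrease} (sign of $t^r$, fixed by the chosen orientations) rules out the rotational/elliptic and parabolic cases and forces $F$ to be purely loxodromic (in fact purely translational, no rotational part, since the deformation stays ``real'' — traces stay real along the relevant subpath). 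Its axis, being perpendicular to \emph{every} edge geodesic of $\mathcal P$, must meet $\mathcal P$ orthogonally at a single point equidistant from all edges; this both proves $\mathcal P$ has an incenter and identifies the axis.

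The main obstacle I expect is the step that simultaneously extracts ``perpendicular to every edge'' \emph{and} ``same rate on every edge implies a single common axis'': one must rule out the possibility that $\mathfrak f$ degenerates (vanishes, i.e.\ $\rho_t(f)$ trivial to higher order) or that it is elliptic/parabolic rather than loxodromic, and one must check the \emph{orientation} claim, namely that this loxodromic axis is co-oriented with the Seifert fiber over the interior of $\mathcal P$. For the nondegeneracy I would argue that $\mathfrak f \equiv 0$ would mean the fiber stays trivial to all relevant orders, contradicting that $\rho_t$ genuinely deforms the transversely hyperbolic foliation (equivalently, using the cohomological setup of Sections~\ref{sec:varieties of representations}--\ref{section:curve}, the class dual to the perimeter is nonzero because $\chi_0$ is a \emph{nondegenerate} critical point by Lemma~\ref{lemma:isolated}). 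For the loxodromic-versus-elliptic dichotomy I would use that the axis of $F$ is perpendicular to the fixed plane $\mathbf H^2$ and that the deformation of each $\rho_t(m_i)$ changes the rotation angle (real first-order change), so the corresponding first-order motion of the common perpendicular is a genuine translation, not a rotation. The orientation statement then follows by matching the translation direction of $F$ (determined by the sign in $\pi - t^r$, i.e.\ by the chosen orientations of $\Sigma^{Hor}_{\mathcal O^3}$ and their meridians, via the relation $f = m_i (m_i')^{\pm1}$) against the direction of the fiber $f$ itself; the appendix on infinitesimal isometries (Appendix~\ref{sec:inf}) supplies the needed normal-form lemmas for Killing fields that reduce this to the trichotomy just described.
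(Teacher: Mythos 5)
Your outline follows the paper's general shape (extract $\mathfrak f$ from the products $\rho_t(m_i)\rho_t(m_i')^{\pm1}$ over all edges, deduce perpendicularity to the edges, classify the Killing field, locate the axis at a point equidistant from the edges), but two of the steps you flag as "obstacles" are resolved in the paper by machinery that your sketch does not contain, and your proposed substitutes would not work. First, the nondegeneracy. Your claim that $\mathfrak f\equiv 0$ "would contradict that $\rho_t$ genuinely deforms the transversely hyperbolic foliation" is not a contradiction: a nontrivial analytic path satisfying the conclusion of Proposition~\ref{assumption:rho} could a priori keep $\rho_t(f)=\pm\operatorname{Id}$, since such representations factor through the quotient of $\pi_1(M)$ by the fiber and can still rotate every horizontal meridian by $\pi-t^r$ while fixing the vertical ones (one then has $\rho_t(m_i')=\rho_t(m_i)^{\mp1}$, which is perfectly consistent with the meridian conditions). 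The paper excludes this with a quantitative volume argument: Schl\"afli gives $\operatorname{Vol}(\rho_t)\geq C t^r$ (Lemma~\ref{lemma:schlafli}), while triviality of $\rho_t(f)$ would force the tangle groups to stay elliptic and would allow a pseudodeveloping map of arbitrarily small volume (Lemma~\ref{lemma:nontrivial}); the same comparison controls the order of vanishing $s$ of $\mathfrak f$ against $r$ (Lemma~\ref{lemma:sleqr}, Lemma~\ref{rem:r=s}). Your parenthetical appeal to Lemma~\ref{lemma:isolated} could perhaps be developed into an alternative along the lines of Claim~\ref{claim:munonconstant} (via Theorem~\ref{thm:duality}), but that is a substantial argument you have not made. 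The orientation statement has the same problem: in the paper it follows from positivity of $\operatorname{Vol}(\rho_t)$, whereas "matching the translation direction against the fiber" has no mechanism in your sketch.

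Second, the key infinitesimal computation as you describe it would give the wrong answer. Writing $\rho_t(m_i)=\exp\bigl(\tfrac{\mathbf i}{2}(\pi-t^r+\cdots)u_i\bigr)$ about a \emph{fixed} axis ignores the first-order relative displacement of the two axes $\Axis(\rho_t(m_i))$ and $\Axis(\rho_t(m_i'))$; with fixed coincident axes the product $m_i(m_i')^{\pm1}$ is an infinitesimal \emph{rotation about the edge geodesic}, never a translation perpendicular to $\mathcal P$. The translational part of $\mathfrak f$ comes precisely from the off-diagonal terms of the conjugating matrices $\Lambda_t'$ in Lemma~\ref{lemma:productofF}, and what the edge-by-edge computation actually yields is not "$F$ perpendicular to the edge and of constant size along it" (constancy of the norm does not follow and is not used), but the equality $B(\mathfrak d_i,\mathfrak f)=4$ for all $i$ together with perpendicularity of $F$ to each $\Axis(\rho_0(m_i))$ (Remark~\ref{rem:Fperpendicular}). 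The elliptic and parabolic alternatives are then excluded not by the sign of the angle deformation, as you propose, but by the Killing-form identities of Appendix~\ref{sec:inf}: equal pairing with all edge rotations would force either an axis at the same complex distance from every edge of a coplanar polygon (Proposition~\ref{prop:killing}) or a horosphere tangent to all edges with parallel tangent directions (Corollary~\ref{cor:equihorsphere}), both impossible. Without these ingredients the classification step, and hence the existence of the incenter, is not established.
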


Notice that  the interior of $\mathcal P$ is orientable because the mirror points are in $\partial\mathcal P$, thus it makes sense to talk
about the induced orientation of the fiber in $\mathcal O^3$ and of the Killing field on $\mathbf H^3$.

Before proving the proposition, we need to show that $\rho_t(f)$ is nontrivial.

For a representation $\rho_t$, a
pseudo\-developing map is a $\rho_t$-equivariant
map $D_t\!:\tilde M\to \mathbf H^3$, 
such that around the singular locus it is like the developing map around a cone
singularity (ie.\ conical in a tubular neighborhood). 
This $D_t$ can be used to define a volume of $\rho_t$ \cite{Stefano}.

\begin{Lemma}
\label{lemma:schlafli}
For $\rho_t$ satisfying Proposition~\ref{assumption:rho}, there 
 exists a uniform constant $C>0$ such that, for $t>0$ close to $0$:
$$
\operatorname{Vol}(\rho_t)\geq C t^r.
$$
\end{Lemma}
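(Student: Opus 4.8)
The plan is to estimate the volume of $\rho_t$ via the Schläfli formula for cone manifolds, applied to the path of representations furnished by Proposition~\ref{assumption:rho}. Recall that for a smooth family of (pseudo-)developing structures on a fixed topological cone manifold, with singular locus of total length functions $\ell_j(t)$ along the edges $\Sigma_j$ and cone angles $\alpha_j(t)$, the Schläfli differential formula reads
$$
\frac{d}{dt}\operatorname{Vol}(\rho_t) = -\frac12 \sum_j \ell_j(t)\,\frac{d\alpha_j}{dt}.
$$
By Proposition~\ref{assumption:rho}, the vertical meridians keep constant angle, so those edges contribute nothing; on each horizontal edge the cone angle is $\alpha(t) = \pi - t^r + O(t^{r+1})$, independent of the edge, so $\frac{d\alpha}{dt} = -r\,t^{r-1} + O(t^r)$. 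Hence
$$
\frac{d}{dt}\operatorname{Vol}(\rho_t) = \frac{r}{2}\,t^{r-1}\!\left(\sum_{j \text{ horiz.}} \ell_j(t)\right) + O(t^r).
$$
Integrating from $0$ to $t$, and using that $\operatorname{Vol}(\rho_0) = 0$ since $\rho_0$ factors through $\pi_1(P^2)$ and so has image in a copy of $PGL_2(\mathbf R)$ stabilizing a totally geodesic $\mathbf H^2 \subset \mathbf H^3$ (a pseudo-developing map can be taken with image essentially two-dimensional, giving zero volume), one obtains
$$
\operatorname{Vol}(\rho_t) = \frac{1}{2}\int_0^t r\,s^{r-1}\!\left(\sum_j \ell_j(s)\right)ds + O(t^{r+1}).
$$
So it remains to show the total horizontal singular length stays bounded below by a positive constant as $t \to 0^+$.

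The key point is therefore the lower bound $\sum_j \ell_j(t) \geq 2c > 0$ for some $c$ independent of small $t$. For this I would argue that $\ell_j(t) \to 2|e_j|$ (twice the length of the corresponding edge $e_j$ of $P^2_{min}$): by the discussion in Section~\ref{section:fenchelnielsen}, when $\rho_t$ is the holonomy of a hyperbolic cone structure the real part of the twist parameter $\lambda_j$ equals the length of the $j$-th singular edge, and by the convergence statement in Theorem~\ref{theorem:1} (or, more carefully, by continuity of the Fenchel–Nielsen coordinates along the analytic path $\rho_t$ and the fact that $\rho_0$ corresponds to $P^2_{min}$ with its perimeter realized by $\sum\lambda_j$) these converge to the edge lengths of the minimizing polygon. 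Since $P^2_{min}$ is a genuine hyperbolic polygon, all its edge lengths are strictly positive, so for $t$ small enough $\sum_j \ell_j(t) \geq 2c$ with $c>0$. Plugging this into the integral above gives $\operatorname{Vol}(\rho_t) \geq c\int_0^t r s^{r-1}\,ds + O(t^{r+1}) = c\,t^r + O(t^{r+1}) \geq C t^r$ for a possibly smaller constant $C > 0$ and $t$ close to $0$, which is the claim.

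The main obstacle I anticipate is justifying the use of the Schläfli formula in this regenerating setting, where at $t = 0$ the structure degenerates to a two-dimensional object and the ``volume of a representation'' is defined via a pseudo-developing map rather than a genuine hyperbolic cone metric — at this stage of the paper the developing maps of the $C(\alpha)$ have not yet been constructed (that is Section~\ref{section:developingmaps}). I would handle this by working with the volume of a representation in the sense of \cite{Stefano}, which is defined purely in terms of $\rho_t$ and a pseudo-developing map and varies smoothly in $t$, and for which a Schläfli-type variation formula holds with $\ell_j$ replaced by the (complex, but here real) length parameters $\lambda_j$ of the meridians; the imaginary parts of the $\mu_j$ are the cone angles and the real parts of the $\lambda_j$ are the relevant ``lengths''. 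The identity $\operatorname{Vol}(\rho_0)=0$ then follows because $\rho_0$ preserves a totally geodesic plane. A secondary technical point is controlling the $O(t^{r+1})$ error terms uniformly, which is routine given the analyticity of $t \mapsto \rho_t$ and hence of $t \mapsto (\mu_j(t), \lambda_j(t))$.
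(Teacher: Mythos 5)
Your proposal is correct and follows essentially the same route as the paper: the paper's proof also invokes Schl\"afli's formula for cone manifolds (citing \cite{PortiEuclidean}, with the volume of a representation in the sense of \cite{Stefano}), writes $\operatorname{Vol}(\rho_t)=-\frac12\int_0^t\sum_e\operatorname{length}(e)\,d\alpha_e$, and concludes from the length being bounded below together with $\alpha_e(t)=\pi-t^r+O(t^{r+1})$. Your extra care in justifying $\operatorname{Vol}(\rho_0)=0$ and the positive lower bound on the singular lengths only fills in details the paper leaves as ``straightforward.''
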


\begin{proof}
 Schl\"afli's formula applied to cone manifolds \cite{PortiEuclidean} gives:  
$$
\operatorname{Vol}(\rho_t)=-\frac12\int_0^t\sum_{e}\operatorname{length}(e) d\alpha_e,
$$
where the sum runs over all singular edges or components. In our case, as the length is bounded below, and the cone angles 
are $\pi-t^r+ O(t^{r+1})$, the lemma is straightforward.
\end{proof}

\begin{Lemma}
\label{lemma:nontrivial}
 For small $t>0$, $\rho_t(f)$ is nontrivial.
\end{Lemma}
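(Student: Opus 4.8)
The plan is to argue by contradiction, using the lower bound on the volume just obtained in Lemma~\ref{lemma:schlafli}. Suppose that $\rho_t(f)=\pm\operatorname{Id}$ for some small $t>0$; by analyticity of the path $\{\rho_s\}$ and Remark~\ref{remark:changesign} we may as well assume this holds for all small $t$. Then the associated $PSL_2(\mathbf C)$-representation factors through the quotient
$$
\overline\Gamma=\pi_1(M)/\langle\langle f\rangle\rangle .
$$
Since $f$ is represented by a regular fiber of the Seifert fibration, killing it is the same as attaching a $2$-handle to $M$ along that fiber, and the resulting manifold is a (possibly twisted) $I$-bundle over the base of the fibration; hence $\overline\Gamma$ is the fundamental group of a $2$-orbifold. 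In the handlebody case of Remark~\ref{rm:handlebody} this is just the one-relator group $\langle F_{n+1}\mid f\rangle$. In either description $\overline\Gamma$ acts on a contractible $2$-complex with finite stabilizers, so $H^3(\overline\Gamma;\mathbf R)=0$.

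Next I would deduce from this that $\operatorname{Vol}(\rho_t)=0$. The point is that $\rho_t$ factors as $\pi_1(M)\twoheadrightarrow\overline\Gamma\to PSL_2(\mathbf C)$, with $\overline\Gamma=\pi_1(F)$ for a $2$-dimensional $F$: choosing an equivariant map $\tilde F\to\mathbf H^3$ and precomposing with a lift of a map $M\to F$ realizing the quotient produces a $\rho_t$-equivariant (pseudo)developing map $D_t\colon\tilde M\to\mathbf H^3$ with image inside a $2$-dimensional subset; then $D_t^*(\operatorname{vol}_{\mathbf H^3})\equiv 0$ and $\operatorname{Vol}(\rho_t)=0$. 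Equivalently, $\operatorname{Vol}(\rho_t)$ is the pairing of $[M,\partial M]$ with the pull-back of the volume class, which now factors through $H^3(\overline\Gamma;\mathbf R)=0$; the boundary components of $M$ are tori, hence have amenable fundamental group, so the relative class is well defined and also vanishes.

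This contradicts Lemma~\ref{lemma:schlafli}, which gives $\operatorname{Vol}(\rho_t)\ge C\,t^{r}>0$ for $t>0$ small. Therefore $\rho_t(f)\neq\pm\operatorname{Id}$, i.e.\ $\rho_t(f)$ is nontrivial. The step I expect to require the most care is the vanishing of $\operatorname{Vol}(\rho_t)$: since the genuine cone singularities, of angle $\pi-t^{r}\neq\pi$, are intrinsically $3$-dimensional, one must either verify that a developing map with $2$-dimensional image is still admissible as a pseudodeveloping map around the singular locus, or — as above — avoid this altogether by phrasing the volume vanishing cohomologically, invoking only that the volume of a representation factoring through a $2$-orbifold group is zero.
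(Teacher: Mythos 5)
Your overall strategy (contradict the volume bound of Lemma~\ref{lemma:schlafli}) is the same as the paper's, but the step where you conclude $\operatorname{Vol}(\rho_t)=0$ has a genuine gap, in both of the forms you offer. The cohomological version proves too much: any compact $3$-manifold with nonempty boundary deformation retracts onto a $2$-complex, so already $H^3(\pi_1(M);\mathbf R)=0$; if ``the pull-back of the volume class factors through $H^3=0$'' were enough, \emph{every} representation of $\pi_1(M)$ would have zero volume, which is false (the holonomies of the cone structures constructed in Section~\ref{section:developingmaps} have positive volume, as does the complete structure on any cusped manifold). The volume of a representation of a bounded manifold is defined via pseudodeveloping maps, or equivalently via bounded/relative classes paired with $[M,\partial M]$; vanishing of the \emph{ordinary} class in $H^3(\overline\Gamma;\mathbf R)$ does not make that pairing vanish, and $H^3_b(\overline\Gamma)$ has no reason to be zero. (Separately, the identification of $\overline\Gamma=\pi_1(M)/\langle\langle f\rangle\rangle$ with a $2$-orbifold group is not justified: $M$ is the complement of \emph{all} of $\Sigma_{\mathcal O^3}$, including the horizontal strands, so it is not a circle bundle over $P^2$ and killing $f$ does not produce $\pi_1^{orb}(P^2)$; in the setting of Remark~\ref{rm:handlebody} one only gets a one-relator quotient of a free group.)

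The geometric version fails for exactly the reason you flag at the end and do not resolve: a $\rho_t$-equivariant map with $2$-dimensional image is not conical around the singular locus (the cone angles are $\pi-t^r\neq 0$), hence is not an admissible pseudodeveloping map, and the well-definedness of $\operatorname{Vol}(\rho_t)$ --- the quantity bounded below by $Ct^r$ in Lemma~\ref{lemma:schlafli} --- only ranges over admissible maps, so the vanishing of the volume of your map says nothing about $\operatorname{Vol}(\rho_t)$. Resolving this is essentially the whole content of the paper's proof: assuming $\rho_t(f)$ trivial, one first shows, by rescaling at the shortest segment between the axes and comparing with the Euclidean model (Lemma~\ref{lemma:euctrans}), that each tangle holonomy has a fixed point and that $\Axis(\rho_t(m_i))=\Axis(\rho_t(m_i'))$; only then can one build a genuinely conical pseudodeveloping map whose singular tubes have arbitrarily small radius and whose complementary solid torus collapses equivariantly to a disk, giving volume smaller than $Ct^r$ and the contradiction. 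Without an argument of this kind (or some other proof that the conically-defined volume vanishes), your proof does not go through.
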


\begin{proof}
Seeking a contradiction, assume that $\rho_t(f)$ is trivial. Then,
$\rho_t(m_i')=\rho_t(m_i)^{\pm 1}$.

 We first claim that 
for small values of $t>0$, $\rho_t$ of the $i$-th tangle group is elliptic, ie.\
for a tubular neighborhood $V_i$ of the i-th $I$-fiber,
$\rho_t(\pi_1(V_i\setminus (V_i\cap \Sigma_{\mathcal O})))$ is elliptic.
We assume first that  the singular $I$-fiber is not in the branching locus of the orbifold.
Again by contradiction, assume that 
the axis of
 $\rho_t(\sigma_i)$ and $\rho_t(\sigma_i')$
are disjoint.
Then there is a minimizing segment between the
axis of $\rho_t(\sigma_i)$ and $\rho_t(\sigma_i')$, because
  the axis of $\rho_0(\sigma_i)$ and $\rho_0(\sigma_i')$ meet at one
point  with angle $\frac{\pi}{q_i}$.
 By rescaling the hyperbolic space in such a way that the length
of this segment is one, and by taking the pointed limit with base point the 
midpoint of this segment,
we look at the limits of the axis $\Axis(\rho_t(\sigma_i))$ and $
\Axis(\rho_t(\sigma_i'))$ after
rescaling: we obtain two euclidean lines
at distance one and forming an angle, as in the euclidean model of
Definition~\ref{dfn:euclideanmodel}.
In this model, the axis of the $m_i$ and $m_i'$ are parallel but
different, by Lemma~\ref{lemma:euctrans}.
 This contradicts that 
 $\rho_t(m_i')=\rho_t(m_i)^{\pm 1}$, and hence $\Axis(\rho_t(\sigma_i))$ and 
$\Axis(\rho_t(\sigma_i'))$ meet at one point.
When the singular $I$-fiber is in the branching locus of the orbifold, then a similar argument tells that 
the segment between 
$\Axis(\rho_t(\varsigma_i))\cap \Axis(\rho_t(\bar \varsigma_i))$ and 
$\Axis(\rho_t(\varsigma_i'))\cap \Axis(\rho_t(\bar \varsigma_i'))$ has length zero.

Construct a pseudodeveloping map $D_t\!:\tilde M\to \mathbf H^3$ as follows. Start by mapping a tubular neighborhood of the 
singularity to a tubular neighborhood of the axis of the corresponding elements via $\rho_t$.
Now, since $\rho_t$ of the edge groups is elliptic,
the singular $I$-fiber  can be mapped to a neighborhood of this point. Similarly, as
$\Axis(\rho_t(m_i))=\Axis(\rho_t(m_i'))$, the regular $I$-fibers can be mapped to a $\delta$-neighborhood of the
axis, for $\delta>0$ arbitrarily small.
The boundary of the neighborhood of the $I$-fibers  is a torus, and since 
$\rho_t(f)$ is trivial, this torus can be deformed $\rho_t$-equivariantly to a
circle, in a neighborhood of radius $2\delta$. Extend $D_t$ by collapsing the
rest of the
 manifold to a disk. Thus $\rho(t)$ has arbitrary small volume, by choosing the
$\delta>0$ small enough, 
contradicting Lemma~\ref{lemma:schlafli}.
\end{proof}

Recall that the Killing vector field $F$ is the corresponding
field of the infinitesimal isometry $\mathfrak {f}\in\mathfrak{sl}_2(\mathbf C)$, where 
$\rho_t(f)=\pm\exp( t^s \mathfrak {f}+ O( t^{s+1}))$. 
By Lemma~\ref{lemma:nontrivial}, $\mathfrak {f}\neq 0$ and $s$ is well defined.

\begin{Lemma}
\label{lemma:sleqr} If $\rho_t(f)=\exp( t^s \mathfrak {f}+ O( t^{s+1}))$, and $r\in\mathbf N$ is as in
Proposition~\ref{assumption:rho},
then 
$$
s\leq r.
$$
\end{Lemma}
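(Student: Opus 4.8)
The plan is to compare two volume estimates. On the one hand, Lemma~\ref{lemma:schlafli} gives the lower bound $\operatorname{Vol}(\rho_t)\geq C t^r$ coming from the Schläfli formula and the fact that the horizontal cone angles are $\pi - t^r + O(t^{r+1})$. On the other hand, I want to produce an \emph{upper} bound on $\operatorname{Vol}(\rho_t)$ of the form $O(t^s)$, directly from the geometry of a pseudo-developing map built out of the infinitesimal datum $\mathfrak f$. Putting the two inequalities together forces $t^r \lesssim t^s$ as $t\to 0^+$, hence $s\leq r$.

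To get the upper bound I would argue as in the proof of Lemma~\ref{lemma:nontrivial}, but now tracking the size of the collapse quantitatively in terms of $t$. Since $\rho_t(f)=\pm\exp(t^s\mathfrak f + O(t^{s+1}))$, the image $\rho_t(f)$ is at distance $O(t^s)$ from $\pm\operatorname{Id}$ in $SL_2(\mathbf C)$; concretely, along the axis of $F$ it translates by an amount $O(t^s)$ and rotates by $O(t^s)$. As in Lemma~\ref{lemma:nontrivial}, for small $t>0$ the representation of each tangle group is elliptic, so a neighborhood of each $I$-fiber can be developed near the (single) fixed point of the tangle group, and a regular $I$-fiber maps near the common axis of $\rho_t(m_i)$ and $\rho_t(m_i')$. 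The torus boundary of the solid-torus neighborhood of the union of tangles and $I$-strips can be developed $\rho_t$-equivariantly into a tube of radius $O(t^s)$ about that axis, because $\rho_t(f)$ moves points by $O(t^s)$. Extending $D_t$ over the rest of $M$ by mapping it into this $O(t^s)$-tube, the whole of $\rho_t$ is realized by a pseudo-developing map whose image is contained in a tubular neighborhood of controlled radius, so $\operatorname{Vol}(\rho_t) = O(t^s)$.

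The main obstacle I expect is the last step: controlling the volume of the pseudo-developing map away from the $I$-fibers, i.e.\ making precise that ``collapsing the rest of the manifold into an $O(t^s)$-tube'' contributes only $O(t^s)$ to the volume. One has to choose the pseudo-developing map on the complementary pieces (the neighborhoods of the strips and regular fibers, the generalized lens space part) so that it is Lipschitz with image in the thin tube; the volume of the image is then bounded by (area of cross-section) $\times$ (length), which is $O(t^{2s})\cdot O(1)$, but one must check the construction is genuinely $\rho_t$-equivariant and does not introduce extra volume near the cone locus. An alternative, cleaner route that avoids constructing a global $D_t$: use that $\operatorname{Vol}(\rho_t)$ depends only on the conjugacy class of $\rho_t$ and is continuous, together with the Schläfli formula $\operatorname{Vol}(\rho_t) = -\tfrac12\int_0^t\sum_e \operatorname{length}(e)\,d\alpha_e$ already invoked in Lemma~\ref{lemma:schlafli}, to note that $\operatorname{Vol}(\rho_t)\to 0$ at a rate governed by the slowest-varying geometric quantity; but pinning that rate to exactly $t^s$ still needs the geometric input that the relevant translation/rotation lengths are $O(t^s)$, which is precisely what $\mathfrak f\neq 0$ (Lemma~\ref{lemma:nontrivial}) and the definition of $s$ provide. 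I would present the volume-comparison argument as the clean core and relegate the Lipschitz-extension bookkeeping to a remark referencing the construction in Section~\ref{section:developingmaps}.
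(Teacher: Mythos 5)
Your proposal follows essentially the same route as the paper: the proof there sandwiches the volume, $C t^r \leq \operatorname{Vol}(\rho_t) \leq C' t^s$, getting the lower bound from Lemma~\ref{lemma:schlafli} and the upper bound from a pseudo-developing map built as in Lemma~\ref{lemma:nontrivial}, using that the displacement of $\rho_t(f)$ on a compact neighborhood of $\mathcal P$ is $\leq C_0 t^s$. The one inaccuracy in your sketch is that the image is not contained in an $O(t^s)$-tube about a single axis (the polygon direction has macroscopic size); the $O(t^s)$ volume bound comes from the collapse in the fiber direction (fiber length and Hausdorff distance between the axes $\Axis(\rho_t(m_i))$, $\Axis(\rho_t(m_i'))$ of order $t^s$), which is exactly the paper's estimate.
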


\begin{proof}By Lemma~\ref{lemma:schlafli},
$\operatorname{Vol}(\rho_t)\geq C t^r$ for some uniform constant $C>0$.

On the other hand, the displacement function
of $\rho_t(f)$ in a compact neighborhood $U$ of $\mathcal P$ is $\leq C_0 t^s$. In particular,
the Hausdorff distance between $\Axis(\rho_t(m_i))\cap U$ and $\Axis(\rho_t(m_i'))\cap U$ is
$\leq C_1 t^s$. 

We want to construct a pseudodeveloping map with volume $\leq C' t^s$.
We start by constructing a developing map  around the singular locus, by taking a small 
radius of the tube, with arbitrarily small volume, say $\leq t^s$. 
Moreover as the Hausdorff distance between $\Axis(\rho_t(m_i))\cap U$ and $\Axis(\rho_t(m_i'))\cap U$ is
$\leq C_1 t^s$, we can develop a solid torus that is a neighborhood of the $I$-fibers with volume $\leq C_ 2 t^s$,
and so that the length of the fiber is $\leq 3 C_1 t^s$. The exterior of this torus in $\mathcal O^3$
is a solid torus without singularity $V$, and since the displacement function  of $\rho_t(f)$ in  $U$ is $\leq C_0 t^s$,
the pseudodeveloping map can be extended to $V$ with a volume contribution $\leq C_3 t^s$.
Thus, the volume of the pseudodeveloping map, and of $\rho_t$, is $\leq C' t^s$. Comparing both 
inequalities for the volume:
$$
C t^r\leq\operatorname{Vol}(\rho_t)\leq C' t^s,
$$
for small values of $t>0$. Thus $s \leq r$.
\end{proof}

Before proving Proposition~\ref{lemma:killingfield}, we still need a further computation.
Let $$
B:\mathfrak{sl}_2(\mathbf C)\times \mathfrak{sl}_2(\mathbf C)\to\mathbf C
$$ 
denote the complex Killing form, see Appendix~\ref{sec:inf}.
For $\mathfrak{a},\mathfrak{b}\in \mathfrak{sl}_2(\mathbf C)$, 
$$
B(\mathfrak{a},\mathfrak{b})=
\operatorname{Trace}(Ad_{\mathfrak{a}}\circ Ad_{\mathfrak{b}})= 
4 \operatorname{Trace}(\mathfrak{a}\mathfrak{b}).
$$

\begin{Definition}
\label{definition:complexlength} 
We say that an infinitesimal isometry $\mathfrak{a}\in\mathfrak{sl}_2(\mathbf C)$ has
\emph{complex length} $l\in \mathbf C$ if $\exp(t\mathfrak{a})$ has complex length $t\,l$.
\end{Definition}

\begin{Lemma}
\label{lemma:productofF}
Let $\mathfrak d_i\in\mathfrak{sl}_2(\mathbf C)$ denote infinitesimal rotation
of complex length $\pi \mathbf i$ around the $i$-the oriented axis of $\mathcal P$.
Then
$$
B(\mathfrak d_{i},\mathfrak f)=\left\{ 
\begin{array}{l}
0, \textrm{ if } s<r. \\
4, \textrm{ if } s=r.
\end{array}
     \right.
$$
In particular $B(\mathfrak d_{i},\mathfrak f)$ is independent of $i$.
\end{Lemma}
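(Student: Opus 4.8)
The plan is to extract the coefficient $\mathfrak f$ from the defining relation $f = m_i (m_i')^{\pm 1}$, using the explicit description of $\rho_t(m_i)$ and $\rho_t(m_i')$ coming from Proposition~\ref{assumption:rho} and the equivariance set-up of Section~\ref{section:fibration}. First I would fix, as in the section, the oriented axis of the $i$-th horizontal edge of $\mathcal P$; after conjugating we may assume this is the axis from $0$ to $\infty$, so that $\rho_0(m_i) = \rho_0(m_i') = \pm\begin{pmatrix}\mathbf i & 0\\ 0 & -\mathbf i\end{pmatrix}$, which is $\exp(\tfrac{\pi\mathbf i}{2}\mathfrak d_i)$ up to sign, with $\mathfrak d_i = \mathrm{diag}(1,-1)$ (a generator of the $i$-th infinitesimal rotation of complex length $\pi\mathbf i$). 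By Proposition~\ref{assumption:rho} the rotation angle of $\rho_t(m_i)$ is $\pi - t^r + O(t^{r+1})$, hence
\[
\rho_t(m_i) = \exp\!\Big(\big(\tfrac{\pi\mathbf i}{2} - \tfrac{\mathbf i}{2}t^r + O(t^{r+1})\big)\mathfrak d_i + (\text{terms moving the axis})\Big).
\]
The key point is that the \emph{translation} part (the real part of the complex length of $\rho_t(m_i)$) and the motion of the axis are what will contribute to $\mathfrak f$ together with the $-\tfrac{\mathbf i}{2}t^r$ term.

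Next I would compute the product. Writing $\rho_t(m_i) = \exp(A_t)$ and $\rho_t(m_i') = \exp(A_t')$ with $A_t, A_t' \to \tfrac{\pi\mathbf i}{2}\mathfrak d_i$ (or $\pm$ that), the Baker–Campbell–Hausdorff formula gives $\rho_t(f) = \rho_t(m_i)\rho_t(m_i')^{\pm 1}$; since $\rho_0(f) = \pm\operatorname{Id}$, the first nonvanishing term in the expansion of $\rho_t(f)$ around the identity is $\exp(t^s\mathfrak f + O(t^{s+1}))$ with $s \le r$ by Lemma~\ref{lemma:sleqr}. The cleanest route to $B(\mathfrak d_i, \mathfrak f)$ is to use $B(\mathfrak d_i, \mathfrak f) = 4\operatorname{Trace}(\mathfrak d_i \mathfrak f)$ and to observe that $\operatorname{Trace}(\mathfrak d_i\,\cdot)$ is (up to a constant) the linear functional that reads off the complex-length component along the $i$-th axis: indeed for any $\mathfrak a$, the derivative of the complex length of $\rho_0(m_i)\exp(\epsilon\mathfrak a)$ along the $i$-th axis direction is proportional to $\operatorname{Trace}(\mathfrak d_i\mathfrak a)$. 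So $B(\mathfrak d_i,\mathfrak f)$ measures exactly the $t^r$-coefficient of the complex length of $\rho_t(f)$ projected onto the $i$-th axis — but the relation $f = m_i(m_i')^{\pm 1}$ with $m_i, m_i'$ sharing their limiting axis forces this projection to be precisely (twice) the $t^r$-coefficient of the complex length of $\rho_t(m_i)$ along that axis, which by Proposition~\ref{assumption:rho} is $-\mathbf i t^r$ on the angular side with zero translation contribution at order $t^r$. Tracking the normalization ($\mathfrak d_i$ has complex length $\pi\mathbf i$, and $B(\mathfrak d_i,\mathfrak d_i) = 4\operatorname{Trace}(\mathfrak d_i^2) = 8$... ) will pin the constant to be $4$ when $s = r$, and $0$ when $s < r$ because then the $t^s$-term of $\rho_t(f)$ comes entirely from the axis-displacement parts, which are orthogonal to $\mathfrak d_i$ under $B$.

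The main obstacle I anticipate is the bookkeeping of \emph{which} part of the deformation lands along $\mathfrak d_i$: one must carefully separate, in $A_t$ and $A_t'$, the component along $\mathfrak d_i$ (angle plus translation along the $i$-th axis) from the components that tilt or translate the axis, and show that when $s < r$ only the latter survive at order $t^s$ in the product $\rho_t(f)$, hence $B(\mathfrak d_i, \mathfrak f) = 0$; while when $s = r$ the angular term $-\tfrac{\mathbf i}{2}t^r\mathfrak d_i$ appears in \emph{both} $A_t$ and $A_t'$ with the same sign relative to the shared axis (this is where the orientation conventions $m_i m_i' = f$ or $m_i (m_i')^{-1} = f$ from Remark~\ref{remark:orient} and Section~\ref{section:fibration} are used, together with the sign choice fixed after Proposition~\ref{assumption:rho}), so they add rather than cancel, yielding $B(\mathfrak d_i,\mathfrak f) = 4$. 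The independence of $i$ is then immediate from the statement, since the answer $0$ or $4$ does not depend on $i$; in particular in the case $s = r$ this is the key input that will be fed into the next lemmas to identify $F$ as a pure loxodromic translation equidistant from all edges.
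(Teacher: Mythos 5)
Your strategy is essentially the paper's: put the $i$-th axis at $\overline{0\infty}$, split the deformations of $\rho_t(m_i)$ and $\rho_t(m_i')$ into the angular part along the common axis and the part displacing the axis, check that the angular parts add (via $f=m_im_i'$ or $f=m_i(m_i')^{-1}$ and the sign conventions) while the axis-displacement part contributes only off-diagonal, hence $B$-orthogonal to $\mathfrak d_i$, terms. But as written the proposal stops exactly where the proof has to be carried out. The two claims you yourself flag as ``the main obstacle'' --- that for $s<r$ the $t^s$-term of $\rho_t(f)$ is purely off-diagonal, and that for $s=r$ the two angular terms add to produce diagonal entries $\mp\mathbf i$ --- are the whole content of the lemma, and the BCH framing does not deliver them by itself: the logarithms $A_t,A_t'$ are not small (they sit near $\tfrac{\pi\mathbf i}{2}\operatorname{diag}(1,-1)$), so ``leading term of the product $=$ sum of the perturbations'' is not automatic, and one must control how conjugation by the order-one rotation acts on the perturbation. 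The paper does this by parametrizing the moving axes, $\Axis(\rho_t(m_i'))=\Lambda_t'(\Axis(\rho_0(m_i')))$ with $\Lambda_t,\Lambda_t'$ analytic in $t^{1/2}$ (obtained from the characteristic polynomials), normalizing $\Lambda_t=\operatorname{Id}$, expanding $\Lambda_t'=\operatorname{Id}+t^{\nu}(\cdots)$ with $\nu\in\tfrac12\mathbf N$, and multiplying the $2\times2$ matrices explicitly; this simultaneously identifies $s=\min(\nu,r)$ and the two possible shapes of $\mathfrak f$ (purely off-diagonal when $\nu<r$, diagonal part $\mp\mathbf i$ when $\nu\geq r$), from which the pairing is read off. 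Some such parametrization, with its $t^{1/2}$-analyticity, is needed in your route too and is missing.

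Moreover, the one normalization you commit to is wrong: $\operatorname{diag}(1,-1)$ is an infinitesimal \emph{translation} of complex length $2$ (indeed $B(\mathfrak d_i,\mathfrak d_i)=+8$ is the signature of a loxodromic, not of a rotation), whereas the element used in the paper's computation is $\mathfrak d_i=\operatorname{diag}(\mathbf i/2,-\mathbf i/2)$. With your choice and the correct $\mathfrak f=\begin{pmatrix}-\mathbf i & -2b\\ -2c & \mathbf i\end{pmatrix}$ one gets $B(\mathfrak d_i,\mathfrak f)=4\operatorname{Trace}(\mathfrak d_i\mathfrak f)=-8\mathbf i$, not $4$; so ``tracking the normalization'' is not a loose end but precisely the step that produces the stated constant, and as set up your computation would not give it. The parts of the statement that feed into Lemma~\ref{rem:r=s} and Proposition~\ref{lemma:killingfield} (vanishing when $s<r$, a nonzero value independent of $i$ when $s=r$) are insensitive to the diagonal normalization, but the lemma asserts the value $4$, so the constant must be pinned down with the correct $\mathfrak d_i$.
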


We will show later in Lemma~\ref{rem:r=s} that only the case  $B(\mathfrak d_{i},\mathfrak f)=4$ occurs, in particular $s=r$.

\begin{proof}
We can find  $\Lambda_t, \Lambda' _t\in SL_2(\mathbf C)$ that  depend analytically on $t^{1/2}$,
 $t\in (0,\varepsilon)$ so that $\Lambda_0=\Lambda_0'=\operatorname{Id}$, 
$$
\Axis(\rho_t(m_i))= \Lambda_t(\Axis (\rho_0(m_i)))
\qquad\textrm{ and }
\qquad
\Axis(\rho_t(m_i'))= \Lambda_t' (\Axis (\rho_0(m_i'))).
$$
Those matrices $\Lambda_t$ and $\Lambda_t'$ are obtained by solving the characteristic polynomials for
$\rho_t(m_i)$ and $\rho_t(m_i')$, hence they are analytic on $t^{1/2}$.

Assume that the axis of 
$\rho_0(m_i)$ is $\overline{0\infty}$ in the upper half space model of $\mathbf H^3$. If 
 the orientations of $m_i$ and $m_i'$ are compatible, then.
$$
\rho_t(m_i)=\pm \Lambda_t\begin{pmatrix}
                e^{i\alpha_i/2} & 0 \\
		0 & e^{-i\alpha_i/2}
               \end{pmatrix}
\Lambda_t^{-1},
\quad
\rho_t(m_i')=\pm \Lambda_t' \begin{pmatrix}
                e^{i\alpha_i'/2} & 0 \\
		0 & e^{-i\alpha_i'/2}
               \end{pmatrix}
 (\Lambda_t')^{-1}.
$$
Notice that since $\rho_0(f)=\pm\operatorname{Id}$, 
 if $\rho_t(f)=\pm \exp(t^s\mathfrak{f}+ O(t^{s+1}))$,
then 
$$\Lambda_t^{-1}\rho_t(f)\Lambda_t=\pm \exp(t^s\mathfrak{f}+ O(t^{s+1})),$$
 hence we may assume 
$\Lambda_t=\operatorname{Id}$ (after replacing $\Lambda'_t$ by $\Lambda_t^{-1}\Lambda'_t$).
Let
$$\Lambda_t' =\begin{pmatrix}
                                                   1+ a t^{\nu} & b t^{\nu} \\
						   c t^{\nu} & 1-a t^{\nu}
                                                  \end{pmatrix} 
+ O(t^{\nu+1/2}),
$$
with $a,b,c\in\mathbf C$, $\nu\in\frac12\mathbf N$.
Since $\alpha_i(t)= \pi-t^r+ O(t^{r+1})$ and $\alpha_i'(t)= \pi-t^r+ O(t^{r+1})$,
\begin{equation}
\label{eqn:rhotfa} 
\rho_t(f)= \rho_t(m_i)\rho_t(m_i')=\pm
	\begin{pmatrix}
	  -1+\mathbf i t^r & 2 t^{\nu} b \\
	2 t^{\nu} c &  -1-\mathbf i t^r 
	\end{pmatrix}
+ O( t^{\min(r,\nu)+1/2}).
\end{equation}
When $m_i$ and $m_i'$ have opposite orientation, then 
$$
\rho_t(m_i')=\pm \Lambda_t' \begin{pmatrix}
                e^{-i\alpha_i'/2} & 0 \\
		0 & e^{i\alpha_i'/2}
               \end{pmatrix}
 (\Lambda_t')^{-1} ,
$$
and since $f=m_i (m_i')^{-1}$, (\ref{eqn:rhotfa}) also holds true.

Let $\mathfrak{d}_i$ be the infinitesimal rotation around the oriented axis of 
$\rho_0(m_i)$ of complex length $\pi \mathbf i$. In this model:
$$
\mathfrak d_i
=\begin{pmatrix} \mathbf i/2 & 0 \\ 0 & -\mathbf i/2
                               \end{pmatrix}.
$$
From (\ref{eqn:rhotfa}) we distinguish two cases:
\begin{itemize}
\item[1)] If $\nu<r$, then $s=\nu<r$ and 
\begin{equation}
\label{eqn:f0} 
\mathfrak{f}=\begin{pmatrix} 0 & -2 b \\ -2c & 0 
                               \end{pmatrix}.
\end{equation}
\item[2)] If $\nu\geq r$, then $s=r$ and 
\begin{equation}
\label{eqn:fi} 
\mathfrak{f}=\begin{pmatrix} -\mathbf i & -2 b \\ -2c & \mathbf i
                               \end{pmatrix}.
\end{equation}

This includes the case $\nu> r$, with $b=c=0$.
\end{itemize}

Then the formula follows from 
$B(\mathfrak d_{i},\mathfrak f)=4\operatorname{Trace}(B(\mathfrak d_{i}\mathfrak f))$.
\end{proof}

\begin{Remark}
\label{rem:Fperpendicular}
It follows from the proof of Lemma~\ref{lemma:productofF} that the Killing vector field $F$  is perpendicular to the axis  $\Axis(\rho_0(m_i))$. 
This holds from Equalities (\ref{eqn:f0}) and (\ref{eqn:fi}), because in both cases the real part of the diagonal of $\mathfrak f$ vanishes, and the axis 
is $\Axis(\rho_0(m_i))=\overline{0\infty}$.
\end{Remark}

\begin{Lemma}
\label{rem:r=s}
 $r=s$.
\end{Lemma}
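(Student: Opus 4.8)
The plan is to combine the two inequalities already on the table with the information extracted from Lemma~\ref{lemma:productofF}. By Lemma~\ref{lemma:sleqr} we already have $s\le r$, so it remains to rule out $s<r$. Suppose for contradiction that $s<r$. Then Lemma~\ref{lemma:productofF} gives $B(\mathfrak d_i,\mathfrak f)=0$ for every oriented axis $\overline{0\infty}=\Axis(\rho_0(m_i))$ of $\mathcal P$; equivalently, by the explicit form $\mathfrak f=\begin{pmatrix}0&-2b\\-2c&0\end{pmatrix}$ in Equation~(\ref{eqn:f0}), the Killing field $F$ is an infinitesimal isometry whose associated element $\mathfrak f$ is trace-free against each $\mathfrak d_i$. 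Geometrically this says $F$ is everywhere tangent to each edge-axis of the polygon $\mathcal P$, i.e. $F$ restricted to each edge geodesic of $\mathcal P$ is a Killing field of that geodesic; in the normal form above $\mathfrak f$ is (infinitesimally) a rotation/translation fixing the axis $\overline{0\infty}$ setwise and acting on it.

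First I would make precise what $B(\mathfrak d_i,\mathfrak f)=0$ for all $i$ forces. Since $\mathfrak d_i$ is the infinitesimal $\pi$-rotation about $\Axis(\rho_0(m_i))$, the condition $B(\mathfrak d_i,\mathfrak f)=0$ is exactly the statement that the Killing field $F$ leaves the geodesic $\Axis(\rho_0(m_i))$ invariant (it is the orthogonality, under the Killing form, of $\mathfrak f$ to the generator of rotations about that geodesic — see Appendix~\ref{sec:inf}). So in the case $s<r$, $F$ preserves simultaneously all the edge-lines of $\mathcal P$. But $\mathcal P$ is a hyperbolic polygon with at least three edges whose lines are pairwise distinct and not all through a common point at infinity (the polygon is compact, or at worst has finitely many ideal vertices, and in any case no Killing field of $\mathbf H^3$ can preserve three or more distinct geodesics lying in a common plane unless it is trivial). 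Hence $\mathfrak f=0$, contradicting Lemma~\ref{lemma:nontrivial}, which guarantees $\mathfrak f\neq 0$. Therefore $s<r$ is impossible and $s=r$.

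The step I expect to be the main obstacle is the clean geometric argument that a nonzero Killing field of $\mathbf H^3$ cannot preserve all the edge-geodesics of $\mathcal P$. One has to treat the degenerate possibilities: a Killing field can preserve two distinct geodesics if they share a common perpendicular (it is then a loxodromic along that perpendicular) or if they share an endpoint at infinity (parabolic or a loxodromic with that fixed point), so one needs at least three edges to get a contradiction, and one must check the edges of $\mathcal P$ are genuinely in ``general position'' in this weak sense — which holds because $\mathcal P$ is a genuine hyperbolic polygon with $n\ge 3$ edges (indeed $\sum(\pi-\vartheta_i)>2\pi$) whose edge-lines bound a region of finite or infinite but nondegenerate area. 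An alternative, perhaps cleaner, route avoids this case analysis entirely: use that $\sum\lambda_j$ is twice the perimeter (Section~\ref{section:fenchelnielsen}) and that at the minimizer $\chi_0$ the relevant directional derivatives of the perimeter are governed by $B(\mathfrak d_i,\mathfrak f)$; if $s<r$ then $\rho_t(f)$ contributes at order lower than the cone-angle deformation, which via the Schläfli estimate of Lemma~\ref{lemma:schlafli} and the volume bound in the proof of Lemma~\ref{lemma:sleqr} would force $\operatorname{Vol}(\rho_t)$ to be $o(t^r)$, contradicting $\operatorname{Vol}(\rho_t)\ge Ct^r$. Either way the contradiction is with $\mathfrak f\neq 0$ or with the volume lower bound, and I would present whichever the author's earlier lemmas make most immediate — most likely the $B(\mathfrak d_i,\mathfrak f)$ computation, since Lemma~\ref{lemma:productofF} is stated precisely to feed this conclusion.
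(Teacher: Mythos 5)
Your overall strategy (use $s\le r$ from Lemma~\ref{lemma:sleqr}, assume $s<r$, feed $B(\mathfrak d_i,\mathfrak f)=0$ from Lemma~\ref{lemma:productofF} into a geometric impossibility) is the paper's strategy, but your central step misreads what the vanishing of the Killing form means, and this is a genuine gap. In coordinates where $\Axis(\rho_0(m_i))=\overline{0\infty}$ one has $\mathfrak d_i=\operatorname{diag}(\mathbf i/2,-\mathbf i/2)$ and $B(\mathfrak d_i,\mathfrak f)=4\mathbf i u$ where $u$ is the diagonal entry of $\mathfrak f$; so $B(\mathfrak d_i,\mathfrak f)=0$ holds exactly for the \emph{off-diagonal} $\mathfrak f$, which do \emph{not} preserve the geodesic $\overline{0\infty}$, while the infinitesimal isometries that do preserve it (the diagonal ones) pair \emph{nontrivially} with $\mathfrak d_i$. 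Your claim that $B(\mathfrak d_i,\mathfrak f)=0$ says ``$F$ leaves the edge-line invariant / is tangent to it'' is therefore exactly backwards — indeed Remark~\ref{rem:Fperpendicular} records that for the $\mathfrak f$ of Equations (\ref{eqn:f0}) and (\ref{eqn:fi}) the field $F$ is \emph{perpendicular} to the edge-axes. Consequently your contradiction ``$F$ preserves three edge-lines in general position, hence $\mathfrak f=0$, contradicting Lemma~\ref{lemma:nontrivial}'' does not follow. The correct reading, via Propositions~\ref{prop:killing} and \ref{prop:killing1P}, is: if $\mathfrak f$ is non-parabolic, $B(\mathfrak d_i,\mathfrak f)=0$ forces the complex distance between $\Axis(\mathfrak f)$ and the $i$-th edge-line to be $\pm\frac{\pi}{2}\mathbf i$, i.e.\ $\Axis(\mathfrak f)$ meets every edge-line of $\mathcal P$ perpendicularly; if $\mathfrak f$ is parabolic, its fixed point at infinity is an endpoint of every edge-line. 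The contradiction in the paper is that no single geodesic can be perpendicular to all edges of the polygon and no single ideal point can be an endpoint of all of them — not that $\mathfrak f=0$.

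Your fallback ``volume'' route also does not close the gap as sketched: if $s<r$ then for small $t>0$ one has $t^s\ge t^r$, so the two bounds $Ct^r\le\operatorname{Vol}(\rho_t)\le C't^s$ from Lemmas~\ref{lemma:schlafli} and \ref{lemma:sleqr} are perfectly compatible with $s<r$; the displacement of $\rho_t(f)$ being of \emph{lower} order in $t$ makes it \emph{larger}, not smaller, so no $o(t^r)$ upper bound on the volume is produced and no contradiction arises. To repair the proof you should replace your invariance claim by the dichotomy above (non-parabolic versus parabolic $\mathfrak f$) and derive the impossibility of a common perpendicular geodesic, respectively of a common ideal endpoint, for the edge-lines of $\mathcal P$.
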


\begin{proof}
 Assume  that $s<r$, hence $B(\mathfrak f, \mathfrak d_i)=0$ for each $i=1,\ldots,n$. 
Using the formulas of Appendix~\ref{sec:inf}, we shall find a contradiction. 
When $\mathfrak f$ is non parabolic,
let $\Axis(\mathfrak f)\subset\mathbf H^3$ denote the axis of $\mathfrak f$, 
which is the minimizing set for the norm of the Killing vector field $\vert F\vert$.
If $f$ is non parabolic, then by Proposition~\ref{prop:killing} the complex distance
between $\Axis(\mathfrak f)$ and $\Axis(\mathfrak d_i)$ is $\pm\frac{\pi}2\mathbf i$, hence 
$\Axis(\mathfrak f)$ must meet perpendicularly  all edges of $\mathcal P$, which is impossible.
So we assume that $\mathfrak f$ is  parabolic. In this case, Proposition~\ref{prop:killing1P} tells that
the point at $\infty$ fixed by $\mathfrak f$ is an endpoint of all (infinite) edges of $\mathcal P$, which is again impossible.
\end{proof}

\begin{proof}[Proof of Proposition~\ref{lemma:killingfield}]
By Lemmas~\ref{rem:r=s} and \ref{lemma:productofF}, $r=s$ and $B(\mathfrak f, \mathfrak d_i)=4$ for each $i=1,\ldots,n$. 
We discuss again the possibilities for $\mathfrak f$. 
If $\mathfrak f$ was parabolic, then Corollary~\ref{cor:equihorsphere} would tell that all (infinite) edges
of $\mathcal P$ are tangent to a given horosphere, and that their tangent vectors are parallel in this horosphere, which is again impossible.
Hence we are left with the case that $\mathfrak f$ is nonparabolic and has an axis 
whose complex distance to all oriented edges of $\mathcal P$ is the same
(by Proposition~\ref{prop:killing}).

Notice that by Remark~\ref{rem:Fperpendicular}, the Killing 
vector field $ F$
is perpendicular to every edge of $\mathcal P$. 
Hence, at the vertices of $\mathcal P$, $F$ is perpendicular to the plane containing $\mathcal P$,
and since it is a Killing vector field, $F$ is perpendicular to $\mathcal P$.
Thus $\mathfrak f$ is either an infinitesimal rotation with axis coplanar to $\mathcal P$ or
an infinitesimal translation with axis perpendicular to $\mathcal P$.
If $\mathfrak f$ is an infinitesimal rotation  
then by Remark~\ref{remark:killing} (Equation \ref{eqn:killingnp}) the complex distance between $\Axis(\mathfrak f)$ 
and every oriented axis of $\mathcal P$ is the same, but this is impossible in a coplanar configuration.
Thus $\mathfrak f$ is an infinitesimal translation, and its axis meets $\mathcal P$
perpendicularly and is equidistant to all edges of $\mathcal P$.

Finally, the assertion about orientations follows from the next lemma.
\end{proof}

\begin{Lemma}
If the cone angles decrease, then the orientation of the Killing vector  field
$F$ is the same as the orientation of the fiber in $\mathcal O^3$. 
If they increase, then it is the opposite orientation.
\end{Lemma}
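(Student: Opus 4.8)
The plan is to track signs through the construction of the Killing vector field $F$, relating the sign of $t^s$ in the expansion $\rho_t(f)=\pm\exp(t^s\mathfrak f+O(t^{s+1}))$ to the sign of the rotation-angle perturbation $\alpha_i(t)=\pi-t^r+O(t^{r+1})$, and then to the orientation of the fiber coming from the Seifert fibration. First I would recall from the proof of Lemma~\ref{lemma:productofF} the explicit model: with $\Axis(\rho_0(m_i))=\overline{0\infty}$ and $\Lambda_t=\operatorname{Id}$, Equation~(\ref{eqn:rhotfa}) shows $\rho_t(f)=\pm\left(\begin{smallmatrix}-1+\mathbf i t^r & 2t^\nu b\\ 2t^\nu c & -1-\mathbf i t^r\end{smallmatrix}\right)+O(t^{\min(r,\nu)+1/2})$, and since $r=s$ by Lemma~\ref{rem:r=s} we get $\mathfrak f=\left(\begin{smallmatrix}-\mathbf i & -2b\\ -2c & \mathbf i\end{smallmatrix}\right)$ as in~(\ref{eqn:fi}). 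Thus the diagonal (loxodromic) part of $\mathfrak f$ is $-\mathbf i\,\mathfrak d_i/( \mathbf i/2)=-2\mathfrak d_i$ after projecting orthogonally (w.r.t.\ $B$) onto the axis $\overline{0\infty}$; concretely $F$ restricted to its axis is an infinitesimal \emph{translation} (the $\pm\mathbf i$ entries with the sign fixed by $\alpha_i<\pi$) whose direction is determined by the sign in $\alpha_i=\pi-t^r$.

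Next I would make the orientation bookkeeping precise. By Remark~\ref{remark:orient} and the discussion in Section~\ref{section:fibration}, the meridians $m_i,m_i'$ are oriented compatibly with the chosen orientations of the horizontal singular edges, and $f=m_i(m_i')^{\pm1}$ represents the fiber through $x_0$ with its Seifert orientation (the translation direction of the index-two subgroup $\mathbf Z<D_\infty$ in the local euclidean model, cf.\ Remark~\ref{remark:fibration}). The key computation is that in the model of Lemma~\ref{lemma:productofF}, with the convention fixing $\rho_t(m_i)$ to have angle $\alpha_i(t)/2$ about the \emph{oriented} axis $\overline{0\infty}$, the product $\rho_t(m_i)\rho_t(m_i')$ acquires diagonal part $-1\pm\mathbf i t^r$ where the upper-left entry is $-1+\mathbf i t^r$ precisely when $\alpha_i=\pi-t^r$ (angles decreasing). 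Exponentiating, $\exp(t^r\mathfrak f)$ with $\mathfrak f=\left(\begin{smallmatrix}-\mathbf i&\ast\\ \ast&\mathbf i\end{smallmatrix}\right)$ is, up to the loxodromic part, a translation along the oriented axis $\overline{0\infty}$ in the \emph{positive} direction — the same direction as the Seifert fiber $f$. If instead $\alpha_i=\pi+t^r$ (angles increasing), the sign of the $\mathbf i t^r$ term flips, so $\mathfrak f$ has diagonal $\left(\begin{smallmatrix}\mathbf i&\ast\\ \ast&-\mathbf i\end{smallmatrix}\right)$ and $F$ translates in the opposite direction along the fiber.

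So the argument reduces to checking that the orientation convention under which $\rho_t(m_i)$ is a rotation of angle $+\alpha_i(t)/2$ (and hence $f=m_i(m_i')^{\pm1}$ points in the Seifert-positive direction) is exactly the one used to fix the sign $\pi-t^r$ in Proposition~\ref{assumption:rho}; this was the point of the sentence after that proposition about fixing orientations of singular edges and meridians, and of Remark~\ref{remark:changesign}, where replacing $t$ by $-t$ simultaneously reverses the trace sign and hence the sense of the angle deformation. I expect the \textbf{main obstacle} to be precisely this consistency check: one must verify that the local euclidean-model orientation of the fiber (translation vector of $\mathbf Z<D_\infty$), the global orientation of $f\in\pi_1(M,x_0)$ coming from the relation $m_im_i'=f$ (or $m_i(m_i')^{-1}=f$ in the opposite case, as in Section~\ref{section:fibration}), and the matrix conventions of Lemma~\ref{lemma:productofF} all line up — i.e.\ that no hidden sign is absorbed when passing between the base point $x_0$, the lifts $\tilde m_i$, and the developing map $D_0$. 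Once that is pinned down, the conclusion is immediate: decreasing cone angles $\Longleftrightarrow \alpha_i=\pi-t^r \Longleftrightarrow \mathfrak f$ has diagonal $-\mathbf i\,\sigma_3$ $\Longleftrightarrow$ $F$ translates along the fiber in its Seifert orientation, and increasing angles gives the reverse, which is the statement of the lemma. I would also note that the $O(t^{r+1})$ corrections and the $\nu\geq r$ hypothesis do not affect the leading sign, so no delicate estimate beyond~(\ref{eqn:rhotfa}) is needed.
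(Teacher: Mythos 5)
Your sign-tracking argument breaks down at its central step. In the normalization of Lemma~\ref{lemma:productofF}, the diagonal part $\operatorname{diag}(-\mathbf i,\mathbf i)$ of $\mathfrak f$ in (\ref{eqn:fi}) is not a ``loxodromic/translational part along $\overline{0\infty}$'': having purely imaginary complex length, it is an infinitesimal \emph{rotation} about $\overline{0\infty}$, and its Killing field vanishes on that axis. On the edge axis the value of $F$ comes entirely from the off-diagonal entries $b,c$ and is perpendicular to the axis (Remark~\ref{rem:Fperpendicular}); the axis of $F$ is the geodesic perpendicular to $\mathcal P$ through the incenter (Proposition~\ref{lemma:killingfield}), not $\overline{0\infty}$, and the Seifert fiber over interior points is normal to $\mathcal P$, not along the developed edge $\overline{0\infty}$. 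Hence the chain ``decreasing angles $\Rightarrow$ upper-left entry $-\mathbf i$ $\Rightarrow$ $F$ points in the Seifert-positive direction'' does not follow. Concretely, conjugating by the $\pi$-rotation about $\overline{0\infty}$ (which commutes with $\rho_t(m_i)$) leaves the diagonal of $\mathfrak f$ unchanged while replacing $(b,c)$ by $(-b,-c)$, i.e.\ it reverses the normal component of $F$ and moves the foot of its axis to the other side of the edge: the diagonal sign at one edge simply cannot determine the normal direction of $F$. What is missing is exactly the geometric input you postpone as a ``consistency check'' and never carry out: on which side of the edge the interior of $\mathcal P$ lies, and how the meridian orientation, the orientation of $\mathbf H^2\subset\mathbf H^3$ and the fiber orientation are matched. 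If one wanted a local proof in this spirit, the usable datum is the sign in $B(\mathfrak d_i,\mathfrak f)=+4$ interpreted through Proposition~\ref{prop:killing} as a statement about the complex distance between the oriented edge axes and the oriented axis of $F$, but again only after pinning down those conventions.

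The paper's own proof avoids all of this bookkeeping by a global argument: Schl\"afli's formula (Lemma~\ref{lemma:schlafli}) shows $\operatorname{Vol}(\rho_t)>0$ when the cone angles decrease, whereas if $F$ pointed against the fiber orientation one could, following the pseudodeveloping-map construction of Lemma~\ref{lemma:nontrivial} (and Lemma~\ref{lemma:sleqr}), produce a pseudodeveloping map of negative volume, a contradiction; increasing angles reverse both signs. So your route is genuinely different from the paper's, but as written it contains a gap (indeed an incorrect identification of the translational part of $\mathfrak f$) rather than a complete alternative argument; either repair it along the lines above or switch to the volume comparison.
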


\begin{proof}
By Lemma~\ref{lemma:schlafli}, the volume of the representation is positive,
$
\operatorname{Vol}(\rho_t)>0$ for $t>0$.
On the other hand, if the orientation of the Killing vector field was the wrong one,
we would be able to construct a pseudodeveloping map with negative volume, following the strategy of
Lemma~\ref{lemma:nontrivial}.
\end{proof}

\begin{Corollary}
\label{coro:rhotfhyp} 
For $t\in (0,\varepsilon)$, $\rho_t(f)$ is loxodromic (ie.\ not elliptic nor parabolic).
\end{Corollary}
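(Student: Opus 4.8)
The plan is to read this off directly from Proposition~\ref{lemma:killingfield} together with the expansion $\rho_t(f) = \pm\exp(t^s\mathfrak{f} + O(t^{s+1}))$ established earlier in this section. Proposition~\ref{lemma:killingfield} tells us that $\mathfrak{f}$ is an infinitesimal pure translation along an axis perpendicular to $\mathcal P$; hence its two eigenvalues are $\pm\tau$ with $\tau\in\mathbf R$, and $\tau\neq 0$ because $\mathfrak{f}\neq 0$ by Lemma~\ref{lemma:nontrivial}. We may assume $\tau>0$. The whole point is then that exponentiating a nonzero infinitesimal translation, even after a perturbation of strictly higher order, still produces a loxodromic element once $t>0$ is small.

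First I would compute the eigenvalues of $X_t := t^s\mathfrak{f} + O(t^{s+1})\in\mathfrak{sl}_2(\mathbf C)$. Since the determinant is a quadratic form on $\mathfrak{sl}_2(\mathbf C)$, one gets $-\det X_t = t^{2s}(-\det\mathfrak{f}) + O(t^{2s+1}) = t^{2s}\tau^2 + O(t^{2s+1})$, which for $t>0$ small lies off the negative real axis; so its principal square root $\kappa_t$ is well defined and $\kappa_t = t^s\tau + O(t^{s+1})$, in particular $\operatorname{Re}\kappa_t = t^s\tau(1+O(t))>0$ for $t\in(0,\varepsilon)$ with $\varepsilon$ small. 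The eigenvalues of $\rho_t(f) = \pm\exp(X_t)$ are $\pm e^{\pm\kappa_t}$, so their moduli $e^{\operatorname{Re}\kappa_t}$ and $e^{-\operatorname{Re}\kappa_t}$ are distinct; an element of $SL_2(\mathbf C)$ with eigenvalues of distinct moduli is loxodromic, so $\rho_t(f)$ is loxodromic. Equivalently one may argue with the trace: $\operatorname{tr}\rho_t(f) = \pm 2\cosh\kappa_t = \pm\bigl(2 + t^{2s}\tau^2 + O(t^{2s+1})\bigr)$, and this cannot lie in $[-2,2]$ for small $t>0$, since otherwise it would be real with absolute value $\geq 2 + t^{2s}\tau^2 + O(t^{2s+1})>2$, a contradiction.

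I do not expect a genuine obstacle here: essentially all the work is already contained in Proposition~\ref{lemma:killingfield} (the identification of $\mathfrak{f}$ as a translation, ruling out the elliptic and parabolic cases). The only point requiring a little care is that the $O(t^{s+1})$ term in the Lie algebra could in principle contribute an imaginary part to $\kappa_t$; but since it is of strictly higher order than the real leading term $t^s\tau$, it affects neither the positivity of $\operatorname{Re}\kappa_t$ nor the fact that the trace leaves the interval $[-2,2]$, and the corollary follows.
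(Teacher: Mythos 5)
Your argument is correct and is essentially the paper's own proof: the paper likewise invokes Proposition~\ref{lemma:killingfield} to conclude that $\mathfrak f$ is hyperbolic and then observes that the first nonzero variation of $\operatorname{tr}\rho_t(f)$ is real positive, so the trace leaves $[-2,2]$ for small $t>0$ (treating the signs $\rho_0(f)=\pm\operatorname{Id}$ as you do). Your version merely makes the eigenvalue/trace expansion explicit, including the harmless higher-order imaginary contribution, which the paper leaves implicit.
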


\begin{proof}
Assume first that $\rho_0(f)=\operatorname{Id}$.
 Since $\mathfrak f$ is hyperbolic, then the first nonzero derivative of the trace of $\rho_t(f)$ is
real positive, in particular for small values of $t>0$ it is not
contained in $[-2,2]$. A similar argument applies when $\rho_0(f)=-\operatorname{Id}$.
\end{proof}

\section{Constructing developing maps}
\label{section:developingmaps}

Along this section, assume that $\rho_t$, $t\in [0,\varepsilon)$, is a path of representations that 
satisfies the conclusion of Proposition~\ref{assumption:rho}.
The goal is to construct developing maps with holonomy $\rho_t$.

We construct the developing maps in three steps. Firstly, in a neighborhood of
the vertices of $P^2$, that correspond to tangles of the orbifold, or singular
interval fibers.
Secondly, on the edges, and finally on the interior.

We start with the vertices of $P^2$, ie.\ the tangles of $\mathcal O^3$.

We assume for the moment that the $I$-fiber of the  $i$-th vertex  is not in the branching locus of the orbifold. 
(See Remark~\ref{Remark:singularcore} when it is in the branching locus of the orbifold).
Let $\sigma_i$ and $\sigma_i'$ in $\pi_1(M)$ denote
the meridians corresponding to the $i$-th tangle, as in
Section~\ref{section:fibration} (Figure~\ref{fig:loops}).

\begin{Lemma}
\label{lemma:positivedistance} 
For $t>0$, 
$$
\dist(\Axis(\rho_t(\sigma_i)),\Axis(\rho_t(\sigma_i'))>0.
$$
Moreover, there is a shortest segment $\nu_i(t)$ between both axis that
converges to the $i$-th vertex of the polygon as $t\to 0^+.$
\end{Lemma}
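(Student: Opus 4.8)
The plan is to derive both parts of the statement from Corollary~\ref{coro:rhotfhyp}, which asserts that $\rho_t(f)$ is loxodromic for $t\in(0,\varepsilon)$, together with continuity (indeed analyticity) of $t\mapsto\rho_t$ near $t=0$. The crux will be a short case analysis showing that if the two axes were at distance zero then the group they generate could not contain any loxodromic element; the subtle case is the one in which the axes are asymptotic, which is not covered by the rescaling argument already used in the proof of Lemma~\ref{lemma:nontrivial}.

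First I would record the situation at $t=0$. By the description of the $i$-th tangle in Section~\ref{section:fibration}, see (\ref{equation:angle}), the isometries $\rho_0(\sigma_i)$ and $\rho_0(\sigma_i')$ are rotations of angle $\pi$ whose axes are coplanar and meet transversally at the $i$-th vertex $v_i$ of $\mathcal P$, forming an angle $\frac{p_i}{q_i}\pi$ with $0<p_i<q_i$; in particular these two axes are distinct and their four endpoints on $\partial_\infty\mathbf H^3$ are distinct. For $t\in(0,\varepsilon)$ the elements $\rho_t(\sigma_i)$ and $\rho_t(\sigma_i')$ are elliptic by Proposition~\ref{assumption:rho}. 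Moreover, since $m_i$ and $m_i'$ lie in the $i$-th tangle group $\pi_1(V_i\setminus(\Sigma_{\mathcal O}\cap V_i))$, which is the free group on $\sigma_i$ and $\sigma_i'$, and since $f=m_i(m_i')^{\pm1}$, the element $\rho_t(f)$ lies in the group $G_t$ generated by $\rho_t(\sigma_i)$ and $\rho_t(\sigma_i')$.

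For the positivity of the distance, suppose $\dist(\Axis(\rho_t(\sigma_i)),\Axis(\rho_t(\sigma_i')))=0$ for some $t\in(0,\varepsilon)$. Then the two complete geodesics either coincide, or meet at a single point of $\mathbf H^3$, or are asymptotic to a common point $\xi\in\partial_\infty\mathbf H^3$. In each of the first two cases $G_t$ fixes a point $p\in\mathbf H^3$, hence is conjugate into $\operatorname{Isom}(\mathbf H^3)_p\cong O(3)$, so no element of $G_t$ is loxodromic. In the third case, conjugating $\xi$ to $\infty$ in the upper half-space model, each of $\rho_t(\sigma_i),\rho_t(\sigma_i')$ is an elliptic isometry fixing $\infty$, so it acts on $\mathbf C=\partial_\infty\mathbf H^3\setminus\{\infty\}$ as $z\mapsto az+b$ with $|a|=1$; a composition of such maps has linear part of modulus $1$, so again no element of $G_t$ is loxodromic. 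In all cases $\rho_t(f)$ is not loxodromic, contradicting Corollary~\ref{coro:rhotfhyp}. Hence the two axes are ultraparallel, the distance is strictly positive, and $\nu_i(t)$ is their unique common perpendicular segment.

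For the convergence as $t\to0^+$, I would use that the axis of a nontrivially elliptic isometry depends continuously on the isometry, so the endpoints at infinity of $\Axis(\rho_t(\sigma_i))$ and of $\Axis(\rho_t(\sigma_i'))$ converge respectively to those of $\Axis(\rho_0(\sigma_i))$ and $\Axis(\rho_0(\sigma_i'))$. The two limit geodesics meet transversally at $v_i$; hence, writing $x_t\in\Axis(\rho_t(\sigma_i))$ and $y_t\in\Axis(\rho_t(\sigma_i'))$ for the endpoints of $\nu_i(t)$, every accumulation point of $\{x_t\}$ (resp.\ $\{y_t\}$) lies on both limit geodesics and therefore equals $v_i$; thus $\operatorname{length}\nu_i(t)\to0$ and $\nu_i(t)\to v_i$. (Alternatively one can rescale the metric around $\nu_i(t)$ and identify the limit with the Euclidean model $E(p_i/q_i)$ of Definition~\ref{dfn:euclideanmodel}, as in the proof of Lemma~\ref{lemma:nontrivial}.) Finally, when the $i$-th singular $I$-fiber lies in the branching locus, I would run the identical argument with the generators $\varsigma_i,\bar\varsigma_i,\varsigma_i',\bar\varsigma_i'$ and the vertex meridian $\varsigma_i\bar\varsigma_i^{-1}=(\varsigma_i')^{-1}\bar\varsigma_i'$ in place of $\sigma_i,\sigma_i'$.
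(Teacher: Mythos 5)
Your argument is correct in its main thrust and pivots on exactly the same ingredient as the paper's proof: if $\dist(\Axis(\rho_t(\sigma_i)),\Axis(\rho_t(\sigma_i')))=0$, then $\rho_t(f)=\rho_t(m_i)\rho_t(m_i')^{\pm1}$ lies in the tangle group's image and cannot be loxodromic, contradicting Corollary~\ref{coro:rhotfhyp}. The differences are in the details. For the first part the paper uses smallness of $t$: since the $t=0$ axes cross at angle $\pi p_i/q_i$, zero distance for small $t$ forces an actual intersection point near the vertex, so $\langle\rho_t(\sigma_i),\rho_t(\sigma_i')\rangle$ is elliptic. You instead run the three-case analysis (coincident, crossing, asymptotic axes) and check directly that in each case the generated group contains no loxodromic element; this is a bit more self-contained, since the asymptotic case is excluded by the $\lvert a\rvert=1$ computation rather than by proximity to the $t=0$ configuration, and it is a perfectly good trade.

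For the second part the paper argues differently: the distance function on $\Axis(\rho_0(\sigma_i))\times\Axis(\rho_0(\sigma_i'))$ is proper and convex (the limit axes cross and their four ideal endpoints are distinct), and properness and convexity persist for small $t$; this gives at once the existence of $\nu_i(t)$ and confinement of the minimizing pair near the vertex. Your accumulation-point argument is where you are a little quick: the statement ``every accumulation point of $\{x_t\}$ equals $v_i$'' yields $\nu_i(t)\to v_i$ only after you know that the feet $x_t,y_t$ of the common perpendicular remain in a compact subset of $\mathbf H^3$; a priori they could escape to infinity while $\dist(\Axis(\rho_t(\sigma_i)),\Axis(\rho_t(\sigma_i')))\to0$, and then the accumulation-point statement is vacuous. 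The repair is short: first note $d(x_t,y_t)\to0$ directly (compare points of the two axes near $v_i$, using convergence of the axes on compact sets); then observe that if $x_t$ left every compact set it would converge, in the visual compactification, to an ideal endpoint of $\Axis(\rho_0(\sigma_i))$, forcing $y_t$ to do the same, which is impossible because $\Axis(\rho_t(\sigma_i'))$ stays in a small neighborhood of the closed geodesic with the two other (distinct) ideal endpoints. Alternatively, just invoke the uniform properness and convexity of the distance functions, as the paper does. With that one step added, your proof is complete.
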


\begin{proof}
By contradiction, 
assume that  $\dist(\Axis(\rho_t(\sigma_i)),\Axis(\rho_t(\sigma_i')))=0$ for small values
of $t>0$.
 Since $\angle(\Axis(\rho_0(\sigma_i)),\Axis(\rho_0(\sigma_i'))=  \pi p_i/q_i$, 
  $\langle \rho_t(\sigma_i),\rho_t(\sigma_i')\rangle$ is an elliptic group
that fixes 
a point close to the initial vertex in $\mathbf H^3$. In particular, since
$m_i,m_i'\in \langle\sigma_i,\sigma_i'\rangle $,
 $\rho_t(f^{\pm 1}) =\rho_t(m_i)\, \rho_t(m_i')^{\pm 1}$ is either trivial or elliptic,
which contradicts
Corollary~\ref{coro:rhotfhyp}.

The existence of the shortest segment  $\nu_i(t)$ comes from the fact that
$\Axis(\rho_0(\sigma_i))$ and $\Axis(\rho_0(\sigma_i'))$ meet at one point with
angle $\pi \frac{p_i}{q_i}$, so the distance function between both axis is a proper
convex function on $\Axis(\rho_0(\sigma_i))\times  \Axis(\rho_0(\sigma_i'))$
 and has a minimum. Therefore, for small $t>0$ it is also a proper convex function 
on $\Axis(\rho_t(\sigma_i))\times  \Axis(\rho_t(\sigma_i'))$
and 
has a minimum.
\end{proof}

The idea now is to construct a double roof ${\mathcal R}_i(t)$ around $\nu_i(t)$ as
follows. Consider an embedding of both axis $\Axis(\rho_t(\sigma_i))$ and 
$\Axis(\rho_t(\sigma_i'))$ and the common perpendicular $\nu_i(t)$ in
$\mathbf H^3$.
Now consider two sectors, one with axis $\Axis(\rho_t(\sigma_i))$ and angle 
$\alpha_i(t)$, another one with axis $\Axis(\rho_t(\sigma_i'))$ and angle 
$\alpha_i'(t)$. (Here 
$\alpha_i(t)$ and $\alpha_i'(t)$ are
the respective rotation angles  of $\rho_t(\sigma_i)$ and $\rho_t(\sigma_i')$).
Choose the sectors so that $\nu_i(t)$ is bisector to both of them, 
and consider the intersection (Figure~\ref{fig:roofs}).

\begin{figure}
\begin{center}
{\psfrag{n}{$\nu_i(t)$}
\psfrag{s}{$\Axis(\rho_t(\sigma_i))$}
\psfrag{s'}{$\Axis(\rho_t(\sigma_i'))$}
\includegraphics[height=4cm]{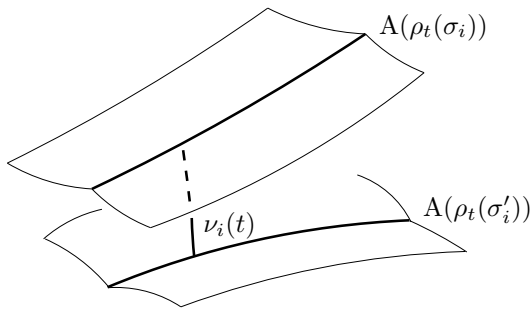}
}
\end{center}
   \caption{The double roof. The tubular neighborhood here is 
${\mathcal R}_i(t)$}\label{fig:roofs}
\end{figure}

The boundary of these sectors may intersect.
Let $r_i(t)>0$ be the maximal radius such that the tubular neighborhood
$\mathcal N_{r_i(t)}(\nu_i(t))$
 does not meet the intersection of  the sides of the sectors.
We denote ${\mathcal R}_i(t)=\mathcal N_{r_i(t)/2}(\nu_i(t))$ the tubular neighborhood of
$\nu_i(t)$ in this double roof. Notice that possibly $r_i(t)\to 0$ as $t\to
0^+$,
 but:

\begin{Lemma}
\label{lemma:r/nu}
$$
\lim_{t\to 0^+}  \frac{r_i(t)}{\vert\nu_i(t)\vert}=+\infty.
$$ 
\end{Lemma}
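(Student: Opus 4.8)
The plan is to blow up the picture near the vanishing common perpendicular $\nu_i(t)$, recognize the rescaled limit as the euclidean model $E(p_i/q_i)$ of Definition~\ref{dfn:euclideanmodel}, and check in that model that the sides of the two limiting sectors lie in two parallel, hence disjoint, planes.

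First I would rescale. By Lemma~\ref{lemma:positivedistance}, $\nu_i(t)$ converges to the $i$-th vertex of $\mathcal P$, so $|\nu_i(t)|\to 0$ and $\lambda(t):=1/|\nu_i(t)|\to+\infty$; let $c(t)$ be the midpoint of $\nu_i(t)$. The pointed spaces $(\lambda(t)\,\mathbf H^3,c(t))$ converge in the pointed bi-Lipschitz topology to Euclidean $(\mathbf R^3,0)$. Since $\rho_t$ depends analytically on $t$ (or $t^{1/2}$), $\rho_0(\sigma_i),\rho_0(\sigma_i')$ are rotations of angle $\pi$, and the angle between their axes is $\frac{p_i}{q_i}\pi$ by Equation~(\ref{equation:angle}), under this rescaling $\nu_i(t)$ becomes a segment $\nu$ of unit length through $0$ and the rescaled axes $\Axis(\rho_t(\sigma_i))$, $\Axis(\rho_t(\sigma_i'))$ converge to two Euclidean lines $\ell,\ell'$ meeting $\nu$ orthogonally at its two endpoints and forming an angle $\frac{p_i}{q_i}\pi$ --- precisely the configuration whose quotient is the euclidean model $E(p_i/q_i)$. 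This is the same rescaling used in Lemma~\ref{lemma:nontrivial} and in the proof of Lemma~\ref{lemma:euctrans}. (The case in which the $i$-th singular $I$-fiber lies in the branching locus is identical, with the singular euclidean model $E(p_i/q_i,\vartheta_i)$ in place of $E(p_i/q_i)$.)

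Next I would follow the two sectors through the rescaling. Each sector is scale invariant. The bisector plane of $W_i(t)$ about $\Axis(\rho_t(\sigma_i))$ is the geodesic plane containing that axis and $\nu_i(t)$, so its rescaled limit is the plane $Q$ spanned by $\ell$ and $\nu$; the two bounding half-planes of $W_i(t)$ make angle $\alpha_i(t)/2$ with this bisector plane, where $\alpha_i(t)$ is the rotation angle of $\rho_t(\sigma_i)$, and $\alpha_i(t)\to\pi$ as $t\to 0^+$ because $\rho_t\to\rho_0$ and $\rho_0(\sigma_i)$ is a $\pi$-rotation. Hence in the limit the two sides of $W_i(t)$ converge to the two halves of the plane $\Pi$ through the endpoint of $\nu$ on $\ell$ and perpendicular to $\nu$, and $W_i(t)$ converges to the closed half-space bounded by $\Pi$ that contains $\nu$. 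Likewise $W_i'(t)$ converges to the half-space bounded by the plane $\Pi'$ through the endpoint of $\nu$ on $\ell'$ and perpendicular to $\nu$. Therefore $\Pi$ and $\Pi'$ are parallel, at distance $1$; in particular $\Pi\cap\Pi'=\emptyset$.

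Finally, ``the intersection of the sides of the sectors'' is the union of the four sets $H_j(t)\cap H_k'(t)$, where $H_1(t),H_2(t)$ are the bounding half-planes of $W_i(t)$ and $H_1'(t),H_2'(t)$ those of $W_i'(t)$. After rescaling, $\widetilde H_j(t)$ converges to a half-plane inside $\Pi$ and $\widetilde H_k'(t)$ to a half-plane inside $\Pi'$; since these limits are closed and disjoint, for every $R>0$ there is $t_0(R)>0$ with $\widetilde H_j(t)\cap \widetilde H_k'(t)\cap B(0,R)=\emptyset$ for all $t<t_0(R)$ and all $j,k$. As $\widetilde\nu_i(t)\subset\overline B(0,1/2)$, the rescaled tube of radius $R$ about $\widetilde\nu_i(t)$ is contained in $B(0,R+1)$; applying the previous sentence with $R+1$ in place of $R$ gives $\lambda(t)\,r_i(t)\geq R$ for $t$ small, and since $R$ is arbitrary $r_i(t)/|\nu_i(t)|=\lambda(t)\,r_i(t)\to+\infty$. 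The step that needs the most care is the identification of the rescaled limit of the \emph{pair} of sectors: one must check that, because $\nu_i(t)$ bisects each sector while the sector angles tend to $\pi$, the limiting side-planes retreat to the two faces $\Pi,\Pi'$ of the unit slab between $\ell$ and $\ell'$ and are thereby disjoint; granting this, the passage ``closed sets converging to disjoint closed sets are disjoint on each fixed ball for small $t$'' is routine.
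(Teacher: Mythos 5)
Your argument is correct, but it follows a genuinely different route from the paper. The paper's proof is a two-line comparison: cut the double roof along the hyperplane perpendicular to $\nu_i(t)$ at its midpoint and compare each half with a Euclidean right triangle, which gives the explicit bound $r_i(t)\geq \frac{\vert\nu_i(t)\vert}{2}\tan\frac{\alpha_i(t)}2$, and this tends to $+\infty$ because $\alpha_i(t)\to\pi$ (Proposition~\ref{assumption:rho}). You instead blow up by $1/\vert\nu_i(t)\vert$ at the midpoint, use $\vert\nu_i(t)\vert\to 0$ (Lemma~\ref{lemma:positivedistance}) and $\alpha_i(t),\alpha_i'(t)\to\pi$ to identify the rescaled limit of the axes and sector sides, and observe that the limiting sides lie in two disjoint parallel planes, so for every $R$ the rescaled intersection of the sides eventually misses $B(0,R)$; this soft compactness argument is sound, and it is the same technique the paper itself uses elsewhere (Lemma~\ref{lemma:nontrivial}, Lemma~\ref{lemma:euctrans}, Corollary~\ref{cor:bilip}), so there is no circularity in your not invoking Corollary~\ref{cor:bilip}, which in the paper is deduced \emph{from} this lemma. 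What each approach buys: the paper's trigonometric comparison is shorter, entirely elementary, and quantitative — the explicit rate $\tan(\alpha_i(t)/2)$ and its manifest independence of auxiliary choices is what makes the uniform version needed for Lemma~\ref{lemma:s/nu} (uniformity in $q$, via Lemma~\ref{Lemma:uniformedges}) immediate; your blow-up argument is more conceptual and makes the limiting Euclidean configuration (the slab between the two parallel planes through the endpoints of $\nu$) explicit, but it yields no rate, and to adapt it to the edge models $\overline{\mathcal S}(q,t)$ you would have to redo the limit uniformly in the basepoint $q$, which the paper's explicit inequality gives for free.
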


\begin{proof}
We cut the double roof along the hyperplane perpendicular to $\nu_i(t)$ that
contains its midpoint, 
and consider each roof separately. We bound below the distance from $\nu_i(t)$ to
the intersection of 
each piece of the roof to this hyperplane, and it suffices to discuss the
argument for one of the 
edges, say $\sigma_i$.
Let $\alpha_i(t)$ denote the cone angle, which is the angle of the roof. 
By comparison with the euclidean right triangle (Figure~\ref{fig:triangle}):
$$ 
\frac{r_i(t)}{\vert\nu_i(t)\vert/2}\geq \tan\frac{\alpha_i(t)}2\to \infty
\quad\textrm{ as } t\to 0^+,
$$
because $\alpha_i(0)=\pi$.
\end{proof}

\begin{figure}
\begin{center}
{\psfrag{n}{${\vert\nu_i(t)\vert}/2$}
\psfrag{a}{${\alpha_i(t)}/2$}
\psfrag{r}{$ r_i(t)$}
\includegraphics[height=3cm]{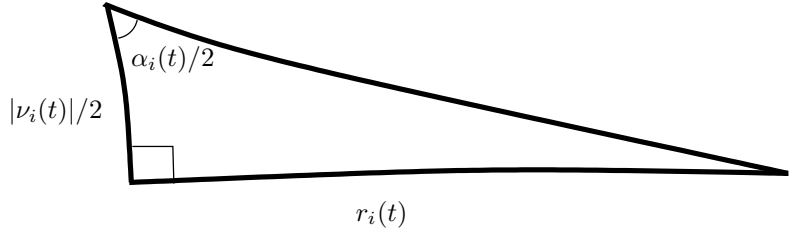}
}
\end{center}
   \caption{The hyperbolic triangle approximated by a euclidean one.}
\label{fig:triangle}
\end{figure}

Let $x_i(t)$ denote the midpoint of $\nu_i(t)$. Let $\overline {\mathcal R}_i(t)$ be the
result of 
identifying the sides of each roof of ${\mathcal R}_i(t)$ by
a rotation around its edge, so that the edges become interior points.

From   Lemma~\ref{lemma:r/nu}, we get:

\begin{Corollary}
\label{cor:bilip}
 For the pointed bi-Lipschitz topology:
$$
\lim_{t\to 0^+}  \frac{1}{\vert\nu_i(t)\vert} (\overline {\mathcal R}_i(t), x_i(t))
=(E(p_i/q_i),x_{\infty}).
$$ 
\end{Corollary}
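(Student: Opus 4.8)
\textbf{Proof plan for Corollary~\ref{cor:bilip}.}
The plan is to show that the rescaled roofs $\frac{1}{|\nu_i(t)|}(\overline{\mathcal R}_i(t),x_i(t))$ converge to the Euclidean model $E(p_i/q_i)$ by checking the bi-Lipschitz convergence directly on a fundamental-domain description of both sides. First I would fix $R>0$ and $\varepsilon>0$ and rescale the hyperbolic metric so that $|\nu_i(t)|=1$; after this rescaling the segment $\nu_i(t)$ has unit length, the two axes $\Axis(\rho_t(\sigma_i))$, $\Axis(\rho_t(\sigma_i'))$ are perpendicular to it at its endpoints, and by Lemma~\ref{lemma:positivedistance} the angle between them (after parallel transport along $\nu_i(t)$) tends to $\pi\, p_i/q_i$ as $t\to 0^+$. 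Since we are rescaling hyperbolic space by a factor $1/|\nu_i(t)|\to\infty$ (because $|\nu_i(t)|\to 0$ by Lemma~\ref{lemma:positivedistance}) and looking at a ball of fixed radius $R$ around the midpoint $x_i(t)$, the rescaled metrics converge smoothly to the flat metric on $\mathbf R^3$ on compact sets; this is the standard fact that a Riemannian manifold blown up at a point converges to its tangent space, applied uniformly in the one-parameter family.

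Next I would identify the limiting group action. The roof $\mathcal R_i(t)$ is, by construction, the intersection of two wedges of angles $\alpha_i(t)$ and $\alpha_i'(t)$ with edges the two axes, and $\overline{\mathcal R}_i(t)$ is obtained by folding each wedge onto itself by the rotation about its edge, i.e.\ by the local action of the rotations $\rho_t(\sigma_i)$, $\rho_t(\sigma_i')$ of order $2$ in the orbifold sense near angle $\pi$. Under the rescaling these two rotations converge to the two order-two Euclidean rotations whose axes are at distance one and meet (after parallel transport) at angle $\pi\, p_i/q_i$ — precisely the generators of $D_\infty$ in Definition~\ref{dfn:euclideanmodel}. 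Here I use Lemma~\ref{lemma:r/nu}: the fold radius $r_i(t)$ satisfies $r_i(t)/|\nu_i(t)|\to+\infty$, so in the rescaled picture the folded wedges fill out an arbitrarily large neighborhood of the axes, and the quotient region $\overline{\mathcal R}_i(t)$, rescaled, exhausts any fixed ball $B(x_\infty,R)$ in $\mathbf R^3/D_\infty=E(p_i/q_i)$. Combining the smooth convergence of the metrics with the convergence of the identifications gives, for $t$ small, a $(1+\varepsilon)$-bi-Lipschitz map from $B(x_\infty,R)$ onto a neighborhood $U\subset \frac{1}{|\nu_i(t)|}\overline{\mathcal R}_i(t)$ trapped between $B(x_i(t),R-\varepsilon)$ and $B(x_i(t),R+\varepsilon)$, which is exactly the definition of pointed bi-Lipschitz convergence.

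The one point that needs a little care — and the main potential obstacle — is the interaction between the two wedges: a priori the radius $r_i(t)$ could shrink so fast that the folded region does not look like the product-with-$\mathbf R$ neighborhood of the soul in $E(p_i/q_i)$ but gets truncated where the wedge boundaries meet. This is exactly what Lemma~\ref{lemma:r/nu} rules out, so the proof should be a clean citation of that lemma together with the trigonometric comparison (Figure~\ref{fig:triangle}): once $r_i(t)/|\nu_i(t)|\to\infty$, for any fixed $R$ the truncation happens outside $B(x_i(t),R+\varepsilon)$ for $t$ small, and the rescaled roof agrees with the rescaled Euclidean model on that ball up to $(1+\varepsilon)$-bi-Lipschitz error coming only from the hyperbolic-versus-Euclidean discrepancy, which goes to $1$. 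I would write this out by treating each of the two half-roofs separately (cut along the hyperplane through $x_i(t)$ perpendicular to $\nu_i(t)$, as in the proof of Lemma~\ref{lemma:r/nu}), establishing the bi-Lipschitz estimate for one half-roof against the corresponding half of $E(p_i/q_i)$, and then gluing along the common hyperplane, which is preserved by both constructions.
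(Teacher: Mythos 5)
Your proposal is correct and follows the same route as the paper, which states the corollary as an immediate consequence of Lemma~\ref{lemma:r/nu}: after rescaling by $1/\vert\nu_i(t)\vert$ the hyperbolic geometry degenerates to Euclidean on bounded balls, the folded wedges with cone angles $\alpha_i(t),\alpha_i'(t)\to\pi$ along axes at rescaled distance one and angle tending to $\pi p_i/q_i$ converge to the $D_\infty$-quotient of Definition~\ref{dfn:euclideanmodel}, and $r_i(t)/\vert\nu_i(t)\vert\to\infty$ guarantees the truncation by the wedge boundaries leaves any fixed ball untouched. You have simply written out the details the paper leaves implicit (the only nitpick being that the convergence of the angle between the axes to $\pi p_i/q_i$ comes from continuity of $\rho_t$ and Equation~(\ref{equation:angle}) rather than from Lemma~\ref{lemma:positivedistance} itself).
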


Next corollary deals with points of $\overline {\mathcal R}_i(t)$ away from the center.

\begin{Corollary}
\label{cor:R}
There exist $R_0>0$ and $t_0>0$ such that for 
  $0<t\leq t_0$ and  $x\in \overline {\mathcal R}_i(t)$
that it is singular and $R_0 \vert \nu_i(t)\vert \leq d(x,x_i(t))  < \frac12 r_i(t) $,
the following hold.
Let ${{\delta}}(x)$ be the distance between $x$ and the other singular component.
 Then the rescaled ball
$$
\frac 1{{{\delta}}(x)}B(x,10 {{\delta}}(x)) 
$$
is $3/2$-bi-Lipschitz to the corresponding ball in $E(0)$.
\end{Corollary}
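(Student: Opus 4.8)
The plan is to obtain Corollary~\ref{cor:R} by composing two bi-Lipschitz convergences that are already available. Corollary~\ref{cor:bilip} gives that $\tfrac1{\vert\nu_i(t)\vert}(\overline{\mathcal R}_i(t),x_i(t))$ converges to the euclidean model $E(p_i/q_i)$ pointed at its soul $x_\infty$, while Lemma~\ref{lemma:euctrans} gives that $E(p_i/q_i)$, re-pointed at singular points that escape to infinity along the singular locus, converges to a euclidean model with parallel singular axes at distance $q_i$ — that is, to $E(0)$ after rescaling by the factor $q_i$. A point $x$ as in the statement corresponds, after rescaling by $\vert\nu_i(t)\vert$, to a singular point of (something bi-Lipschitz close to) $E(p_i/q_i)$ lying at distance $\to\infty$ from $x_\infty$, because $d(x,x_i(t))/\vert\nu_i(t)\vert\ge R_0$ and $R_0$ will be chosen large. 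Hence, near such a point, and after rescaling by $q_i\vert\nu_i(t)\vert$, the double roof converges to $E(0)$; since $\delta(x)$, the distance from $x$ to the other singular component, is asymptotic to $q_i\vert\nu_i(t)\vert$ (again by Lemma~\ref{lemma:euctrans}), rescaling instead by $\delta(x)$ yields the same limit $E(0)$, which is the assertion. Note that $B(\bar x,10)\subset E(0)$ does not depend on the singular point $\bar x$, since the isometry group of $E(0)$ acts transitively on its singular locus.

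To make this precise I would argue by contradiction, with a diagonal rescaling argument. If no pair $R_0,t_0$ worked, there would be sequences $t_n\to 0^+$ and singular points $x_n\in\overline{\mathcal R}_i(t_n)$ with $n\,\vert\nu_i(t_n)\vert\le d(x_n,x_i(t_n))<\tfrac12 r_i(t_n)$ for which $\tfrac1{\delta(x_n)}B(x_n,10\,\delta(x_n))$ fails to be $3/2$-bi-Lipschitz to the model ball of $E(0)$. Let $y_n$ be the image of $x_n$ in $\tfrac1{\vert\nu_i(t_n)\vert}\overline{\mathcal R}_i(t_n)$. Applying Corollary~\ref{cor:bilip} with radius slightly larger than $d(y_n,x_\infty)$, and passing to a subsequence so that the thresholds it provides are compatible with these growing radii, the ball of radius $d(y_n,x_\infty)+11q_i$ around $y_n$ is $(1+o(1))$-bi-Lipschitz to the corresponding ball around a singular point $\tilde y_n\in E(p_i/q_i)$ with $d(\tilde y_n,x_\infty)\to\infty$ along the singular locus. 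By Lemma~\ref{lemma:euctrans}, $(E(p_i/q_i),\tilde y_n)$ converges to $E(0)$ rescaled by $q_i$; in particular the distance from $\tilde y_n$ to the other singular component tends to $q_i$, so that $\delta(x_n)/(q_i\vert\nu_i(t_n)\vert)\to 1$, and the ball of radius $11q_i$ around $\tilde y_n$ is $(1+o(1))$-bi-Lipschitz to the corresponding ball of $E(0)$ rescaled by $q_i$. Chaining the two comparisons and rescaling by $\delta(x_n)$ shows that $\tfrac1{\delta(x_n)}B(x_n,10\,\delta(x_n))$ is $(1+o(1))$-bi-Lipschitz to $B(\bar x,10)\subset E(0)$, contradicting the choice of $x_n$ once $n$ is large.

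Two points need attention, and the second is the main obstacle. First, one must check that $B(x_n,10\,\delta(x_n))$ sits in the interior of $\overline{\mathcal R}_i(t_n)$ (so that the comparison makes sense) and, more generally, keep careful track of which branch of the singular locus realises $\delta$; this is where the two-sided bound $R_0\vert\nu_i(t)\vert\le d(x,x_i(t))<\tfrac12 r_i(t)$ is used, together with Lemma~\ref{lemma:r/nu} ($r_i(t)/\vert\nu_i(t)\vert\to\infty$), which forces such an $x$ to stay at distance $\gg\delta(x)$ from the lateral tube boundary and from the folded roof faces of $\overline{\mathcal R}_i(t)$. Second, the two bi-Lipschitz convergences must be composable with multiplicative errors tending to $1$ \emph{uniformly in the escaping base point} $y_n$: the statement is a limit of limits, so Corollary~\ref{cor:bilip} has to be quantitatively strong enough — bi-Lipschitz on balls whose radius grows with $1/t$, with the rate depending only on $t$ — to survive re-pointing at $y_n$ before the second limit is taken. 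Extracting the diagonal subsequence above is exactly what manages this interchange.
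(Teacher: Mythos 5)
Your proposal is correct and takes essentially the same route as the paper, whose proof is exactly the two-step reduction you describe: by Corollary~\ref{cor:bilip} it suffices to verify the statement in the euclidean model $E(p_i/q_i)$, where it follows from Lemma~\ref{lemma:euctrans}. Your contradiction/diagonal argument merely makes explicit the uniformity in the escaping base point (and the containment of $B(x,10\delta(x))$ in the roof) that the paper leaves implicit.
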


\begin{proof}
By Corollary~\ref{cor:bilip}, it is sufficient to prove it for the euclidean models $E(p_i/q_i)$.
Then the corollary follows from Lemma~\ref{lemma:euctrans}.
\end{proof}

\begin{Remark}
\label{Remark:singularcore}
When the $I$-fiber of the $i$-th vertex is in the branching locus of the orbifold, then one needs to consider
the double roofs $\mathcal R_i(t)$ and 
the corresponding neighborhoods 
$\overline{\mathcal R_i(t)}$ with a singular core $\nu_i(t)$ of cone angle $\vartheta_i$.
Lemma~\ref{lemma:r/nu} and Corollaries~\ref{cor:bilip} and \ref{cor:R} apply in this case.
\end{Remark}

Next we deal with the edges of $P^2$. 
We shall construct locally the hyperbolic structures in pieces
 $\overline{\mathcal S}(q)$ and study its behavior and compatibility in 
Corollary~\ref{cor:bilipedge} and Lemma~\ref{lemma:glueS}. 
Before that, we need few technical results about the 
edges
 $\Axis(\rho_t(m_i))$ and 
$\Axis(\rho_t(m_i'))$.

To simplify notation, set
$i=1$. The endpoints of the segment  $e_1$ of $\mathcal P^2$ at time $t=0$ 
are  $v_1$
and $v_2$. But for  $t>0$, we consider two segments $e_1(t)$ and
$e_1'(t)$ that are contained in  $\Axis(\rho_t(m_1))$ and 
$\Axis(\rho_t(m_1'))$, respectively, and whose endpoints are given by the
$\sigma$'s or $\varsigma$'s: ie.\ the endpoints of  
the corresponding conjugates of 
$\nu_1(t)$ and $\nu_2(t)$.

Let $p_1(t)$ and $p_2(t)$ denote the endpoints of $e_1(t)$.
For $q\in e_1(t)$, let $q'\in \Axis(\rho_t(m_1'))$ be the point that realizes the distance 
between $q$ and $\Axis(\rho_t(m_1'))$ (cf.\ Fig.~\ref{fig:dq}). Define, for $q\in e_1$:
$$
	{{\delta}}_t(q)=d(q,q')=d(q,\Axis(\rho_t(m_1'))).
$$

\begin{Lemma}
\label{Lemma:uniformedges} Let $q\in e_1(t)$ and $q'\in\Axis(\rho_t(m'_1))$ be as above.
\begin{enumerate}
 \item The distance ${{\delta}}_t(q)=d(q,q')$ converges to zero uniformly on $q\in e_1(t)$:
 $$\lim_{t\to 0^+} \sup_{q\in e_1(t)} {{\delta}}_t(q)=0$$
\item Let $v_{q,t}\in T_q\mathbf H^3$ be the parallel transport of the tangent vector to $e_1'(t)$
 along the segment $\overline{q'q}$. Then
$$
\lim_{t\to 0^+} \sup_{q\in e_1(t)} \angle_q e_1(t) v_{q,t}=0
$$
\item Let $R_0>0$ be as in Corollary~\ref{cor:R}. There exists $t_0>0$ such that, for $0<t<t_0$, $q\in   e_1(t)$ satisfies $d(q,p_1(t))>  R_0\,\vert\nu_1(t)\vert$ and $d(q,p_2(t))> R_0\,\vert\nu_2(t)\vert$, then:
$$
q'\in e_1'(t).
$$
\end{enumerate}
\end{Lemma}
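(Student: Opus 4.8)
The plan is to get (1) and (2) from the convergence, as $t\to 0^+$, of the two geodesic lines $\Axis(\rho_t(m_1))$ and $\Axis(\rho_t(m_1'))$ to their common limit $L_1:=\Axis(\rho_0(m_1))=\Axis(\rho_0(m_1'))$ — the geodesic carrying the edge $e_1$ of $\mathcal P$ — and to get (3) by a monotonicity argument for the ``perpendicular foot'' map, fed near each endpoint by the Euclidean model. Throughout, fix a compact neighbourhood $U$ of $\mathcal P$ in $\mathbf H^3$; for $t$ small both $e_1(t)$ and $e_1'(t)$ lie in $U$, since by Lemma~\ref{lemma:positivedistance} their endpoints $p_j(t),p_j'(t)$ (ends of the conjugates of $\nu_j(t)$) converge to the vertices $v_1,v_2$ of $\mathcal P$. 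Note that, from $f=m_1(m_1')^{\pm1}$ and $\rho_0(f)=\pm\operatorname{Id}$, $\rho_0(m_1)$ and $\rho_0(m_1')$ are both the half-turn about $L_1$.

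For (1): since $\rho_t\to\rho_0$ analytically and each $\rho_t(m_1),\rho_t(m_1')$ is elliptic (Proposition~\ref{assumption:rho}), the fixed-point axes depend continuously on $t$, so $\Axis(\rho_t(m_1))\cap U$ and $\Axis(\rho_t(m_1'))\cap U$ lie within Hausdorff distance $\eta(t)\to 0$ of $L_1\cap U$. Hence for $q\in e_1(t)$ one has $\delta_t(q)=d(q,\Axis(\rho_t(m_1')))\le d(q,L_1)+\eta(t)\le 2\eta(t)$, uniformly in $q$. For (2): convergence of geodesics is $C^\infty$, so the unit tangent of $\Axis(\rho_t(m_1))$ at $q$, and that of $\Axis(\rho_t(m_1'))$ at $q'$, both converge uniformly on $U$ to the tangent of $L_1$; moreover parallel transport along $\overline{q'q}$, whose length $\delta_t(q)$ tends to $0$ by (1), tends uniformly to the identity. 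Writing $\tau$ for the tangent of $L_1$ at its point nearest $q$, the triangle inequality for angles gives $\angle_q(e_1(t),v_{q,t})\le \angle(T_qe_1(t),\tau)+\angle(\tau,v_{q,t})\to 0$ uniformly.

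The substance is (3). First I would check that the foot map $\Phi_t\colon e_1(t)\to \Axis(\rho_t(m_1'))$, $q\mapsto q'$, is injective: if $\Phi_t(q_1)=\Phi_t(q_2)=q'$, both perpendicular segments lie in the totally geodesic plane $\Pi$ through $q'$ orthogonal to $\Axis(\rho_t(m_1'))$, forcing $\Axis(\rho_t(m_1))\subset\Pi$; then $\Pi$ would contain a line $\approx L_1$ and be perpendicular to a line $\approx L_1$, impossible since for $t$ small both axes are nearly coincident with the planar line $L_1$. Parametrising $\Axis(\rho_t(m_1'))$ by arclength, $\Phi_t$ is thus a monotone continuous embedding. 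Next, at the vertex $v_1$, rescale by $1/|\nu_1(t)|$: by Corollary~\ref{cor:bilip} the double roof $\overline{\mathcal R}_1(t)$ based at $x_1(t)$ converges, pointed bi-Lipschitz, to $E(p_1/q_1)$, in which $\Axis(\rho_t(m_1))$ and $\Axis(\rho_t(m_1'))$ become two parallel singular lines at distance $q_1$ (Lemma~\ref{lemma:euctrans}), with $p_1(t),p_1'(t)$ at bounded rescaled distance from $x_1(t)$. In that Euclidean model the perpendicular foot between these two parallel lines preserves ``position along the line'' (the lines being matched by the screw motion of Lemma~\ref{lemma:euctrans}, compatibly with the orientations of $m_1,m_1'$ fixed as in Remark~\ref{remark:orient}); hence there is a uniform $R_0>0$, which I also take $\ge$ the constant of Corollary~\ref{cor:R}, such that $\Phi_t$ sends the point of $e_1(t)$ at distance $R_0|\nu_1(t)|$ from $p_1(t)$ strictly past $p_1'(t)$, on the side of the interior of $e_1'(t)$. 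The symmetric statement at $v_2$ gives that $\Phi_t$ sends the point at distance $R_0|\nu_2(t)|$ from $p_2(t)$ strictly before $p_2'(t)$, again on the interior side. By monotonicity of $\Phi_t$ and the orientation conventions (so that traversing $e_1(t)$ from $v_1$ to $v_2$ traverses $\Phi_t(e_1(t))$ from $p_1'(t)$ towards $p_2'(t)$), it follows that every $q\in e_1(t)$ with $d(q,p_1(t))>R_0|\nu_1(t)|$ and $d(q,p_2(t))>R_0|\nu_2(t)|$ has $q'=\Phi_t(q)\in e_1'(t)$.

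The main obstacle is (3), and inside it the bookkeeping that makes the last step legitimate: since the bi-Lipschitz convergence of Corollary~\ref{cor:bilip} holds only on balls of fixed radius, the Euclidean model must be invoked at a fixed rescaled scale near each of $v_1,v_2$, with the long ``middle'' of $e_1(t)$ controlled purely by the global monotonicity of $\Phi_t$; and one must verify the orientation/direction bookkeeping so that ``past $p_1'(t)$'' at the $v_1$ end and ``before $p_2'(t)$'' at the $v_2$ end genuinely lie on the same side of the monotone arc $\Phi_t(e_1(t))$, thereby pinning $\Phi_t(q)$ inside $(p_1'(t),p_2'(t))$. Everything else is routine hyperbolic geometry and continuity of representations.
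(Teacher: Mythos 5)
Your proof is correct, and for the delicate part (3) it follows the same architecture as the paper: control at the two extremal points near the vertices via the rescaled Euclidean model (Corollary~\ref{cor:bilip}, Lemma~\ref{lemma:euctrans}, Corollary~\ref{cor:R}), and an elementary argument to pin down the feet of all intermediate points. Where you differ is in the elementary ingredients. For (1) the paper uses convexity of the distance to $\Axis(\rho_t(m_1'))$ along $e_1(t)$, reducing the supremum to the endpoints (inequality~(\ref{eqn:dconvex})), while you use uniform convergence of the two axes to $L_1=\Axis(\rho_0(m_1))$ on a compact neighbourhood of $\mathcal P$; both are fine, and note that the paper's route still needs your continuity input to see that $\delta_t(p_j(t))\to 0$. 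For (2) the paper runs an explicit hyperbolic-trigonometric estimate (the $\tan\beta_2$ bound plus the angle-sum/area identity for the small triangle $qq'p_1'$), which yields uniformity after splitting according to which endpoint is far from $q$; your soft argument via $C^1$-convergence of the axes and near-identity parallel transport along $\overline{q'q}$ gives the same uniform conclusion more directly (just transport your reference vector $\tau$ to $q$ before applying the angle triangle inequality, since angles only compare vectors at a common point). For (3) the paper leaves the interior points to ``Equation~(\ref{eqn:dconvex}) and elementary arguments''; your observation that the nearest-point projection onto $\Axis(\rho_t(m_1'))$ restricted to $e_1(t)$ is injective (else $\Axis(\rho_t(m_1))$ would lie in a plane perpendicular to $\Axis(\rho_t(m_1'))$, impossible when both axes are close to $L_1$), hence monotone, so that each interior foot lies between two feet already known to be in the segment $e_1'(t)$, is a clean and complete version of that step. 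One small remark: you do not actually need to enlarge $R_0$ beyond the constant of Corollary~\ref{cor:R}; for any fixed $R_0>0$ the rescaled error in locating the foot tends to $0$ as $t\to 0^+$, so the extremal feet land strictly inside $e_1'(t)$ for $t$ small, exactly as the lemma is stated (and as in the paper, the alignment of the two rays in the model is guaranteed because both strands converge, unrescaled, to the same directed edge of $\mathcal P$).
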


\begin{proof}
By convexity of the distance function in hyperbolic space, we have, for $q\in e_1(t)$:
\begin{equation}
\label{eqn:dconvex}
{{\delta}}_t(q)\leq \max\{{{\delta}}_t(p_1(t)),{{\delta}}_t(p_2(t))\},
 \end{equation}
because $p_1(t)$ and $p_2(t)$ are the endpoints of $ e_1(t)$. This proves Assertion 1 of the lemma.

\begin{figure}
\begin{center}
{
\psfrag{e1}{$e_1(t)$}
\psfrag{e11}{$e_1'(t)$}
\psfrag{p1}{$p_1(t)$}
\psfrag{pn}{$p_2(t)$}
\psfrag{q}{$q$}
\psfrag{qq}{$q'$}
\includegraphics[height=2.1cm]{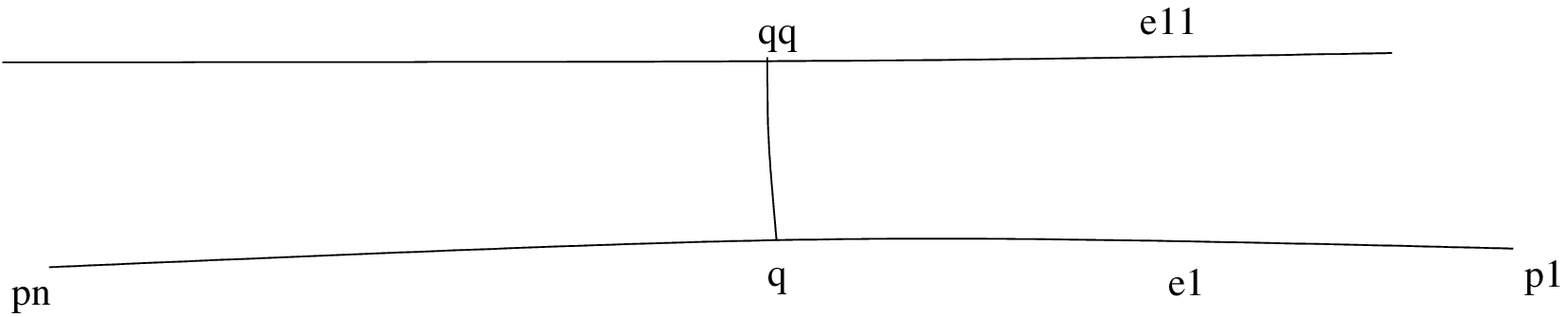}
}
\end{center}
   \caption{}\label{fig:dq}
\end{figure}

In order to prove Assertion 3, 
if $ d(q,p_1(t))=   R_0\,\vert\nu_1(t)\vert$ 
 or  
if $ d(q,p_2(t))=  R_0\,\vert\nu_2(t)\vert$,
then the assertion holds true for these $q$,
because of  Corollary~\ref{cor:R}. As those $q$ are extremal, for other $q$ the assertion follows from
Equation~(\ref{eqn:dconvex}) and elementary  arguments.

 Next we prove Assertion 2. 
Up to permuting $p_1$ with $p_2$, we may assume that $d(q, p_1(t))>\frac13\vert e_1(0)\vert$, where 
$\vert e_1(0)\vert$ denotes the length of $e_1(0)$.
Let $\beta_q(t)$ be the angle between $v_{q,t}$ and $e_1(t)$.
By the  triangle inequality in spherical space, the angle $\beta_q(t)$ satisfies:
$0\leq \beta_q(t)\leq  \beta_1 +\beta_2$, where
$\beta_1$ is the angle between $v_{q,t}$ and $qp_1'$,
  $\beta_2$ is the angle between $qp_1'$ and $qp_1\subset e_1(t)  \subset \Axis(\rho_t(m_1))$,
and $p_1'\in \Axis(\rho_t(m_1'))$ realizes $d(p_1,\Axis(\rho_t(m_1')) )=d(p_1,p_1')$,
cf.\ Figure~\ref{fig:beta}.

\begin{figure}
\begin{center}
{
\psfrag{v}{$v_{q,t}$}
\psfrag{b1}{$\beta_1$}
\psfrag{b2}{$\beta_2$}
\psfrag{b3}{$\beta_3$}
\psfrag{p}{$p_1'$}
\psfrag{pp}{$p''$}
\psfrag{q}{$q$}
\psfrag{qq}{$q'$}
\includegraphics[height=2.5cm]{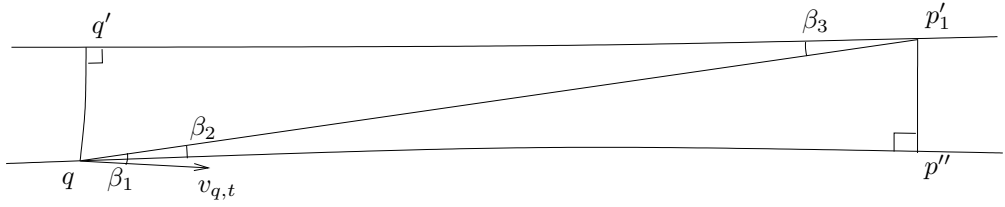}
}
\end{center}
   \caption{Triangles in the proof of Assertion 2 of Lemma~\ref{Lemma:uniformedges}}\label{fig:beta}
\end{figure}

Let $p''\in \Axis(\rho_t(m_1))$ realize the distance from $p_1'$ to $\Axis(\rho_t(m_1))$, so that 
$q$, $p_1'$ and $p''$ form a triangle with angles $\beta_2$ at $q$, and $\pi/2$ at $p''$. Then
$$
\tan\beta_2= \frac{\tanh d(p_1',p'')}{\sinh d(q,p'')}\leq  \frac{\tanh d(p_1',p'')}{\sinh(\frac13 \vert e_1(0)\vert- d(p_1,p'') )}
$$
which converges to zero uniformly on $q$.
Consider now the triangle $q$, $q'$ and $p_1'$. By the same argument as before the angle $\beta_3$ of this triangle at $p_1'$ converges to zero.
The angles of the triangle satisfy:
$$
	(\frac{\pi}2-\beta_1)+\frac{\pi}2+\beta_3=\pi-\textrm{Area}(qq'p_1').
$$
In addition, the area of this triangle converges to zero uniformly on $q$, by Assertion~1 of the lemma.
Thus
$$
	\beta_1 = \beta_3+\textrm{Area}(qq'p_1')\to 0,\qquad \textrm{ uniformly on } q .
$$
\end{proof}

We define, for  $0<t<t_0$ as in Assertion 3 of Lemma~\ref{Lemma:uniformedges}:
$$
	\hat e_1(t)= 
	\left\{q\in e_1(t)\mid d(q,p_1(t)) \geq  R_0\,\vert\nu_1(t)\vert \textrm{ and } 
	d(q,p_2(t))\geq  R_0\,\vert\nu_2(t)\vert\right\}.
$$

Using also Lemma~\ref{Lemma:uniformedges}, construct a double roof from the segment between $q$ and $q'$, with edges determined by  
$\Axis(\rho_t(m_1))$ and $\Axis(\rho_t(m_1'))$, and with dihedral angles the respective rotation angles of $\rho_t(m_1)$ and 
$\rho_t(m_1')$,
$\alpha_1(t)$ and
 $\alpha_1'(t)$, as before.
Let ${\mathcal S}(q,t)=B(q,s(q,t)/2)$  be the ball in this double roof, with $s(q,t)$ maximal such that the 
sides of the roof do not meet. As in Lemma~\ref{lemma:r/nu}:

\begin{Lemma}
 \label{lemma:s/nu}
$$
\lim_{t\to 0^+}\frac{s(q,t)}{d(q,q')}=+\infty
$$
uniformly on $q\in \hat e_1(t)$.
\end{Lemma}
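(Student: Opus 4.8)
The plan is to repeat, essentially verbatim, the argument of Lemma~\ref{lemma:r/nu}, with the segment $\overline{qq'}$ now playing the rôle of $\nu_i(t)$ and with $\alpha_1(t),\alpha_1'(t)$ (the rotation angles of $\rho_t(m_1),\rho_t(m_1')$) playing the rôle of $\alpha_i(t),\alpha_i'(t)$; the only new point compared with Lemma~\ref{lemma:r/nu} is to keep every estimate uniform in $q\in\hat e_1(t)$, which is exactly what Lemma~\ref{Lemma:uniformedges} provides. Note first that on $\hat e_1(t)$ one has $q'\in e_1'(t)$ by Lemma~\ref{Lemma:uniformedges}(3), so the double roof at $q$ used to define $s(q,t)$ is indeed built from genuine subsegments of $\Axis(\rho_t(m_1))$ and $\Axis(\rho_t(m_1'))$.

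First I would cut the double roof at $q$ along the totally geodesic hyperplane $\Pi_q$ through the midpoint of $\overline{qq'}$ and orthogonal to it, and treat the two half-roofs separately: one has edge along $\Axis(\rho_t(m_1))$ and dihedral angle $\alpha_1(t)$, the other edge along $\Axis(\rho_t(m_1'))$ and dihedral angle $\alpha_1'(t)$. Since the locus where the sides of the roof intersect is separated from $q$ by $\Pi_q$, it suffices to bound from below, uniformly in $q$, the distance from $q$ to the intersection of a side of the $\alpha_1(t)$-half-roof with $\Pi_q$ (and symmetrically for the $\alpha_1'(t)$-half-roof). Exactly as in Figure~\ref{fig:triangle}, a side of that half-roof and the bisecting segment $\overline{qq'}$ issue from $q$ making angle $\alpha_1(t)/2$, while $\Pi_q$ meets $\overline{qq'}$ at distance $d(q,q')/2$ from $q$; comparing with the Euclidean right triangle one gets that the side meets $\Pi_q$ at distance at least
$$
\bigl(1-o(1)\bigr)\,\tan\tfrac{\alpha_1(t)}{2}\cdot\tfrac{d(q,q')}{2}
$$
from $q$. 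The factor $1-o(1)$ absorbs two harmless errors: the passage from hyperbolic to Euclidean trigonometry, negligible because $d(q,q')\to 0$, and the fact that $\overline{qq'}$ is exactly perpendicular to $\Axis(\rho_t(m_1'))$ at $q'$ but only nearly perpendicular to $\Axis(\rho_t(m_1))$ at $q$; moreover, as in Lemma~\ref{lemma:r/nu}, the angle between the two axes only widens the roof, so the bound is conservative. By the definition of $s(q,t)$ this yields, for a universal constant $c>0$,
$$
\frac{s(q,t)}{d(q,q')}\ \ge\ c\,\tan\frac{\alpha_1(t)}{2}\ -\ o(1).
$$

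It remains to make everything uniform in $q\in\hat e_1(t)$. The rotation angles satisfy $\alpha_1(t)=\alpha_1'(t)=\pi-t^r+O(t^{r+1})$ by Proposition~\ref{assumption:rho}, independently of $q$, so $\tan(\alpha_1(t)/2)\to+\infty$ at a $q$-independent rate; by Lemma~\ref{Lemma:uniformedges}(1) the quantity $d(q,q')={{\delta}}_t(q)$ tends to $0$ uniformly on $q$; and by Lemma~\ref{Lemma:uniformedges}(2) the angle at $q$ between $e_1(t)$ and the parallel transport of the direction of $e_1'(t)$ along $\overline{q'q}$ — hence the defect of $\overline{qq'}$ from being orthogonal to $\Axis(\rho_t(m_1))$ — tends to $0$ uniformly, so the $o(1)$ above is a uniform $o(1)$. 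Combining, $s(q,t)/d(q,q')\to+\infty$ uniformly on $\hat e_1(t)$.

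The only genuinely new work beyond Lemma~\ref{lemma:r/nu} is the bookkeeping of the two error terms and the verification that each is a \emph{uniform} $o(1)$ dominated by $\tan(\alpha_1(t)/2)\to\infty$; since all of this is immediate from Lemma~\ref{Lemma:uniformedges}, I do not expect a serious obstacle, and the heart of the matter is just the Euclidean-triangle comparison already used for the vertices.
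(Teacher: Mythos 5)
Your proposal is correct and follows the same route as the paper, which proves this lemma precisely by repeating the Euclidean-triangle comparison of Lemma~\ref{lemma:r/nu} and invoking the uniform estimates of Lemma~\ref{Lemma:uniformedges} to make the bound uniform in $q$. Your additional bookkeeping of the near-orthogonality of $\overline{qq'}$ at $q$ via Lemma~\ref{Lemma:uniformedges}(2) and of the endpoint condition via part (3) is exactly the intended use of that lemma.
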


The proof  of this limit is the same as Lemma~\ref{lemma:r/nu}, using the uniform limits of Lemma~\ref{Lemma:uniformedges}.

Identifying the sides  of ${\mathcal S}(q,t)$ by the rotations corresponding to its edges, we obtain $\overline {\mathcal S}(q,t)$. From Lemmas~\ref{lemma:s/nu} and 
\ref{Lemma:uniformedges}, we get:

\begin{Corollary}
\label{cor:bilipedge}
For any choice of $q(t)\in\hat e_1(t)$ and  for the pointed bi-Lipschitz topology:
$$
\lim_{t\to 0^+}  \frac{1}{{{\delta}}_t(q)} (\overline{\mathcal S}(q,t), q(t))
=(E(0),q_{\infty}),
$$ 
uniformly on $q$.
\end{Corollary}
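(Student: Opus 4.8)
The plan is to imitate the proof of Corollary~\ref{cor:bilip}, replacing the common perpendicular $\nu_i(t)$ of the $i$-th tangle by the segment $\overline{q q'}$ joining $q\in\hat e_1(t)$ to its nearest point $q'\in\Axis(\rho_t(m_1'))$, a segment of length ${{\delta}}_t(q)$. First I would rescale $\mathbf H^3$ by the factor $1/{{\delta}}_t(q)$, so that $\overline{q q'}$ acquires unit length. By Assertion~1 of Lemma~\ref{Lemma:uniformedges} one has $\sup_{q\in\hat e_1(t)}{{\delta}}_t(q)\to 0$ as $t\to 0^+$; hence on any ball of a fixed radius $R$ centred at the image of $q(t)$ the rescaled metric of $\frac1{{{\delta}}_t(q)}\mathbf H^3$ is $(1+\varepsilon)$-bi-Lipschitz to the Euclidean metric of $\mathbf R^3$ once $t$ is small enough, and the required smallness of $t$ depends only on $\sup_q{{\delta}}_t(q)$, so it is uniform in $q$.

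Next I would identify the limit of the rescaled double roof ${\mathcal S}(q,t)$. Its two edges lie on $\Axis(\rho_t(m_1))$ and $\Axis(\rho_t(m_1'))$, which cross $\overline{q q'}$ perpendicularly and are at rescaled distance $1$; by Assertion~2 of Lemma~\ref{Lemma:uniformedges} the angle between them, after parallel transport along $\overline{q' q}$, tends to $0$ uniformly on $q$, so in the limit the two edges become \emph{parallel} lines of $\mathbf R^3$ at distance $1$. The dihedral angles of the two roofs are $\alpha_1(t)$ and $\alpha_1'(t)$, the rotation angles of $\rho_t(m_1)$ and $\rho_t(m_1')$, which tend to $\pi$, so each roof degenerates in the limit to a Euclidean half-space bisected by the common perpendicular; and by Lemma~\ref{lemma:s/nu} the rescaled radius $s(q,t)/(2\,{{\delta}}_t(q))$ tends to $+\infty$ uniformly on $q$, so $\frac1{{{\delta}}_t(q)}{\mathcal S}(q,t)$ eventually contains every ball of fixed radius about $q(t)$. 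Folding the sides of each roof by the rotation about its edge, exactly as in the definition of $\overline{\mathcal S}(q,t)$, the limiting object is $\mathbf R^3$ modulo the two order-two rotations about parallel axes at distance $1$, which is the euclidean model $E(0)=E(0/1)$ of Definition~\ref{dfn:euclideanmodel}.

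To conclude, given $R>0$ and $\varepsilon>0$ I would fix $t_0>0$ so small that, for all $q\in\hat e_1(t)$ and all $0<t<t_0$ simultaneously, the bi-Lipschitz comparison for $\frac1{{{\delta}}_t(q)}\mathbf H^3$ on $B(q(t),R+1)$, the transport-angle bound of Lemma~\ref{Lemma:uniformedges}(2), the bounds on $\vert\alpha_1(t)-\pi\vert$ and $\vert\alpha_1'(t)-\pi\vert$, and the inequality $s(q,t)/(2\,{{\delta}}_t(q))>R+1$ all hold with the precision dictated by the target factor $1+\varepsilon$; composing the Euclidean-to-hyperbolic comparison map with the folding then realizes $B(q_\infty,R)\subset E(0)$ as $(1+\varepsilon)$-bi-Lipschitz to a neighbourhood $U\subset\frac1{{{\delta}}_t(q)}\overline{\mathcal S}(q,t)$ with $B(q(t),R-\varepsilon)\subseteq U\subseteq B(q(t),R+\varepsilon)$. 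I expect the main obstacle to be precisely this last point: verifying that the operation of folding and identifying the sides of the roof is continuous for the pointed bi-Lipschitz topology, so that uniform bi-Lipschitz control on the double roof \emph{before} identification, together with control on the identifying rotations (their axes and angles), yields uniform control \emph{after} identifying the sides, in particular along the edges where the gluing occurs. This verification is, however, the same one already needed in the proof of Corollary~\ref{cor:bilip}, so it can be invoked rather than repeated.
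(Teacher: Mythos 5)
Your argument is correct and is essentially the paper's own: the paper deduces this corollary directly from Lemma~\ref{lemma:s/nu} and Lemma~\ref{Lemma:uniformedges} (mirroring how Corollary~\ref{cor:bilip} follows from Lemma~\ref{lemma:r/nu}), and your proposal simply spells out that rescaling-and-folding derivation, with the uniformity in $q$ coming from the uniform statements of those two lemmas. The final point you flag about compatibility of the bi-Lipschitz control with the side identifications is indeed the same (routine) verification already implicit in Corollary~\ref{cor:bilip}, so invoking it is appropriate.
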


Recall that  ${{\delta}}_t(q)= d(q,q')= d(q, \Axis(\rho_t(m'_i)))$.

\begin{Lemma}
\label{lemma:glueS} 
Let $r\in \overline{\mathcal S}(q,t)$ belong to the same connected component of the singular locus as $q$. Let $r'$ and $q'$ be the corresponding
closest points in the other components.
If $d(q,r)\leq 10 {{\delta}}_t(q)$, then 
the angle between $qq'$ and $rr'$ after parallel transport (along any of both singular components) is $\leq \gamma(t)$, for some uniform 
$\gamma(t)\to 0$.
\end{Lemma}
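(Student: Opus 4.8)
The goal is to control the variation of the "normal segment direction" $qq'$ as the basepoint $q$ moves a bounded multiple of ${\delta}_t(q)$ along one singular component. The plan is to reduce everything to the uniform bi-Lipschitz comparison already established in Corollary~\ref{cor:bilipedge}, which says that after rescaling by $1/{\delta}_t(q)$ the local model $(\overline{\mathcal S}(q,t),q)$ converges, uniformly in $q$, to the flat model $(E(0),q_\infty)$. In $E(0)$ the two singular lines are parallel, so the corresponding normal direction field is genuinely parallel: the angle defect between $qq'$ and $rr'$ is exactly zero there. Hence any deviation in $\overline{\mathcal S}(q,t)$ must come from the bi-Lipschitz error, which tends to $0$ uniformly as $t\to 0^+$.

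**Main steps.** First I would fix $q\in\hat e_1(t)$ and work inside the rescaled roof $\frac1{{\delta}_t(q)}\overline{\mathcal S}(q,t)$. By the hypothesis $d(q,r)\le 10{\delta}_t(q)$, the point $r$ lies in a ball of radius $\le 10$ around $q$ in the rescaled picture, so Corollary~\ref{cor:bilipedge} gives a $(1+\varepsilon(t))$-bi-Lipschitz map $\Phi_t$ from this rescaled ball to the corresponding ball in $E(0)$, with $\varepsilon(t)\to 0$ uniformly in $q$. Second, I would use that the nearest-point projection between two complete geodesics (here the two singular components) varies smoothly, and that parallel transport along a geodesic is an isometry of tangent spaces; consequently the angle $\angle(qq',rr')$ measured after parallel transport is a $1$-Lipschitz-continuous functional of the pair of geodesics in the $C^1$ sense. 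Applying $\Phi_t$, this angle differs from the analogous angle in $E(0)$ — which is $0$ because the image lines are parallel straight lines and the transported normal vectors coincide — by a quantity bounded by a fixed function of $\varepsilon(t)$ (plus a contribution from the fact that $\Phi_t$ need not send singular components exactly to the model lines, which is again $O(\varepsilon(t))$ and can be absorbed). Setting $\gamma(t)$ to be this bound, which is independent of $q$ and of the choice of $r$, and noting $\gamma(t)\to 0$, finishes the argument. The same reasoning with $E(p_i/q_i)$ in place of $E(0)$ handles the case where $q$ is near an endpoint, via Corollary~\ref{cor:R} and Lemma~\ref{lemma:euctrans}, though the statement is phrased for $r\in\overline{\mathcal S}(q,t)$ so Corollary~\ref{cor:bilipedge} is the relevant tool.

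**Expected main obstacle.** The delicate point is not the convergence itself but making the angle estimate genuinely \emph{uniform} in $q$ and independent of the base segment: one must check that the "angle between $qq'$ and $rr'$ after parallel transport" is a continuous functional on pairs of geodesics with a modulus of continuity that does not degenerate as the two geodesics come together (which they do, since ${\delta}_t(q)\to 0$ before rescaling). Rescaling by $1/{\delta}_t(q)$ is precisely what fixes this, since in the rescaled picture the two components stay at bounded, bounded-away-from-zero distance and the basepoint stays at bounded distance from the relevant portion of both; the parallel transport is then along a path of bounded length in a space of bounded geometry, so its dependence on the endpoints is uniformly Lipschitz. I would also need to be slightly careful that the parallel transport "along either singular component" gives the same answer up to $O(\varepsilon(t))$ — this is because in $E(0)$ the two transports agree exactly (parallel lines, translation-invariant normal field), so the discrepancy between the two choices is again controlled by the bi-Lipschitz error. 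Once these uniformities are in place the conclusion is immediate; the proof is essentially "transport the problem to the flat model, where the angle is literally zero, and bound the error."

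\begin{proof}
After rescaling the metric by $1/{\delta}_t(q)$, the hypothesis $d(q,r)\le 10\,{\delta}_t(q)$ places $r$ in the ball of radius $10$ about $q$. By Corollary~\ref{cor:bilipedge} there is, uniformly in $q\in\hat e_1(t)$, a $(1+\varepsilon(t))$-bi-Lipschitz identification of this ball with the corresponding ball in the flat model $E(0)$, where $\varepsilon(t)\to 0^+$ as $t\to 0^+$. In $E(0)$ the two relevant components of the singular locus are parallel straight lines, so the orthogonal projection from one to the other is a translation, and the vector $rr'$ is the parallel transport of $qq'$ along either line: the angle between them, measured after parallel transport, is exactly $0$.

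The angle $\angle(qq',rr')$ after parallel transport depends on the pair of geodesics, on $q$ and on $r$ in a way that is Lipschitz with respect to the $C^1$-distance of these data, with a Lipschitz constant depending only on the (bounded) distances involved in the rescaled picture; this uses that nearest-point projection between two disjoint complete geodesics is smooth and that parallel transport along a path of bounded length in a space of bounded geometry depends Lipschitz-continuously on its endpoints. Transporting the configuration in $\overline{\mathcal S}(q,t)$ to $E(0)$ through the bi-Lipschitz map distorts all these data by $O(\varepsilon(t))$, including the discrepancy between transporting along one singular component versus the other (which vanishes identically in $E(0)$). Hence
$$
\angle\big(qq',\,rr'\big)\le C\,\varepsilon(t)=:\gamma(t),
$$
for a universal constant $C$, where the bound is independent of $q$, of $r$, and of the chosen edge. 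Since $\varepsilon(t)\to 0^+$, we have $\gamma(t)\to 0$, which is the claim.
\end{proof}
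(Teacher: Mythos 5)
Your overall strategy (rescale by $1/{\delta}_t(q)$, compare with the flat model where the two singular components are parallel and the normal field is literally parallel) is the right picture, but the pivotal step is not justified: Corollary~\ref{cor:bilipedge} gives convergence only in the \emph{pointed bi-Lipschitz} topology, i.e.\ a $(1+\varepsilon(t))$-bi-Lipschitz homeomorphism of metric balls. Such a map controls distances, not derivatives: it need not send geodesics near geodesics in any $C^1$ sense, and it gives no control whatsoever on parallel transport or on angles between geodesic segments, which are infinitesimal (first-order) quantities. So the sentence ``transporting the configuration to $E(0)$ through the bi-Lipschitz map distorts all these data by $O(\varepsilon(t))$'' — including the claim that the discrepancy between transporting along the two components is $O(\varepsilon(t))$ — does not follow from the statement you invoke. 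Your own ``expected obstacle'' paragraph flags exactly this uniformity issue, but the resolution offered (smoothness of nearest-point projection, bounded geometry) is an intrinsic statement inside one hyperbolic space; it does not transfer estimates across a merely bi-Lipschitz identification with the model.

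The paper's (very terse) proof goes the other way around: it uses the quantitative estimates of Lemma~\ref{Lemma:uniformedges} directly, not the bi-Lipschitz corollary that was derived from them. Assertions (1) and (2) there say that the distance ${\delta}_t$ between $\Axis(\rho_t(m_1))$ and $\Axis(\rho_t(m_1'))$ and the angle $\theta_t$ between the two axes (after parallel transport along the common-perpendicular segments) tend to $0$ uniformly. With those in hand, an elementary hyperbolic-trigonometric computation on the explicit configuration of two geodesics — at distance ${\delta}_t(q)$, with angle $\theta_t$, comparing the perpendicular segments issued from two points at distance at most $10\,{\delta}_t(q)$ along one axis — bounds the angle between $qq'$ and the transport of $rr'$ by roughly $\arctan\bigl(10\sin\theta_t\bigr)$ plus curvature corrections of order ${\delta}_t(q)$; both terms are uniform in $q$ and tend to $0$. (Note that the bound genuinely needs the ratio $d(q,r)/{\delta}_t(q)\le 10$: for two skew lines at angle $\theta$ and distance $\delta$, the normal direction turns by an amount comparable to $\arctan\bigl((d(q,r)/\delta)\sin\theta\bigr)$, so without the bounded ratio the angle would not be small.) To repair your write-up, replace the appeal to Corollary~\ref{cor:bilipedge} by this direct use of Lemma~\ref{Lemma:uniformedges} together with the explicit trigonometry, or else upgrade the model comparison to a genuinely $C^1$ (smooth) convergence statement — which in this setting amounts to the same computation.
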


This lemma follows easily from the estimates of Lemma~\ref{Lemma:uniformedges} and elementary trigonometric arguments.

\begin{Proposition}
\label{prop:deformreps}
Let $\rho_t$ be as in  Proposition~\ref{assumption:rho}.  There exists $\varepsilon>0$ such that
for $t\in (0,\varepsilon)$ 
there exists $D_t\!:\!\tilde M\to \mathbf H^3$ the developing map of a cone
structure on $(\vert\mathcal O^3\vert,\Sigma_{\mathcal O^3})$ with holonomy
$\rho_t$. In addition, when $t\to 0$, $D_t $ converges to $D_0$,
the developing map of the transverse hyperbolic foliation.
\end{Proposition}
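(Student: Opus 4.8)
The plan is to assemble the developing map $D_t$ from the local pieces constructed in the preceding lemmas, following the three-step scheme announced at the start of this section: first over neighborhoods of the vertices of $P^2$ (the tangles), then over the edges, and finally over the interior (the regular fibers). For each vertex $v_i$, Corollary~\ref{cor:bilip} and Remark~\ref{Remark:singularcore} give a neighborhood $\overline{\mathcal R}_i(t)$ of the $I$-fiber which, rescaled, converges to the euclidean model $E(p_i/q_i)$ (or its singular analogue $E(p_i/q_i,\vartheta_i)$); this comes equipped with a local hyperbolic structure whose holonomy, on the tangle group $\pi_1(V_i\setminus(\Sigma_{\mathcal O^3}\cap V_i))$, is conjugate to $\rho_t$ restricted to that subgroup. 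For each edge $e_i$, Corollary~\ref{cor:bilipedge} produces over $\hat e_i(t)$ a family of pieces $\overline{\mathcal S}(q,t)$ fibering over the segment, with local holonomy $\rho_t$ of the edge subgroup $\langle m_i,m_i'\rangle$; these are the $I$-fibered strips. Finally, over the interior of $P^2$, away from $\mathcal N(\Sigma_{\mathcal O^3})$ and the vertex/edge pieces, the holonomy $\rho_t$ of the fiber $f$ is loxodromic by Corollary~\ref{coro:rhotfhyp}, its axis meets $\mathcal P$ perpendicularly at the incenter by Proposition~\ref{lemma:killingfield}, and one develops the regular Seifert fibers as circles around that axis (equivariantly over the universal cover of $P'$), using the base developing map $D_0$ of the transversely hyperbolic foliation as a scaffold to control the position of the fibers. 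The union of the tangle pieces and neighborhoods of the strips is a solid torus, and $|\mathcal O^3|$ is a generalized lens space, so the combinatorics of the gluing mirrors that of the fibration described in Section~\ref{section:fibration}.

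The key technical steps, in order, are: (i) fix a triangulation (or handle decomposition) of $|\mathcal O^3|$ adapted to the fibration — tangle balls at the vertices, $I$-fibered strips over the edges, solid-torus neighborhoods of the regular part; (ii) on each piece define the local developing map with holonomy the appropriate restriction of $\rho_t$, using $\overline{\mathcal R}_i(t)$, $\overline{\mathcal S}(q,t)$, and the loxodromic model for the regular fibers; (iii) check that on overlaps the transition maps agree up to $\rho_t$: between a tangle $\overline{\mathcal R}_i(t)$ and the adjacent strips $\overline{\mathcal S}(q,t)$ this is Corollary~\ref{cor:R} together with Assertion~3 of Lemma~\ref{Lemma:uniformedges} (the point $q'$ lands on $e_i'(t)$, so the two roof constructions are built from the same pair of axes), and between adjacent strips along the edge this is Lemma~\ref{lemma:glueS} (the segments $qq'$ and $rr'$ are parallel up to $\gamma(t)\to 0$, so nearby strips glue), while between the strips and the regular part one uses that $\Axis(\rho_t(m_i))$ and $\Axis(\rho_t(m_i'))$ both converge to the edge $e_i$ of $\mathcal P$ and that the fibers are $\delta_t(q)$-close to these axes; (iv) having glued the local developing maps into a single $\rho_t$-equivariant map $D_t\colon\tilde M\to\mathbf H^3$, verify it is a genuine developing map of a cone structure — a local homeomorphism away from $\Sigma_{\mathcal O^3}$ and conical of the prescribed cone angles $\alpha_i(t)=\pi-t^r+O(t^{r+1})$ (on $\Sigma^{Hor}$) and the orbifold angles (on $\Sigma^{Vert}$) along the singular locus, which holds because each roof piece was built precisely with those dihedral angles; (v) extend over $\mathcal N(\Sigma_{\mathcal O^3})$ by the cone-neighborhood model and conclude it is complete since $|\mathcal O^3|$ is closed; (vi) let $t\to 0^+$ and observe that, piece by piece, $\nu_i(t)$ collapses to the vertex $v_i$, $\delta_t(q)\to 0$ uniformly along the edges, and the fibers collapse to their base points, so $D_t\to D_0$ (and hence $C(\alpha)\to P^2_{min}$ for Gromov–Hausdorff convergence, which is Theorem~\ref{theorem:1}).

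The main obstacle is step (iii)–(iv): ensuring the local pieces, each of which is only controlled up to a bi-Lipschitz error and only on a rescaled ball, actually overlap and glue into a map that is a local homeomorphism (not merely a local immersion) on the whole of $\tilde M$, and in particular that the strips $\overline{\mathcal S}(q,t)$ for $q$ ranging over $\hat e_i(t)$ have radii $s(q,t)$ large enough relative to their spacing to cover a full neighborhood of the edge — this is exactly what Lemma~\ref{lemma:s/nu} (the ratio $s(q,t)/d(q,q')\to\infty$ uniformly) is designed to guarantee, but one must chase the estimates of Lemma~\ref{Lemma:uniformedges} carefully to see that the angular and distance errors accumulated along the whole edge and around the whole base $P^2$ stay below the margins provided by the "$\to\infty$" and "$\to 0$" statements, uniformly in $t$ small. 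Once the covering and compatibility are in place, that the resulting structure is hyperbolic with the claimed cone angles is essentially built into the roof construction, and the convergence $D_t\to D_0$ follows from the collapsing of all the vertical and transverse scales as $t\to0^+$. The estimates for $r=1$ and the precise asymptotics of the cone angle will be addressed separately; here one only needs that the cone angles tend to $\pi$ from below, which is Proposition~\ref{assumption:rho}.
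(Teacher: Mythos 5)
Your construction of the cone structure near the singular locus is essentially the paper's: the tangle pieces $\overline {\mathcal R}_i(t)$, the strips $\overline {\mathcal S}(q,t)$ along $\hat e_i(t)$, and the gluing via Lemma~\ref{Lemma:uniformedges}, Lemma~\ref{lemma:glueS}, Corollary~\ref{cor:R} and Lemma~\ref{lemma:euctrans} reproduce the first half of the proof faithfully. The gap is in how you handle the complement, i.e.\ the smooth solid torus $V$ lying over the interior of $P^2$. You propose to ``develop the regular Seifert fibers as circles around the axis'' of $\rho_t(f)$, with $D_0$ as a scaffold. First, this is not accurate as stated: $\rho_t(f)$ is loxodromic (Corollary~\ref{coro:rhotfhyp}), so a regular fiber develops to an arc from $x$ to $\rho_t(f)(x)$ that closes up only in the quotient $\mathbf H^3/\rho_t(f)$, not to a circle around the axis in $\mathbf H^3$. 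More seriously, even granting such a product-like structure on $V$, you never verify that it is compatible along the boundary torus with the structure you have already built around the singular locus — equivalently, that the meridian disk of $V$ can actually be developed. This is the step where the topology of the fibration enters, and it is where the paper does something specific: the developing map of the boundary $2$-torus descends to a map into the solid torus $\mathbf H^3/\rho_t(f)$, which is shown to be an \emph{embedding} (injectivity on each model piece via Proposition~\ref{lemma:killingfield}, plus the fact that distinct pieces are either far apart or have controlled overlaps); since the image torus is not contained in a ball, it bounds a solid torus in $\mathbf H^3/\rho_t(f)$ whose meridian is precisely the curve with trivial holonomy, and only then does one extend $D_t$ over $\tilde V$ and normalize it so that it converges to $D_0$ on compact sets.

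Without this embeddedness-and-filling argument (or some substitute identifying the meridian of $V$ with a curve that bounds in the developed picture), your step (iv) cannot be completed: nothing in your outline rules out that the structure defined in the neighborhood of $\Sigma_{\mathcal O^3}$ simply fails to extend over $V$, or extends with the wrong gluing. Your ``main obstacle'' paragraph locates the difficulty in the bi-Lipschitz margins along the edges, but those are already dealt with by Lemmas~\ref{lemma:r/nu}, \ref{lemma:s/nu} and \ref{lemma:glueS}; the genuinely delicate point is the global extension over the complementary solid torus, which your proposal leaves unproved.
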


\begin{proof}
Let $0<t<t_0$, where  $t_0>0$ is as in Assertion 3 of Lemma~\ref{Lemma:uniformedges}. The edge $\hat e_1(t)$ is covered by  balls 
$B(q,2 {{\delta}}_{t}(q))$.
Choose a finite covering of such balls, with centers  $q$ in $\hat e_1(t)$.
We claim that  the model $\overline {\mathcal S}(q,t)$ of each ball 
 matches with the next one: this is a consequence of Lemma~\ref{lemma:glueS},
because the segments between $q$ and the opposite singular edge
vary continuously with $q$, and they are almost parallel (the difference
with the parallel transport is uniformly small in $B(q, 10 {{\delta}}_t(q))$). 
 Notice also that  the position
of the singular edges is determined 
by the isometries $\rho_t(m_1)$ and $\rho_t(m'_1)$.
 This gives a metric structure for  a neighborhood of the edges.

By Lemma~\ref{lemma:euctrans}, 
when $q\in \partial \hat e_1(t)$, then 
the $\overline {\mathcal S}(q,t)$ match with the corresponding $\overline {\mathcal R}_i(t)$. In this way we put a geometric structure on 
a solid torus that contains the singular locus, made of the union of 0-cells (the  $\overline {\mathcal R}_i(t)$ for the singular vertices of the polygon)
and $1$-cells (the union of  $\overline {\mathcal S}(q,t)$ for the edges of the polygon).
Let  $D_t$ be the corresponding developing map of this solid torus that contains the singular locus.

Notice that the orientation is globally preserved, by Proposition~\ref{lemma:killingfield},
and
because it depends on the displacement of $\rho_t(f)$.

Recall that we assume that there is no singular fiber in the interior of the orbifold. Look at
the 2-torus that bounds the previous tubular neighborhood of the singularity.
 Now the developing map of the universal covering of the 2-torus factors to a map
from the 2-torus to the hyperbolic solid torus
$\mathbf H^3/\rho_t(f)$, ($\rho_t(f)$ is hyperbolic by
Corollary~\ref{coro:rhotfhyp}).
By Proposition~\ref{lemma:killingfield}, this map is injective on the intersection of the $2$-torus and each model
$\overline{\mathcal S}(q,t)$ and $\overline{\mathcal R}_i(t)$.
In addition, the
models are either far apart or their intersection is  well understood, by
the previous discussion, hence it is an embedding of the torus.

Since it is not contained in a ball, this 2-torus must bound a solid torus in 
$\mathbf H^3/\rho_t(f)$,  with meridian the curve that has trivial holonomy.
This $2$ torus is fibered over a  curve that converges to
the singular locus.
Thus we extend  $D_t$ to the universal covering of the corresponding  solid
torus $V$ in
the smooth part of $\mathcal O^3$. The map
$D_t$ restricted to each compact subset of $\partial \tilde V$ converges to 
$\partial \mathcal P$, coherently with the fibration. Then we choose 
 $D_t$ so that restricted to compact subsets of $\tilde V$ converges to 
the $D_0$.
\end{proof}


\section{Cone manifolds with geometry $\widetilde{SL_2(\mathbf  R)}$ and $\mathbf H^2\times\mathbf R$}
\label{section:weights}

Before explaining the proof of Theorem~\ref{theorem:2}, we give a result about cone manifolds with those fibered geometries, 
just for the statement of the theorem.

As in the introduction, let $\mathcal O^3$ be an orbifold fibering over a polygonal orbifold $P^2$ with mirror boundary and corners.
We assume that $P^2$ has no cone point in the interior, to simplify.
 We will relax the hyperbolicity condition 
for the orbifold $P^2$ by adding cone singularities at the  $I$-fibers. Choose $n$ $I$-fibers of $\mathcal O$, 
$$
\{f_1,\ldots,f_n\}
$$
that include all singular $I$-fibers. Let $q_1,\ldots,q_n\in\mathbf N$ denote their respective indices
in the fibration. In particular
$q_i=1$ if and only if  $f_i$ is a regular fiber.
Fix angles $\vartheta_1,\ldots,\vartheta_n\in (0,2\pi]$ so that 
$$
\vartheta_i/ q_i\leq \pi,
$$
for $i=1,\ldots,n$.
We impose also the following condition
$$
\sum_{i=1}^n(\pi-\vartheta_i/ (2q_i))> 2\pi
$$
this implies that the polygon $Q$ with angles $\vartheta_i/ (2q_i)$ is hyperbolic.

\begin{Proposition}
\label{prop:conemanifolds} Given a hyperbolic structure on $Q$,
there exists a cone manifold $C(\pi)$ with geometry $\widetilde{SL_2(\mathbf  R)}$ or $\mathbf H^2\times\mathbf R$, with the same underlying space as $\mathcal O^3$,
$\Sigma_{C(\pi)}^{Hor}=\Sigma_{\mathcal O^3}^{Hor}$, $\Sigma_{C(\pi)}^{Vert}\subseteq f_1\cup\cdots\cup f_n$, and respective vertical cone angles $\vartheta_1,\ldots,\vartheta_n$, and fibered over $Q$.

In addition, every cone manifold  with geometry $\widetilde{SL_2(\mathbf  R)}$ or $\mathbf H^2\times\mathbf R$ with vertical
 angles $\leq 2\pi$ and with 
space of fibers a polygon with angles $\leq \pi/2$ is obtained in this way.
\end{Proposition}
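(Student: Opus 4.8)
The plan is to realize $C(\pi)$ as a geometric structure modeled on the appropriate Seifert geometry $X\in\{\widetilde{SL_2(\mathbf R)},\mathbf H^2\times\mathbf R\}$. Both of these fiber as $\mathbf R\to X\to\mathbf H^2$ over the hyperbolic plane, and every element of $\operatorname{Isom}(X)$ preserves this fibration, inducing an isometry of the base $\mathbf H^2$. First I would develop the given hyperbolic structure on $Q$ into $\mathbf H^2$, obtaining a convex polygon $\mathcal Q\subset\mathbf H^2$ with the prescribed dihedral angles $\vartheta_i/(2q_i)$ together with the holonomy $\bar\rho$ of the reflection orbifold $Q$ into $\operatorname{Isom}(\mathbf H^2)$. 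The Seifert data of $\mathcal O^3$ recalled in Section~\ref{section:fibration} --- the Euler number and the fractions $p_i/q_i$ attached to the exceptional fibers $f_i$ --- then single out $X$ (it is $\mathbf H^2\times\mathbf R$ precisely when the Euler number vanishes, and $\widetilde{SL_2(\mathbf R)}$ otherwise) and prescribe a lift $\rho$ of $\bar\rho$ through $\operatorname{Isom}(X)\to\operatorname{Isom}(\mathbf H^2)$; the remaining ambiguity of translating the lift along the $\mathbf R$-fibers is fixed by demanding that the holonomy around $f_i$ be a screw motion whose rotational part equals $\vartheta_i$.

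Second, I would verify the local models and assemble the developing map. Over an interior point of $Q$ the structure is the trivial $\mathbf R$-fibration, over an interior mirror point it is the quotient by a reflection of $\mathbf H^2$ combined with a reflection of the $\mathbf R$-factor; in particular the horizontal edges of $\Sigma_{\mathcal O^3}^{Hor}$ lying over mirror and dihedral points acquire cone angle $\pi$, which is forced because $\mathcal O^3$ is orientable and these edges are the lifts of the reflections, hence order two axes. Over a vertex $v_i$ of $Q$ the local model is the quotient of $X$ by the group generated by the fiber screw motion of angle $\vartheta_i$ and the lift of the $\mathbf H^2$-rotation of angle $\vartheta_i/q_i$ about the corner; by construction this is exactly a fibered neighborhood of the tangle at $f_i$, with a cone singularity of angle $\vartheta_i$ along $f_i$ and angle $\pi$ along the incident horizontal edges, and the relation $\vartheta_i/q_i\le\pi$ is the translation of the hypothesis that the corner angles of $Q$ are $\le\pi/2$. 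Patching these models equivariantly over $\mathcal Q$ yields a developing map from the universal cover of the complement of the vertical cone locus into $X$, equivariant under $\rho$ and conical along the cone locus; this is the desired structure $C(\pi)$, fibered over $Q$ by construction.

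For the converse I would run the argument backwards. If $C$ carries a structure modeled on some $X\in\{\widetilde{SL_2(\mathbf R)},\mathbf H^2\times\mathbf R\}$ with vertical cone angles $\le 2\pi$ and base a polygon with angles $\le\pi/2$, then since $\operatorname{Isom}(X)$ preserves the line fibration of $X$ the structure descends to a Seifert fibration of $|C|$ over a hyperbolic $2$-orbifold; the hypothesis that the base is a polygon with corner angles $\le\pi/2$ forces it to be a reflection orbifold $Q$ of the type considered, the vertical cone locus is a union of fibers, and the horizontal cone locus is transverse to the fibers and, being the fixed locus of an involution of $X$, has cone angle $\pi$. Reading off the Euler number, the fractions $p_i/q_i$, and the screw motions around the exceptional fibers recovers the data $(\mathcal O^3,\vartheta_1,\ldots,\vartheta_n)$ and exhibits $C$ as one of the $C(\pi)$ produced above. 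The main obstacle is the Seifert-invariant bookkeeping that pins down $X$ and the translational part of the holonomy lift near each $f_i$, so that the cone angle along $f_i$ is exactly $\vartheta_i$ and the developing map genuinely closes up around the tangles; the remaining verifications are the standard local analysis of fibered geometric structures.
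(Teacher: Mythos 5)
Your outline takes a genuinely different route from the paper: you propose a direct construction (develop $Q$ into $\mathbf H^2$, lift the holonomy through $\operatorname{Isom}(X)\to\operatorname{Isom}(\mathbf H^2)$, glue local fibered models), whereas the paper starts from the orbifold case $\vartheta_i=2\pi/n_i$ (geometrization of Seifert fibered orbifolds), passes to the orientation double cover $\tilde{\mathcal O}^3$ fibering over the doubled polygon, and then runs an openness/closedness deformation argument in the cone angles, with holonomy in $\widetilde{SL_2(\mathbf R)}\times_{\mathbf Z}\mathbf R$, Tollefson's theorem to realize the orientation-reversing involution by an isometry (so the structure descends back to $\mathcal O^3$), and a universal-cover argument to show the fiber holonomy never degenerates.

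The gap in your version is precisely the step you set aside as ``Seifert-invariant bookkeeping.'' It is not true that the only freedom in the lift is a translation along the $\mathbf R$-fibers that you can use to dial each screw angle to $\vartheta_i$ independently: writing $\rho_l$, $\rho_r$ for the two components of the holonomy in $\widetilde{SL_2(\mathbf R)}\times_{\mathbf Z}\mathbf R$, one has $\rho_l(f)=\operatorname{Id}$, so the unknowns $\rho_r(f),\rho_r(a_1),\dots,\rho_r(a_n)$ are coupled by the $n$ meridian conditions $p_i\rho_r(f)+q_i\rho_r(a_i)=\vartheta_i$ together with $\rho_r(a_1)+\cdots+\rho_r(a_n)=0$ coming from $a_1\cdots a_n=1$, and these hold only modulo $2\pi$; solvability and (local) uniqueness rest on the Euler number being the determinant of this system, and, worse, the actual determinations in the universal cover $\widetilde{SL_2(\mathbf R)}\times\mathbf R$ are constrained because $\tilde\rho_l(a_1\cdots a_n)$ is a specific lift of the identity pinned down by the hyperbolic structure on the base (Gauss--Bonnet). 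Showing that a solution with the correct lifts exists, that the resulting developing map closes up with cone angle exactly $\vartheta_i$ along $f_i$, and that $\rho(f)$ stays nontrivial is the actual content of the proposition; the paper obtains it not by direct construction but by deforming from the known orbifold case and checking openness (local uniqueness of the linear system plus Tollefson, to keep the structure equivariant under the involution and hence defined on $\mathcal O^3$ with horizontal angle $\pi$) and closedness (the lift argument excluding $\rho(f)$ trivial). Your converse paragraph is fine in outline and matches the paper's brief remark that both geometries are fibered, but the forward direction as written asserts, rather than proves, exactly the global lifting and closing-up statements that carry the proof.
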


Since both geometries
$\widetilde{SL_2(\mathbf  R)}$ and $\mathbf H^2\times\mathbf R$ are fibered, for a cone manifold 
with this geometry there is a vertical and a horizontal singular locus, and the horizontal cone angle is always $\pi$.

In the statement, a fiber $f_i$ is in the singular locus of the cone manifold if and only if $\vartheta_i<2\pi$.

\begin{proof}
If the angles are  $\vartheta_i=2\pi/n_i$, then $C(\pi)$ is an orbifold and this is consequence of the geometrization of
Seifert fibered orbifolds (see \cite[Prop. 2.13]{BMP}).

For the general case, we decrease the $\vartheta_i$ to some $\pi/n_i$ and we apply a deformation argument. When the Euler number of the fibration is zero, 
then the geometry of the orbifold involved is $\mathbf H^2\times\mathbf R$, and the geometric structure on $C(\pi)$ is deformed  by deforming  
the basis. Otherwise, the geometric structure is 
 $\widetilde{SL_2(\mathbf  R)}$. 
Let $\tilde{\mathcal O}^3$ be the orientation covering of $\mathcal O^3$, so that there is an orientation reversing involution
$\tau: \tilde{\mathcal O}^3\to\mathcal O^3$ such that $\tilde{\mathcal O}^3/\tau=\mathcal O^3$
and $\textrm{Fix}(\tau)=\Sigma_{\mathcal{O}^3}^{Hor}$.
 The orbifold $\tilde{\mathcal O}^3$ is Seifert fibered, all leaves are circles and the space of leaves is 
 $\tilde Q$,  the sphere with $n$ cone points that is union of $Q$ and the mirror image of $Q$ along the boundary.
The hyperbolic structure on the polygon $Q$ induces a hyperbolic structure on the cone manifold $\tilde Q$.

Let $N=\mathcal O^3\setminus \mathcal{N}(\tilde f_1\cup\cdots\cup\tilde f_n)\cong F\times S^1$, where $F$ is a planar surface with $n$ boundary components.
We first describe the holonomy representation of $N$. Let $a_1,\ldots,a_n\in\pi_1(F)$ denote the peripheral elements so that
$$
\pi_1(F)=\langle a_1,\ldots,a_n\mid a_1\cdots a_n=1\rangle.
$$
Let $f$ be the generator of $\pi_1(S^1)$. The meridian of $\tilde f_i$ is the curve $f^{p_i} a_i ^{q_i}$, where $q_i\geq 1$ is the index of the singular fiber 
(regular when $q_i=1$).

Recall that the identity component of the isometry group of  $\widetilde{SL_2(\mathbf  R)}$ is 
$$
\widetilde{SL_2(\mathbf  R)}\times_{\mathbf Z} \mathbf R,
$$
where $\widetilde{SL_2(\mathbf  R)}$ acts on itself by left multiplication and $\mathbf R$ is the universal covering of $SO(2)$, the stabilizer of 
a point acting on itself by right multiplication of its inverse. For a representation $\rho$ in $\widetilde{SL_2(\mathbf  R)}\times_{\mathbf Z} \mathbf R$,
we denote by $\rho_l$ the projection to $PSL_2(\mathbf  R)$ and $\rho_r$ the projection to $SO(2)=\mathbf R/\mathbf Z$. Notice that $\rho_l(f)=Id$,
because  $PSL_2(\mathbf  R)$ has no center. On the other hand, for $\rho( f^{p_i} a_i ^{q_i})$ to be a rotation
of angle $\vartheta_i$, working in $\mathbf R/2\pi \mathbf Z$, we must have
\begin{equation}
\label{eqn:pqi}
p_i \rho_r(f)+q_i\rho_r(a_i)=\vartheta_i  \textrm{ in }   \mathbf R/2\pi \mathbf Z,   \qquad \textrm{ for } i=1,\ldots,n.
\end{equation}
Combining this with
\begin{equation}
\label{eqn:as0} 
\rho_r(a_1)+\cdots+ \rho_r(a_n)=0  \textrm{ in }   \mathbf R/2\pi \mathbf Z,
\end{equation}
it follows that $\rho_r(f)$ and $\rho_r(a_i)$ are locally uniquely determined in $\mathbf R$, because the Euler number
does not vanish:
$$
\frac{p_1}{q_1}+\cdots +\frac{p_n}{q_n}\neq 0.
$$
The reason is that this Euler number is the determinant of the matrix associated to the linear system (\ref{eqn:pqi}) and (\ref{eqn:as0}).

Now we describe the deformation argument. By changing the angles and the hyperbolic structure of $Q$, Equations (\ref{eqn:pqi}) and (\ref{eqn:as0})
imply that we can deform the representation of $\pi_1(N)$ in $\widetilde{SL_2(\mathbf  R)}\times_{\mathbf Z} \mathbf R$, in such a way that
the meridians go to the rotation of the expected angle, this gives a $\widetilde{SL_2(\mathbf  R)}$ cone structure on $\tilde {\mathcal O}^3$ 
with the deformed cone angles on the 
$\tilde f_i$.
 Moreover, since the solution to  (\ref{eqn:pqi}) and (\ref{eqn:as0}) is localy unique and the metric structure in $\tilde Q$ is invariant by the involution,
$\tau$ is homotopic to the isometry. By applying Tollefson's theorem to $N\cong F\times S^1$, $\tau$ is conjugate to an isometry,
giving the singular
structure on $\mathcal O^3$. This proves openness for the deformation. For closedness, we need to show that $\rho(f)$ will not become trivial. By contradiction, assume that $\rho(f)=0$,
then by (\ref{eqn:pqi}) $\rho_r(a_i)=\vartheta_i/q_i$ $\mod 2\pi\mathbf Z$  and by (\ref{eqn:as0}) $\sum \vartheta_i/q_i\in 2\pi\mathbf Z$. Then we have to look carefully at the determinations in the universal 
covering $\widetilde{SL_2(\mathbf  R)}\times \mathbf R$. Choose a lift of $\rho$ such that $\tilde \rho_r(a_i)=\vartheta_i/q_i\in (0,2\pi)$. Then, by a deformation argument and viewing
the Euclidean case as a limit case, we get that $\tilde \rho_l(a_1\cdots a_n)$ is a lift of a rotation of angle  $2\pi(n-2)$. On the other hand
$\tilde \rho_r(a_1\cdots a_n)$ lifts to $\sum \vartheta_i/q_i\in\mathbf R$, but $\sum \vartheta_i/q_i<2\pi(n-2)$, hence $\tilde\rho(a_1\cdots a_k)$ cannot be the lift or a trivial element in 
in 
 $\widetilde{SL_2(\mathbf  R)}\times_{\mathbf Z} \mathbf R$, which leads to contradiction.
\end{proof}

Let us explain now how to adapt the proof of Theorem~\ref{theorem:1} to Theorem~\ref{theorem:2}.

We use the analogue results of Section~\ref{section:adding} to construct a curve of representations
when all $\vartheta_i<2\pi$. When some $\vartheta_i=2\pi$, then we require the analogue deformation argument of 
Section~\ref{section:curve}. 

The arguments of Section~\ref{section:adding} work exactly the same, just by replacing the vector $(1,\ldots,1)$ by 
$(w_1,\ldots,w_n,w_1',\ldots,w_n')$.

In Section~\ref{section:curve} one has to work with real analytic sets instead of algebraic ones, but all results apply.
Namely, 
Sullivan's local Euler characteristic theorem in the proof of Corollary~\ref{coro:chi0inS} is already stated for real analytic sets.

The required result on the Teichm\"uller space is  Corollary~\ref{cor:geometry}.

Regarding the construction of developing maps, Section~\ref{section:Killing} and Section~\ref{section:developingmaps} 
apply with no changes.

\section{An example}
\label{section:example}

Let $\mathcal O^3$ denote the orbifold with underlying space $S^3$ and singular locus the Whitehead link.
Assume that the respective labels in the singular components are $n>4$ and $2$. This orbifold is Seifert fibered: 
 the component with label $n>4$ is a fiber,
and the one with label $2$ is the union of mirror points of the $I$-fibers, and projects to mirror points of $P^2$.
The base of the Seifert fibration is a one-edged polygonal orbifold, with a single corner, and its interior  
contains a cone point with label $n$, corresponding to the singular component that is also a fiber. 
The angle at the corner is $\pi/2$, cf.\ Figure~\ref{fig:witehead}.

\begin{figure}
\begin{center}
{
\psfrag{n}{$n$}
\psfrag{2}{$2$}
\psfrag{p}{$P^2$}
\psfrag{o}{$\mathcal O^3$}
\psfrag{pm}{$\pi/2$}
\includegraphics[height=4cm]{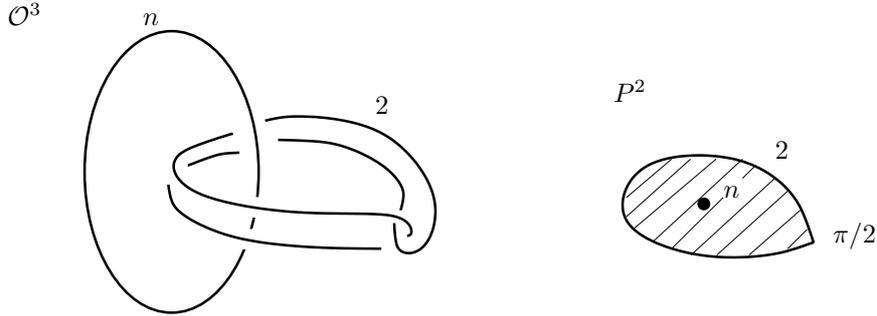}
}
\end{center}
   \caption{The orbifold $\mathcal O^3$ with singular locus the Whitehead link and the  base of the fibration $\mathcal P^2$.}\label{fig:witehead}
\end{figure}

In our proof of Theorem~\ref{theorem:1} we   work with the $n$-th branched covering
(thus $P^2$ lifts to the regular $n$-sided right-angled polygon), but 
for explicit computations it is easier  to work in $\mathcal O^3$ instead of its covering.

The smooth part of $\mathcal O^3$ is $M=\mathcal O^3\setminus \Sigma_{\mathcal O^3}$, 
the complement of the Whitehead link in $S^3$.  If $a, b \in \pi_1M$
are meridians around the two components of $\Sigma$, the
fundamental group has the following presentation, cf.~\cite{HLM}:
$$
\pi_1(M) = \langle a,b \,\vert\, awa^{-1}w^{-1} \rangle
$$
with $w=bab^{-1}a^{-1}b^{-1}ab$.
The $SL_2(\mathbf C)$-character variety of $M$ has been computed by \cite{HLM}.
Here we follow the exposition from \cite{PortiWeiss}.
Namely, after identifying $X(M)$ with the image of the map
$$
\begin{array}{rl}
 X(M) &\rightarrow \mathbf C^3\\
\chi & \mapsto (\chi(a),\chi(b),\chi(ab))
\end{array}
$$
in $\mathbf C^3$, it is 
$$
X(M)=\{ (x,y,z) \in \mathbf C^3 \mid p(x,y,z) \cdot q(x,y,z)=0\}
$$
with
$$
p(x,y,z) = xy - (x^2+y^2-2)z+xyz^2 - z^3
$$
and
$$
q(x,y,z)=x^2+y^2+z^2-xyz-4\,.
$$

Again by \cite{HLM}, $q=0$ corresponds to the abelian characters and $p=0$ is the closure
of the set of irreducible characters.

We work with the subvariety $y=\pm 2\cos(\pi/n)$, with $n>4$. The holonomies of the deformations of
Proposition~\ref{assumption:rho}
are obtained by 
taking $x=\pm 2\cos(\alpha/2)$, for $\alpha=\pi-t$ close to $\pi$.

The point of the initial holonomy $\chi_0$ has coordinates 
$$x=0, \ y= \pm 2\cos(\pi/n)\textrm{ and }z=\pm \sqrt{4\cos^2(\pi/n)-2}\in \mathbf i\mathbf R.
$$
The sign ambiguity comes from different lifts to $SL_2(\mathbf C)$.
Since $n>4$,
$$
\frac{\partial p}{\partial z}(\chi_0)\neq 0.
$$
 Hence, by the implicit function theorem, $x$ is  a local
parameter of the variety
$$
p(x, \pm 2\cos(\pi/n),z)=0
$$
around $\chi_0$.
In particular $\alpha=\pi-t$ is a local parameter of our deformation space.
Recall that changing the sign of $t$ corresponds to changing the orientation.

This example was the conjectural picture for a piece of the boundary of the moduli space of hyperbolic cone structures of \cite{PortiWeiss}.

Other regular or singular Dehn fillings on the same component of the Whitehead link give a Seifert fibered orbifold with the same base.
Here we just described the $1/0$-Dehn filling  with singular core with ramification $n\geq  5$, but we can also consider the $p/q$-Dehn filling
with $p \geq 1$ either with regular core ($n=1$) or with singular core of ramification $n\geq 2$. The base is the same orbifold, 
and the cone point has label $p\, n$. So the base is hyperbolic when $p\, n> 4$. Similar explicit computations of the variety of characters can be  made, but they are more involved.

\appendix
\section{Appendix: Earthquakes and polygons}
\label{sec:appendix}

We  are in the setting of Lemma~\ref{lemma:compact}. Thus, assuming that $P^2$ is
a polygon with cone angles $\leq \vartheta_0/2<\pi/2$, we take its double
$C$, which is a cone manifold with underlying space a sphere, and singular locus
$n$ points of cone angle $\leq \vartheta_0<\pi$.

We follow  the work of Bonsante and Schlenker in 
\cite{BonsanteSchlenker} about earthquakes for closed cone manifolds with cone angles $<\pi$.

The Teichm\"uller space of $C$ is denoted by $\mathcal T(C)$, and we assume that
cone angles are fixed.
Let $\mathcal{ML}(C)$ and $\mathcal{PML}(C)$ denote the spaces of measured and
projectively measured geodesic laminations on the smooth part of $C$.
Thus $\mathcal{ML}(C)\cong \mathbf R^{2n-6}$ and $\mathcal{PML}(C)\cong S^{2
n-7}$. Lemma~2.2 of \cite{BonsanteSchlenker} proves that any lamination of
$C$
can be realized by a geodesic lamination. The main result of 
\cite{BonsanteSchlenker} for our purposes is
the earthquake theorem for cone manifolds with cone angles $<\pi$:

\begin{Theorem}[\cite{BonsanteSchlenker}]
\label{thm:earthquake}
For any $S\in \mathcal T(C)$, the map
\begin{eqnarray*}
 \mathcal{ML}(C) & \to & \mathcal T(C) \\
  \lambda  & \mapsto & E_{\lambda}^l(S)
\end{eqnarray*}
is a homeomorphism,
where $E_{\lambda}^l$ denotes the left earthquake of $S$ along
$\lambda\in\mathcal{ML}(C)  $.
\end{Theorem}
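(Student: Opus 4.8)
This is the earthquake theorem of Bonsante and Schlenker \cite{BonsanteSchlenker}, so the plan is to recall the structure of their argument, which adapts Thurston's original approach \cite{ThurstonEarthquake} and Kerckhoff's analysis \cite{KerckhoffAnnals} to the cone setting. First I would check that the map is well defined. By Lemma~2.2 of \cite{BonsanteSchlenker} every measured lamination on $C$ is realized by a geodesic one, and such a lamination is automatically disjoint from the cone points, since a geodesic cannot pass through a cone point of angle $<\pi$. Hence the left earthquake $E_\lambda^l$ is defined exactly as in the smooth case, by a shear along the leaves, and it preserves the cone angles because it restricts to an isometry on each stratum of the complement of $\lambda$. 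This already shows $E_\lambda^l(S)\in\mathcal T(C)$, so the map of the statement makes sense.

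Next I would establish continuity. Weighted simple closed multicurves are dense in $\mathcal{ML}(C)$, the cone points playing no role since laminations avoid them, and for such a multicurve the earthquake is an explicit Fenchel--Nielsen twist depending continuously on the weights. Using a collar lemma for cone surfaces with cone angles $<\pi$, which remains valid because a cone angle only shortens collars, one obtains uniform estimates allowing a passage to the limit, so $\lambda\mapsto E_\lambda^l(S)$ is continuous on all of $\mathcal{ML}(C)$.

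Then I would prove properness: if $\lambda_k$ leaves every compact set of $\mathcal{ML}(C)$, meaning its mass tends to infinity while its projective class remains in a compact part of $\mathcal{PML}(C)$, or else the projective class degenerates, then $E^l_{\lambda_k}(S)$ leaves every compact set of $\mathcal T(C)$. The mechanism is as in \cite{ThurstonEarthquake,KerckhoffAnnals}: a large shear along a curve forces either a very short or a very long geodesic on the image surface, and the collar estimates hold uniformly away from the cone points while the geometry is even more rigid near a cone point of angle $<\pi$. Combined with Thurston's inversion principle, that a left earthquake is undone by a right earthquake, this also gives injectivity: if $E^l_\lambda(S)=E^l_\mu(S)$, then comparing with the right earthquake that returns to $S$ forces $\lambda=\mu$. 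Finally, since $\mathcal{ML}(C)\cong\mathbf R^{2n-6}$ and $\mathcal T(C)$ have the same dimension, a continuous, proper, injective map between them is a homeomorphism by invariance of domain together with properness.

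The main obstacle is the uniform control of the shear estimates near the cone points, which is exactly the technical content supplied by \cite{BonsanteSchlenker}. In fact they bypass part of it by passing to three dimensions, realizing a pair $(S,S')\in\mathcal T(C)^2$ as the two boundary components of the convex core of a globally hyperbolic anti-de~Sitter cone manifold with timelike particles; the existence and uniqueness of that spacetime yield bijectivity directly, and the pleating data of its convex core produce the earthquake lamination. For the present paper I would be content to cite \cite{BonsanteSchlenker} for this last step, since only the statement of Theorem~\ref{thm:earthquake} is needed in what follows.
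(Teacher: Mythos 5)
The paper itself does not prove this statement: Theorem~\ref{thm:earthquake} is imported verbatim from \cite{BonsanteSchlenker}, so your closing position --- cite Bonsante--Schlenker, whose actual argument realizes a pair of cone metrics as the two pleated boundary components of the convex core of a globally hyperbolic AdS manifold with particles and reads off the earthquake from the pleating data --- is exactly the paper's treatment, and your description of that mechanism is accurate. Where you go beyond the paper is in sketching a direct Thurston--Kerckhoff style proof, and there the two delicate steps are understated. Injectivity does not follow from the ``inversion principle'' alone: knowing that $E^l_\lambda$ is undone by a right earthquake gives no comparison between $E^l_\lambda(S)=E^l_\mu(S)$ for distinct $\lambda,\mu$; in the smooth case uniqueness of the connecting earthquake is the genuinely hard half of Thurston's theorem (proved via the geometry of the earthquake map or via strict monotonicity/convexity of length functions along earthquake paths), and in the cone setting \cite{BonsanteSchlenker} obtain it from the uniqueness of the maximal globally hyperbolic AdS extension rather than from any inversion trick. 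Likewise properness requires quantitative shear--length estimates that are uniform up to the cone points (this is where the hypothesis that all cone angles are $<\pi$, and the resulting uniform distance between cone points and closed geodesics used elsewhere in this appendix, enters); asserting that ``the geometry is even more rigid near a cone point'' is not yet an estimate. Since you explicitly defer these points to \cite{BonsanteSchlenker}, your proposal is acceptable as a literature recall, but the sketched direct route should not be presented as a self-contained alternative proof.
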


We also require a version of Wolpert's formulas \cite{Wolpert} in our setting,
namely for the lengths
of the segments between cone points. 
Let $\sigma$ denote a geodesic segment of $C$ between two cone points. The
perimeter we are interested in is the addition of lengths  of such geodesic segments.
The length of $\sigma$ is denoted by $\vert\sigma\vert$.

\begin{Lemma} 
\label{lemma:wolpertlamination}
Let $S\in \mathcal T(C)$, $\lambda\in \mathcal{ML}(C)$  and $\sigma$ a geodesic
segment between two cone points.
For   the earthquake deformation 
$E_{ t \lambda}^l(S)$, $t\in\mathbf R_+$:
\begin{itemize}
 \item [(1)] $\frac{\partial \vert \sigma\vert}{\partial t} =  \int_{\lambda}
\cos \theta$.
\item [(2)] $\frac{\partial^2 \vert \sigma\vert}{\partial t^2} \geq c \langle
\lambda, \sigma\rangle^2   $, for some uniform $c>0$.
\end{itemize}
\end{Lemma}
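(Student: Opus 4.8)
The plan is to prove Lemma~\ref{lemma:wolpertlamination} by first establishing the two formulas for a single closed geodesic (the classical Wolpert cosine formula and its second-variation estimate), and then integrating over the measured lamination by a density/approximation argument, exactly as in Kerckhoff's and Wolpert's treatment of the smooth case, checking that nothing in the argument requires smoothness away from the cone points.

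First I would reduce to the case where $\lambda$ is a \emph{weighted simple closed geodesic} $\gamma$ disjoint from the cone points and from the endpoints of $\sigma$. In that situation $E^l_{t\lambda}$ is just a Fenchel--Nielsen twist along $\gamma$, and for the segment $\sigma$ one can compute $\frac{\partial\vert\sigma\vert}{\partial t}$ directly: lift $\sigma$ and $\gamma$ to $\widetilde{C\setminus\Sigma}\subset\mathbf H^2$, and at each intersection point $p\in\sigma\cap\gamma$ the twist inserts an infinitesimal translation along $\gamma$, whose effect on the distance between the (fixed) endpoints of $\sigma$ contributes $\cos\theta_p$, where $\theta_p$ is the angle at $p$ between $\sigma$ and $\gamma$. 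Summing over the (finitely many) intersection points and recalling that $\int_\lambda\cos\theta$ is by definition $\sum_p w\cos\theta_p$ gives (1); this is precisely Wolpert's computation and it is purely local, so the presence of cone points elsewhere is irrelevant. For (2), I would use that the second derivative is, again by Wolpert, a sum over pairs of intersection points of positive terms plus the ``diagonal'' term $\sum_p w^2\sin^2\theta_p\,(\text{something positive})$; the cross terms are nonnegative because the twisting deformations along disjoint arcs of $\gamma$ add up convexly (this is the content of Wolpert's inequality, reproved in Kerckhoff), and a uniform lower bound $c\langle\lambda,\sigma\rangle^2$ — with $\langle\lambda,\sigma\rangle$ the geometric intersection number — follows since the diagonal term alone dominates a constant times $\big(\sum_p w\big)^2=\langle\lambda,\sigma\rangle^2$, the constant depending only on a lower bound for the injectivity radius along $\sigma$, which is controlled because $\mathcal T(C)$ with fixed cone angles $<\pi$ has bounded geometry away from the cone points and $\sigma$ has bounded length.

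Next, to pass from weighted simple closed geodesics to an arbitrary $\lambda\in\mathcal{ML}(C)$, I would approximate $\lambda$ by a sequence $\lambda_k$ of weighted simple closed geodesics converging to $\lambda$ in the weak-$*$ topology (this is standard, and valid on the smooth part of a cone manifold by Lemma~2.2 of \cite{BonsanteSchlenker}, which guarantees geodesic realizability). The left earthquake map depends continuously on the lamination by Theorem~\ref{thm:earthquake}, and $\vert\sigma\vert$ is a smooth function on $\mathcal T(C)$, so $\vert\sigma\vert\big(E^l_{t\lambda_k}(S)\big)\to\vert\sigma\vert\big(E^l_{t\lambda}(S)\big)$; moreover $\int_{\lambda_k}\cos\theta\to\int_\lambda\cos\theta$ and $\langle\lambda_k,\sigma\rangle\to\langle\lambda,\sigma\rangle$ by weak-$*$ convergence of the transverse measures against the (bounded, continuous) integrand along $\sigma$. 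To upgrade convergence of values to convergence of first and second $t$-derivatives I would invoke convexity: by the case just proved, each $t\mapsto\vert\sigma\vert(E^l_{t\lambda_k}(S))$ is a $C^2$ convex function of $t$ (second derivative $\geq 0$), and a locally uniformly convergent sequence of convex functions converges in $C^1_{loc}$, with the limiting first derivative equal to the limit of the first derivatives; applying the $k\to\infty$ limit to formula (1) for $\lambda_k$ yields (1) for $\lambda$. For the second-derivative bound (2), I would note that convexity plus the quantitative lower bound $\frac{\partial^2}{\partial t^2}\vert\sigma\vert(E^l_{t\lambda_k}(S))\geq c\langle\lambda_k,\sigma\rangle^2$ passes to the limit as a distributional (hence, since the limit is $C^2$, pointwise) inequality, using that a decreasing-type bound on convex functions is stable under locally uniform limits.

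The main obstacle I anticipate is making the second-variation estimate (2) genuinely \emph{uniform}, i.e.\ pinning down that the constant $c$ can be chosen independent of $S\in\mathcal T(C)$ and of $\lambda$; this requires a lower bound on the injectivity radius of the smooth part along geodesic segments of bounded length, uniformly over the (non-compact, but controlled) Teichm\"uller space with fixed cone angles, together with Wolpert's sine-squared term not degenerating — i.e.\ controlling how tangentially $\lambda$ can meet $\sigma$. In the closed smooth case this uniformity is Wolpert's; in the cone-manifold setting one has to check that the estimates of \cite{BonsanteSchlenker} on the geometry of earthquakes near cone points do not spoil it, which should follow because $\sigma$ stays a definite distance from the cone points or, if it limits onto them at its endpoints, because the relevant intersections with $\lambda$ occur in the thick part. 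A secondary, more bookkeeping obstacle is the precise sign and normalization in formula (1) (left versus right earthquake, orientation of $\theta$), which I would fix once and for all by the convention that $E^l_{t\lambda}$ is the left earthquake, so that $\frac{\partial\vert\sigma\vert}{\partial t}=\int_\lambda\cos\theta\le\langle\lambda,\sigma\rangle$ with equality iff $\lambda$ meets $\sigma$ orthogonally everywhere — consistent with the perimeter being minimized exactly at the configuration where an equidistant (inscribed) circle exists.
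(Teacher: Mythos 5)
Your overall route is essentially the paper's: establish Wolpert-type first and second variation formulas for weighted multicurves (the paper rederives them from scratch for segments between cone points in Lemma~\ref{lemma:wolpertscc}, rather than quoting the closed-geodesic case) and then pass to general laminations by density; your handling of part (1) and of the limiting step via convexity and $C^1$ convergence of convex functions is fine. The gap is in the quantitative estimate for part (2). Your claim that ``the diagonal term alone dominates a constant times $\bigl(\sum_p w\bigr)^2=\langle\lambda,\sigma\rangle^2$'' is false: the diagonal contribution is of size $\sum_p w_p^2$, which by Cauchy--Schwarz is smaller than $\bigl(\sum_p w_p\bigr)^2$ by a factor of the number of intersection points; when one approximates a lamination by multicurves with many leaves of small weight, $\sum_p w_p^2\to 0$ while $\langle\lambda,\sigma\rangle$ stays bounded away from zero. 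One must use the full double sum over all pairs $(p,q)$ in the second-variation formula (all of whose terms are positive), bound each factor $\frac{\cosh\vert\sigma^+_{pq}\vert\cosh\vert\sigma^-_{pq}\vert}{\sinh\vert\sigma\vert}\sin\theta_p\sin\theta_q$ below by a constant, and only then does the double sum give $c\bigl(\sum_p w_p\bigr)^2=c\,\langle\lambda,\sigma\rangle^2$.

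More importantly, your proposed source of uniformity does not work. A lower bound on injectivity radius is not available uniformly over $\mathcal T(C)$ (curves can be arbitrarily short), and in any case it would not prevent a leaf of $\lambda$ from crossing $\sigma$ at an arbitrarily shallow angle; also $\sigma$ does not ``stay a definite distance from the cone points'' --- its endpoints \emph{are} cone points. The mechanism the paper uses is precisely the hypothesis that the cone angles are $\leq\vartheta_0<\pi$: by Remark~2 of \cite{BonsanteSchlenker}, every geodesic lamination stays at distance $\geq C(\vartheta_0)>0$ from the cone points. If $\ell$ is the leaf through $p\in\lambda\cap\sigma$ and $c_1$ is an endpoint of $\sigma$, hyperbolic trigonometry gives $\sinh d(c_1,\ell)\leq \sinh d(c_1,p)\,\sin\theta_p\leq \sinh\vert\sigma\vert\,\sin\theta_p$, hence $\sin\theta_p\geq \sinh\bigl(C(\vartheta_0)\bigr)/\sinh\vert\sigma\vert$; that is, the crossing angles are bounded below in terms of $\vartheta_0$ and an upper bound on $\vert\sigma\vert$, and this is exactly what makes the constant $c$ in (2) uniform. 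Without this input the inequality with a uniform $c$ is simply not true (this is the point where the cone-angle hypothesis enters, and it is the step your proposal flags as an obstacle but does not actually resolve).
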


Here  $\theta$ denotes the angle from $\lambda$ to $\sigma$ measured
counterclockwise at each intersection point, 
$\int_{\lambda} \cos \theta$ means the integral of the cosine of this  angle on
the intersection between $\sigma$ and $\lambda$ 
along the transverse measure of $\lambda$, and 
$$
\langle \lambda, \sigma\rangle=\int_{\lambda} \chi_{\sigma}
$$ 
is
the intersection between $\lambda$ and $\sigma$ (which is the integral of the
characteristic function of $\sigma$ along the
transverse measure of $\lambda$).

To prove this lemma we use the density of weighted multicurves in 
$\mathcal{ML}(C)$ and reduce to the following computation:

\begin{Lemma}[Wolpert's formulas] 
\label{lemma:wolpertscc}
Let $S\in \mathcal T(C)$, $\lambda\in \mathcal{ML}(C)$  and $\sigma$ a geodesic
segment between two cone points.
For  the earthquake deformation 
$E_{ t \lambda}^l(S)$, $t\in\mathbf R_+$:
 \begin{itemize}
 \item [(1)] Let $\lambda$ be a simple closed curve. Then $\frac{\partial \vert
\sigma\vert}{\partial t} = \int_{\lambda} \cos \theta$.
\item [(2)] 
Let $\lambda=\sum m_i\lambda_i$, be a weighted multicurve, with $\lambda_i$
disjoint simple closed curves and $m_i\in\mathbf R_ +$.
 Then
$$
\frac{\partial^2 \vert \sigma\vert}{\partial t^2} =
\sum_{ij} m_i m_j \sum_{(p,q)\in(\sigma\cap\lambda_i)\times
(\sigma\cap\lambda_j)}
\frac{\cosh\vert \sigma_{pq}^+\vert \cosh\vert \sigma_{pq}^-\vert}{\sinh{\vert 
\sigma\vert}} \sin\theta_p\sin\theta_q
$$
where $\sigma_{pq}^+$ and $\sigma_{pq}^-$ are the two components of
$\sigma\setminus\overline{pq}$, and 
 $\theta_p$ and $\theta_q$ are the angles at the corresponding intersection
points. 
\end{itemize}
\end{Lemma}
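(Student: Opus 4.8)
The statement to prove is Lemma~\ref{lemma:wolpertscc}, the classical Wolpert earthquake variation formulas, adapted to a segment $\sigma$ joining two cone points inside the cone manifold $C$ (the double of the polygon).

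\medskip

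\textbf{Plan of proof.} The key point is that, although $\sigma$ runs between cone points rather than being a closed geodesic, all of the local computation takes place in the smooth part of $C$ and in the universal cover of a neighbourhood of $\sigma$, which is an honest piece of $\mathbf H^2$. So the whole argument is the standard one of Wolpert \cite{Wolpert} (see also \cite{KerckhoffAnnals}), carried out on the developed picture. First I would set up the developing picture: lift $\sigma$ to a geodesic arc $\tilde\sigma$ in $\mathbf H^2$ with endpoints the developed images of the two cone points; these endpoints are fixed points of the holonomies of small loops around the cone points, and since those holonomies are \emph{elliptic} (cone angle $<\pi$), the endpoints are honest interior points of $\mathbf H^2$, not ideal points. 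The earthquake along a simple closed curve $\lambda$ cuts $\mathbf H^2$ along the lifts of $\lambda$ meeting $\tilde\sigma$ and reglues by hyperbolic translations of length $t$ along those lifts; the endpoints of $\tilde\sigma$ move with the outermost chambers. The length $\vert\sigma\vert$ becomes the distance in $\mathbf H^2$ between the two moved endpoints.

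\medskip

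For part (1), I would use the first variation of hyperbolic distance under a one-parameter family of isometries: if the two endpoints $P,Q$ of the broken-up-and-reglued arc are moved by isometries whose infinitesimal generators are the (sum of the) Killing fields of the translations along the intersecting lifts of $\lambda$, then $\frac{d}{dt}\vert\sigma\vert$ is the sum, over intersection points $p\in\sigma\cap\lambda$, of the projection of the unit translation Killing field along the lift through $p$ onto the geodesic $\tilde\sigma$; that projection is exactly $\cos\theta_p$, where $\theta_p$ is the angle between $\lambda$ and $\sigma$ at $p$. Summing (with the transverse measure, i.e.\ with the weights $m_i$ when $\lambda$ is a multicurve) gives $\frac{\partial\vert\sigma\vert}{\partial t}=\int_\lambda\cos\theta$. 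Concretely I would write $\vert\sigma\vert(t)=d(P(t),Q(t))$ with $P(t)=A_1(t)\cdots A_k(t)\cdot P(0)$ a product of the translations and differentiate, using $d\,d(x,y)$ expressed via inner products of the velocity vectors with the unit tangent to the connecting geodesic; the cross terms between distinct cuts contribute nothing to first order because at $t=0$ all the $A_j(0)$ are the identity.

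\medskip

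For part (2), differentiate once more. The second derivative of $d(P(t),Q(t))$ picks up two kinds of contributions: the curvature term from each single cut, and the genuinely second-order interaction term between each ordered pair of cuts $(p,q)$. This is precisely the content of Wolpert's cosine/sinh formula: for an ordered pair $(p,q)$ of intersection points of $\sigma$ with the (possibly distinct) curves $\lambda_i,\lambda_j$, the contribution is
$$
m_i m_j\,\frac{\cosh\vert\sigma^+_{pq}\vert\,\cosh\vert\sigma^-_{pq}\vert}{\sinh\vert\sigma\vert}\,\sin\theta_p\sin\theta_q,
$$
where $\sigma^{\pm}_{pq}$ are the two pieces of $\sigma$ outside the subsegment $\overline{pq}$ and $\theta_p,\theta_q$ the crossing angles. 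I would derive this by the same two-variable hyperbolic trigonometry Wolpert uses: parametrize the configuration of the two cut geodesics relative to $\tilde\sigma$, use the formula for the derivative of distance under an infinitesimal translation along a geodesic making angle $\theta$ and meeting $\tilde\sigma$ at distance $a$ from one endpoint and $b$ from the other, namely the relevant quantity is $\frac{\cosh a\cosh b}{\sinh(a+b)}\sin^2\theta$ for the diagonal term, and the mixed partial gives the stated product form; the sign works out positive because $\sin\theta_p\sin\theta_q>0$ (angles in $(0,\pi)$) and all hyperbolic functions of real lengths are positive. No extra terms appear from the cone points since $\sigma$ is taken to avoid them except at its endpoints and the developed endpoints stay at finite distance.

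\medskip

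\textbf{Main obstacle.} The real work is purely bookkeeping: keeping careful track of the ordered versus unordered pairs, of which chamber each endpoint of $\tilde\sigma$ lands in, of the orientation conventions that make $\sin\theta_p\sin\theta_q$ come out with the correct sign, and of the passage from a single simple closed curve to a weighted multicurve (which is linear in the weights for the first derivative and bilinear for the second, hence the double sum $\sum_{ij}m_im_j$). The geometric input — ellipticity of the cone-point holonomies, so that $\tilde\sigma$ has genuine interior endpoints and the standard $\mathbf H^2$ trigonometry applies verbatim — is immediate from the hypothesis that all cone angles are $<\pi$, and the density of weighted multicurves in $\mathcal{ML}(C)$ (used to pass from Lemma~\ref{lemma:wolpertscc} to Lemma~\ref{lemma:wolpertlamination}) is quoted from \cite{BonsanteSchlenker}. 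So the only thing that could genuinely go wrong is a sign or a normalization, and the remedy is to reduce every case to Wolpert's original computation applied in the developed picture near $\sigma$.
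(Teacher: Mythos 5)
Your proposal is correct and takes essentially the same route as the paper: both reduce to Wolpert-style hyperbolic trigonometry in the developed picture near $\sigma$, with the cone points as genuine interior endpoints, the first derivative given by the sum of $\cos\theta_p$ over crossings, and the weights entering linearly (first derivative) and bilinearly (second derivative). The only organizational difference is in part (2): instead of the full second variation of $d(P(t),Q(t))$ (Hessian-of-distance plus acceleration terms) that you sketch, the paper differentiates the first-variation formula and computes $\frac{d}{dt}\cos\theta_q$ under the earthquake at $p$ via the sine and dual cosine rules (its Equation~(\ref{eqn:derivativecosine})), which produces the terms $\frac{\cosh\vert\sigma^+_{pq}\vert\cosh\vert\sigma^-_{pq}\vert}{\sinh\vert\sigma\vert}\sin\theta_p\sin\theta_q$ directly; your diagonal and mixed-partial bookkeeping would arrive at the same formula, just with more work.
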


 Lemma~\ref{lemma:wolpertscc}(1) 
implies
Lemma~\ref{lemma:wolpertlamination}(1)
by density of multicurves in $\mathcal{ML}(C)$. For item (2), one uses in
addition the fact that the cone 
angles are $\leq\vartheta_0<\pi$. This implies that the distance between cone
points and $\lambda_j$ is 
$\geq C(\vartheta_0)>0$, for some uniform $C(\vartheta_0)$ depending on $\vartheta_0<\pi$ 
(see Remark~2 in \cite{BonsanteSchlenker}). Thus 
$\sin\theta_p$ and $\sin\theta_q$ are also uniformly bounded away from zero.

\begin{proof}[Proof of Lemma~\ref{lemma:wolpertscc}]
 To prove (1), consider the result of an earthquake of length $t$ as in
Figure~\ref{fig:earthquake1} and analyze limits when $t\searrow 0$.

\begin{figure}
\begin{center}
{\psfrag{l}{$\lambda$}
\psfrag{s}{$\sigma$}
\psfrag{p}{$p$}
\psfrag{p0}{$p_0$}
\psfrag{p1}{$p_1$}
\psfrag{p2}{$p_2$}
\psfrag{c1}{$c_1$}
\psfrag{c2}{$c_2$}
\psfrag{l1}{$l_1$}
\psfrag{l2}{$l_2$}
\psfrag{t}{$\theta$}
\includegraphics[height=4cm]{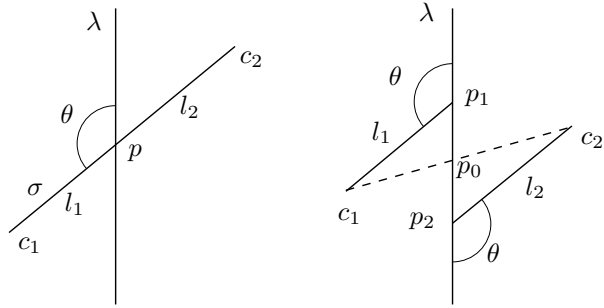}
}
\end{center}
   \caption{The earthquake of parameter $t=\vert p_1p_2\vert>0$ transforms the
picture on the left to the picture on the right.}\label{fig:earthquake1}
\end{figure}

In the picture, $\sigma$ is the segment between two cone points $c_1$ and $c_2$,
and it is 
divided into two segments of length $l_1=\vert c_1p\vert$ and $l_2=\vert
c_2p\vert$ respectively. 
Let $\alpha_1=\angle p_1c_1p_0$ and $\alpha_2=\angle p_2c_1p_0$ denote the
angles between the old and the new 
minimizing segments between $c_1$ and $c_2$ after the earthquake, at $c_1$ and $c_2$
respectively.
Of course $\alpha_1\to 0$ and $\alpha_2\to 0$ as $t\searrow 0$.
Accordingly, we divide the segment $p_1p_2$ of length  $t$ into two pieces
$p_1p_0$ and $p_0p_2$ of length $t_1$ and $t_2$ respectively. 
So we have $t=t_1+t_2$ and $\vert\sigma\vert=l_1+l_2$. Writing $l_1+\delta
l_1=\vert c_1p_0\vert$, 
the cosine formula gives
$$
\cosh(l_1+\delta l_1)= \cosh l_1\cosh t_1 - \sinh l_1\sinh t_1\cos(\pi-\theta).
$$
Moreover, by the triangle inequality $\vert \delta l_1\vert \leq t_1$, hence
\begin{equation}
\label{eqn:l1t1}
\lim_{t_1\searrow 0} \frac{\delta l_1}{t_1}= \lim_{t_1\searrow 0} \frac{\cosh(l_1+\delta
l_1)-\cosh l_1}{t_1\sinh l_1}=-\cos(\pi-\theta)=\cos\theta. 
\end{equation}

Define $\theta+\delta\theta$ to be the angle between $\lambda$ and the segment 
from $c_1$ to $c_2$
after the earthquake.
Using the hyperbolic sine formula:
$$
\frac{\sin( \theta+\delta\theta)}{\sinh(l_1)}=\frac{\sin( \pi-\theta)}{\sinh(l_1+\delta l_1)},
$$
we have
\begin{multline*}
 \lim_{t_1\searrow 0}\frac{\sin(\theta+\delta\theta) - \sin (\theta) }{t_1}
=
\lim_{t_1\searrow0}\sin(\theta)\frac1{t_1}\frac{\sinh(l_1)-\sinh(l_1+\delta l_1)  }{\sinh(l_1+\delta l_1)}
=\\
-\frac{\sin(\theta)\cos(\theta)}{\tanh(l_1)}
\end{multline*}
Thus 
\begin{equation}
\label{eqn:t1t2}
\lim_{t\searrow 0}\frac{t_1}{t_2}=\frac{\tanh l_1}{\tanh l_2},
\end{equation}
which proves that $t_1$ and $t_2$ are  infinitesimals of $t$ of the same order.
Notice that we have given an argument for (\ref{eqn:t1t2}) when $\cos\theta\neq 0$, but when $\theta=\pi/2$, the picture is symmetric and then $t_1=t_2=t/2$
and $l_1=l_2$.

Equation~(\ref{eqn:l1t1}) and the fact that  $t_1$ and $t_2$ are  infinitesimals of $t$ of the same order prove the first formula of
the lemma when the curves meet at a single point, 
and the general case follows from linearity.

Notice that from Equation (\ref{eqn:t1t2}) we also have:
\begin{equation}
\label{eqn:t2t}
\frac{t_2}{t}\to \frac{\tanh l_2}{\tanh l_1 + \tanh l_2}=\frac{\sinh l_2\cosh
l_1}{\sinh\vert\sigma\vert}.
\end{equation}
Now we consider a new geodesic $\mu$ in the previous picture, and we want to
estimate the derivative of $\cos(\eta)$, where $\eta$ is the angle at $q=\mu\cap\sigma$ from
$\mu$ to $\sigma$ (counterclockwise).
In Figure~\ref{figure:earthquake2},  $\eta+\delta\eta$ is the angle at $q_0$
(the intersection of $\mu$ with the segment $c_1c_2$ after the earthquake).

\begin{figure}
\begin{center}
{\psfrag{l}{$\lambda$}
\psfrag{mu}{$\mu$}
\psfrag{s}{$\sigma$}
\psfrag{p}{$p$}
\psfrag{q}{$q$}
\psfrag{q0}{$q_0$}
\psfrag{q}{$q$}
\psfrag{p0}{$p_0$}
\psfrag{p1}{$p_1$}
\psfrag{p2}{$p_2$}
\psfrag{c1}{$c_1$}
\psfrag{c2}{$c_2$}
\psfrag{l1}{$l_1$}
\psfrag{l2}{$l_2$}
\psfrag{l3}{$l_3$}
\psfrag{t}{$\theta$}
\psfrag{e}{$\eta$}
\includegraphics[height=4cm]{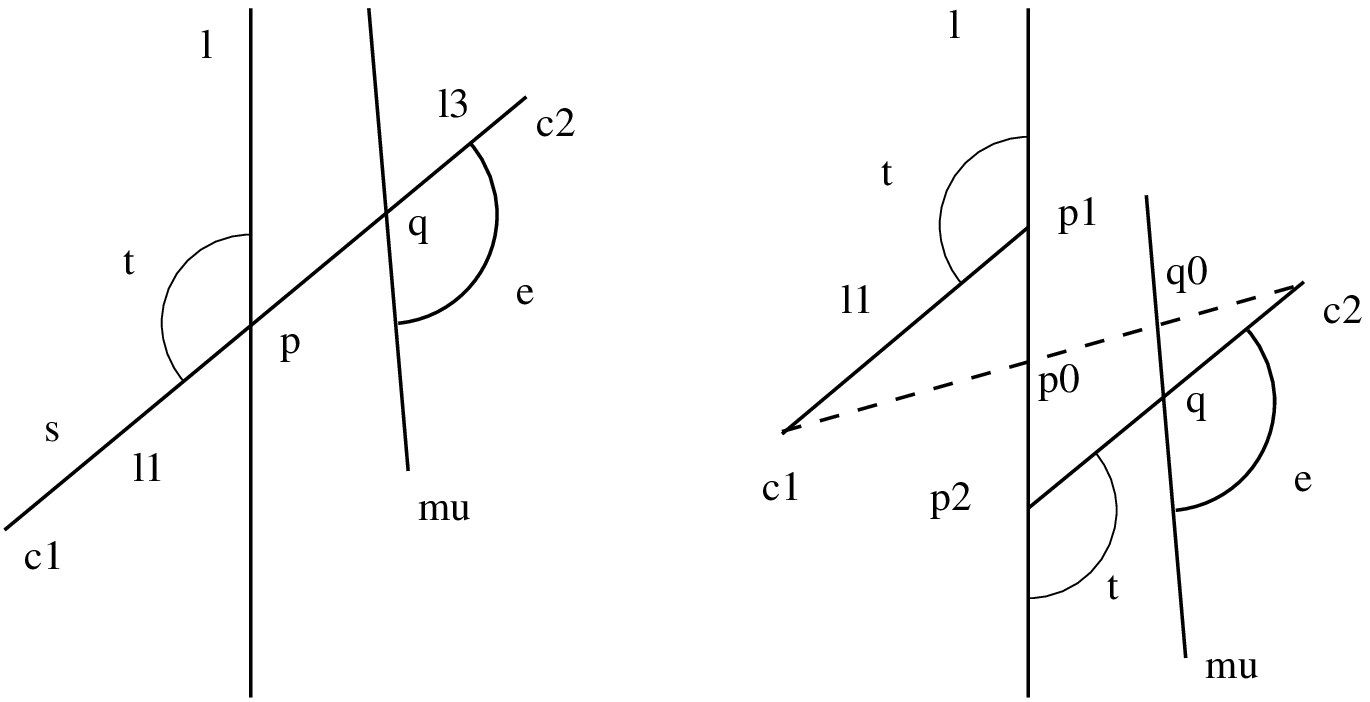}
}
\end{center}
   \caption{The earthquake of parameter $t=\vert p_1p_2\vert>0$ transforms the
picture on the left to the picture on the right.} \label{figure:earthquake2}
\end{figure}

First, applying the sine formula, we have:
\begin{equation}
\label{eqn:alpha2t2} 
\lim_{t_2\searrow 0}\frac{\alpha_2}{t_2}= \lim_{t_2\searrow 0}\frac{\sin \alpha_2}{\sinh
t_2}=\lim_{t_2\to 0} \frac{\sin(\pi-\theta)}{\sinh(l_2+\delta l_2)}=
\frac{\sin\theta}{\sinh l_2}.
\end{equation}
By applying the dual cosine formula:
\begin{eqnarray*}
 \cos(\eta+\delta\eta) &= &
-\cos(\pi-\eta)\cos\alpha_2+\sin(\pi-\eta)\sin\alpha_2\cosh l_3\\
	&=& \cos\eta\cos\alpha_2+\sin\eta\sin\alpha_2\cosh l_3,
\end{eqnarray*}
where $l_3=\vert qc_2\vert$. Thus
\begin{multline}
\label{eqn:limcos}
\lim_{t\searrow 0}\frac{\cos(\eta+\delta\eta)-\cos\eta}{t}=\lim_{t\searrow
0}\frac{\cos(\eta+\delta\eta)-\cos\eta\cos\alpha_2}{t}=\\
\lim_{t\searrow 0}\frac{\sin\eta\sin\alpha_2\cosh l_3}{t}.
\end{multline}
Combining   Equations (\ref{eqn:t2t}), (\ref{eqn:alpha2t2}) and (\ref{eqn:limcos}), we get:
$$
\lim_{t\searrow 0}\frac{\cos(\eta+\delta\eta)-\cos\eta}{t}= \frac{\sin\theta\sin\eta
\cosh l_3 \cosh l_1}{\sinh\vert\sigma\vert}.
$$
Writing $\theta=\theta_p$, $\eta=\theta_q$, $l_1=\sigma_{pq}^+$ and
$l_3=\sigma_{pq}^-$, we get that, 
if we only deform the earthquake in a neighborhood of $p$, then
\begin{equation}
 \label{eqn:derivativecosine}
\frac{d \phantom{t}}{dt} \cos(\theta_q) =\frac{\cosh\vert \sigma_{pq}^+\vert
\cosh\vert \sigma_{pq}^-\vert}{\sinh{\vert  \sigma\vert}} \sin\theta_p\sin\theta_q.
\end{equation}
We next deduce the formula for a multicurve $\lambda=\sum m_i \lambda_i$ from
(\ref{eqn:derivativecosine}), 
where each $\lambda_i$ is a simple closed curve and $m_i\in\mathbf R_+$. 
The first derivative is:
$$
\frac{\partial\vert\sigma\vert}{\partial E^l_{t\lambda}}=\sum_i m_i
\frac{\partial\vert\sigma\vert}{\partial E^l_{t\lambda_i}}=
\sum_i m_i \sum_{p\in {\lambda_i\cap \sigma}} \cos\theta_p.
$$
In this formula $\partial/\partial E^l_{t\lambda}$ and $\partial/\partial
E^l_{t\lambda_i}$ are used to distinguish the foliations
of the earthquakes.
Hence 
$$
\frac{\partial^2\vert\sigma\vert}{(\partial E^l_{t\lambda})^2}= 
\sum_{ij} m_i m_j  \sum_{p\in {\lambda_j\cap \sigma}}  
\frac{\partial\cos\theta_p }{\partial E^l_{t \lambda_i}} .
$$

For each $p\in {\lambda_j\cap \sigma} \lambda_j$, we compute the derivative of
$\cos\theta_p$ with respect to 
$\partial E^l_{t\lambda_i}$ by regarding the contribution
of all terms in the intersection $\lambda_i\cap \sigma$ and by applying
(\ref{eqn:derivativecosine}), proving the lemma.
\end{proof}

We also need the following lemma. It is due to Kerckhoff for smooth surfaces, but his
proof in \cite[Thm.~3.5]{KerckhoffDuke} applies verbatim here:

\begin{Lemma}
 \label{lemma:kerckhofftg}
Every tangent vector to $\mathcal T(C)$   is tangent to an earthquake
map.
\end{Lemma}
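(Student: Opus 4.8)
The plan is to follow, essentially verbatim, Kerckhoff's proof of \cite[Thm.~3.5]{KerckhoffDuke}, checking that its two ingredients hold for the cone manifold $C$: the earthquake theorem, which here is Theorem~\ref{thm:earthquake}, and the first‑ and second‑variation formulas for lengths along an earthquake, which here are Lemmas~\ref{lemma:wolpertlamination}--\ref{lemma:wolpertscc}. Fix $S\in\mathcal T(C)$ and, for $\lambda\in\mathcal{ML}(C)$, consider the earthquake path $t\mapsto E^l_{t\lambda}(S)$, which passes through $S$ at $t=0$. First I would show that this path is differentiable at $t=0$, with tangent vector $\tau(\lambda)\in T_S\mathcal T(C)$ depending continuously on $\lambda$ and positively homogeneous of degree one, $\tau(c\lambda)=c\,\tau(\lambda)$ for $c\ge 0$. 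This is local on $\mathcal T(C)$: choosing geodesic arcs $\sigma_1,\dots,\sigma_{2n-6}$ between cone points whose lengths form local coordinates near $S$, Lemma~\ref{lemma:wolpertlamination}(1) says $t\mapsto\ell_{\sigma_j}(E^l_{t\lambda}(S))$ is differentiable with derivative $\int_\lambda\cos\theta_{\sigma_j}$, which is continuous in $(\lambda,t)$ and linear — hence homogeneous of degree one — in the transverse measure of $\lambda$. Because the cone angles are $<\pi$, the cone points stay a definite distance from every geodesic lamination (Remark~2 of \cite{BonsanteSchlenker}), so these formulas and their continuity are uniform for $\lambda$ in compact subsets of $\mathcal{ML}(C)$; in the chosen coordinates this gives, uniformly for $\lambda$ in the unit sphere $\mathcal{PML}(C)\cong S^{2n-7}$ (realized by geodesic laminations of unit mass),
$$
E^l_{r\lambda}(S)=S+r\,\tau(\lambda)+o(r)\qquad\text{as }r\to 0^+.
$$

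Next I would prove that $\tau\colon\mathcal{ML}(C)\to T_S\mathcal T(C)$ is surjective, which gives the lemma since then every $v\in T_S\mathcal T(C)$ equals $\tau(\lambda)$ for some $\lambda$, i.e.\ is tangent to the earthquake along $\lambda$. One checks first that $\tau(\lambda)=0$ forces $\lambda=0$: if $\tau(\lambda)=0$ then $\int_\lambda\cos\theta_\sigma=0$ for all geodesic arcs $\sigma$, and combined with the strict convexity $\frac{d^2}{dt^2}\ell_\sigma\ge c\langle\lambda,\sigma\rangle^2$ of Lemma~\ref{lemma:wolpertlamination}(2) this would make $t=0$ a strict minimum of $\ell_\sigma$ along the path for every $\sigma$ meeting $\lambda$, which is impossible for $\lambda\neq 0$ (pick an arc $\sigma$ crossing a leaf of $\lambda$ transversally so that $\int_\lambda\cos\theta_\sigma\neq 0$), exactly as in \cite{KerckhoffDuke}. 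Hence $\tau$ descends to $\bar\tau\colon\mathcal{PML}(C)\to\mathbf S\bigl(T_S\mathcal T(C)\bigr)$, a map of $(2n-7)$-spheres, and it suffices that $\deg\bar\tau\neq 0$. By Theorem~\ref{thm:earthquake}, $\mathcal E_S\colon\mathcal{ML}(C)\to\mathcal T(C)$, $\lambda\mapsto E^l_\lambda(S)$, is a homeomorphism with $\mathcal E_S(0)=S$; so for small $r>0$ it carries the sphere $\{|\lambda|=r\}$ onto an embedded sphere enclosing $S$, and composing with a coordinate chart and radial projection yields a degree $\pm1$ map $\mathcal{PML}(C)\to\mathbf S(T_S\mathcal T(C))$, namely $[\lambda]\mapsto$ (radial projection of $E^l_{r\lambda}(S)$). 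By the uniform expansion above, for $r$ small this map is homotopic to $\bar\tau$; therefore $\deg\bar\tau=\pm1$, so $\bar\tau$, and hence $\tau$, is surjective.

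The one place where the cone structure really enters is the uniform differentiability of the earthquake variation near $t=0$: a priori a leaf of $\lambda$ running close to a cone point could make the angles $\theta$ degenerate and destroy the estimates. This is precisely the difficulty resolved by the hypothesis that all cone angles of $C$ are $<\pi$, which forces a uniform lower bound on the distance from the cone points to any geodesic lamination, hence uniform control of $\sin\theta$ and $\cos\theta$; with that in hand, Lemmas~\ref{lemma:wolpertlamination}--\ref{lemma:wolpertscc} supply exactly the bounds Kerckhoff uses, and the remainder of the proof of \cite[Thm.~3.5]{KerckhoffDuke} goes through without change. (Alternatively, one can run the argument through Thurston's identification of $E^l_{t\lambda}$ with the Hamiltonian flow of $\tfrac12\ell_\lambda$ for the symplectic form on $\mathcal T(C)$, reducing surjectivity of $\tau$ to the statement that the differentials $d\ell_\lambda|_S$, $\lambda\in\mathcal{ML}(C)$, span $T_S^{*}\mathcal T(C)$; the estimates needed are the same.)
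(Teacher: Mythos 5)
Your proposal takes the same route as the paper, whose proof consists precisely of the observation that Kerckhoff's argument for \cite[Thm.~3.5]{KerckhoffDuke} goes through verbatim once the two cone-manifold ingredients are available, namely the Bonsante--Schlenker earthquake theorem (Theorem~\ref{thm:earthquake}) and the Wolpert-type variation formulas with the uniform angle/distance control coming from cone angles $<\pi$ (Lemmas~\ref{lemma:wolpertlamination}--\ref{lemma:wolpertscc}) --- exactly the ingredients you check. The additional detail you supply (homogeneity plus a degree argument for surjectivity) is a reasonable reconstruction; the only soft spot is that your inline justification of $\tau(\lambda)\neq 0$ for $\lambda\neq 0$ is circular as written (you assume the existence of an arc with $\int_\lambda\cos\theta\neq 0$, which is the very point), but this non-degeneracy is part of what the appeal to Kerckhoff's proof covers, matching the paper's own level of detail.
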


Combining Lemmas~\ref{lemma:wolpertscc} and~\ref{lemma:kerckhofftg} with
Theorem~\ref{thm:earthquake}, we get that the Hessian of the perimeter in $\mathcal T(C)$
is positive definite. We deduce then the following corollary, which implies Lemma~\ref{lemma:compact}.

\begin{Corollary}
\label{cor:minimizer}
 The perimeter of $P^2$ has a unique minimum in $\mathcal T(C)$. Moreover,
the determinant of the Hessian of the perimeter is nonzero on this minimum,
in particular it is an isolated critical point.

The same holds true on  $\mathcal T(P^2)$.
 \end{Corollary}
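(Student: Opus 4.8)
The plan is to combine the earthquake theorem (Theorem~\ref{thm:earthquake}), the second-variation estimate of Lemma~\ref{lemma:wolpertscc}(2) (extended to laminations via Lemma~\ref{lemma:wolpertlamination}(2)), and Kerckhoff's tangency lemma (Lemma~\ref{lemma:kerckhofftg}) to show that the perimeter function, call it $L:\mathcal T(C)\to\mathbf R_+$, is \emph{strictly convex along earthquake paths}, and then to deduce existence and uniqueness of the minimum by a properness argument. First I would fix any base surface $S\in\mathcal T(C)$ and consider, for a lamination $\lambda\in\mathcal{ML}(C)$, the earthquake ray $t\mapsto E^l_{t\lambda}(S)$; along it, $L(t)=\sum_\sigma \vert\sigma\vert$ (sum over the finitely many geodesic segments joining cone points whose lengths make up the perimeter). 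By Lemma~\ref{lemma:wolpertlamination}(2), $\frac{\partial^2\vert\sigma\vert}{\partial t^2}\geq c\langle\lambda,\sigma\rangle^2\geq 0$ for each $\sigma$, and hence $L(t)$ is convex; moreover it is \emph{strictly} convex unless $\langle\lambda,\sigma\rangle=0$ for every $\sigma$ in the perimeter, which forces $\lambda$ to miss all the perimeter segments. Since a measured lamination on the smooth part of $C$ that is disjoint from this collection of segments joining all the cone points must be empty (the segments, together with the cone points, fill $C$), we get $\lambda=0$. So $L$ is strictly convex along every nontrivial earthquake ray.

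Next I would upgrade this to a statement about the Hessian. By Lemma~\ref{lemma:kerckhofftg}, every tangent vector to $\mathcal T(C)$ at any point is the initial velocity of an earthquake path; combined with the strict convexity just established (and the first-variation formula Lemma~\ref{lemma:wolpertlamination}(1), which shows the path really is nonconstant in the relevant direction), the second derivative of $L$ is strictly positive in every tangent direction at any critical point, i.e.\ the Hessian of $L$ is positive definite there. In particular every critical point of $L$ is a nondegenerate local minimum, so critical points are isolated and the determinant of the Hessian is nonzero at each of them.

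For existence of a global minimum I would argue that $L$ is proper on $\mathcal T(C)$: if a sequence of hyperbolic structures leaves every compact set of $\mathcal T(C)$, then (the cone angles being fixed and $<\pi$) some geodesic joining two cone points degenerates, and by the collar-type estimates available in this setting the length of some perimeter segment tends to infinity, so $L\to\infty$. Properness plus lower-boundedness gives a global minimum; a strictly convex function along earthquakes with all critical points nondegenerate local minima cannot have two of them (between any two, a minimizing earthquake path of Theorem~\ref{thm:earthquake} would produce, by strict convexity, a point of strictly smaller value, contradicting that both are local minima after invoking convexity along that ray), so the minimum is unique, proving the first two assertions. The statement for $\mathcal T(P^2)$ follows since $\mathcal T(P^2)$ is the fixed-point set of the reflection involution on $\mathcal T(C)$ interchanging the two copies of $P^2$; this involution preserves $L$, so the unique minimizer on $\mathcal T(C)$ is fixed by it and hence lies in $\mathcal T(P^2)$, and the Hessian restricted to the invariant subspace is still positive definite. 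The main obstacle I expect is justifying the properness of $L$ and, more delicately, confirming that the perimeter segments together with the cone points fill $C$ so that a lamination disjoint from all of them is forced to be trivial; both are geometrically plausible for a sphere with $n$ cone points of angle $<\pi$ but need the quantitative lower bounds on distances from cone points (Remark~2 of \cite{BonsanteSchlenker}) that are already invoked after Lemma~\ref{lemma:wolpertscc}.
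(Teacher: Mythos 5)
Your proposal follows essentially the same route as the paper: strict convexity of the perimeter along earthquake paths via the Wolpert-type second variation (Lemmas~\ref{lemma:wolpertscc} and \ref{lemma:wolpertlamination}), Kerckhoff's tangency lemma (Lemma~\ref{lemma:kerckhofftg}) and the Bonsante--Schlenker earthquake theorem (Theorem~\ref{thm:earthquake}), with the $\mathcal T(P^2)$ statement deduced from invariance under the doubling involution exactly as in the text. The two points you flag --- properness of the perimeter for existence, and that a nonzero measured lamination must cross some perimeter arc because the complement of the arcs is two convex polygonal disks containing no complete geodesic --- are precisely the steps the paper leaves implicit, and your sketches of them are sound.
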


To prove the  assertion for  $\mathcal T(P^2)$, notice that not all cone manifold structures on $ C$ are doubles of polygons.
In fact $\mathcal T(P^2)\subset \mathcal T(C)$ is a subspace invariant by the involution on  $\mathcal T(C)$
induced by the involution on $\mathcal C$.
 The minimizer of the perimeter in  $\mathcal T(C)$ must be symmetric by uniquenes, hence it lies in $\mathcal T(P^2)$.
Moreover, as the minimizer is a critical point, the Hessian restricted to $\mathcal T(P^2)$ must be also positive definite.

Here we restate and prove Proposition~\ref{prop:geometry}.

\begin{Corollary}
\label{cor:geometry}
Given $0<\vartheta_1,\ldots,\vartheta_n\leq  \pi/2$ with $\sum (\pi-\vartheta_i)>2\pi$, there exists a unique polygon 
$\mathcal P$ in $\mathbf H^2$ with those angles that minimizes its perimeter.
This structure is an isolated critical point for the perimeter in the Teichm\" uller space of the polygon.

In addition, $\mathcal P$ is the only polygon with those angles that has an inscribed circle tangent to all 
of its edges.

The existence and uniqueness of a minimizer of the $\mathcal W$-perimeter
 holds true for any choice of weights $\mathcal W=\{w_1,\ldots,w_n\}$, $w_i>0$.
In this case, $\mathcal P$ is the only polygon with an interior point $p$ such that $\frac1{w_i}\sinh(d(p,e_i))$
is independent of the edge $e_i$ of $P$.
\end{Corollary}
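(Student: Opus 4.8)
The plan is to combine the earthquake‑convexity package developed in this appendix with an explicit first‑variation computation for the perimeter, so that the minimizer can be pinned down by a geometric condition. First, the existence, uniqueness and non‑degeneracy of the minimizer: for the ordinary perimeter this is exactly Corollary~\ref{cor:minimizer}, and for the $\mathcal W$-perimeter $\mathcal L_{\mathcal W}=\sum_i w_i|\sigma_i|$ on the double $C$ the same argument applies termwise. By Lemma~\ref{lemma:wolpertlamination}(2), $\partial_t^2|\sigma_i|\ge c\,\langle\lambda,\sigma_i\rangle^2$ along an earthquake, so $\partial_t^2\mathcal L_{\mathcal W}\ge c(\min_i w_i)\sum_i\langle\lambda,\sigma_i\rangle^2$, which is strictly positive whenever $\lambda\neq0$: the $\sigma_i$ form a cycle through all the cone points separating $C$ into the two copies of the polygon, so a non‑trivial geodesic lamination (whose leaves are complete geodesics) cannot be disjoint from it. By Lemma~\ref{lemma:kerckhofftg} every tangent vector to $\mathcal T(C)$ is tangent to an earthquake, so the Hessian of $\mathcal L_{\mathcal W}$ is positive definite at any critical point; by Theorem~\ref{thm:earthquake} any two points of $\mathcal T(C)$ are joined by an earthquake path along which $\mathcal L_{\mathcal W}$ is strictly convex, hence there is at most one critical point, necessarily the strict global minimum. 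Existence follows from $\mathcal L_{\mathcal W}\ge(\min_i w_i)\cdot(\text{perimeter})$ and properness of the perimeter, and since $\mathcal L_{\mathcal W}$ depends only on the fold‑locus lengths it is invariant under the doubling involution, so its minimizer is symmetric and lies in $\mathcal T(P^2)$. (When some $\vartheta_i=\pi/2$ the cone angles of $C$ reach $\pi$, the boundary case of Bonsante–Schlenker; this is absorbed by approximating with $\vartheta_i<\pi/2$ and using continuity of the minimizer.)

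Next I would establish a first‑variation formula. Given a hyperbolic $n$-gon $P$ with an interior point $p$, cone $p$ to the vertices, $P=\bigcup_i T_i$ with $T_i=pv_iv_{i+1}$, and set $\phi_i$ the angle of $T_i$ at $p$, $h_i=d(p,\ell_i)$ the distance from $p$ to the line $\ell_i$ carrying $e_i$, $\beta_i$ the angle of $T_i$ at $v_i$ and $\alpha_i=\vartheta_i-\beta_i$. Since a hyperbolic triangle is determined by its angles, $|e_i|$ is a function of $(\phi_i,\beta_i,\alpha_{i+1})$, and the hyperbolic sine and cosine rules give $\partial_{\phi_i}|e_i|=-1/\sinh h_i$, $\partial_{\beta_i}|e_i|=-\cosh d(p,v_i)/\sinh h_i$ and $\partial_{\alpha_{i+1}}|e_i|=-\cosh d(p,v_{i+1})/\sinh h_i$. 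Using $d\alpha_{i+1}=-d\beta_{i+1}$ and the identity $\sum_i\phi_i=2\pi$, this telescopes, on the space of pairs $(P,p)$, to
\[
d\mathcal L_{\mathcal W}=-\sum_i\frac{w_i}{\sinh h_i}\,d\phi_i+\sum_j\cosh d(p,v_j)\Bigl(\frac{w_{j-1}}{\sinh h_{j-1}}-\frac{w_j}{\sinh h_j}\Bigr)d\beta_j .
\]

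Now comes the identification. If $P$ admits an interior point $p$ with $\sinh d(p,e_i)/w_i$ independent of $i$ — so the perpendicular feet land on the edges, $h_i=d(p,e_i)$, and $w_i/\sinh h_i\equiv\lambda$ — then the $d\beta_j$-terms above vanish and $d\mathcal L_{\mathcal W}=-\lambda\,d(\sum_i\phi_i)=-\lambda\,d(2\pi)=0$; as $\mathcal L_{\mathcal W}$ does not depend on $p$, this says $P$ is a critical point on $\mathcal T(P^2)$, hence by the first step it is the minimizer. It remains to produce such a $P$ and prove it unique, which then also forces the minimizer itself to have this property and completes the proof (the inscribed‑circle assertion being the case $w_1=\dots=w_n$). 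Working in the hyperboloid model with $p=e_0$, a line at distance $h$ with outward unit normal $u$ satisfies $\langle u,e_0\rangle=-\sinh h$, so I seek unit spacelike $u_1,\dots,u_n$ with $\langle u_i,e_0\rangle=-\lambda w_i$ and $\langle u_{i-1},u_i\rangle=-\cos\vartheta_i$; writing $u_i=\lambda w_i e_0+\sqrt{1+\lambda^2w_i^2}\,(\cos\theta_i,\sin\theta_i)$, the second condition becomes $\cos(\theta_i-\theta_{i-1})=\dfrac{\lambda^2 w_{i-1}w_i-\cos\vartheta_i}{\sqrt{(1+\lambda^2w_{i-1}^2)(1+\lambda^2w_i^2)}}=:\cos\delta_i(\lambda)$ with $\delta_i\in(0,\pi)$, and the polygon closes up precisely when $\sum_i\delta_i(\lambda)=2\pi$. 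A direct differentiation shows each $\delta_i(\lambda)$ is strictly decreasing in $\lambda>0$, with $\sum_i\delta_i(\lambda)\to\sum_i(\pi-\vartheta_i)>2\pi$ as $\lambda\to0^+$ — exactly where the hypothesis $\sum(\pi-\vartheta_i)>2\pi$ enters — and $\to0$ as $\lambda\to\infty$, so there is a unique $\lambda^*>0$ with $\sum_i\delta_i(\lambda^*)=2\pi$, giving the unique polygon (up to isometry) with the stated property.

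The hard part is the second step: computing the three partial derivatives of $|e_i|$ correctly, carrying out the reindexing/telescoping, and checking that at the optimum the perpendicular feet from $p$ really lie on the edges (so that $d(p,\ell_i)=d(p,e_i)$ and the configuration $u_1,\dots,u_n$ assembles into an honest convex $n$-gon), together with the monotonicity of $\delta_i(\lambda)$; everything else is a routine application of the Kerckhoff–Wolpert–Bonsante–Schlenker convexity machinery recalled above.
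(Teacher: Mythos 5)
Your scheme for the characterization part is attractive and, as far as I can check, sound: the partial derivatives $\partial_{\phi_i}|e_i|=-1/\sinh h_i$, $\partial_{\beta_i}|e_i|=-\cosh d(p,v_i)/\sinh h_i$, $\partial_{\alpha_{i+1}}|e_i|=-\cosh d(p,v_{i+1})/\sinh h_i$ are correct, the telescoped $1$-form does reduce to $-\lambda\,d\bigl(\sum_i\phi_i\bigr)$ at a weighted incenter, and the monotonicity of $\delta_i(\lambda)$ in the hyperboloid construction is a correct computation (the feet of the perpendiculars do land on the edges because all angles are $\leq\pi/2$, so the polygon lies in the strip over each edge). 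This is a genuinely different and more self-contained route than the paper's, which obtains the inscribed-circle/weighted-incenter property of the minimizer as a byproduct of the three-dimensional Killing-field analysis (Proposition~\ref{lemma:killingfield}) and only alludes to a ``continuity method'' for uniqueness of the incenter polygon; your explicit first variation plus the closing-up equation $\sum_i\delta_i(\lambda)=2\pi$ (which uses the hypothesis $\sum(\pi-\vartheta_i)>2\pi$ exactly where it should) gives both directions in purely two-dimensional terms.

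The genuine gap is the boundary case $\vartheta_i=\pi/2$, which the statement explicitly allows and which you dismiss with ``approximating with $\vartheta_i<\pi/2$ and using continuity of the minimizer.'' When some $\vartheta_i=\pi/2$ the double $C$ has cone angles equal to $\pi$, outside the hypotheses of Theorem~\ref{thm:earthquake}, and the uniform constant $c$ in Lemma~\ref{lemma:wolpertlamination}(2) degenerates (it comes from a lower bound on the distance from a cone point of angle $\leq\vartheta_0<\pi$ to a geodesic, hence on $\sin\theta_p$, $\sin\theta_q$ in Lemma~\ref{lemma:wolpertscc}(2)). Continuity of minimizers under approximation gives at best existence of a limit minimizer; it gives neither uniqueness nor the assertion that the minimizer is an isolated, nondegenerate critical point, which is part of the statement. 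Your incenter argument does not repair this on its own: it shows incenter $\Rightarrow$ critical and that the incenter polygon is unique, but without convexity (or the converse implication critical $\Rightarrow$ incenter, which does not follow from the restricted vanishing of your $1$-form, since the $d\phi_i,d\beta_j$ are not independent on the constrained space) you cannot conclude there is only one critical point. This is precisely where the paper works: when at least two angles equal $\pi/2$ it passes to a double cover of $C$ branched over the angle-$\pi$ cone points to push all cone angles below $\pi$, and when exactly one angle equals $\pi/2$ it proves a lower bound on the Wolpert Hessian that is uniform as that angle tends to $\pi/2$, using that every segment $\sigma_i$ has at least one endpoint of angle $\leq\vartheta_0<\pi$ so the intersection angles stay bounded away from zero; the positive definiteness then persists in the limit. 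You need to supply an argument of this kind (or prove critical $\Rightarrow$ incenter directly) for the case of right angles.
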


\begin{proof}
When $\vartheta_1,\ldots,\vartheta_n<\pi/2$, existence and uniqueness is  
Corollary~\ref{cor:minimizer}. The fact that it has an inscribed circle tangent to all 
of its faces (when there are no weights)  
 follows from the properties of the Killing vector field in Proposition~\ref{lemma:killingfield}.
When there are weights $\mathcal W$, this assertion is consequence of the formulas in the proof of Proposition~\ref{lemma:killingfield}.

We discuss the case when some of the angles $\vartheta_i$ become $ \pi/2$. 
Assume first that at least two of the cone angles become $\pi/2$. Then the cone manifold $C$ obtained as a double of the polygon has at least
two cone angles equal to $\pi$. Consider a two to one ramified covering of $C$, with ramification locus precisely the cone points of angle $\pi$.
In this way we obtain a cone manifold with cone angles $<\pi$, and we may apply the previous argument, also  with arcs and some closed geodesics, instead of only arcs.

Assume now there is only one cone angle equal to $\pi/2$ and all other cone angles are $\leq \vartheta_0/2<\pi/2$.
Then the cone manifold obtained as double of the polygon has one cone angle $\pi$ and all other cone angles $\leq\vartheta_0<\pi$.
The Teichm\"uller space of the polygon is embedded in $\mathbf R^n_+$, with coordinates the length of the edges.
Using the ambient metric of $\mathbf R^n_+$, a unit tangent vector $v$ to this Teichm\"uller space
has at least one coordinate larger than  $1/\sqrt{n}$ (up to replacing $v$ by $-v$).
We can assume that it happens to be the first coordinate. 
Now, deform the angle $\pi/2$ to $\pi/2-\varepsilon$, and call $\tau_{\varepsilon}$
the minimum, that varies continuously. 
Deform also the tangent vector $v_{\varepsilon}$ 
so that the first coordinate is at least $1/(2\sqrt{n})$. 
View $v_{\varepsilon}$ as a tangent vector $\tilde v_{\varepsilon}$ to the Teichm\"uller space of $C$, 
the cone manifold double of the polygon.
Let $E^l_{ t \lambda}$ be the earthquake path tangent to $\tilde v_{\varepsilon}$. 
By Lemma~\ref{lemma:wolpertlamination} (1), the intersection of $\lambda$ with the first edge is $\langle\lambda,\sigma_1\rangle\geq 1/(2\sqrt{n})$. Thus $\langle\lambda,\sigma\rangle\geq 1/(2\sqrt{n})$ and,
by Lemma~\ref{lemma:wolpertlamination} (2), the second derivative of $\vert\sigma\vert$ in the direction of $v$ is $\geq c/(2\sqrt{n})$, for some 
$c>0$ that we claim that exists and is uniform because the cone manifold $C$ has only one cone angle equal to $\pi$. 
Once we have this claim, there is a positive lower bound to the second derivative of the perimeter, that it is uniform on $\varepsilon>0$, hence
it holds for $\varepsilon=0$ and shows that the Hessian of the perimeter is positive definite.

Let us prove the existence of this  $c>0$ uniform in $\varepsilon>0$,
 when precisely one of the cone angles is $\pi-\varepsilon$ and the other ones are 
$\leq\vartheta_0<\pi$.  
The distance between a cone point of angle $\leq \vartheta_0<\pi$ and a closed geodesic is
$\geq C(\vartheta_0)>0$. Thus,
if $\sigma_i$ is a segment between two cone points, since at least one of them has angle $\leq\vartheta_0<\pi$,  
then the angle between any closed geodesic
and $\sigma_i$ is bounded below in terms of $\vartheta_0$ and an upper bound to the  length $\vert\sigma\vert$, 
hence the angles $\theta_p$ and $\theta_q$ that occur in   Lemma~\ref{lemma:wolpertscc}(2) are bounded
below away from zero, independently of $\varepsilon$. This gives the desired bound.
\end{proof}

The arguments in the proof of Corollary~\ref{cor:geometry} also give:

\begin{Remark}
\label{Remark:pi}
Corollary~\ref{cor:minimizer} also holds true when some angles of $P^2$ are $\pi/2$.
\end{Remark}

\section{Appendix: Infinitesimal isometries}
\label{sec:inf}

The Lie algebra of infinitesimal isometries is denoted by  $\mathfrak{sl}_2(\mathbf C)$. It is naturally equipped with the complex Killing form 
$$
B\! : \! \mathfrak{sl_2}(\mathbf C)\times \mathfrak{sl_2}(\mathbf C)\to \mathbf C
$$
defined by $B(\mathfrak a,\mathfrak b)= \operatorname{Trace}(Ad_{\mathfrak a}\circ Ad_{\mathfrak b}) =
4 \operatorname{Trace}( {\mathfrak a} {\mathfrak b})$.
Thus,
$$
B\left(
	\begin{pmatrix}
	 a & b \\
	c & -a
	\end{pmatrix},
	\begin{pmatrix}
	 x & y \\
	z & -x
	\end{pmatrix}
\right)= 8 ax+4 bz+4cy.
$$
Let $\mathfrak{a}$ be an infinitesimal isometry of complex length $l$ (ie.\ $\exp(t\mathfrak{a})$ has complex length
$t \, l$). Then 
$$
l=\pm\frac{1}{\sqrt{2}}\sqrt{B(\mathfrak{a},\mathfrak{a})}.
$$
In particular $B(\mathfrak{a},\mathfrak{a})=0$ iff $\mathfrak{a}$ is trivial or parabolic.

The Killing vector field $F$ corresponding to $\mathfrak a\in\mathfrak{sl_2}(\mathbf C)$ is the field tangent to the orbits
of $\exp(t\mathfrak {a})$, the 
one parameter group of diffeomorphism of $\mathbf H^3$.
We notice that for $x\in\mathbf H ^3$, $F_x$ is the translational part of $\mathfrak a$ at $x$.
When $\mathfrak a$ is not parabolic, then 
$\exp(t\mathfrak {a})$ has an invariant axis, that it is also the minimizing locus for the norm $\vert F\vert$. This axis
is denoted by $\Axis(\mathfrak{a})$. 
The Killing vector field has nonempty vanishing locus iff $\mathfrak a$ is an infinitesimal rotation, then it vanishes precisely at 
$\Axis(\mathfrak{a})$.

When $\mathfrak a$ is parabolic, then $\exp(t\mathfrak {a})$ fixes a point at $\infty$,
that we denote by
 $\Fix(\mathfrak{a})\in\partial \mathbf H^3$.

Following 
Fenchel~\cite{Fenchel},
we denote by $ d_{\mathbf C}$ the complex distance between two geodesics, ie. the real part is the metric distance
and the imaginary part the rotation angle.

\begin{Proposition}
\label{prop:killing}
Let $\mathfrak{a}, \mathfrak{b}\in\mathfrak{sl}_2(\mathbf C)$ be two nonzero and nonparabolic infinitesimal isometries. Then
$$
\frac{B(\mathfrak{a},\mathfrak{b})^2}
{ 
{B(\mathfrak{a},\mathfrak{a})B(\mathfrak{b},\mathfrak{b})}
} 
=
\cosh^2 d_{\mathbf C}(\Axis(\mathfrak{a}),\Axis(\mathfrak{b})) .
$$
 \end{Proposition}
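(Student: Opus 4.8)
The plan is to reduce to a normal form by simultaneous conjugation and then compute both sides directly. First I would recall that $PSL_2(\mathbf C)$ acts transitively on pairs of distinct geodesics in $\mathbf H^3$ with prescribed complex distance, so that after conjugating by a suitable element of $PSL_2(\mathbf C)$ (which preserves $B$, since $B$ is $\operatorname{Ad}$-invariant, and which sends $\Axis(\mathfrak a)$ and $\Axis(\mathfrak b)$ to their images) we may assume $\Axis(\mathfrak a)$ is the geodesic $\overline{0\infty}$ in the upper half-space model. Then $\mathfrak a = \lambda_a\left(\begin{smallmatrix} 1 & 0 \\ 0 & -1\end{smallmatrix}\right)$ for some $\lambda_a\in\mathbf C^\ast$, and $B(\mathfrak a,\mathfrak a)=8\lambda_a^2$, so that $\mathfrak a$ has complex length $l_a=\pm 2\lambda_a$ up to the normalization fixed in this appendix. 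The axis of $\mathfrak b$ is then an arbitrary geodesic, and writing $d_{\mathbf C}=d_{\mathbf C}(\Axis(\mathfrak a),\Axis(\mathfrak b))$ I can take its endpoints on $\partial\mathbf H^3=\mathbf C\cup\{\infty\}$ to be, say, $\pm e^{d_{\mathbf C}/2}$ (the common perpendicular of $\overline{0\infty}$ and $\overline{(-e^{d_{\mathbf C}/2})(e^{d_{\mathbf C}/2})}$ has the right complex distance — this is Fenchel's computation). Conjugating $\left(\begin{smallmatrix} 1 & 0 \\ 0 & -1\end{smallmatrix}\right)$ by the Möbius map sending $0\mapsto -e^{d_{\mathbf C}/2}$, $\infty\mapsto e^{d_{\mathbf C}/2}$ gives an explicit matrix for $\mathfrak b/\lambda_b$, and then $B(\mathfrak a,\mathfrak b)$ is a routine trace computation.

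The key computational step is: with $\mathfrak a = \lambda_a\,\mathrm{diag}(1,-1)$ and $\mathfrak b = \lambda_b\, g\,\mathrm{diag}(1,-1)\,g^{-1}$ where $g$ conjugates $\overline{0\infty}$ to $\Axis(\mathfrak b)$, one finds $B(\mathfrak a,\mathfrak b)=4\lambda_a\lambda_b\operatorname{Trace}\!\big(\mathrm{diag}(1,-1)\cdot g\,\mathrm{diag}(1,-1)\,g^{-1}\big)$, and the trace of a product of two trace-free $2\times 2$ matrices each of "length" $2$ whose axes are at complex distance $d_{\mathbf C}$ equals $2\cosh d_{\mathbf C}$. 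Hence $B(\mathfrak a,\mathfrak b)=8\lambda_a\lambda_b\cosh d_{\mathbf C}$, while $B(\mathfrak a,\mathfrak a)B(\mathfrak b,\mathfrak b)=64\lambda_a^2\lambda_b^2$, and the quotient $B(\mathfrak a,\mathfrak b)^2/\big(B(\mathfrak a,\mathfrak a)B(\mathfrak b,\mathfrak b)\big)=\cosh^2 d_{\mathbf C}$ as claimed. I would note that this quotient is manifestly conjugation-invariant and homogeneous of degree zero in each of $\mathfrak a,\mathfrak b$ separately, so it is legitimate to normalize $\lambda_a=\lambda_b=1/2$ (i.e.\ complex length $1$) at the outset, which shortens the bookkeeping.

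I expect the only real obstacle to be a matter of conventions rather than mathematics: pinning down the precise relation between the matrix normal form of a loxodromic infinitesimal isometry, its complex length via $l=\pm\frac1{\sqrt2}\sqrt{B(\mathfrak a,\mathfrak a)}$, and Fenchel's complex-distance formalism, so that the factors of $2$ and the choice of endpoints $\pm e^{d_{\mathbf C}/2}$ are consistent. Once the normal form $\mathfrak a=\frac12\,\mathrm{diag}(1,-1)$ with $B(\mathfrak a,\mathfrak a)=2$ and complex length $1$ is fixed, everything else is the two-line trace identity $\operatorname{Trace}(A\,gAg^{-1})$ where $A=\mathrm{diag}(1,-1)$, which can alternatively be verified by taking $\Axis(\mathfrak b)$ to have endpoints $0$ and $\infty$ rotated/translated, or simply by observing that the left-hand side of the proposition, being a holomorphic function of the configuration that depends only on $d_{\mathbf C}$ (by the conjugation argument), must agree with $\cosh^2 d_{\mathbf C}$ since both sides have the same value and first derivatives along a one-parameter family — but the direct computation is cleanest and I would present that.
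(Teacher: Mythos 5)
Your argument is essentially the paper's: after the reduction, everything rests on the fact that (suitably normalized) $\mathfrak a$ and $\mathfrak b$ are generators of half-turns about their axes and that the product of two $\pi$-rotations is a screw motion along the common perpendicular of complex length $2d_{\mathbf C}$, whence $\operatorname{Trace}(\mathfrak a\mathfrak b)$ is proportional to $\pm\cosh d_{\mathbf C}$; the paper phrases this by rescaling $\mathfrak a,\mathfrak b$ by $\sqrt{-8/B(\cdot,\cdot)}$ so that they become actual elements of $SL_2(\mathbf C)$, while you conjugate to the normal form $\lambda\,\mathrm{diag}(1,-1)$ and compute the trace directly. Both are correct and the bookkeeping in your second paragraph ($B(\mathfrak a,\mathfrak a)=8\lambda_a^2$, $B(\mathfrak a,\mathfrak b)=\pm8\lambda_a\lambda_b\cosh d_{\mathbf C}$) checks out.

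One concrete slip in your proposed explicit verification: the geodesic with endpoints $\pm e^{d_{\mathbf C}/2}$ is \emph{not} at complex distance $d_{\mathbf C}$ from $\overline{0\infty}$; it is the semicircle $|z|=e^{d_{\mathbf C}/2}$, which meets $\overline{0\infty}$ orthogonally for every value of the parameter, so the trace computation set up with those endpoints returns $0$ rather than $\pm2\cosh d_{\mathbf C}$. The correct normal form is obtained by translating $\overline{0\infty}$ along the common perpendicular $\overline{(-1)\,1}$ by complex length $d_{\mathbf C}$, i.e.\ conjugating by $\left(\begin{smallmatrix}\cosh(d_{\mathbf C}/2) & \sinh(d_{\mathbf C}/2)\\ \sinh(d_{\mathbf C}/2) & \cosh(d_{\mathbf C}/2)\end{smallmatrix}\right)$, which places the endpoints of $\Axis(\mathfrak b)$ at $\tanh(d_{\mathbf C}/2)$ and $\coth(d_{\mathbf C}/2)$; equivalently, for a geodesic with endpoints $a,b\in\mathbf C$ one has $\cosh d_{\mathbf C}=\pm(a+b)/(a-b)$, which makes your ``routine trace computation'' give exactly $\operatorname{Trace}\bigl(\mathrm{diag}(1,-1)\, g\,\mathrm{diag}(1,-1)\,g^{-1}\bigr)=\pm 2\cosh d_{\mathbf C}$. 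With that correction the proof is complete and coincides in substance with the one in Appendix~\ref{sec:inf}.
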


\begin{proof}
Notice that $B(\mathfrak{a},\mathfrak{a})= -8 \det(\mathfrak{a})$. Thus  since traceless matrices in $SL_2(\mathbf C)$ are $\pi$-rotations in $\mathbf H^3$,
 $$
\sqrt{\frac{-8}{B(\mathfrak{a},\mathfrak{a})}}\,\mathfrak{a}\in SL_2(\mathbf C)
$$
is a  rotation of angle $\pi$ around $\Axis(\mathfrak{a})$. Hence, the product 
 $$
\sqrt{\frac{-8}{B(\mathfrak{a},\mathfrak{a})}}\,\mathfrak{a} 
\sqrt{\frac{-8}{B(\mathfrak{b},\mathfrak{b})}}\,\mathfrak{b} 
=
\frac{\pm 8}{\sqrt{B(\mathfrak{a},\mathfrak{a}) B(\mathfrak{b},\mathfrak{b}) }}\, \mathfrak{ab}
$$
is an isometry of complex length $2 d_{\mathbf C}(\Axis(\mathfrak{a}),\Axis(\mathfrak{b}))$. Thus
$$
\operatorname{Trace}( 
\frac{\pm 8}{\sqrt{B(\mathfrak{a},\mathfrak{a}) B(\mathfrak{b},\mathfrak{b}) }} \, \mathfrak{ab}
) 
=
\pm 2\cosh d_{\mathbf C}(\Axis(\mathfrak{a}),\Axis(\mathfrak{b}))
$$
The proposition follows from this formula and 
$
 B(\mathfrak{a},\mathfrak{b})= 4 \operatorname{Trace}(\mathfrak{a}\mathfrak{b}).
$
\end{proof}

The idea of considering elements of the Lie algebra that are also in $SL_2(\mathbf C)$ as rotations of angle $\pi$  
is taken from the so called line geometry in  Marden's book \cite[Ch.\ 7]{Marden}.

\begin{Remark}
\label{remark:killing}
In the previous proposition, if   $\mathfrak a,\mathfrak b\in\mathfrak {sl}_2(\mathbf C)$ are infinitesimal 
rotations of respective angles $\alpha$ and $\beta$, and if we have $\alpha,\beta>0$, then it makes sense to talk about 
orientation of their axis.
In this case we have:
\begin{equation}
\label{eqn:killingnp}
B(\mathfrak{a},\mathfrak{b})= -2\alpha\beta \cosh d_{\mathbf C}(\Axis(\mathfrak{a}),\Axis(\mathfrak{b})). 
\end{equation}
\end{Remark}

This remark follows immediately from Proposition~\ref{prop:killing} and a continuity argument, 
by deforming first $\beta$ to $\alpha$ and then by moving one of the oriented edges to the other,
because 
$B(\mathfrak{a},\mathfrak{a})= -2 \alpha^2$.

\medskip

Given four points $z_1,z_2,z_3,z_4\in\partial\mathbf H^3=\mathbf C\cup\infty$, the \emph{cross ratio} is
$$
[z_1:z_2:z_3:z_4]=\frac{(z_1-z_3)(z_2-z_4)}{(z_2-z_3)(z_1-z_4)}\in\mathbf C\cup\{\infty\}.
$$

\begin{Proposition}
\label{prop:killing1P}
Let $\mathfrak{a}, \mathfrak{b}\in\mathfrak{sl}_2(\mathbf C)$ be two nonzero infinitesimal isometries. Assume that $\mathfrak a$
is parabolic and $\mathfrak{b}$ is not.
Then
\begin{enumerate}
 \item $B(\mathfrak{a},\mathfrak{b})=0$ iff $\Fix(\mathfrak{a})$ is an endpoint of $\Axis(\mathfrak{b})$.
\item Let $p_+,p_-\in\mathbf C\cup\{\infty\}$ be the endpoints of $\Axis(\mathfrak{b})$, and assume that they are 
both different from 
 $\Fix(\mathfrak{a})$. Then 
$$
\frac{B(\mathfrak{a},\mathfrak{b})^2}{B(\mathfrak{b},\mathfrak{b})}= \frac{8}{t^2}[ p_+: e^{t\mathfrak{a}}(p_-): e^{t\mathfrak{a}}(p_+):p_-].
$$
\end{enumerate}
\end{Proposition}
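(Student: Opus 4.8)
The plan is to reduce Proposition~\ref{prop:killing1P} to an explicit normal-form computation, exactly as in the proof of Proposition~\ref{prop:killing}, but now adapted to the parabolic case. First I would conjugate so that $\Fix(\mathfrak a)=\infty$; then the parabolic infinitesimal isometry has the normal form
$$
\mathfrak a = \begin{pmatrix} 0 & \kappa \\ 0 & 0 \end{pmatrix},
\qquad \kappa\in\mathbf C\setminus\{0\},
$$
and $e^{t\mathfrak a}$ is the translation $z\mapsto z+t\kappa$. For $\mathfrak b$ nonparabolic write
$$
\mathfrak b = \begin{pmatrix} u & v \\ w & -u \end{pmatrix},
\qquad u^2+vw\neq 0,
$$
so that $B(\mathfrak b,\mathfrak b)=4\operatorname{Trace}(\mathfrak b^2)\cdot 2 = 8(u^2+vw)$ and $B(\mathfrak a,\mathfrak b)=4\operatorname{Trace}(\mathfrak a\mathfrak b)=4\kappa w$, using the displayed formula for $B$ in Appendix~\ref{sec:inf}. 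The endpoints $p_\pm$ of $\Axis(\mathfrak b)$ are the two fixed points of the rotation $\sqrt{-8/B(\mathfrak b,\mathfrak b)}\,\mathfrak b\in SL_2(\mathbf C)$, i.e.\ the solutions of $wz^2-2uz-v=0$; in particular $p_+$ and $p_-$ are both finite (hence both $\neq\Fix(\mathfrak a)=\infty$) precisely when $w\neq 0$, which is the generic situation of part (2).

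For part (1): $B(\mathfrak a,\mathfrak b)=4\kappa w=0$ iff $w=0$ (since $\kappa\neq 0$), and $w=0$ means that $\infty=\Fix(\mathfrak a)$ is a root of $wz^2-2uz-v=0$, i.e.\ an endpoint of $\Axis(\mathfrak b)$. This gives the equivalence. For part (2), assume $w\neq 0$, so $p_+,p_-$ are the two finite roots, with $p_++p_-=2u/w$ and $p_+p_-=-v/w$, whence $(p_+-p_-)^2=(2u/w)^2+4v/w=4(u^2+vw)/w^2$. Since $e^{t\mathfrak a}$ translates by $t\kappa$, I compute the cross ratio directly:
$$
[\,p_+ : e^{t\mathfrak a}(p_-) : e^{t\mathfrak a}(p_+) : p_-\,]
=\frac{(p_+-e^{t\mathfrak a}(p_+))(e^{t\mathfrak a}(p_-)-p_-)}
       {(e^{t\mathfrak a}(p_-)-e^{t\mathfrak a}(p_+))(p_+-p_-)}
=\frac{(-t\kappa)(t\kappa)}{(p_--p_+)(p_+-p_-)}
=\frac{t^2\kappa^2}{(p_+-p_-)^2}.
$$
Plugging in $(p_+-p_-)^2=4(u^2+vw)/w^2$ gives $t^2\kappa^2 w^2/\bigl(4(u^2+vw)\bigr)$, so
$$
\frac{8}{t^2}\,[\,p_+ : e^{t\mathfrak a}(p_-) : e^{t\mathfrak a}(p_+) : p_-\,]
= \frac{2\kappa^2 w^2}{u^2+vw}
= \frac{(4\kappa w)^2}{8(u^2+vw)}
= \frac{B(\mathfrak a,\mathfrak b)^2}{B(\mathfrak b,\mathfrak b)},
$$
which is the claimed identity. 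I would also remark that the right-hand side is visibly independent of $t$, as it must be, and check the ordering convention on the cross ratio matches the one fixed just before the statement.

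The main obstacle here is purely bookkeeping: getting the labelling of $p_+$ versus $p_-$ and the four slots of the cross ratio to line up with the stated formula (the cross ratio is sensitive to swapping the two endpoints, though here the symmetry of the expression $(-t\kappa)(t\kappa)$ makes it unchanged under $p_+\leftrightarrow p_-$, so this is harmless), and verifying that the normal-form reduction loses no generality — i.e.\ that conjugation by $PSL_2(\mathbf C)$ acts on $B$ by invariance and on the cross ratio by invariance (both standard), so it suffices to treat the single normal form above. There is also the minor edge case where one of $p_\pm$ equals $\infty$ (i.e.\ $w=0$): this is exactly the case excluded in the hypothesis of part (2) and handled by part (1), so nothing is lost. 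I expect the whole argument to be about a page once the trigonometric bookkeeping is written out.
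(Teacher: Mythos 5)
Your proposal is correct and follows essentially the same route as the paper: conjugate so that $\Fix(\mathfrak a)=\infty$, observe $B(\mathfrak a,\mathfrak b)=4\kappa w$ to get part (1), and then compare $B(\mathfrak a,\mathfrak b)^2/B(\mathfrak b,\mathfrak b)$ with a direct computation of the cross ratio under the translation $z\mapsto z+t\kappa$. The only cosmetic difference is that the paper performs a second conjugation placing the endpoints of $\Axis(\mathfrak b)$ at $\pm x$ (so $\mathfrak b$ becomes anti-diagonal), whereas you keep a general $\mathfrak b$ and use Vieta's formulas for $(p_+-p_-)^2$; both yield the same identity.
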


\begin{proof}
Up to conjugacy, we may  assume that $\Fix(\mathfrak{a})=\infty$ in the upper half space model.
Then 
$$
\mathfrak{a}=\begin{pmatrix}
              0 & a \\ 0 & 0
             \end{pmatrix}
\textrm{ and }
\mathfrak{b}=\begin{pmatrix}
              u & v \\ w & -u
             \end{pmatrix}.
$$
With this expressions,  $B(\mathfrak{a},\mathfrak{b})= 4 a w$, and the first assertion of the proposition
follows from the fact that $w=0$ iff $\infty$ is an endpoint of the  axis of $\mathfrak{b}$.
To prove Assertion 2, we may assume up to further conjugation that the axis of $\mathfrak{b}$
has endpoints $ p_{\pm}=\pm x\in \mathbf  R\setminus \{0\}$. Hence 
$$
\mathfrak{a}=\begin{pmatrix}
              0 & a \\ 0 & 0
             \end{pmatrix}
\textrm{ and }
\mathfrak{b}=  \begin{pmatrix}
              0 &  y x \\ y/x  & 0
             \end{pmatrix}
$$
with $B(\mathfrak{b},\mathfrak{b})= 8 y^2$.
Thus:
$$
\frac{B(\mathfrak{a},\mathfrak{b})^2}{B(\mathfrak{b},\mathfrak{b})}
=
\frac{(4ay/x)^2}{8 y^2}= 2\frac{a^2}{x^2}.
$$
On the other hand,
$$
[ p_+: e^{t\mathfrak{a}}(p_-): e^{t\mathfrak{a}}(p_+):p_-]=
[x: -x+t\,a:x+t\, a: -x]= \frac{t^2 a^2}{4x^2},
$$
and the formula follows.
\end{proof}

Using  Proposition~\ref{prop:killing1P} and the computations in its proof we have the following remark:

\begin{Remark}
\label{rem:equidistant}
Assume $\mathfrak{a},\mathfrak{b}\in \mathfrak{sl}_2(\mathbf C)$ satisfy $B(\mathfrak{a},\mathfrak{b})\neq 0$,
$\mathfrak{a}$ is parabolic, and $\mathfrak{b}$ is not.
The horosphere centered at 
$\Fix(\mathfrak {a})\in\partial_{\infty}\mathbf H^3$ and tangent to
the axis $\Axis(\mathfrak{b})$ has a natural complex structure, up to homothety. 
Fix a complex structure in which $1\in\mathbf C$
is the unit tangent vector to $\Axis(\mathfrak{b})$, and 
suppose that  $tz\in\mathbf C$ is the translation vector of $e^{t\mathfrak{a}}$ in this horosphere. Then: 
$$
\frac{B(\mathfrak{a},\mathfrak{b})^2}{B(\mathfrak{b},\mathfrak{b})}= 2 z^2.
$$
\end{Remark}

By looking at the homothety factor of the complex structure on different horospheres with the same center, we get:

\begin{Corollary}
\label{cor:equihorsphere}
 Let $\mathfrak{a}, \mathfrak{b}, \mathfrak{c}\in\mathfrak{sl}_2(\mathbf C)$ be two nonzero infinitesimal isometries. Assume that $\mathfrak a$
is parabolic, but $\mathfrak{b}$ and $ \mathfrak{c}$ are not.
If
$$
\frac{B(\mathfrak{a},\mathfrak{b})^2}{B(\mathfrak{b},\mathfrak{b})}= 
\frac{B(\mathfrak{a},\mathfrak{c})^2}{B(\mathfrak{c},\mathfrak{c})}\neq 0 ,
$$
then the axis $\Axis(\mathfrak{b})$ and $\Axis(\mathfrak{c})$ are tangent to the same horosphere centered at $\Fix(\mathfrak {a})\in\partial_{\infty}\mathbf H^3$.
Moreover, their tangent directions are parallel in the Euclidean structure of the horosphere.
\end{Corollary}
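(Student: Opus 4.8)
The plan is to work entirely in the upper half-space model, place $\Fix(\mathfrak{a})$ at $\infty$, and convert the hypothesis into an elementary statement about the two Euclidean semicircles $\Axis(\mathfrak{b})$ and $\Axis(\mathfrak{c})$. First I would note that, since the common value $\frac{B(\mathfrak{a},\mathfrak{b})^2}{B(\mathfrak{b},\mathfrak{b})}=\frac{B(\mathfrak{a},\mathfrak{c})^2}{B(\mathfrak{c},\mathfrak{c})}$ is nonzero, Proposition~\ref{prop:killing1P}(1) rules out $\Fix(\mathfrak{a})$ being an endpoint of either axis; hence each of $\Axis(\mathfrak{b})$, $\Axis(\mathfrak{c})$ is a semicircle over $\mathbf C=\partial\mathbf H^3$ with two finite endpoints, and is tangent to exactly one horosphere centered at $\Fix(\mathfrak{a})$, namely the horizontal plane through its apex.

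Next I would make the computation underlying Remark~\ref{rem:equidistant} explicit. Writing $e^{t\mathfrak{a}}$ as the translation $w\mapsto w+tv$ of $\mathbf C$ (with $v\neq 0$ fixed), and letting $p_\pm^b$ be the endpoints of $\Axis(\mathfrak{b})$, the horosphere $\mathcal H_b$ centered at $\Fix(\mathfrak{a})$ and tangent to $\Axis(\mathfrak{b})$ sits at Euclidean height $h_b=\tfrac12|p_+^b-p_-^b|$; in the flat coordinate on $\mathcal H_b$ normalized so that the unit tangent to $\Axis(\mathfrak{b})$ at the apex corresponds to $1$ (as in Remark~\ref{rem:equidistant}), $e^{t\mathfrak{a}}|_{\mathcal H_b}$ is the translation by $tz_b$ with
\[
z_b=\frac{2v}{p_+^b-p_-^b},
\]
so that $\frac{B(\mathfrak{a},\mathfrak{b})^2}{B(\mathfrak{b},\mathfrak{b})}=2z_b^2$, and likewise for $\mathfrak{c}$ with $z_c=2v/(p_+^c-p_-^c)$. (These identities can also be obtained directly from Proposition~\ref{prop:killing1P}(2) by evaluating the cross ratio $[p_+:e^{t\mathfrak{a}}(p_-):e^{t\mathfrak{a}}(p_+):p_-]=t^2v^2/(p_+-p_-)^2$.)

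From the hypothesis $2z_b^2=2z_c^2\neq 0$ we get $z_b=\pm z_c$; in particular $|z_b|=|z_c|$. Since $|z_b|=2|v|/|p_+^b-p_-^b|=|v|/h_b$ is a strictly decreasing function of $h_b$, this forces $h_b=h_c$, i.e.\ $\Axis(\mathfrak{b})$ and $\Axis(\mathfrak{c})$ are tangent to one and the same horosphere $\mathcal H$ centered at $\Fix(\mathfrak{a})$ (this is the ``homothety factor'' observation: changing the horosphere rescales the translation length of $e^{t\mathfrak{a}}$). Finally, on this common $\mathcal H$ the passage from the coordinate normalized by $\Axis(\mathfrak{c})$ to the one normalized by $\Axis(\mathfrak{b})$ is a rotation through the angle $\phi$ between their apex tangent directions, followed by a translation of $\mathbf C$ (which does not affect translation vectors), so $z_b=e^{i\phi}z_c$; combined with $z_b=\pm z_c$ this gives $e^{i\phi}=\pm 1$, so those tangent directions are parallel in the Euclidean structure of $\mathcal H$, which is the assertion.

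I do not foresee a real obstacle; the only delicate bookkeeping is tracking how the horospherical metric (and hence the normalized coordinate and the value of $z_b$) scales with the Euclidean height $h_b$, which is exactly what lets one pass from $|z_b|=|z_c|$ to $h_b=h_c$. Everything else is the cross-ratio computation and linear algebra in $\mathbf C$.
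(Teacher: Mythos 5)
Your proposal is correct and follows essentially the paper's own route: it is Remark~\ref{rem:equidistant} (equivalently the computation behind Proposition~\ref{prop:killing1P}(2)) combined with the observation that the normalized translation length of $e^{t\mathfrak{a}}$ scales with the homothety factor between horospheres centered at $\Fix(\mathfrak{a})$, which forces equal heights and then parallel tangent directions. The paper leaves this as a one-line remark; you have merely made the same argument explicit in the upper half-space model, and your formulas ($z_b=2v/(p_+^b-p_-^b)$, cross-ratio value $t^2v^2/(p_+-p_-)^2$) check out.
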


Finally we deal with the case where $\mathfrak a$ and $\mathfrak b$ are both parabolic.

\begin{Proposition}
\label{prop:killing2P}
Let $\mathfrak{a}, \mathfrak{b}\in\mathfrak{sl}_2(\mathbf C)$ be two nonzero infinitesimal  parabolic isometries, with respective fixed points at infinity $\Fix(\mathfrak{a})$ and $\Fix(\mathfrak{b})$.
Then:
\begin{enumerate}
 \item $B(\mathfrak{a},\mathfrak{b})=0$ iff $\Fix(\mathfrak{a})=\Fix(\mathfrak{b})$.
\item 

When $\Fix(\mathfrak{a})\neq\Fix(\mathfrak{b})$, 
$$
B(\mathfrak{a},\mathfrak{b})= \frac{4}{t^2} [\Fix(\mathfrak{a}):\Fix(\mathfrak{b}):  
 e^{t\mathfrak{a}}(\Fix(\mathfrak{b})):
e^{t\mathfrak{b}}(\Fix(\mathfrak{a})) ] .
$$
\end{enumerate}
\end{Proposition}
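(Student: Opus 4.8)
The plan is to follow exactly the strategy used for Propositions~\ref{prop:killing} and \ref{prop:killing1P}: both sides of each assertion are invariant under simultaneous conjugation of the pair $(\mathfrak a,\mathfrak b)$ by an element of $PSL_2(\mathbf C)$ — the Killing form because it is $\mathrm{Ad}$-invariant, the cross ratio because Möbius transformations preserve it, and $\Fix(\cdot)$ and $e^{t\,\cdot}$ because they are equivariant. So I would first move $\mathfrak a$ and $\mathfrak b$ into a convenient normal form and then compute everything explicitly.

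For assertion~(1), if $\Fix(\mathfrak a)=\Fix(\mathfrak b)$ I would conjugate so that this common point is $\infty$; then both $\mathfrak a$ and $\mathfrak b$ are strictly upper-triangular nilpotent matrices, hence $\mathfrak a\mathfrak b=0$ and $B(\mathfrak a,\mathfrak b)=4\operatorname{Trace}(\mathfrak a\mathfrak b)=0$. Conversely, if $\Fix(\mathfrak a)\neq\Fix(\mathfrak b)$, conjugate so that $\Fix(\mathfrak a)=\infty$ and $\Fix(\mathfrak b)=0$; the only nonzero parabolic elements fixing exactly $\infty$, resp.\ exactly $0$, are $\mathfrak a=\left(\begin{smallmatrix}0&a\\0&0\end{smallmatrix}\right)$ and $\mathfrak b=\left(\begin{smallmatrix}0&0\\c&0\end{smallmatrix}\right)$ with $a,c\in\mathbf C\setminus\{0\}$, and then $B(\mathfrak a,\mathfrak b)=4\operatorname{Trace}(\mathfrak a\mathfrak b)=4ac\neq0$.

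For assertion~(2) I would keep the normalization $\Fix(\mathfrak a)=\infty$, $\Fix(\mathfrak b)=0$, so that $e^{t\mathfrak a}=\mathrm{Id}+t\mathfrak a$ and $e^{t\mathfrak b}=\mathrm{Id}+t\mathfrak b$ act on $\partial\mathbf H^3$ by $z\mapsto z+ta$ and $z\mapsto z/(tcz+1)$ respectively. The four points entering the cross ratio are then $\Fix(\mathfrak a)=\infty$, $\Fix(\mathfrak b)=0$, $e^{t\mathfrak a}(\Fix(\mathfrak b))=ta$ and $e^{t\mathfrak b}(\Fix(\mathfrak a))=1/(tc)$; substituting these into the defining formula for $[\,\cdot:\cdot:\cdot:\cdot\,]$ (taking the evident limit in the slot containing $\infty$) and comparing with $B(\mathfrak a,\mathfrak b)=4ac$ yields the claimed identity. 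The computation is routine; the only point that needs care is the bookkeeping of the cross-ratio conventions — the limiting value when one argument is $\infty$, and the ordering of the last two arguments — so as to land precisely on the displayed normalization, and this minor notational matching is the only obstacle I anticipate. As an alternative one could try to deduce~(2) from Proposition~\ref{prop:killing1P}(2) by a degeneration argument, letting the nonparabolic $\mathfrak b$ there tend to a parabolic so that its axis endpoints $p_\pm$ collapse to $\Fix(\mathfrak b)$; but that limit is of $0/0$ type, since both $B(\mathfrak b,\mathfrak b)$ and the cross ratio tend to $0$, so the direct computation above is the cleaner route and is the one I would carry out.
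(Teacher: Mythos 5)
Your proof is correct and is essentially the paper's own argument: the same normalization $\Fix(\mathfrak a)=\infty$, $\Fix(\mathfrak b)=0$, the trace computation $B(\mathfrak a,\mathfrak b)=4ac$, and the points $e^{t\mathfrak a}(0)=ta$, $e^{t\mathfrak b}(\infty)=1/(tc)$ substituted into the cross ratio, with part (1) handled by the same normal-form computation the paper only sketches. Your caveat about the cross-ratio bookkeeping is well taken: with the paper's stated convention and the displayed argument order one gets $4/(t^4ac)$ rather than $4ac$, so the identity holds after swapping the last two entries (or adjusting the convention), but this is a notational matter in the statement, not a gap in your argument.
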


\begin{proof}
The first assertion is an elementary computation, with a proof analogous to the first statement of Proposition~\ref{prop:killing1P}. For the second one, assume
$\Fix(\mathfrak{a})=\infty$ and $\Fix(\mathfrak{b})=0$. Hence 
$$
\mathfrak{a}=\begin{pmatrix}
              0 & x \\ 0 & 0
             \end{pmatrix}
\textrm{ and }
\mathfrak{b}=  \begin{pmatrix}
              0 & 0 \\ y  & 0
             \end{pmatrix}.
$$
Then $B(\mathfrak{a},\mathfrak{b})= 4 x y$. On the other hand, $e^{t\mathfrak{a}}(0)= t x$ and 
$e^{t\mathfrak{b}}(\infty) =1/(t y)$, and the formula is straightforward.
\end{proof}

\begin{footnotesize}

\noindent \textsc{Departament de Matem\`atiques, Universitat Aut\`onoma de Barcelona,\\  08193 Cerdanyola del Vall\`es (Spain)}

\noindent \emph{porti@mat.uab.cat}, \emph{porti@mat.uab.es}
\end{footnotesize}

\end{document}